
\documentclass[12pt,a4paper]{amsart}
\pdfoutput=1
\usepackage{mathtools}
\usepackage{xcolor}
\usepackage{amsmath,amstext,amssymb,amsthm,amsfonts}
\usepackage[all]{xy} \xyoption{poly}

\usepackage{float}
\usepackage{array, cellspace}
\usepackage{tablefootnote}
\usepackage{placeins}
\usepackage{pdfpages}

\newcommand{\nc}{\newcommand}

\nc{\one}{\mbox{\bf 1}}
\nc{\invtensor}{\underset{\leftarrow}{\otimes}}
\nc{\const}{\operatorname{const}}

\nc{\ad}{\operatorname{ad}}
\nc{\tr}{\operatorname{tr}}

\nc{\tp}{\operatorname{top}}
\nc{\rank}{\operatorname{rank}}
\nc{\corank}{\operatorname{corank}}
\nc{\codim}{\operatorname{codim}}
\nc{\sdim}{\operatorname{sdim}}
\nc{\mult}{\operatorname{mult}}

\nc{\fspn}{\operatorname{span}}
\nc{\Sym}{\operatorname{Sym}}
\nc{\sym}{\operatorname{sym}}
\nc{\id}{\operatorname{id}}
\nc{\Id}{\operatorname{Id}}
\nc{\Ree}{\operatorname{Re}}

\nc{\htt}{\operatorname{ht}}
\nc{\an}{\operatorname{an}}
\nc{\pr}{\operatorname{pr}}
\nc{\str}{\operatorname{str}}

\nc{\Ker}{\operatorname{Ker}}
\nc{\rker}{\operatorname{rKer}}
\nc{\im}{\operatorname{Im}}
\nc{\osp}{\mathfrak{osp}}
\nc{\sgn}{\operatorname{sgn}}
\nc{\F}{\operatorname{F}}
\nc{\Mod}{\operatorname{Mod}}
\nc{\DS}{\operatorname{DS}}
\nc{\Soc}{\operatorname{Soc}}
\nc{\Inj}{\operatorname{Inj}}
\nc{\fhom}{\operatorname{Hom}}
\nc{\End}{\operatorname{End}}
\nc{\supp}{\operatorname{supp}}
\nc{\Card}{\operatorname{Card}}
\nc{\Ann}{\operatorname{Ann}}
\nc{\Ind}{\operatorname{Ind}}
\nc{\Coind}{\operatorname{Coind}}
\nc{\wt}{\widetilde}
\nc{\ch}{\operatorname{ch}}
\nc{\sch}{\operatorname{sch}}

\nc{\Stab}{\operatorname{Stab}}
\nc{\Sch}{{\mathcal S}\mbox{\em ch}}
\nc{\Irr}{\operatorname{Irr}}
\nc{\fspec}{\operatorname{Spec}}
\nc{\Res}{\operatorname{Res}}
\nc{\Aut}{\operatorname{Aut}}
\nc{\Ext}{\operatorname{Ext}}
\nc{\chr}{\operatorname{char}}
\nc{\Fract}{\operatorname{Fract}}
\nc{\gr}{\operatorname{gr}}

\nc{\deff}{\operatorname{def}}
\nc{\fhC}{\operatorname{HC}}
\nc{\Sk}{\operatorname{Sk}}
\nc{\Sp}{\operatorname{Sp}}
\nc{\Isos}{\operatorname{Isos}}
\nc{\red}{\operatorname{red}}

\newcommand{\re}{\mathit{re}}
\newcommand{\is}{\mathit{iso}}
\newcommand{\nr}{\mathit{nr}}

\nc{\ima}{\operatorname{im}}

\nc{\U}{\operatorname{U}}
\nc{\ttC}{\operatorname{C}}

\nc{\GL}{\operatorname{GL}}
\nc{\wdchi}{\widetilde{\chi}}
\nc{\wdH}{\widetilde{H}}
\nc{\wdN}{\widetilde{N}}
\nc{\wdM}{\widetilde{M}}
\nc{\wdO}{\widetilde{O}}
\nc{\wdR}{\widetilde{R}}

\nc{\wdV}{\widetilde{V}}

\nc{\wdC}{\widetilde{C}}

\nc{\Obj}{\operatorname{Obj}}
\nc{\Dglie}{\operatorname{{\mathcal D}glie}}
\nc{\Fin}{\operatorname{{\mathcal F}in}}

\nc{\Adm}{\operatorname{\mathcal{A}dm}}

\nc{\Sg}{{\cS(\fg)}}
\nc{\Shg}{{\cS(\fhg)}}
\nc{\Ug}{{\cU(\fg)}}
\nc{\Uhg}{{\cU(\fhg)}}
\nc{\Sh}{{\cS(\fh)}}
\nc{\Uh}{{\cU(\fh)}}
\nc{\Uhh}{{\cU(\fhh)}}
\nc{\Zg}{{{\mathcal{Z}}(\fg)}}

\nc{\Vir}{{\mathcal{V}ir}}
\nc{\NS}{{\mathcal{N}S}}

\nc{\tZg}{{\widetilde{\mathcal Z}({\mathfrak g})}}
\nc{\Zk}{{\mathcal Z}({\mathfrak k})}

\newcommand{\ZZ}{\mathbb{Z}}

\nc{\Up}{{\mathcal U}({\mathfrak p})}
\nc{\Ah}{{\mathcal A}({\mathfrak h})}
\nc{\Ag}{{\mathcal A}({\mathfrak g})}
\nc{\Ap}{{\mathcal A}({\mathfrak p})}
\nc{\Zp}{{\mathcal Z}({\mathfrak p})}
\nc{\cR}{\mathcal R}
\nc{\cS}{\mathcal S}
\nc{\cT}{\mathcal{T}}
\nc{\cC}{\mathcal C}
\nc{\cA}{\mathcal A}
\nc{\cB}{\mathcal B}
\nc{\cU}{\mathcal U}
\nc{\cZ}{\mathcal Z}
\nc{\cM}{\mathcal M}
\nc{\cL}{\mathcal L}
\nc{\cF}{\mathcal F}
\nc{\fg}{\mathfrak g}

\nc{\cK}{\mathcal{K}}
\nc{\CO}{\mathcal O}
\nc{\CR}{\mathcal R}

\nc{\cW}{\mathcal{W}}
\nc{\bM}{\mathbf{M}}
\nc{\bL}{\mathbf{L}}
\nc{\bN}{\mathbf{N}}

\nc{\zq}{\mathpzc q}

\nc{\fo}{\mathfrak o}
\nc{\fl}{\mathfrak l}
\nc{\fn}{\mathfrak n}
\nc{\fm}{\mathfrak m}
\nc{\fp}{\mathfrak p}
\nc{\fh}{\mathfrak h}
\nc{\ft}{\mathfrak t}
\nc{\fk}{\mathfrak k}
\nc{\fb}{\mathfrak b}
\nc{\fs}{\mathfrak s}
\nc{\fB}{\mathfrak B}

\nc{\vareps}{\varepsilon}
\nc{\varesp}{\varepsilon}
\nc{\veps}{\varepsilon}

\nc{\fsl}{\mathfrak{sl}}
\nc{\fgl}{\mathfrak{gl}}
\nc{\fso}{\mathfrak{so}}
\nc{\fosp}{\mathfrak{osp}}
\nc{\fsp}{\mathfrak{sp}}
\nc{\fq}{\mathfrak q}
\nc{\fr}{\mathfrak r}
\nc{\fsq}{\mathfrak{sq}}
\nc{\fpsl}{\mathfrak{psl}}


\nc{\fhg}{\fhat{\fg}}
\nc{\fhn}{\fhat{\fn}}
\nc{\fhh}{\fhat{\fh}}
\nc{\fhb}{\fhat{\fb}}
\nc{\fhrho}{\fhat{\rho}}

\nc{\fhsl}{\fhat{\fsl}}
\nc{\fpo}{\mathfrak{po}}
\nc{\dirlim}{\underset{\rightarrow}{\lim}\,}
\nc{\nen}{\newenvironment}
\nc{\ol}{\overline}
\nc{\ul}{\underline}
\nc{\ra}{\rightarrow}
\nc{\lra}{\longrightarrow}
\nc{\Lra}{\Longrightarrow}
\nc{\bo}{\bar{1}}
\nc{\Lla}{\Longleftarrow}

\nc{\Llra}{\Longleftrightarrow}

\nc{\thla}{\twoheadleftarrow}

\nc{\lang}{(}
\nc{\rang}{)}

\nc{\fhra}{\fhookrightarrow}

\nc{\iso}{\overset{\sim}{\lra}}

\nc{\ssubset}{\underset{\not=}{\subset}}

\nc{\vac}{|0\rangle}

\nc{\Thm}[1]{Theorem~\ref{#1}}
\nc{\Prop}[1]{Proposition~\ref{#1}}
\nc{\Lem}[1]{Lemma~\ref{#1}}
\nc{\Cor}[1]{Corollary~\ref{#1}}
\nc{\Conj}[1]{Conjecture~\ref{#1}}
\nc{\Claim}[1]{Claim~\ref{#1}}
\nc{\Defn}[1]{Definition~\ref{#1}}
\nc{\Exa}[1]{Example~\ref{#1}}
\nc{\Rem}[1]{Remark~\ref{#1}}
\nc{\Note}[1]{Note~\ref{#1}}
\nc{\Quest}[1]{Question~\ref{#1}}
\nc{\fhyp}[1]{Hypoth\`ese~\ref{#1}}
\nen{thm}[1]{\label{#1}{\bf Theorem.\ } \em}{}
\nen{prop}[1]{\label{#1}{\bf Proposition.\ } \em}{}
\nen{lem}[1]{\label{#1}{\bf Lemma.\ } \em}{}
\nen{cor}[1]{\label{#1}{\bf Corollary.\ } \em}{}
\nen{conj}[1]{\label{#1}{\bf Conjecture.\ } \em}{}

\nen{claim}[1]{\label{#1}{\bf Claim.\ } \em}{}

\nen{defn}[1]{\label{#1}{\bf Definition.\ } }{}
\nen{exa}[1]{\label{#1}{\bf Example.\ } }{}
\nen{rem}[1]{\label{#1}{\em Remark.\ } }{}
\nen{note}[1]{\label{#1}{\em Note.\ } }{}
\nen{exer}[1]{\label{#1}{\em Exercise.\ } }{}
\nen{sket}[1]{\label{#1}{\em Sketch of proof.\ } }{}
\nen{quest}[1]{\label{#1}{\bf Question.\ } \em}{}

\nen{hyp}[1]{\label{#1}{\bf Hypoth\`ese.\ } \em}{}
\setlength{\unitlength}{0.8cm}
\setlength{\baselineskip}{18pt}
\setlength{\parskip}{6pt}
\setlength{\textwidth}{16cm}
\setlength{\textheight}{21cm}
\setlength{\oddsidemargin}{0.1in}
\setlength{\evensidemargin}{0.1in}
\setlength{\headheight}{30pt}
\setlength{\headsep}{40pt}
\setlength{\topmargin}{-60pt}
\setlength{\marginparwidth}{0pt}

\begin{document}
\setcounter{section}{-1}
\setcounter{tocdepth}{1}
	
\title{On the  root system of a Kac-Moody superalgebra }
\author{Maria Gorelik, Shay Kinamon Kerbis}
\address{Department of Mathematics, Weizmann Institute of Science, Rehovot 761001, Israel; 
maria.gorelik@weizmann.ac.il, shay-kinamon.kerbis@weizmann.ac.il}
%

\date{}

\begin{abstract} 
In this paper we extend several results about root systems of Kac-Moody algebras to superalgebra context.
In particular, we describe the root bases and the sets of imaginary roots.
\end{abstract}

\subjclass[2010]{17B67, 17B22.}

\medskip

\keywords{Kac-Moody algebras, Lie superalgebras, Cartan matrices, Weyl group}

\maketitle
\begin{center}
{Dedicated to V.~G.~Kac in the occassion of his 80th birthday.}
\end{center}

\section{Introduction}
In this paper we study the root systems of Kac-Moody superalgebras.
The root systems of Kac-Moody algebras were studied by Vinberg, Kac, Moody, Peterson and others.

A contragredient Lie algebra is a Lie algebra constructed from a complex 
square matrix by a 
standard procedure described in~\cite{Kbook}, Chapter I.
Kac-Moody algebras correspond to the matrices satisfying the following conditions
for all indices $i,j$:
\begin{itemize}
\item[1.]$a_{ij}=0\ \Longrightarrow\ a_{ji}=0$;
\item[2.] $a_{ii}\not=0$;
\item[3.]  $\frac{2a_{ij}}{a_{ii}}\in\mathbb{Z}_{\leq 0}$ for $j\not=i$.
\end{itemize}
A similar procedure assigns a  certain Lie superalgebra to any complex $n\times n$-matrix $A$
and a parity function $p: X\to \mathbb{Z}_2$, where $X:=\{1,2,\ldots,n\}$.
We call $(A,p)$ the {\em Cartan datum}. Kac-Moody superalgebras 
correspond to the Cartan data satisfying certain conditions 
(the condition (1) for all $i,j$, the condition  (2) for all $i$ with $p(i)=\ol{0}$, and a certain modification of
 the condition  (3))
which, unlike the Lie algebra
case,   should be imposed not only on the Cartan datum $(A,p)$ but
on all Cartan data obtained from $(A,p)$ by so-called
''isotropic reflexions''. 

A big part of the theory described in the  Kac's book ``Infinite-dimensional Lie Algebras'' 
can be developed in the context of  Kac-Moody superalgebras. 
The construction (Chapter I) was developed in~\cite{S},\cite{GHS}.
The results of Chapter II (``Invariant bilinear form'') and many results of Chapter III
can be extended straightforwardly.  Chapter IV is devoted to classification results; for
 Kac-Moody superalgebras
the corresponding classification  was obtained  by I.~van de Leur, C.~Hoyt and V.~Serganova (\cite{vdL}, \cite{Hoyt}, \cite{S}), see~\ref{classif} below.
In this paper we extend most of results of Chapter V (``Real and Imaginary Roots'').
Some of these results can be found in~\cite{S}, we give precise references in the text.

Different manifestations of the Weyl group in the context of Kac-Moody algebras lead
to three different notions for superalgebras: the skeleton (an analogue of the Cayley
graph of the Weyl group),  the group $\Sk^D(v)$
and the Weyl group. In this paper we study these objects and 
establish a decomposition of the group $\Sk^D(v)$ analogous
to~\cite{Kbook}, Proposition 6.5. A version of Matsumoto Theorem for skeleta in recently proven
in~\cite{GHS2}.

\subsection{Skeleton}\label{skeleton}
By construction, a  Kac-Moody superalgebra $\fg$ possesses a triangular decomposition
$\fg=\fn^-\oplus \fh\oplus \fn^+$ where  the Cartan subalgebra $\fh$
is a commutative even Lie algebra which acts diagonally in the adjoint representation.
The subalgebras $\fn^{\pm}$ are generated by the root spaces $\fg_{\pm\alpha}$ where  $\alpha$ 
runs through the set of simple roots $\Sigma$.

Starting from the original triangular decomposition, one can construct many more, using ``reflexions'' $r_x$ numbered by the indices $x\in X$.
We call such triangular decompositions ``attainable''.
The attainable triangular decompositions are parametrized by a regular graph, which we call
a {\em skeleton}: its vertices
correpond to different triangular decompositions
and the edges are marked by $r_x$ with $x\in X$.
 Each chain
of indices $x_1,\ldots, x_s\in X$ represents  a path between the corresponding triangular decompositions.
We denote the skeleton by $\Sk(v)$ (where the vertex $v$ indicates the original triangular decomposition).
For each vertex $u\in \Sk(v)$ we denote by $\Sp(u)$ the full subgraph corresponding
to the triangular decompositions which induce the same triangular decomposition of $\fg_{\ol{0}}$.
By~\cite{S}, $\Sp(u)$ is a connected subgraph of $\Sk(v)$. 
Similar objects appear in the Weyl groupoids
introduced in~\cite{HY08} and developed in~\cite{HS20}.

The Weyl group $W(\fg)$ acts faithfully on the set of vertices of the skeleton;
this action preserves the edges and the markings of the edges.
In the case of Kac-Moody algebras, this action is transitive:
the skeleton is naturally isomorphic to the Cayley graph of 
 $W(\fg)$ and the spine is a trivial graph $\Sp(v)=\{v\}$.
In general, the spine is a connected subgraph of $\Sk(v)$ and
for any vertex $u\in\Sk(v)$ there exists a unique $w\in W(\fg)$ and $v'\in \Sp(v)$ such
that $u=wv'$.  
The Weyl group can be naturally embedded in another group,
$\Sk^D(v)$, which also  acts faithfully on the set of vertices of the skeleton,
preserving the edges and the marks on the edges.
The elements of  $\Sk^D(v)$ map each triangular
decomposition $\fg=\fn^-\oplus \fh\oplus \fn^+$ to an isomorphic triangular decomposition
$\fg=(\fn')^-\oplus \fh\oplus (\fn')^+$.  
 
 \subsection{Root systems}\label{introroot}
 We denote by $Q_v$ the root lattice (=$\mathbb{Z}$-span of $\Sigma$).
 For each vertex $u$ in the skeleton we denote by $\Sigma_u$
 the corresponding set of simple roots and by $Q^+_u$ the semilattice spanned by $\Sigma_u$.

Let $\Delta$ be the root system of $\fg$ (i.e., $\Delta\subset \fh^*$ is the set of non-zero eginevalues
of $\fh$ in the adjoint representation).  Then $\Delta$ spans $Q_v$ and 
for each $u\in \Sk(v)$ one has 
$\Delta=-\Delta^+_u\coprod \Delta^+_u$ where $\Delta^+_u=\Delta\cap Q^+_u$. Moreover,
$\Sigma_{wu}=w\Sigma_u$ for each $w\in\Sk^D(v)$ (in particular, for $w\in W(\fg)$).

 We call a root $\alpha\in\Delta$ {\em real }
if $\alpha\in\Sigma_u$ for some $u\in \Sk(v)$ and
{\em imaginary} if $\alpha$ is not proportional to a real root.
We denote by $\pi$ the set of even roots $\alpha$ with the property
that $\alpha$ or $\alpha/2$ is a simple root at some vertex $u\in\Sp(v)$.
 For each $\alpha\in\pi$
one defines a coroot $\alpha^{\vee}$ in the usual way.
The Weyl group is a Coxeter group generated by the reflections with 
respect to the roots in $\pi$.

For the Kac-Moody algebras $\Sp(v)=\{v\}$ and $\Sigma_v=\pi$. 
For the finite dimensional Kac-Moody superalgebras $\pi$ is the set of simple roots for $\fg_{\ol{0}}$.

\subsection{Classification}\label{classif}
For a set $S\subset\fh^*$ we denote by $C(S)$ the real  convex cone 
generated by $S$.

We divide indecomposable Kac-Moody superalgebras into three classes: 
\begin{itemize}
\item[(Fin)] $\fg$ is finite-dimenional;
\item[(Aff)]  $\fg$ is infinite-dimensional with a finite Gelfand-Kirillov dimension;
\item[(Ind)] $\fg$ is of infinite Gelfand-Kirillov dimension. 
\end{itemize}
In the Lie algebra case the corresponding  classification of  the Cartan matrices is given by Theorem 4.3
in~\cite{Kbook}. In general, this classification can be described in terms of the {\em totally positive cone}
$$Q^{++}_{\mathbb{R}}:=\bigcap_{u\in \Sk(v)} C(\Sigma_u).$$
(denoted as $D^+_{\Pi}$ in~\cite{S}, Section 4). 
Let $r$ be the (real) dimension of the convex cone
 $Q^{++}_{\mathbb{R}}$.
By~\cite{GHS} we have  $r=0$ for (Fin), $r=1$ for (Aff) and  $r>1$ for (Ind).

The finite-dimensional Kac-Moody superalgebras (type (Fin)) were described by V.~Kac in~\cite{Ksuper}.
The symmetrizable Kac-Moody superalgebras of type (Aff) were described by I.~van de Leur in~\cite{vdL}.
The non-symmetrizable Kac-Moody superalgebras of type (Aff) and the Kac-Moody superalgebras of type (Ind) 
were described by C.~Hoyt and V.~Serganova in~\cite{Hoyt}, \cite{S}. These results
imply that in the case (Aff) the Gelfand-Kirillov dimension equals to one.

\subsection{Root systems of Kac-Moody superalgebras}\label{intro1}
In the first part of our paper we study root systems of Kac-Moody superalgebras.
We extend the main results of Chapter V in~\cite{Kbook} to the Kac-Moody superalgebras.
For the Kac-Moody algebras the closure of the Tits
cone is the set $\{h\in\fh_{\mathbb{R}}|\ \langle h,\alpha\rangle\geq 0\ \ \text{ for all }
\alpha\in Q^{++}_{\mathbb{R}}\}$ (see~\cite{Kbook}, Proposition 5.8).
In this paper we do not introduce $\fh_{\mathbb{R}}$ and the Tits cone
(since Cartan matrices of Kac-Moody superalgebras may have non-real entries), but 
 work with $Q^{++}_{\mathbb{R}}$ instead.  We do not dicuss the results
 for symmetrizable Kac-Moody algebras which appear in Chapter V in~\cite{Kbook};
 it is easy to see that most of these results   do not hold in superalgebra context.
 Another result which does not hold for superalgebras is 
Corollary 5.10 in~\cite{Kbook} describing the group of automorphisms of the root lattice
preserving the root system. It would be interesting to have such  a descrption for Kac-Moody superalgebras.

\subsubsection{Root bases} 
A linearly independent subset $\Sigma'\subset \Delta$ is called
a {\em root basis} if for 
each $\alpha\in\Delta$  either $\alpha$ or $-\alpha$ is a non-negative integral linear combination
of $\Sigma'$.  Each set of simple roots
is a root basis.  By~\cite{K1978} for an indecomposable Kac-Moody algebra   each root basis $\Sigma'$
 is $W$-conjugated to
 $\Sigma$ or to $-\Sigma$ (see~\cite{Kbook}, Proposition 5.9). 
We denote by  ``$\operatorname{Root\ Bases}$''  the set of root bases.
 
We call $h\in\fh^*$ {\em generic} if $\Ree\langle \alpha,h\rangle \not=0$ for all $\alpha\in\Delta$.
For a generic $h\in\fh^*$ we set
$$\Delta_{>0}(h):=\{\alpha\in\Delta|\ \Ree\langle \alpha,h\rangle >0\}$$
and denote by $\tilde{\Sigma}_{h}$ the set of indecomposable elements in $\Delta_{>0}(h)$.

The following theorem is a slight modification of Theorem 8.3 in~\cite{S}; it
extends Proposition 5.9 in~\cite{Kbook} to Kac-Moody superalgebras.

  \subsubsection{} 
  \begin{thm}{thmiii}
\begin{enumerate}
\item For a  Kac-Moody component of type (Fin) we have 
$$\{\Sigma_u\}_{u\in \Sk(v)}=\operatorname{Root\ Bases}=
\{\tilde{\Sigma}_{h}\}_{h\text{ is generic in } \fh}.$$
\item For an indecomposable Kac-Moody component of type (Aff) we have
$$\{\Sigma_u\}_{u\in \Sk(v)}\coprod\{-\Sigma_u\}_{u\in \Sk(v)}
=\operatorname{Root\ Bases}=
\{\tilde{\Sigma}_{h}\}_{h\text{ is generic in } \fh}.$$
\item For an indecomposable Kac-Moody component of type (Ind) we have
$$\{\Sigma_u\}_{u\in \Sk(v)}\coprod\{-\Sigma_u\}_{u\in \Sk(v)}
=\operatorname{Root\ Bases}\subsetneq
\{\tilde{\Sigma}_{h}\}_{h\text{ is generic in } \fh}.$$
\end{enumerate}\end{thm}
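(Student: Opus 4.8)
\emph{Plan.} The plan is to deduce all three assertions by comparing the three families pairwise, using two external inputs: \cite{S}, Theorem~8.3, which identifies $\operatorname{Root\ Bases}$ with $\{\Sigma_u\}_{u\in\Sk(v)}$ in case (Fin) and with $\{\Sigma_u\}_{u\in\Sk(v)}\coprod\{-\Sigma_u\}_{u\in\Sk(v)}$ in cases (Aff) and (Ind); and the fact from \cite{GHS} that $r:=\dim_{\RR}Q^{++}_{\RR}$ is $0$, $1$, and $>1$ in these three cases. Two elementary remarks are also used: $\{-\Sigma_u\}_{u\in\Sk(v)}\subset\operatorname{Root\ Bases}$, directly from the definition; and $\{\Sigma_u\}_{u\in\Sk(v)}$ is disjoint from $\{-\Sigma_u\}_{u\in\Sk(v)}$ whenever $r\geq1$, because $Q^{++}_{\RR}\subset C(\Sigma_u)$ for every $u$ while $C(\Sigma_u)\cap(-C(\Sigma_u))=\{0\}$, the cone on the linearly independent set $\Sigma_u$ being pointed.

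\emph{The inclusion $\operatorname{Root\ Bases}\subset\{\tilde\Sigma_h\}$.} Given a root basis $\Sigma'=\{\beta_1,\dots,\beta_k\}$, I would pick $h\in\fh$ with $\langle\beta_i,h\rangle=1$ for all $i$ — possible since $\Sigma'$ is linearly independent and $\dim\fh<\infty$. Such an $h$ is automatically generic: for $\alpha\in\Delta$ write $\varepsilon\alpha=\sum_i c_i\beta_i$ with $\varepsilon\in\{\pm1\}$ and $c_i\in\ZZ_{\geq0}$ not all zero, so $\langle\alpha,h\rangle=\varepsilon\sum_i c_i\neq0$; moreover $\Delta_{>0}(h)=\{\alpha\in\Delta:\alpha\in\sum_i\ZZ_{\geq0}\beta_i\}$. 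By \cite{S}, Theorem~8.3 (after replacing $h$ by $-h$ if needed) $\Sigma'=\Sigma_u$ for some $u\in\Sk(v)$, so $\Delta_{>0}(h)=\Delta^+_u$. Each $\beta_i$ is indecomposable in $\Delta^+_u$: a splitting $\beta_i=\gamma+\delta$ with $\gamma,\delta\in\Delta^+_u\subset\sum_j\ZZ_{\geq0}\beta_j$ would force one summand to be $0$. Conversely a non-simple $\alpha\in\Delta^+_u$ is decomposable, since $\fg_\alpha$ lies in the degree-$\htt(\alpha)$ component of the subalgebra $\fn^+$ of the triangular decomposition indexed by $u$, which is generated by $\{\fg_\beta\}_{\beta\in\Sigma_u}$, so that $\alpha=\beta+\gamma$ with $\beta\in\Sigma_u$, $\gamma\in\Delta^+_u$, both in $\Delta_{>0}(h)$. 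Hence $\tilde\Sigma_h=\Sigma_u=\Sigma'$.

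\emph{The trichotomy.} Each generic $h$ gives a real linear functional $\phi_h:=\Ree\langle\,\cdot\,,h\rangle$ on $Q_v\otimes\RR$ on which $\tilde\Sigma_h$ alone depends, and every real linear functional arises so. If $\phi_h>0$ on $\Sigma_u$, then $\phi_h>0$ on $C(\Sigma_u)\setminus\{0\}\supset\Delta^+_u$ and $\phi_h<0$ on $-C(\Sigma_u)\setminus\{0\}$, whence $\Delta_{>0}(h)=\Delta^+_u$ and, by the previous step, $\tilde\Sigma_h=\Sigma_u$; conversely $\tilde\Sigma_h=\Sigma_u$ forces $\Sigma_u\subset\Delta_{>0}(h)$, i.e. $\phi_h\in\operatorname{int}C(\Sigma_u)^{\vee}$ with $C(S)^{\vee}:=\{\phi:\phi|_{C(S)}\geq0\}$, and likewise for $-\Sigma_u$. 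So $\tilde\Sigma_h\in\{\pm\Sigma_u\}_{u\in\Sk(v)}$ (hence is a root basis) exactly when $\phi_h\in\bigcup_{u}\bigl(C(\Sigma_u)^{\vee}\cup(-C(\Sigma_u)^{\vee})\bigr)$, and $\{\tilde\Sigma_h\}=\{\pm\Sigma_u\}_{u\in\Sk(v)}$ if and only if this union meets every generic functional. For $r\leq1$ this covering holds: it is the super form of ``Weyl chambers tile the Tits cone, which is the whole space (finite case) or a half-space (affine case)'', and for $r=0,1$ it follows from the structure of $Q^{++}_{\RR}$ in \cite{GHS} and the classification of \S\ref{classif}. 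Then $\{\tilde\Sigma_h\}=\{\pm\Sigma_u\}$, and since $\{\Sigma_u\}\subset\operatorname{Root\ Bases}\subset\{\tilde\Sigma_h\}$ (previous step) and $\{-\Sigma_u\}\subset\operatorname{Root\ Bases}$, we get $\operatorname{Root\ Bases}=\{\tilde\Sigma_h\}=\{\pm\Sigma_u\}$; for $r=0$ this set is $\{\Sigma_u\}$ by \cite{S}, Theorem~8.3, which is assertion (Fin), and for $r=1$ the union $\{\Sigma_u\}\cup\{-\Sigma_u\}$ is disjoint, which is assertion (Aff). For $r>1$ the covering fails: since $C(\Sigma_u)^{\vee}\subset(Q^{++}_{\RR})^{\vee}$ for all $u$, it suffices to note that $(Q^{++}_{\RR})^{\vee}\cup(-(Q^{++}_{\RR})^{\vee})$ is a proper subset of the dual of $Q_v\otimes\RR$ when $\dim Q^{++}_{\RR}\geq2$ — take linearly independent $p_1,p_2\in Q^{++}_{\RR}$ and a generic $\phi$ with $\phi(p_1)>0>\phi(p_2)$. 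For the corresponding $h$, $\tilde\Sigma_h\neq\pm\Sigma_u$ for every $u$, so $\tilde\Sigma_h\notin\operatorname{Root\ Bases}$ by \cite{S}, Theorem~8.3; combined with $\operatorname{Root\ Bases}=\{\Sigma_u\}\coprod\{-\Sigma_u\}\subset\{\tilde\Sigma_h\}$ this gives the strict inclusion of assertion (Ind).

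\emph{Main obstacle.} The only non-formal ingredient is the covering statement invoked for $r\leq1$: that every generic functional on $Q_v\otimes\RR$ is strictly positive, or strictly negative, on one of the attainable simple systems $\Sigma_u$. This is the super analogue of the fact that the Tits cone, together with its negative, exhausts the ambient space in finite type and a half-space in affine type; in the present setting, where Cartan matrices need not be symmetrizable and isotropic reflexions occur, I would extract it from the properties of $Q^{++}_{\RR}$ in \cite{GHS} rather than prove it anew.
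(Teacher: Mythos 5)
Your architecture is sound in outline, and two of your ingredients coincide with the paper's own: the strictness in case (Ind) is obtained in~\ref{Sigma'h} exactly as you propose (two non-proportional vectors in $Q^{++}_{\mathbb{R}}$ and a generic functional separating them), and your pointedness argument for the disjointness of $\{\Sigma_u\}$ and $\{-\Sigma_u\}$ is a clean substitute for the paper's finiteness argument. But there are two genuine gaps.

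First, the covering claim --- that for $r\leq 1$ every generic $h$ satisfies $\Delta_{>0}(h)=\pm\Delta^+_u$ for some $u\in\Sk(v)$ --- is the entire content of \Prop{proprootbasisfinaff}, and it does \emph{not} follow from the dimension of $Q^{++}_{\mathbb{R}}$ alone, which is where you propose to extract it from. Knowing that $\bigcap_u C(\Sigma_u)$ is $\{0\}$ or a ray says nothing a priori about whether the dual cones $C(\Sigma_u)^{\vee}\cup(-C(\Sigma_u)^{\vee})$ exhaust the generic functionals; one needs an actual mechanism for producing $u$ from $h$. The paper's mechanism is \Lem{lemfindif}: an induction on the cardinality of $(\Delta_{>0}(h)\cap\Delta^{\re})\setminus\Delta^+_v$, reflecting at a simple root outside $\Delta_{>0}(h)$ at each step, which terminates precisely because that discrepancy set is finite. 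Finiteness is automatic in type (Fin), but in type (Aff) it requires the structural input~(\ref{affmodulodelta}) ($(\Delta^{\ima})^+=\mathbb{Z}_{>0}\delta$ and finiteness of $\Delta$ modulo $\delta$, coming from the van de Leur--Hoyt classification), after normalizing $\Ree\langle\delta,h\rangle>0$. Without this your cases (i) and (ii) are unproven.

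Second, you import the identification $\operatorname{Root\ Bases}=\{\pm\Sigma_u\}_{u\in\Sk(v)}$ wholesale from~\cite{S}, Theorem~8.3. Since the theorem you are proving is itself a modification of that result, this is defensible as a citation but it removes the substantive mathematics: for type (Ind) the paper devotes all of Section 4 to this identification (the density lemma~\ref{lemV+} for imaginary rays, \Lem{corrotbasisInd} placing $(\Delta^{\ima}_{\pi})^+$ inside $\Delta^+(\Sigma')$, the finiteness of $-\Delta^+(\Sigma')\cap\Delta^+_{\an}$ via the dual root system of $B_{\pi}$, and the case analysis for $Q^{\pm}(m,n,t)$). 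Your step showing $\tilde{\Sigma}_h=\Sigma_u$ once $\Delta_{>0}(h)=\Delta^+_u$ (simple roots indecomposable, non-simple positive roots decomposable because $\fn^+$ is generated by the simple root spaces) is correct and fills in what the paper leaves implicit in~(\ref{basesinclusion}), but it too leans on the imported identification to reduce an arbitrary root basis to some $\Sigma_u$ first.
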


We say that  an indecomposable root system $\Delta$ is of {\em type II}  if any root of $\fg$ lies
in the $\mathbb{R}$-span of the set of even roots,
and is of type I otherwise. 
 Using~\Thm{thmiii} we prove the following 
  \subsubsection{}
\begin{thm}{corintrospine}
For an  indecomposable Kac-Moody superalgebra we have
\begin{enumerate}
\item One has $\bigcap\limits_{u\in \Sp(v)} C(\Sigma_u)=C(\pi)$ and $\pi$ is finite.

\item One has $Q^{++}_{\mathbb{R}}=\bigcap\limits_{w\in W} wC(\pi)=\bigcup\limits_{w\in W} wK_{\mathbb{R}}$, 
where 
$$K_{\mathbb{R}}:=\{ \mu\in C(\pi)|\ \langle \mu,\alpha^{\vee}\rangle \leq 0\ 
\text{ for all }\alpha\in\pi\}.$$
Moreover, any element in $Q^{++}_{\mathbb{R}}$ is $W$-conjugated to a unique element in $K_{\mathbb{R}}$.

\item For an  isotropic\footnote{In this paper we use the definitions of~\cite{GHS} and
all isotropic roots are real.}
 root $\alpha$ the following conditions are equivalent:
\begin{itemize}
\item[(a)] $\alpha\in \Sigma_u$ for some $u\in\ Sp(v)$;
\item [(b)] $\alpha\not\in \bigl(-C(\pi)\cup C(\pi)\bigr)$.
\end{itemize}

\item The spine is infinite if and only if $\Delta$ is infinite and of type I.
\end{enumerate}
\end{thm}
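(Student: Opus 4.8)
The plan is to prove (1) first and to bootstrap (2)--(4) from it, using also \Thm{thmiii}.

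First I would pin down $\pi$. Each $\alpha\in\pi$ is a positive even root which is indecomposable in $\Delta^+_u$ for the relevant $u\in\Sp(v)$ (and if $\alpha/2\in\Sigma_u$ instead, then $\alpha/2$ is an odd non-isotropic simple root and $\alpha=2(\alpha/2)$ is still indecomposable in $\Delta^+_u$, as one checks directly); since all vertices of $\Sp(v)$ induce the same positive system $\Delta^+_{\ol 0}$ on $\fg_{\ol 0}$ and $\Delta^+_{\ol 0}\subseteq\Delta^+_u\subseteq Q^+_u$, such an $\alpha$ is also indecomposable among positive even roots, i.e.\ a simple root of the Kac--Moody algebra $\fg_{\ol 0}$; conversely every even simple root of $\fg_{\ol 0}$ becomes a simple root of $\fg$ after a suitable chain of odd reflections within $\Sp(v)$ (a standard reduction). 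Hence $\pi$ equals the finite set of simple roots of $\fg_{\ol 0}$, so $\pi$ is finite and $C(\pi)=C(\Delta^+_{\ol 0})$. The inclusion $C(\pi)\subseteq\bigcap_{u\in\Sp(v)}C(\Sigma_u)$ is then immediate from $\Delta^+_{\ol 0}\subseteq Q^+_u$. For the reverse inclusion, $C(\pi)$ being a pointed finitely generated cone, it is enough to show $f(\mu)\ge 0$ whenever $\mu\in\bigcap_{u\in\Sp(v)}C(\Sigma_u)$ and $f$ is a generic linear functional on the real span of $\Delta$ that is nonnegative on $\Delta^+_{\ol 0}$: for such $f$ one produces a vertex $u\in\Sp(v)$ with $f\ge 0$ on all of $\Delta^+_u$, whence $\mu\in C(\Delta^+_u)$ gives $f(\mu)\ge 0$. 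For types (Fin) and (Aff), \Thm{thmiii} provides such a $u$ at once --- the triangular decomposition cut out by $f$ is attainable and has even part $\Delta^+_{\ol 0}$; in general, starting at any $u_0\in\Sp(v)$ one repeatedly applies the odd reflection at the $\Delta^+_u$-minimal isotropic root $\beta\in\Delta^+_u$ with $f(\beta)<0$ (such a minimal ``bad'' root is necessarily simple, odd reflections preserve $\Delta^+_{\ol 0}$ and hence stay inside $\Sp(v)$, and they change the positive system only by flipping the reflected simple root), each step removing one bad root and creating none.

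To deduce (2), note that every $u\in\Sk(v)$ is uniquely $wv'$ with $w\in W$, $v'\in\Sp(v)$, and $\Sigma_{wu}=w\Sigma_u$, so $Q^{++}_{\RR}=\bigcap_{u\in\Sk(v)}C(\Sigma_u)=\bigcap_{w\in W}w\bigl(\bigcap_{v'\in\Sp(v)}C(\Sigma_{v'})\bigr)=\bigcap_{w\in W}wC(\pi)$ by (1). The identity $\bigcap_{w}wC(\pi)=\bigcup_{w}wK_{\RR}$ and the uniqueness of the $W$-representative in $K_{\RR}$ are the standard Tits-cone statements for the Coxeter group $W$ acting on the real span of $\Delta$ (which is finite-dimensional, and $W$ has finite rank by finiteness of $\pi$; cf.\ \cite{Kbook}, Proposition~3.12 and Proposition~5.8): for ``$\supseteq$'' one uses that $w\nu-\nu\in\ZZ_{\ge 0}\pi\subseteq C(\pi)$ for antidominant $\nu$, so $w\nu\in C(\pi)+C(\pi)=C(\pi)$; for ``$\subseteq$'' and uniqueness one pushes $\mu$ into the antidominant chamber by minimising the height $\textstyle\sum_\alpha c_\alpha$ over its $W$-orbit in $C(\pi)$, and invokes that this chamber is a fundamental domain on the dual Tits cone.

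For (3): if $\alpha$ is isotropic and $\alpha\in\Sigma_u$ for some $u\in\Sp(v)$, then $-\alpha\notin C(\Sigma_u)$ by linear independence of $\Sigma_u$, hence $-\alpha\notin C(\pi)$ by (1), i.e.\ $\alpha\notin-C(\pi)$; applying the odd reflection at $u$ gives $u'=r_\alpha u\in\Sp(v)$ with $-\alpha\in\Sigma_{u'}$, and the same argument yields $\alpha\notin C(\pi)$. Conversely, let $\alpha$ be isotropic (hence real) with $\alpha\notin C(\pi)\cup(-C(\pi))$. By (1) there are $u_1,u_2\in\Sp(v)$ with $\alpha\notin C(\Sigma_{u_1})$ and $-\alpha\notin C(\Sigma_{u_2})$; since $\alpha$ is a root this forces $-\alpha\in\Delta^+_{u_1}$ and $\alpha\in\Delta^+_{u_2}$. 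Join $u_2$ to $u_1$ by a path in $\Sp(v)$ (connectedness, \cite{S}); every edge is an odd reflection (even and odd non-isotropic reflections change $\Delta^+_{\ol 0}$ and so leave $\Sp(v)$), and an odd reflection alters the positive system only by flipping the reflected simple root; hence at the edge of the path where $\alpha$ switches from positive to negative, $\alpha$ is that simple root, so $\alpha\in\Sigma_u$ for the corresponding $u\in\Sp(v)$.

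Finally, for (4): if $\Delta$ is finite then $\Sp(v)\subseteq\Sk(v)$ is finite, so assume $\Delta$ infinite. By (3) and the sign-flip argument used there, an isotropic root lying in $C(\pi)\cup(-C(\pi))$ has a fixed sign throughout $\Sp(v)$, while one lying outside is simple at some spine vertex; thus a vertex $u\in\Sp(v)$ is determined by which elements of $I:=\{\text{isotropic roots}\}\setminus\bigl(C(\pi)\cup(-C(\pi))\bigr)$ are positive at it, so $\Sp(v)$ is finite iff $I$ is finite. If $\Delta$ is of type I, the root witnessing type I --- one outside the span $V_{\ol 0}$ of $C(\pi)$ --- must be odd and isotropic (an even or odd non-isotropic root $\gamma$ has $\gamma$ or $2\gamma$ even, hence in $V_{\ol 0}$), and combining it with suitable even imaginary roots of the (now affine or indefinite) even root system produces infinitely many isotropic roots outside $V_{\ol 0}\supseteq C(\pi)\cup(-C(\pi))$, so $I$ is infinite. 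If $\Delta$ is of type II, all roots lie in $V_{\ol 0}$, and one must show that $C(\pi)\cup(-C(\pi))$ contains all but finitely many isotropic roots; this step --- together with the reverse inclusion in (1) for type (Ind) --- is the one I expect to be the crux, since both require quantitative control of how far the positive even cone $C(\pi)$ reaches among the odd roots, bringing in the classification ($r=\dim Q^{++}_{\RR}$) and the fine structure of imaginary roots. (The Tits-cone input in (2) is classical, but it must be phrased entirely in terms of $Q_{v}\otimes\RR$ and $C(\pi)$, since $\fh^*$ carries no preferred real structure.)
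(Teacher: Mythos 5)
Your architecture for parts (ii) and (iii), and for part (i) in types (Fin) and (Aff), matches the paper's: the height-minimisation argument for (ii) is Lemma~\ref{Winvariant}, the cone-separation argument reducing (i) to producing a spine vertex $u$ with $f\ge 0$ on $\Delta^+_u$ is Corollary~\ref{corrootbasisfinaff}, and the sign-flip-along-a-spine-path argument for (iii) is the paper's notion of essentially simple roots (\ref{essence}). However, there are genuine gaps exactly at the points you flag as ``the crux'', and these are not minor: the paper closes them by invoking the Hoyt classification of type (Ind) components. Concretely, your iterative odd-reflection procedure for the reverse inclusion in (i) has no termination proof --- you need the set of isotropic $\beta\in\Delta^+_u$ with $f(\beta)<0$ to be \emph{finite}, and establishing such finiteness statements is essentially the whole difficulty (compare Lemma~\ref{lemfindif}(ii) and Lemma~\ref{lemrootbasisInd}, which need the $\fg(B_\pi)$ machinery and a height bound via a strictly antidominant imaginary root $\nu_0$); moreover the claim that a $\le_u$-minimal ``bad'' isotropic root is simple at $u$ is unjustified, since a decomposition $\beta=\gamma_1+\gamma_2$ need not involve isotropic summands. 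The paper avoids all of this in type (Ind): the only non--purely-anisotropic components are $Q^{\pm}(m,n,t)$, whose spine has exactly four vertices and for which $\bigcap_u Q^+_u$ is computed by hand (Lemma~\ref{lemQ}), while in the purely anisotropic case the spine is a single vertex. Similarly, the type II half of (iv) is done in~\ref{infess} using $\delta\in C(\pi)$ with strictly positive coefficients and finiteness of $\Delta$ modulo $\delta$.

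A second, more structural problem is your identification of $\pi$ with ``the set of simple roots of $\fg_{\ol 0}$''. The paper deliberately does \emph{not} work with $\fg_{\ol 0}$, because $\fg_{\ol 0}$ is not contragredient in general and $\Delta_\pi\ne\Delta_{\ol 0}$ can occur (e.g.\ $D(m{+}1|n)^{(2)}$, where $\delta\in\Delta^{\ima}_{\ol 0}$ but the minimal imaginary root of $\fg(B_\pi)$ is $2\delta$, and $\fsl(m|n)^{(1)}$, where $\fg'\subsetneq[\fg_{\ol0},\fg_{\ol0}]$). What is actually true is Proposition~\ref{propprin}: $\Sigma_{\pr}$ is the set of indecomposable elements of $\Delta^+_{\an}$ --- of the \emph{anisotropic} roots, not of all positive even roots --- and its proof already uses the auxiliary Kac--Moody algebras $\fg(B')$. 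Your ``standard reduction'' converse (every indecomposable positive even root is reached by odd reflections) is neither stated nor needed in the paper, and your deduction of finiteness of $\pi$ from it does not cover the cases with infinite spine: there the paper needs the separate argument of Lemma~\ref{Sigmapraff} (injectivity of $\Sigma_{\pr}\to V/\mathbb{R}\delta$ plus finiteness of $\Delta$ mod $\delta$) in type (Aff), and the explicit classification in type (Ind). Fortunately, for the separation argument in (i) you only need $\pi\subset\Delta^+_{>0}(h)$ to force $u\in\Sp(v)$ (via~\ref{spineDelta0}), so the detour through $C(\Delta^+_{\ol 0})$ can simply be dropped.
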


Note that all assertions in~\Thm{corintrospine} are trivial if $\Sp(v)=v$ (in particular, in
the case of Kac-Moody algebras). 

For the cases (Fin) and (Aff)  \Thm{thmiii} follows from usual properties of root bases
 and the assertions (i)---(iii) of~\Thm{corintrospine} follow from this theorem.
 In the case when the spine is finite~\Thm{corintrospine} (i) is established in~\cite{S} Lemma 3.10.

\subsubsection{Imaginary roots}\label{introimaginary}
Set
$$Q^{++}:=Q^{++}_{\mathbb{R}}\cap Q_v.$$
Let $(\Delta^{\ima})^+$ be the set of positive imaginary roots. 
We prove that
\begin{equation}\label{suppconnected}
(\Delta^{\ima})^+=\{\mu\in Q^{++}|\ \ \supp\mu\ \text{ is connected}\}
\end{equation}
where for $\mu=\sum\limits_{\alpha\in\Sigma} k_{\alpha}\alpha$
we set $\supp\mu:=\{\alpha\in\Sigma|\ k_{\alpha}\not=0\}$ and $\supp\mu$
is connected if the corresponding Dynkin diagram is connected. 
Combining the above formula with~\Thm{corintrospine} (ii),
we obtain a  generalization of~\cite{Kbook},  Proposition 5.2 (a) and Theorem 5.4.

As in the case of Kac-Moody algebras, this implies  that 
for the Cartan datum of a Kac-Moody superalgebra, all ``root superalgebras''
have the same set of imaginary roots (the multiplicities may vary);
this extends Corollary 5.12 in~\cite{Kbook} to Kac-Moody superalgebras.

In the case when $\Delta$ is not ``purely anisotropic'' (this means that
$\Sigma$ contains an isotropic root) we have
\begin{equation}\label{eqimagine}
(\Delta^{\ima})^+=Q^{++}.
\end{equation}

\subsubsection{Algebra $\fg(B_{\pi})$}
By~\cite{S}, Lemma 3.7, Lie algebra $\fg_{\ol{0}}$ contains a subalgebra $\fg'$ which can be seen as
``the best approximation to the largest almost contragredient Lie subalgebra''.
The corresponding contragredient Lie algebra has the Cartan matrix $B_{\pi}$
with the entries $(\langle \beta,\alpha^{\vee}\rangle)_{\alpha,\beta\in\pi}$. For example, if 
$\fg$ is finite-dimensional, then $\fg'=\fg(B_{\pi})=[\fg_{\ol{0}},\fg_{\ol{0}}]$.
The Weyl group $W(\fg)$ is naturally isomorphic to the Weyl group of $\fg(B_{\pi})$.

Denote by $\tilde{\pi}$ the set of simple roots for $\fg(B_{\pi})$. 
We have a linear map  which sends $\tilde{\pi}$
to $\pi$ and the root system of $\fg(B_{\pi})$ to $\Delta_{\ol{0}}$
(this map is injective in the types (Fin) and (Ind)). Denote by $\Delta^{\ima}_{\pi}$
the image of the set of imaginary roots of $\fg(B_{\pi})$.
Then $\Delta^{\ima}_{\pi}\subset \Delta^{\ima}_{\ol{0}}$.
Since  $\fg(B_{\pi})$ is a Kac-Moody algebra, $\Delta^{\ima}_{\pi}$ is given
by the formula~(\ref{suppconnected}). 
Our proofs of~\Thm{thmiii} (iii) and the formula~(\ref{suppconnected}) are based on 
this description of $\Delta^{\ima}_{\pi}$ and
the Hoyt's classification
of the Kac-Moody superalgebras of type (Ind).

Our results imply  the following formula for an indecomposable Kac-Moody superalgebra
$$\Delta^{\ima}=\{\alpha\in Q_v|\ 2\alpha\in \Delta^{\ima}_{\pi}\},\ \ \ \ 
Q^{++}=\{\alpha\in Q_v|\ 2\alpha\in Q^{++}_{\pi}\}.$$

\subsection{Group $\Sk^D(v)$}
As in the case of Kac-Moody algebras, part of the theory can be naturally developed in a greater generality. In this paper we use the framework of root groupoids developed
 in~\cite{S},~\cite{GHS}. Lie superalgebras in this approach are described by 
certain connected components of so-called {\sl root groupoid} $\cR$.
The root groupoid has a distinguished collection of generators, similarly
to the standard generators of a Coxeter group. 
In~\cite{GHS} it was shown that the group $\Aut_\cR(v)$, the automorphism group of the vertex $v$ of 
the root groupoid, contains two commuting normal subgroups: the Weyl group $W$ and the group
of ``irrelevant automorphisms''.  Any element in  $\Aut_\cR(v)$ lifts to an automorphism
of the corresponding Kac-Moody superalgebra and this automorphism preserves $\fh$.
Irrelevant automorphisms preserve all root spaces and
 the quotient $\Aut_\cR(v)/K(v)$ identifies with $\Sk^D(v)$. 

The group $\Sk^D(v)$ can be seen as
 a subgroup of the group of automorphisms of the root lattice preserving the root system.
Denote by $\Sp^D(v)$ the stabilizer of the spine $\Sp(v)$ under the action of $\Sk^D(v)$ (induced by the action on the root lattice). One has $\Sk^D(v)=W\rtimes \Sp^D(v)$; moreover,
$\Aut_\cR(v)/(W\times K(v))$ identifies with $\Sp^D(v)$. 

The group $\Sp^D(v)$ stabilizes the set $\pi$. This defines a group  homomorphism from
$\Sp^D(v)$ to the automorphisms of the Dynkin diagram of $\pi$.
This map is injective if $\pi$ spans the root lattice (in particular, for the Kac-Moody superalgebras of
 the type II).

\subsubsection{The group $\Sp^D(v)$ for the Kac-Moody superalgebras} 
Combining the results
of~\cite{GHS}, \cite{Shay} and this paper we obtain the following description
of the group $\Sp^D(v)$ for the indecomposable Kac-Moody superalgebras:
 $$\Sp^D(v)\cong \left\{\begin{array}{ll}
 \mathbb{Z}_2\ & \text{ for }A(n|n),\ q_{2n}^{(2)}\\
 \mathbb{Z}\ & \text{ for }C(n+1)^{(1)}, \ A(m|n)^{(1)} \ (m\not=n),\\
 \mathbb{Z}\rtimes \mathbb{Z}_2\ & \text{ for }A(n|n)^{(1)},\\
 1 & \text{ for other indecomposable Kac-Moody superalgebras}.
 \end{array}\right.$$

 In all cases, except $A(n|n)$ and $A(n|n)^{(1)}$, 
  the image of $\Sp^D(v)$ in the automorphism group of the Dynkin diagram of $\pi$
  is trivial; for  $A(n|n)$ and $A(n|n)^{(1)}$ 
  the image contains the identity and the involution 
  which switches two connected components of the Dynkin diagram of $\pi$.

 \subsubsection{The group $\Sk^D(v)$ for type (Aff)}
 By above, $\Sk^D(v)=W(\fg)$ except for the cases $A(n|n)$,
 $A(m|n)^{(1)}$, $\fq_{2n}^{(2)}$  and $C(n+1)^{(1)}$. 
 
  By~\cite{Kbook}, Proposition 6.15,  for
  each affine Kac-Moody algebra the Weyl group admits a decomposition
  $W(\fg)={W}(\dot{\Delta})\ltimes T$, where the root system
  $\dot{\Delta}$ is   the image of the set of real roots  in $\fh^*/\mathbb{C}\delta$
  and $T$ is an abelian group which is isomorphic to $\mathbb{Z}\dot{\Delta}$.
 For the affine Kac-Moody superalgebras  we have  $W(\fg)=W(\fg_{\pi})$ and
 the above description gives a decomposition 
 $W(\fg)={W}(\dot{\Delta}_{\pi})\ltimes T_{\pi}$.

For the cases $A(m|n)^{(1)}$ the  following formula was established in~\cite{GHS}, 10.3:
\begin{equation}\label{eqintroSkDv}
\Sk^D(v)={\Sk}^D(\dot{v})\ltimes T 
\end{equation}
${\Sk}^D(\dot{v})$ is the corresponding group for the root system $\dot{\Delta}$
and $T$ is naturally isomorphic to the root lattice of $\dot{\Delta}$.
In Section 9 we prove this formula for the case $C(n+1)^{(1)}$.  
For the remaining case $\fq_{2n}^{(2)}$ a smilar decomposition
is obtained in~\cite{GHS}, 10.3, but the isomorphism between $T$ and $\mathbb{Z}\dot{\Delta}$ is not ``natural''.
One has $T/T_{\pi}\cong \Sp^D(v)/{\Sp}^D(\dot{v})$ (so $T/T_{\pi}$ is isomorphic 
to $\mathbb{Z}$ for $A(m|n)^{(1)}$, $C(n+1)^{(1)}$ and to
$\mathbb{Z}_2$ for $\fq_{2n}^{(2)}$).

\subsubsection{}
In this paper we introduce $\Sk^D(v)$  as
 a subgroup of the group of automorphisms of the root lattice.
Using the formula~(\ref{eqintroSkDv}) we describe, for the Kac-Moody superalgebras,
 an embedding of $\Sk^D(v)$ into $GL(\fh^*)$.
 
By~\cite{GHS} one has $\Aut_{\cR}(v)=W\times K(v)$ if $\Sp^D(v)=1$ and 
$\Sk^D(v)\cong \Aut_{\cR}(v)/K(v)$. The embedding of $\Sk^D(v)$ into $GL(\fh^*)$ induces
a group isomorphism  
$$\Aut_{\cR}(v) \cong \Sk^D(v)\times K(v)$$
 for all Kac-Moody superalgebras, see~\cite{Shay}.

\subsection{Content of the paper}

In Section 1 we present a preliminary information on root groupoids, mostly borrowed from~\cite{GHS}. 

In Section 2 we discuss root bases in the general setup and compare root systems of type I and type II;
in particular, we prove~\Prop{lempis}. In addition, we
 prove Theorems~\ref{thmiii}  and~\ref{corintrospine}
for the types (Fin) and (Aff). 
 
In Section 3 we verify that $B_{\pi}$ is a  Cartan matrix and study the algebra $\fg(B_{\pi})$;
the results of this section can be easily deduced from~\cite{S}, but we give the proofs for the sake 
of completeness.

In Section 4 we prove~\Thm{thmiii} (iii). The proof is very similar to the proof of Theorem 8.3 in~\cite{S};
we present it for the sake of completness.

In Section 5 we study $Q^{\pm}(m,n,t)$,
which are the only superalgebras of type (Ind) that are not purely anisotropic.
For this case
we prove  Theorems~\ref{thmiii}, \ref{corintrospine}  and the formula
$(\Delta^{\ima})^+=Q^{++}_\mathbb{R}\cap Q_v$.

In Section 6 we complete the proof of~\Thm{corintrospine}
and of the formula~(\ref{suppconnected}).

In Section 7 we study the group $\Sp^D(v)$ in a more general context introduced in~\cite{GHS}.
We prove the existence of a group  homomorphism from
$\Sp^D(v)$ to the automorphisms of the Dynkin diagram of $\pi$. As an application, we obtain another proof of~\Thm{corintrospine} (iv). In~\ref{examples7} we consider the examples
$A(m|n)$ and  $A(m|n)^{(1)}$.

In Section 8 we study some additional examples. We briefly recall a desrciption of spines
for the type (Fin) and
describe $\Sp(v)$ and $\Sp^D(v)$ for the component  of type $C(n+1)^{(1)}$. We compare two algebras $G(3)^{(1)}$ and $G(3)^{(2)}$ and prove that they are isomorpic, so correcting the van de Leur's classification in~\cite{vdL}. In~\ref{infinitepiS} we consider an example of spine for a component
which is not Kac-Moody.

In Section 9 we study the group $\Sk^D(v)$ in the type (Aff) and prove the formula~(\ref{eqintroSkDv}).

\subsection{Acknowledgement}
We are grateful to Victor Kac for his guidance.
We would like to thank Vladimir Hinich and  Vera Serganova for their patience and valuable conversations.
We would like to thank Inna Entova-Aizenbud, Ian M.~Musson, Shifra Reif, Alex Sherman and
 Lior David Silberberg for their support and  numerous helpful discussions.
The research was 
supported by the Minerva foundation with funding from the Federal German Ministry for Education and Research.

\subsection{Index of definitions and notation} \label{sec:app-index}
Throughout the paper the ground field is $\mathbb{C}$; 
$\mathbb{N}$ stands 
 for the set of non-negative integers. 
 
\begin{center}
\begin{tabular}{lcl} 
$\Sk(v)$, $\Sp(v)$ & & \ref{spineandskeleton}\\
$\Sigma_u, \Delta^\re,\Delta_{\is},\Delta_{\an}, \Delta_{\nr}$ & & \ref{deltasdef}\\
 $\Sigma_{{pr}}$ &  & \ref{prindef}\\
$\Delta(\fg(v))$ & & \ref{decomposition}\\
 $\pi,\Delta,\Delta_{\ol{0}},\Delta_{\ol{1}}$ & &  \ref{pidef}\\
  $\Delta^{\ima}$ & & \ref{-Delta}\\
$\Delta^+_v, \Delta_{\an}^+,  (\Delta^{\ima})^+,\Delta_{\ol{0}}^+$ & & \ref{Delta+}\\
$W$ & & \ref{weylgroup}\\
$\pi_S$ & &  \ref{defnpiS}\\
$\Delta^+(\Sigma')$ & & \ref{rootbasis}\\
$\Delta_{>0}(h)$ & &  \ref{rootbases}\\
 ${\Sk}^D(v),{\Sp}^D(v)$ & & \ref{SpDvset}\\
\end{tabular}
\end{center}
  
\section{Preliminaries }
We work over the field \(\mathbb{C}\). We fix a finite set $X$.

\subsection{Main definitions}

Recall that a groupoid is a small category in which every arrow is invertible.
In this section we recall a construction of the root groupoid given in Section 2 of~\cite{GHS}.

\subsubsection{}
\begin{defn}{}\label{injectivemap}
Let \(V\) be a complex vector space. We call a map \(\phi:X\rightarrow V\) {\em injective}  if the induced linear map \(\phi:\operatorname{Span}_{\mathbb{C}}(X)\rightarrow V\) is injective.

For a finite set $X$, a root datum of $X$ is,
by definition, a quadruple \((\mathfrak{h},a,b,p)\) where \(\mathfrak{h}\) is a finite dimensional complex vector space, \(a:X\rightarrow\mathfrak{h},b:X\rightarrow\mathfrak{h}^*\) are injective maps and \(p:X\rightarrow\mathbb{Z}_2\).
\end{defn}
The cardinality of \(X\) is called the {\em rank}  of the root datum.

\subsubsection{}
\begin{defn}{} \label{rootdatum}
For a root datum 
\(v=(\mathfrak{h},a,b,p)\)  the {\em Cartan matrix} \(A^v\) is given by \(A^v=(a^v_{x,y})_{x,y\in X}\) where 
\(a^v_{x,y}:=\langle a(x),b(y)\rangle \).	The pair $(A^v,p^v)$ is called the {\em Cartan datum } at $v$.
  Two Cartan data \((A,p)\) and \((A',p')\) are called {\em \(D\)-equivalent} if  \(p=p'\)
and there exist an invertible diagonal matrix \(D\) such that \(A=DA'\).

For \(v=(\fh,a,b,p)\) a root datum of \(X\) and \(x\in X\) we call \(b(x)\) a \emph{simple root at \(v\)}.
\end{defn}

\subsubsection{}
\begin{defn}{}\label{refdef}
	We call \(x\in X\) {\em reflectable} at \((\mathfrak{h},a,b,p)\) if one of the following conditions holds:
	\begin{enumerate}
		\item \(a_{xx}=0\) and \(p(x)=1\).
		\item \(a_{xx}\neq0, \; p(x)=0\) and \(\frac{2a_{xy}}{a_{xx}} \in \ZZ_{\leq 0}\) for every \(y\neq x\in X\) .
		\item \(a_{xx}\neq0, \; p(x)=1\) and \(\frac{a_{xy}}{a_{xx}} \in \ZZ_{\leq 0}\) for every \(y\neq x\in X\) .
	\end{enumerate}

If \(x\in X\) is reflectable and \(a_{xx}\neq 0 \) we define 
\(r_x:(\mathfrak{h},a,b,p)\rightarrow(\mathfrak{h},a',b',p')\) by  \(p=p'\) and
 \[a'(y)\coloneqq a(y)-2\frac{a_{yx}}{a_{xx}}a(x),\;b'(y)\coloneqq b(y)-2\frac{a_{xy}}{a_{xx}}b(x);\] we call 
$r_x$ {\em anisotropic reflexion}.
	
	If \(x\in X\) is reflectable and \(a_{xx}= 0 \) we define
	 \(r_x:(\mathfrak{h},a,b,p)\rightarrow(\mathfrak{h},a',b',p')\) by \[(a'(y),b'(y),p'(y))\coloneqq \begin{cases}
		(-a(y),-b(y),p(y)) &\; \text{if } x=y\\
		(a(y),b(y),p(y)) &\; \text{if } x\neq y,\; a_{xy}=0\\
		(a(y)+\frac{a_{yx}}{a_{xy}}a(x),b(y)+b(x),p(y)+1) &\; \text{if } x\neq y,\; a_{xy}\neq 0;\\
	\end{cases}\] 
	we call 
	$r_x$ {\em isotropic reflexion}.
\end{defn}

\subsubsection{}\label{rootgroupoid}
Let us now define the root groupoid \(\cR\).\\
Fix a finite set \(X\). The objects of \(\cR\) are all root data of \(X\). 
The arrows of \(\cR\) will be defined by generators and relations. The generators are arrows of the following types:
	\begin{enumerate}
\item {\em Reflexions}. For  \(x\in X\) reflectable at \((\mathfrak{h},a,b,p)\)
 we have the arrow \(r_x:(\mathfrak{h},a,b,p)\rightarrow(\mathfrak{h},a',b',p')\)  defined in~\ref{refdef}).
\item {\em Tautological arrows}.  If \(\theta: \mathfrak{h} \rightarrow \mathfrak{h}'\) 
is an isomorphism of vector spaces, then \(\theta\) gives rise to the following arrow
 \(t_\theta:(\mathfrak{h},a,b,p)\rightarrow(\mathfrak{h}',a',b',p)\) 
where \(a' \coloneqq \theta\circ a\) and \(b'\coloneqq (\theta^*)^{-1}\circ b\).
\item {\em Homotheties}. Let \(\lambda: X \rightarrow \mathbb{C}^*\), \(\lambda\) 
gives rise to the following arrow \(h_\lambda:(\mathfrak{h},a,b,p)\rightarrow(\mathfrak{h},a',b,p)\) where 
\(a'(x)=\lambda(x)a(x)\).
	\end{enumerate}
We temporarily denote by \(\mathcal{F}\) the free category whose objects are the objects of 
\(\cR\) and whose arrows are generated by the three types of arrows described above. 
	
Let \(\mathfrak{h}:\mathcal{F}\rightarrow \mathsf{Vect}\) be the forgetful functor from
 \(\mathcal{F}\) to the category of finite dimensional vector spaces over  $\mathbb{C}$,
 sending \((\mathfrak{h},a,b,p)\) to \(\mathfrak{h}\). Notice that this functor	
 sends reflexions and homothety arrows to the 
 identity map and tautological arrows  to the isomorphism that is used to define them. 
 We now define an equivalence relation on arrows of \(\mathcal{F}\):
 two arrows from \(v=(\mathfrak{h},a,b,p)\) to \(v'=(\mathfrak{h}',a',b',p')\) are equivalent if they induce the same isomorphism from \(\mathfrak{h}\) to \(\mathfrak{h}'\).
	
The arrows of \(\cR\) are defined as the equivalence class of the arrows of the category \(\mathcal{F}\).
 By~Proposition 2.3 of \cite{GHS},  the category \(\cR\) is  a groupoid.
%

\subsubsection{Skeleton}\label{skeleton}
\begin{defn}{}
 Let \(\operatorname{Sk}\subset \cR\) be the subgroupoid whose 
 objects are the same objects as the objects of \(\cR\) and an arrow \(\gamma:v\rightarrow v'\) in \(\cR\) will be in the skeleton if \(\gamma\) can be presented as a composition of reflexions.  
 \end{defn}

 Let $V_a(v)\subset \fh(v)$ and $V_b(v)\subset\fh^*(v)$
 be the spans of $\{a_v(x)\}_{x\in X}$ and $\{b_v(x)\}_{x\in X}$ respectively.  
 Since the elements of $\{a_v(x)\}_{x\in X}$(resp., of $\{b_v(x)\}_{x\in X}$) are linearly independent,
 one has  $\dim V_a(v)=\dim V_b(v)=|X|$.
 If $u\in\Sk(v)$, then $\fh(v)=\fh(u)$, $V_a(u)=V_a(v)$ and $V_b(u)=V_b(v)$.
 In particular, $\corank A^v=\corank A^u$ (this implies $\corank A^v=\corank A^{v'}$
 for any $v'\in\cR_0$). We usually fix a connected component of $\Sk$ and
 omit $v$ in the notation (writing $\fh$, $V_a$, $V_b$).

 \subsubsection{Spine}\label{spine}
\begin{defn}{}
 Let \(\operatorname{Sp} \subset \cR\) be the subgroupoid whose 
 objects are the same objects as the objects of \(\cR\) and an arrow
  \(\gamma:v\rightarrow v'\) in \(\cR\) will be in the spine if \(\gamma\) can be 
  presented as a composition of isotropic reflexions. 
\end{defn} 
	\subsubsection{}
\label{spineandskeleton}
 For \(v\in\cR\) we denote by $\Sk(v)$, $\Sp(v)$ the connected component of 
\(\operatorname{Sk}\), $\Sp$ which contains \(v\). We will call $\Sk(v)$, $\Sp(v)$ 
the skeleton and the spine  at \(v\) or only the skeleton, 
the spine when it is clear which \(v\) we are talking about. 

\subsubsection{Remark}\label{groupiediscontractible}
It is important to notice that $\Sk(v)$, $\Sp(v)$  are contractible groupoids, 
since by definition of reflexion every arrow in $\Sk(v)$ induces the identity map on $\fh$ 
and thus any two arrows with the same beginning and end coincide.

\subsubsection{}
\begin{defn}{}
A connected component \(\cR_0\) of \(\cR\) is called
{\em fully reflectable} if all \(x\in X\) are reflectable at all \(v \in \cR_0\).
\end{defn}

\subsubsection{Remark} \label{dequiv reflectable}
Let \(v_1,v_2 \in \cR\) have \(D\)-equivalent Cartan data. 
If \(v_1\) is reflectable at \(x\) then \(v_2\) is also reflectable at \(x\). 
Moreover, if \(v_1 \xrightarrow{r_x}v_1'\) and \(v_2 \xrightarrow{r_x}v_2'\) 
then the Cartan data of \(v_1',v_2'\) are also \(D\)-equivalent.

\subsubsection{}
\begin{defn}{} 
A matrix \(A\) is called {\em symmetrizable} if there exists an invertible
 diagonal matrix \(D\) and symmetric matrix \(B\) such that \(A=DB\).
\end{defn}

\subsubsection{}
The anisotropic reflexions and tautological arrows do not change the Cartan datum. The homotheties
do not change $p$ and multiply the Cartan matrix on an invertible diagonal matrix.
Isotropic reflexions preserve the symmetricity of a Cartan matrix.

Thus, if for a connected component $\cR_0$ 
 there exists \(v \in \cR_0\) with a  symmetrizable Cartan matrix \(A_v\),  then for every \(u \in \cR_0\)
 the Cartan matrix  \(A_u\) is symmetrizable. Such  connected component  is called {\em symmetrizable}.
 
  \subsubsection{}\label{bilin}
  Observe that the formulae in~\ref{refdef}
imply that for any $u\in\Sk(v)$ we have $a_u(x)=\sum k_y a_v(y)$ 
and $b_u(x)=\sum k'_y b_v(y)$  for some $k_y, k'_y\in\mathbb{C}$.  If $A_v$ is symmetric, then
$k_y=k'_y$.

 \subsection{Root (super)algebras} \label{rootalgebras}
The root groupoid $\cR$ consists  of connected 
components some of which will lead to  
``root superalgebras'' introduced in Section 3 of~\cite{GHS}.  Root superalgebras
are Lie superalgebras constructed in the following manner.
For each $v\in\cR$ one defines a ``half-baked'' Lie superalgebra $\wt\fg(v)$ 
 in a classical way, see~\cite{Kbook}, Chapter I:
this  is a Lie superalgebra generated by  $\fh$
and the  elements $\tilde{e}_{\pm b(x)}$, $x\in X$ with the parity
$p(\tilde{e}_{\pm b(x)}):=p(x)$
  the relations are
\begin{equation}\label{relationshalfbaked}
[\tilde{e}_{b(x)},\tilde{e}_{-b(y)}]=\delta_{x,y} a(x), \ \ \ [h,\tilde{e}_{\pm b(x)}]=\pm 
\langle h, b(x) \rangle\ \ \text{ for all }x,y\in X,\ h \in\fh. \end{equation}

 For an arrow $\gamma:v\to v'$ in $\cR$ the 
isomorphism
$\fh(\gamma):\fh(v)\to \fh(v')$ does not always extend to a homomorphism of  $\wt\fg(v)\to \wt\fg(v')$.
A root algebra supported on a component $\cR_0$ of $\cR$ is a collection of 
quotients $\fg(v)$ of $\wt\fg(v)$ such that for any $\gamma:v\to v'$ the isomorphism 
$\fh(\gamma)$ extends to an isomorphism $\fg(v)\to\fg(v')$.

A connected component $\cR_0$ of $\cR$ is called {\em  admissible} if it admits a root algebra. 
In this paper we consider only admissible components. By Theorem 3.3.1 in~\cite{GHS}, 
$\cR_0$, the connected component is admissible if and only if for each $v\in \cR_0$ the Cartan datum
$(A_v, p_v)$ is {\em weakly symmetrizable} which means 
$$a^v_{xy}=0\ \ \Longrightarrow\ \ a^v_{yx}=0\ \text{ for all $x$ reflectable at $v$.}$$

For an admissible component $\cR_0$ the  category of root algebras
has an initial object, a  root algebra $\fg^{\U}$ which is called {\em universal},
and a final object, a root algebra $\fg^{\ttC}$ is called {\em contragredient}.  
The algebra $\fg^{\ttC}$  is 
the quotient of $\wt\fg(v)$ by the maximal
ideal having zero intersection with $\fh$. 
The universal root algebra $\fg^{\U}$ is obtained by imposing on $\wt\fg(v)$ reflected Chevalley relations --- so generalizing the classical Serre relations.

\subsubsection{}\label{sllocnil}
Let $\fg(v)$ be a root algebra. 
Assume that $x\in X$ is reflectable at $v$ and $a_{xx}\not=0$. Set $\alpha:=b_v(x)$.
The root spaces $\fg(v)_{\pm \alpha}$ act locally nilpotently in the adjoint representation and
generate a subalgebra which is isomorphic to $\fsl_2$
if  $p(x)=\ol{0}$, and  to $\mathfrak{osp}(1|2)$ if 
 $p(x)=\ol{1}$. 
Note that  $[\fg_{\alpha},\fg_{-\alpha}]$ is one-dimensional. We denote by $\alpha^{\vee}$ a unique
element in $[\fg_{\alpha},\fg_{-\alpha}]$ satisfying $\langle \alpha^{\vee},\alpha\rangle=2$.

\subsubsection{}\label{122}
If two vertices $v,v'$ have $D$-equivalent Cartan data
and $\dim\fh=\dim\fh'$, then there exists a homothety
$h_{\lambda}: (\fh,a,b,p)\to (\fh',a',b',p')$.
By the definition of a root algebra, this homothety
(as all other generating arrows) can be extended to isomorphisms
$\fg^{\U}\iso (\fg')^{\U}$ and $\fg^{\ttC}\iso (\fg')^{\ttC}$.

\subsection{Lie algebra case}\label{Liealgcase}
Consider the case when $p\equiv \ol{0}$ for some $v\in\cR_0$.  Then 
$p'\equiv \ol{0}$  and  $(A^v,p)$, $(A^{v'},p_{v'})$ 
are $D$-equivalent for any $v'\in\cR_0$.
 Applying a suitable homothety we can 
choose $v\in\cR_0$ such that $a_{xx}\in\{0,2\}$ for all $x\in X$. 
Note that $x$ is reflectable at $v$
(and at any $v'\in\cR_0$) if and only if $a_{xx}\not=0$ and 
$\frac{2a_{xy}}{a_{xx}}\in\mathbb{Z}_{\leq 0}$
for all $y\in X$.

Let $A:=A^v$ be the Cartan matrix at $v$.
If $\cR_0$ is admissible, then  corresponding root superalgebras are  Lie algebras.
In the fully reflectable case ($a_{xx}\not=0$, 
$\frac{2a_{xy}}{a_{xx}}\in\mathbb{Z}_{\leq 0}$
for all $x,y$) the admissibility is equivalent to 
\begin{equation}\label{C2}
a_{xy}=0\ \Longrightarrow a_{yx}=0\end{equation}
for all $x,y\in X$.

In the fully reflectable admissible case, $\fg^{\ttC}$ is the direct product of the Kac-Moody algebra with the Cartan matrix $A$ and a commutative algebra
of the dimension $\dim\fh-|X|-\corank A$.
If $A$ is symmetrizable, then $\fg^{\U}=\fg^{\ttC}$, see~\cite{GabberKac}.

If the Cartan matrix $A$ is real symmetrizable
  and satisfies additional
conditions (see~\cite{Wak}, 2.2), then $\fg^{\ttC}=\fg^{\U}$ is a Borcherds-Kac-Moody algebra. For example, if $A=(0)$ and $\dim\fh=2$, then  $\fg^{\ttC}=\fg^{\U}$
is the product of the Heisenberg algebra and a one-dimensional Lie algebra.

\subsection{Roots}\label{roots}
From now on we fix an admissible component $\Sk(v)$ of $\Sk$; we
write $\fh:=\fh(v)$ and so on.

\subsubsection{}
\begin{defn}{defsimpleroots}
\begin{enumerate}
		\item A simple root $b(x)\in\fh^*$ will be called {\em isotropic} if $x$ is reflectable at $v$ and $\langle a(x),b(x)\rangle=0$.
		\item A simple root $b(x)\in\fh^*$ will be called {\em anisotropic} 
		if $x$ is reflectable at $v$ and $\langle a(x),b(x)\rangle\ne 0$. 
\end{enumerate}
\end{defn}

\subsubsection{}
\begin{defn}{deltasdef} An element $\alpha\in \fh^*$ is called a {\em real root} if there exists
	$v'\in \Sk(v)$
	and $x\in X$ so that $\alpha=b_{v'}(x)$.
\end{defn}

\subsubsection{}	
We denote by $\Sigma_v$ the set of simple roots at \(v\), i.e.
$\Sigma_v:=\{b(x)\}_{x\in X}$.

By~\cite{GHS}, Prop. 4.2.2 and Corollary 5.3.10, Definition~\ref{defsimpleroots} can be extended to 
the real roots. Moreover we have
\begin{equation}\label{eq:reunion}
\Delta^\re=\Delta_{\is}\sqcup\Delta_{\an}
\sqcup\Delta_{\nr}, \ \ \ \ \ \Delta_{\is}\sqcup\Delta_{\an}=\Delta^\re\cap (-\Delta^\re)
\end{equation}
where
\begin{itemize}
\item[] $\Delta_{\is}$ is the set of {\em isotropic real } roots that 
are reflectable simple isotropic roots at some $v'\in\Sk(v)$.
\item[] $\Delta_{\an}$ is the set of {\em anisotropic real} roots that
are reflectable anisotorpic simple roots at some $v'\in\Sk(v)$. Any anisotropic
real root $\alpha\in\Delta_{\an}$ defines a coroot 
$\alpha^\vee\in\fh$.
\item[] $\Delta_{\nr}$ is the set of {\em non-reflectable real}  roots
that are non-reflectable simple roots at some $v'\in\Sk(v)$.
\end{itemize}

By~\ref{sllocnil}, for any $\alpha\in\Delta_{\an}$ 
the root spaces $\fg_{\pm \alpha}$ generate a subalgebra $\fg(\alpha)$ which is isomorphic to $\fsl_2$
if  $p(\alpha)=\ol{0}$, and  to $\mathfrak{osp}(1|2)$ if 
 $p(\alpha)=\ol{1}$. One has $\fg(\alpha)\cap \fh=\mathbb{C}\alpha^{\vee}$
where $\alpha^{\vee}$ is normalized by the condition 
$\langle \alpha^{\vee},\alpha\rangle=2$.

\subsubsection{Remark}
In the above definition $2\alpha\not\in
 \Delta^\re$ if $\alpha\in\Delta^\re$ (even if $\alpha$ is odd and non-isotropic).
 Moreover, 
isotropic roots are necessarily real.
In another tradition, a root of a symmetrizable Lie superalgebra is
called isotropic if it has length zero. For the real roots both notions 
of isotropicity coincide. 

\subsubsection{}
\begin{defn}{} \label{prindef}	An element $\alpha\in \Delta_{\an}$ is called a {\em principal root} if there exists
	$v'\in \Sp(v)$
	and $x\in X$ so that $\alpha=b'(x)$\footnote{We use the definition in~\cite{GHS}; in~\cite{S}
principal roots are defined differently.}. We denote the set of principal roots
	by $\Sigma_{\pr}$. 
\end{defn}

\subsubsection{} \label{decomposition}
	We set
	$$Q_v:=\sum\limits_{x\in X}\mathbb{Z} b(x)\subset\fh^*,\ \ 
	Q^+_v:=\sum\limits_{x\in X}\mathbb{Z}_{\geq 0} b(x)\subset\fh^*.$$
		The parity function $p:X\to\mathbb{Z}_2$ extends to a group homomorphism
	$p:Q_v\to\mathbb{Z}_2$ that we denote by the same letter $p$. 
	
Any root algebra $\fg(v)$ admits a root decomposition
	$$\fg(v)=\fh(v)\oplus\bigoplus_{\mu\in\Delta(\fg(v))}\fg(v)_\mu$$
	where $\Delta(\fg(v)):=\{\mu\in\fh^*|\ \fg(v)_{\mu}\not=0,\ \ \mu\not=0\}$.
	By the defining relation of the root algebra one can see that 
	$\Delta(\fg(v))\subset Q^+_v\coprod (-Q^+_v)$. 
	
	\subsubsection{} \label{pidef}
	We set 
	$\Delta:=\Delta(\fg^{\ttC})$ and $\Delta_{\ol{i}}:=\{\alpha\in \Delta(\fg^{\ttC})|\ p(\alpha)=\ol{i}\}$.
	The following formulae easily follows from definitions:  
	$\Delta\subset \Delta(\fg)$, 
	$\Delta^\re\subset\Delta$, $\fg_{\mu}\subset \fg_{p(\mu)}$ and

	\begin{equation}\label{2alpha}\begin{array}{ll}
\dim \fg_{j\alpha}\leq 1\ & \text{ for } \alpha \in\Delta^{\re}, j\in\mathbb{Z},\\
\mathbb{Z}\alpha\cap \Delta(\fg)=\{\pm\alpha\}\ & \text{ for } 	\alpha\in\Delta^{\re}_{\ol{0}},\\
\mathbb{Z}\alpha\cap \Delta(\fg)=\{\pm\alpha, \pm 2\alpha\}\ & \text{ for } \alpha\in\Delta^{\re}_{\ol{1}}\setminus \Delta_{\is},\\
\mathbb{Z}\alpha\cap \Delta=\{\pm\alpha\}\ & \text{ for } 	\alpha\in\Delta_{\is}.
\end{array}\end{equation}

We set
$$\pi:=\{\alpha\in \Sigma_{\pr}|\ p(\alpha)=\ol{0}\}\cup \{2\alpha|\ \alpha\in \Sigma_{\pr}|\ p(\alpha)=\ol{1}\}$$
and define $\alpha^{\vee}$ for each $\alpha\in \pi$ by
setting $(2\alpha)^{\vee}:=\frac{\alpha}{2}$ for each odd principal root $\alpha$.

By~\ref{sllocnil},
\begin{equation}\label{betaalphavee}
\langle \beta,\alpha^{\vee}\rangle \in\mathbb{Z}\ \ \text{ for all }\beta\in Q_v,\  \alpha\in\pi.
\end{equation}

\subsubsection{}\label{2bx}
By~\cite{GHS}, Corollary 3.2.8 (4) one has 
$$\mathbb{C} b_v(x)\cap \Delta(\fg)=\{\pm b_v(x)\}\ \ \text{ if }
a_{xx}=0\ \text{ and }a_{xy} \not=0\ \text{ for some }y\in X.$$

\subsubsection{}\label{-Delta}
By~\cite{GHS}, 3.1.4  $\Delta=-\Delta$. A root $\beta\in\Delta$ is called {\em imaginary}
	if $\beta\not\in\Delta^{\re}$ and $\frac{\beta}{2}\not\in\Delta^{\an}$. We denote the set of imaginary roots
	by $\Delta^{\ima}$.

\subsubsection{}\label{Delta+}
	We set
$\Delta^+_v:=\Delta\cap Q^+_v$, $\Delta_{\an}^+:=\Delta_{\an}\cap Q^+_v$,  $(\Delta^{\ima})^+:=\Delta^{\ima}\cap Q^+_v$, $\Delta^+_{\overline{i}}:=\Delta_{\overline{i}}\cap Q^+_v$.

Take a reflexion $r_x: v\to u$ and set $\alpha:=b_v(x)$. Then $-\alpha=b_u(x)$ and
\begin{equation}\label{Delta+uv}
\Delta^+_u=\left\{\begin{array}{ll}(\Delta^+_v\setminus \{\alpha\})\cup\{-\alpha\}\ \text{ if
} p(\alpha)=\ol{0}\ \text{ or $\alpha$ is isotropic} ,\\
(\Delta^+_v\setminus \{\alpha, 2\alpha\})\cup\{-\alpha,-2\alpha\}\ \text{ otherwise}.
\end{array}\right.\end{equation}

In particular, the  sets $\Delta^+_{\an}$, $\Sigma_{\pr}$ and $\Delta^+_{\ol{0}}$ are independent of the choice of $v$
in the same component of $\Sp$ (since any isotropic reflection does not change the set
of positive roots which are not isotropic). One has $\pi\subset  \Delta^+_{\ol{0}}$ and
	$\Sigma_{\pr}\subset \Delta_{\an}^+$.
	Moreover, any reflection does not change the set of positive imaginary roots, so
	 $(\Delta^{\ima})^+$  are independent of the choice of $v$ in the same component of $\Sk$.

\subsection{Weyl group}\label{Weylgr}
The notion of the Weyl group can be naturally extended to the case of admissible 
components in the following way.

For $\alpha\in\Delta_{\an}$  we define a reflection $s_\alpha$
	acting on $\fh(v)$ and on $\fh^*(v)$ by the formulae
	\begin{equation*}
		\label{eq:salpha}
		s_\alpha(\beta)=\beta-\langle \beta,\alpha^\vee\rangle\alpha,\
		s_\alpha(h)=h-\langle \alpha,h\rangle\alpha^\vee.
	\end{equation*}

\subsubsection{}
\begin{defn}{}
	\label{weylgroup}  
	The Weyl group $W$  is the subgroup of 
	 $\GL(\fh)=\GL(\fh^*)$ generated by the reflections with respect to $\Delta_{\an}$.
\end{defn}

\subsubsection{}\label{l(w)}
The  Weyl group acts on $\Sk(v)$ mapping a vertex $u$ to the vertex $wu$ such that 
$b_{wu}(x)=w b_u(x)$, $a_{wu}(x)=wa_u(x)$ and $p_{wv}(x)=p_v(x)$ for all $x\in X$
(see~\cite{GHS}, Proposition 4.3.6).   
Moreover, this action is faithful and 
for any $u\in \Sk(v)$ there exists a
unique $w\in W$ such that $wu\in \Sp(v)$ (see~\cite{GHS}, Section 5.2).
Note that $A^{wv}=A^v$, so
the Cartan data at $wv$ and at $v$ are equal.

By~\cite{GHS}, 4.3.12, $W$ is the Coxeter group with the generators 
$\{s_{\alpha}\}_{\alpha\in\pi}=
\{s_{\alpha}\}_{\alpha\in\Sigma_{\pr}}$. 
The length of $w$ (as an element of the Coxeter group) equals to 
 the cardinality of the set $\Delta^+_{\an}(u)\setminus \Delta^+_{\an}(v)$.

\subsubsection{}\label{111}
By~\cite{GHS}, 4.3.13, the group $W$ acts faithfully on $\Delta_{\an}$ and 
any element in $\Delta_{\an}$ is $W$-conjugated to a principal root, that is
 $$\Delta_{\an}=W \Sigma_{\pr}.$$

By~\ref{l(w)} the skeleton $\Sk(v)$ contains a vertex $wv$ with $b_{wu}(x)=w b_u(x)$  for all $x\in X$.
Using~\ref{Delta+} we conclude that
$(\Delta^{\ima})^+$ is $W$-invariant.

\subsection{Remark}\label{Dequivalence}
Let $u,v\in\cR$ be two vertices with $D$-equivalent Cartan data, i.e. $p_u=p_v$ and  $A_u=DA_v$
for an invertible diagonal matrix $D$. In this case
we have a natural bijection $\sigma$ between the corresponding connected components 
which maps $v$ to $u$ and the arrows to their ``namesakes'': this means that a generating arrow
$\gamma: v_1\to v_2$ maps to a generating arrow $\gamma': u_1\to u_2$, 
where $\gamma'=r_x$ if $\gamma=r_x$, $\gamma'=h_{\lambda}$ if $\gamma=h_{\lambda}$
and $\gamma'$ is tautological if $\gamma$ is tautological. The Cartan data at $v$ and at  $\sigma(v)$ are $D$-equivalent (with the same matrix $D$). The map
$\sigma$ maps $\Sk(v)$ to $\Sk(u)$ and $\Sp(v)$ to $\Sp(u)$, see~\ref{SpDvset}
for details.

The map $b_v(x)\mapsto b_u(x)$
extends to a linear isomorphism $\sigma_Q: Q_v\iso Q_u$ which maps
 $\Delta_{\an}(v)$ to $\Delta_{\an}(u)$, $\Delta_{\an}^+(v)$ to $\Delta_{\an}^+(u)$
and $\Sigma_{\pr}(v)$ to $\Sigma_{\pr}(u)$. 

This defines an isomorphism of the commutators of half-baked Lie superalgebras 
$$[\wt\fg(v),\wt\fg(v)]\to[\wt\fg(u),\wt\fg(u)]$$ that induces an isomorphism 
$[\fg^{\U}(v),\fg^{\U}(v)]\to[\fg^{\U}(u),\fg^{\U}(u)]$.
Factoring both sides by the maximal graded ideal having zero intersection with the Cartan, 
we get an isomorphism $[\fg^{\ttC}(v),\fg^{\ttC}(v)]\to[\fg^{\ttC}(u),\fg^{\ttC}(u)]$ of the commutators of the corresponding contragredient Lie
superalgebras. In particular, $\sigma_Q(\Delta)=\Delta$.

Taking these isomorphisms into account, we will always assume that $\fh$ is chosen to have
a minimal possible dimension that allows to realize a given Cartan matrix. By~\cite{Kbook} 
$\dim\fh=|X|+\corank(A)$.

\subsection{Indecomposable components}\label{indecomosable}
We call a Cartan matrix $A$ {\em indecomposable} if $A=(0)$ or
the elements of $X$ can be ordered in such a way
that $X=\{x_1,\ldots,x_s\}$ and $a_{x_j x_{j+1}}\not=0$ for $j=1,\ldots, s$  
where $x_{s+1}:=x_1$.
We call an admissible connected component $\Sk(v)$   {\em indecomposable} if all Cartan matrices $A^u$ are indecomposable and call a component {\em decomposable} otherwise.   Note that  
$\Sk(v)$ is indecomposable if and only if $A^u$ is indecomposable for all 
$u\in\Sp(v)$.

A contragredient superalgebra $\fg^{\ttC}$ is called {\em quasisimple} if for any ideal $\mathfrak{i}\subset \fg^{\ttC}$
either $\mathfrak{i}+\fh=\fg^{\ttC}$ or $\mathfrak{i}\subset \fh$ (in the latter case $\mathfrak{i}$ lies in the centre
of $\fg^{\ttC}$). It is easy to show that the superalgebra $\fg^{\ttC}$ is simple if and only if it is quasisimple with   $\dim\fh=|X|$ (this implies $\det A\not=0$). 
 One readily sees that $\fg^{\ttC}$ corresponding to an admissible Cartan datum 
is quasisimple if $\Sk(v)$ is an indecomposable component.

\subsection{Kac-Moody components}\label{KMcomponents}
A fully reflectable component with $\dim\fh=|X|+\corank(A)$ is called a {\em Kac-Moody component}.
In this case $\fg^{\ttC}$ is a Kac-Moody superalgebra.In the terminology of~\cite{S} 
such superalgebras are called ``quasisimple regular Kac-Moody superalgebras''; 
``Kac-Moody superalgebras'' in~\cite{S} include even and Heisenberg superalgebras.

Since a Kac-Moody component is fully reflectable, the set $\Delta_{\nr}$ is empty, so 
$$\Delta^{\re}=-\Delta^{\re}=\Delta_{\an}\cup\Delta_{\is},\ \ \ 
\Delta=\Delta_{\an}\cup\Delta_{\is}\cup\Delta^{\ima}\cup \{2\alpha|\ \alpha\in \Delta_{\an}\cap \Delta_{\ol{1}}\}.$$

By~\cite{GHS}, 3.2.7 any root algebra for a decomposable Kac-Moody component
is a product of root algebras for the corresponding indecomposable Kac-Moody components.
Using the standard argument (see~\cite{Kbook}, Lemma 1.6) one deduces that $\supp\beta$ is connected for any $\beta\in\Delta$
(see~\ref{introimaginary} for notation).

 \subsubsection{} 
 \Thm{thmiii} is a slight modification of Theorem 8.3 in~\cite{S}  which implies 
 the following important corollary (see 
Theorem 4.14 in~\cite{S}):

  \begin{cor}{corbase}
  Let $(A,p)$ and $(A',p')$ be two Cartan data 
  and $\fg$, $\fg'$ be the corresponding Kac-Moody superalgebras
  with the Cartan subalgebras $\fh$ and $\fh'$. If
   $\fg\iso\fg'$ is an algebra isomorphism which maps $\fh$ to $\fh'$, then  
$(A',p')$ can be obtained from $(A,p)$ by a sequence of isotropic reflections,
a multiplication of $A$ to an invertible diagonal matrix and   
a permutation  of indices (=the elements of $X$).\end{cor}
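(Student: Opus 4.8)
The plan is to deduce Corollary~\ref{corbase} from Theorem~\ref{thmiii} together with the classification of root algebras attached to an indecomposable Kac-Moody component. First I would reduce to the indecomposable case: an isomorphism $\fg\iso\fg'$ carrying $\fh$ to $\fh'$ induces a bijection $\Delta\iso\Delta'$, hence carries indecomposable summands of $\fg$ (equivalently, connected components of the Dynkin diagram) to indecomposable summands of $\fg'$, so by the product decomposition of root algebras over decomposable Kac-Moody components (recalled in~\ref{KMcomponents}) it suffices to treat each indecomposable factor. From now on assume $(A,p)$ indecomposable.

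Next I would extract a root basis. The isomorphism $\phi:\fg\iso\fg'$ restricts to a linear isomorphism $\phi^*:(\fh')^*\iso\fh^*$ (dual of $\phi|_{\fh}$); put $\Sigma'':=\phi^*(\Sigma_{v'})$, the image of the set of simple roots of $\fg'$. Since $\phi$ is an algebra isomorphism mapping $\fh$ to $\fh'$, it maps root spaces to root spaces and positive parts to (a choice of) positive parts, so $\Sigma''$ is a linearly independent subset of $\Delta$ and every $\alpha\in\Delta$ satisfies $\alpha$ or $-\alpha$ is a nonnegative integral combination of $\Sigma''$; i.e. $\Sigma''\in\operatorname{Root\ Bases}$ for $\fg$. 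By Theorem~\ref{thmiii} (applied to the type of the component --- (Fin), (Aff), or (Ind)), $\Sigma''$ equals $\Sigma_u$ or $-\Sigma_u$ for some $u\in\Sk(v)$. In the (Fin) case there is no sign; in the (Aff) and (Ind) cases, if $\Sigma''=-\Sigma_u$ we first replace $v'$ by the opposite triangular decomposition (the automorphism $e_i\leftrightarrow f_i$, $h\mapsto -h$ of $\fg'$), which changes $(A',p')$ only by the permitted operations (none, in fact), so we may assume $\Sigma''=\Sigma_u$ for some $u\in\Sk(v)$.

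Now recall that any $u\in\Sk(v)$ is obtained from $v$ by a chain of reflexions; by the decomposition $\Sk^D(v)=W\rtimes\Sp^D(v)$ and the $W$-action sending $u$ to some $wu\in\Sp(v)$ (see~\ref{l(w)}), and since $W$ acts by $\Sigma_{wu}=w\Sigma_u$, the Cartan datum at $u$ equals the Cartan datum at some vertex of $\Sp(v)$, i.e.\ $(A^u,p^u)$ is obtained from $(A,p)$ by a sequence of isotropic reflexions (possibly composed with a relabelling if one prefers an ordered $X$). Then the linear isomorphism $\sigma_Q$ matching simple roots at $u$ with simple roots at $v'$ identifies, via~\ref{122} (two vertices with $D$-equivalent Cartan data and equal-dimensional $\fh$ differ by a homothety, which lifts to an isomorphism of $\fg^{\ttC}$), the Cartan datum $(A^u,p^u)$ with $(A',p')$ up to a diagonal rescaling and a permutation of $X$. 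Chaining these identifications gives exactly the assertion.

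The main obstacle is the bookkeeping at the last step: matching the \emph{Cartan datum} (not merely the abstract root system with multiplicities) requires knowing that the isomorphism class of $\fg^{\ttC}$ together with the marked Cartan $\fh$ and a choice of root basis $\Sigma_u$ pins down $(A^u,p^u)$ up to $D$-equivalence and relabelling --- this is where one uses that $\fg^{\ttC}$ is determined by its Cartan datum (it is the quotient of $\wt\fg$ by the maximal ideal trivial on $\fh$) and that $a_{xy}=\langle a(y),b(x)\rangle$ can be read off from the bracket $[\fg_{b(x)},\fg_{-b(y)}]$ and the pairing with $\fh$. All other steps are formal once Theorem~\ref{thmiii} is in hand; the subtlety is purely that Theorem~\ref{thmiii} classifies root bases only up to $\Sk(v)$ (and sign), so one must pass from $\Sk(v)$ to $\Sp(v)$ using the $W$-action, which is precisely why the resulting operations are ``isotropic reflexions + diagonal rescaling + permutation'' rather than general reflexions.
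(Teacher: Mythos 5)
Your argument is correct and follows essentially the same route as the paper: pull back the simple roots of $\fg'$ to a root basis of $\Delta$, apply Theorem~\ref{thmiii} to identify it with $\pm\Sigma_u$ for some $u\in\Sk(v)$, and then use $u=wv_1$ with $v_1\in\Sp(v)$ (so that the Cartan data at $u$ and $v_1$ coincide) to reduce to isotropic reflexions, a diagonal rescaling and a permutation. The only cosmetic difference is that the paper first introduces the notion of a \emph{base} and uses PBW to show bases coincide with root bases, whereas you verify directly from the root decomposition $\Delta'\subset Q^+_{v'}\coprod(-Q^+_{v'})$ that the pulled-back set is a root basis; both are fine.
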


\subsubsection{}
We will use the classification of Kac-Moody component described in~\ref{classif}. 
We give some additional details below.

\subsubsection{Purely anisotropic case}\label{Deltareisoempty} 
We say that $\Sk(v)$ is {\em purely anisotropic} if $a_{xx}\not=0$ for all $x$.
In this case all reflections are anisotropic,
the Cartan data is the same in all vertices of the skeleton and the spine is trivial.
In \cite{GHS} it is shown that in this case, the classes (Fin), (Aff), and (Ind) correspond to (FIN), (AFF), and (IND) as defined in \cite{Kbook}, Theorem 4.3. 

It is not hard to check that many proofs in~\cite{Kbook} are valid in purely anisotropic case.  
In particular,  all propositions, theorems and corollaries in Chapters II--V hold 
in this case (some lemmas should be modified: for example, in Lemma 3.7,
if  $\alpha_i$ is a simple root and $\alpha\in\Delta^+$ is such that
$s_{\alpha_i}\alpha\not\in\Delta^+$ , then $\alpha=\alpha_i$
if $\alpha_i$ is even and $\alpha\in\{\alpha_i,2\alpha_i\}$ if $\alpha_i$ is odd).
Main results of Chapters IX and X (including the classification
of integrable highest weight modules and the Weyl-Kac  formula
for their characters) are also valid.

\subsubsection{Type (Fin)}
The corresponding Lie superalgebras were classified in~\cite{Ksuper}.
Except for the case $A(0|0)$ with $\fg^{\ttC}=\fgl(1|1)\not=\fg^{\U}$, one has
$\fg^{\ttC}=\fg^{\U}$ and this  superalgebra is either a basic classical Lie superalgebra which differs from
$\mathfrak{psl}(n|n)$ 
or $\fgl(n|n)$.

\subsubsection{Type (Aff)}
By~\cite{vdL}, for any indecomposable symmetrizable Kac-Moody component of type (Aff)
the corresponding Kac-Moody  superalgebra $\fg^{\ttC}$ is  the affinization or a twisted affinization
of  a  basic classical Lie superalgebra. By~\cite{GHS}, one has $\fg^{\ttC}=\fg^{\U}$ except for the 
case when  $\fg^{\ttC}$ is  the affinization or a twisted affinization of $\mathfrak{psl}(n|n)$.

By~\cite{Hoyt}  the non-symmetrizable components of type (Aff)
are $S(1|2,b)$ (for $b\in\mathbb{C}\setminus\mathbb{Z}$)  and $q_n^{(2)}$ (for $n\geq 3$).
The component $S(1|2,b)$ has rank $3$; 
the corresponding Kac-Moody superalgebra $\fg^{\ttC}=\fg^{\U}$ 
can be seen as a deformation of $\fsl(1|2)^{(1)}$. The component $q_n^{(2)}$ has rank $n$; 
the corresponding Kac-Moody superalgebra is the twisted affinization
of a simple strange superalgebra $\mathfrak{psq}_n$ (while the universal superalgebra is the twisted affinization of the  strange superalgebra $\mathfrak{q}_n$, see~\cite{GHS}).
We will use the following property of the components of types (Aff):

\begin{equation}
\label{affmodulodelta}\begin{array}{c}
(\Delta^{\ima})^+=\mathbb{Z}_{>0}\delta\ \ \text{and 
 the image of $\Delta$ in $\fh^*/\mathbb{C}\delta$ is finite.}
\end{array}\end{equation}
For all cases except $S(1|2;b)$ the above property follows from the fact that
$\fg^{\ttC}$ can be realized as the affinization or a twisted affinization
of  a  finite-dimensional Lie superalgebra.
For $S(1|2,b)$ this follows from the description in~\cite{S}.

\subsubsection{Type (Ind)}
The components of type (Ind) which  are  not purely anisotropic were described in~\cite{Hoyt};
these components are $Q^{\pm}(m,n,t)$ where $m,n, p$ are positive integers with $mnp>1$.
 The purely anisotropic components of type (Ind) correspond to the pairs
 $(A,p)$ where $A=(a_{ij})_{i,j=1}^n$ is the usual Cartan matrix of type (Ind) 
 (with $a_{ii}=2$) 
 and  $p:\{1,2,\ldots,n\}\to\mathbb{Z}_2$ is such that $p(i)=\ol{1}$ implies
 $a_{ij}\in 2\mathbb{Z}_{\leq 0}$. It is not known whether 
 $\mathfrak{g}^{\ttC}=\mathfrak{g}^{U}$.

\subsection{Symmetrizable case}\label{bilinearform}
Retain notation of~\ref{skeleton}.
In the symmetrizable case we choose the spine in such a way that
the Cartan matrices are symmetric (this property is independent
of the choice of
$v$ in the  spine) and we fix a bilinear form
on $V_b$ by setting $(b_v(x),b_v(y)):=\langle a_v(x), b_v(y)\rangle$.
The observation~\ref{bilin} implies that 
this form is independent on the choice of $v$ in the skeleton and
that for any vertex $u\in \Sk(v)$  the Cartan matrix 
$A_u$ coincides with the Gram matrix of $\{b_u(x)\}$ (i.e.,
$\langle a_u(x),b_u(y)\rangle=(b_u(x),b_u(y))$).  
This form is $W$-invariant, since for $w\in W$ one has
$$(wb_v(x),wb_v(y))=(b_{wv}(x),b_{wv}(y))=\langle a_{wv}(x),b_{wv}(y)\rangle=
\langle a_v(x),b_v(y)\rangle=(b_v(x),b_v(y)).$$

Note that this form is nondegenerate if and only if $\det A_v\not=0$.
Using the argument in~\cite{Kbook}, Ch. II this form can be extended
(not uniquely) 
 to a nondegenerate $W$-invariant bilinear form on $\fh^*$. 
 For $\mu\in \fh^*$ satisfying $(\mu,\mu)\not=0$ we define the reflection $s_{\mu}\in GL(\fh^*)$  by the usual formula $s_{\mu}(\nu):=\nu-\frac{2(\nu,\mu)}{(\mu,\mu)}\mu$; for $\alpha\in\Delta_{\an}$
 the reflection $s_{\alpha}$  coincides with the reflection defined in~\ref{eq:salpha}.

\subsection{Notation}\label{VaVb}
 Recall that  $\Sigma_{v}:=\{b_v(x)\}_{x\in X}$ denotes the set of simple roots at $v$.
We will always fix an admissible component of $\Sp$ (i.e., a connected 
component of spine which lies in an admiisible component of $\cR_0$). 
From now on ``the spine'' will refer to this component and ``the 
skeleton'' to the corresponding component in $\Sk$.
We will use notation  $W$, $\Delta^+_{\an}$, $\Sigma_{\pr}$
and  $(\Delta^{\ima})^+$ since these objects are independent of the choice of
$v$ in the same component of $\Sp$ (see~\ref{Delta+}).
We denote by $V_{\mathbb{R}}$ the $\mathbb{R}$-span of $\Sigma_v$.
By~(\ref{betaalphavee}), $\langle \mu,\alpha^{\vee}\rangle\in\mathbb{R}$
for all $\mu\in V_{\mathbb{R}}$ and $\alpha\in\pi$.


\section{Bases of $\Delta$}
Fix an admissible component $\Sk(v)$ of $\Sk$.

\subsection{The sets $\Sigma_u$}
Recall that $\Sk(v)$ 
contains vertices which are obtained from $v$ by a chain of reflections.
For each reflection $r_x: v\to v'$ the set
$\Delta^+_{v}(\fg)\setminus \Delta^+_{v'}(\fg)$ equals to $\{\alpha\}$ if $\alpha$ is even, 
to $\{\alpha,2\alpha\}$ if $\alpha$ is odd and non-isotropic and to one 
of these sets if $\alpha$ is odd isotropic (one has
$\Delta^+_{v}\setminus \Delta^+_{v'}=\{\alpha\}$  if $\alpha$ is odd isotropic).
In particular, for $v'\in\Sk(v)$ the set $\Delta^+_{v}(\fg)\setminus \Delta^+_{v'}(\fg)$
is finite.

\subsubsection{}\label{attain}
By~\cite{GHS}, Corollary 5.3.7 for 
$u\in\Sk(v)$ one has
\begin{equation}\label{Sigmau=Sigmav}
\Sigma_u=\Sigma_v \ \Longrightarrow\ u=v.
\end{equation}
 Following~\cite{GHS}, 4.3.1
we call the sets $\Sigma_u$ for $u\in\Sk(v)$  {\em 
 attainable sets of simple roots} for $v$.   By above, $\Sk(v)$ classifies the attainable sets of simple roots.

\subsubsection{}\label{spineDelta0}
Recall that $\Sp(v)$ 
contains vertices which are obtained from $v$ by a chain of isotropic reflections. 

By~\ref{Delta+}, for $u\in\Sp(v)$ we have 
$\Delta_{\an}\cap \Delta^+_u=\Delta^+_{\an}$ and 
$\Delta_{\ol{0}}\cap \Delta^+_u=\Delta^+_{\ol{0}}$.
We claim that
$$\Sp(v)=
\{u\in \Sk(v)|\ \pi\subset \Delta^+_u\}.$$
Indeed, the inclusion $\subset$ holds since $\pi\subset \Delta_{\ol{0}}^+$.
For the inverse inclusion assume that $\pi\subset \Delta^+_{u}$ for some
$u\in \Sk(v)$. By above, $u=wu'$ for some $u'\in\Sp(v)$ and $w\in W$.
Then $\pi\subset  w\Delta^+_{u'}$, so $w^{-1}\pi\subset  (\Delta_{\ol{0}}\cap \Delta^+_{u'})$.
Since $u'\in \Sp(v)$ this implies $w^{-1}\pi\subset  \Delta^+_{\ol{0}}$.
Recall that $W$ is the Coxeter group generated by $s_{\alpha}$ with
$\alpha\in\pi$. If  $w\not=1$, then there exists
$\alpha\in\pi$ such that $w^{-1}\alpha$ is a non-positive linear combination
of the elements in $\pi$, so $w^{-1}\alpha\in -Q^+_v$, a contradiction.

We conclude that  for $u\in Sk(v)$ we have 
$$\Delta_{\an}\cap \Delta^+_u=\Delta_{\an}\cap \Delta^+_v\ \ \Longleftrightarrow\ \ \
\Delta_{\ol{0}}\cap \Delta^+_u=\Delta_{\ol{0}}\cap \Delta^+_v\ \ \Longleftrightarrow\ \ \
u\in\Sp(v).$$

In other words, $\Sp(v)$ classifies the attainable sets of simple roots $\Sigma'$ with the property
$\Delta_{\ol{0}}\cap \mathbb{Z}_{\geq 0}\Sigma'=\Delta^+_{\ol{0}}$.
By~\ref{l(w)}, for any attainable set of simple roots $\Sigma''$ there exists a unique element $w\in W$ such that
$w\Sigma''=\Sigma_u$ for some $u\in\Sp(v)$.

 \subsection{Type I and type II}
Let $\Sp(v)$ be an indecomposable admissible component  of $\Sp$.

\subsubsection{}
\begin{defn}{defnpiS}
We call $\alpha\in\fh^*$  {\em $S$-principal} 
\footnote{these elements are called principal in~\cite{S}.} 
 if for some $u\in\Sp(v)$ and $x\in X$ we have $\alpha=b_u(x)$ and \(p_u(x)=\ol{0}\) or $\alpha=2b_u(x)$ and \(p_u(x)=\ol{1}\) with \(a^u_{xx}\neq 0\).
 We denote  the set of $S$-principal elements by $\pi_S$. 
 \end{defn}

\subsubsection{Remark}
 For any $u\in\Sp(v)$ and any $\alpha\in\Sigma_u$ exactly one of the following properties hold:
 either $\alpha\in \pi_S$ or $2\alpha\in\pi_S$ or $\alpha\in  \Delta_{\is}$. 
 Using~(\ref{2alpha}) and~(\ref{Delta+uv}) we obtain
 $\pi\subset\pi_S\subset \Delta^+_u$ for all $u\in\Sp(v)$.

\subsubsection{}
\begin{prop}{lempis}
If $\Sp(v)$ is an indecomposable admissible component, then
\begin{equation}\label{lempiseq}
\mathbb{C}\pi_S\cap \Delta_{\is}\not=\emptyset\ \ \ \Longrightarrow\ \ \ \mathbb{C}\Delta=\mathbb{C}\pi_S.
\end{equation}
\end{prop}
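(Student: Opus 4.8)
The plan is to first recast the statement in linear algebra and then propagate a vanishing condition around the (connected) Dynkin diagram by means of isotropic reflexions. First note that $\mathbb{C}\pi_S$ is $W$‑stable: since $\pi\subset\pi_S$ and $W$ is generated by the $s_\alpha$, $\alpha\in\pi$ (see~\ref{l(w)}), for $\beta\in\pi_S$ one has $s_\alpha\beta=\beta-\langle\beta,\alpha^\vee\rangle\alpha\in\mathbb{C}\pi_S$. Moreover $\mathbb{C}\pi_S=\mathbb{C}\Sigma_{\pr}=\mathbb{C}\Delta_{\an}$: the first equality holds because the two sets have the same $\mathbb{C}$‑span (at each $u\in\Sp(v)$ they both amount to the anisotropic simple roots, those of odd parity being doubled in $\pi_S$); the second because $\Delta_{\an}=W\Sigma_{\pr}$ by~\ref{111} while $\mathbb{C}\Sigma_{\pr}$ is $W$‑stable and contains $\Sigma_{\pr}\subset\Delta_{\an}$. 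Set $N:=\mathbb{C}\Delta/\mathbb{C}\pi_S$ and let $q\colon\mathbb{C}\Delta\to N$ be the projection. Since $s_\alpha\mu-\mu\in\mathbb{C}\alpha\subset\mathbb{C}\pi_S$ for $\alpha\in\pi$, the group $W$ acts trivially on $N$, so $q$ is constant on $W$‑orbits in $\Delta$; also, as $\Delta$ spans $Q_v$, for any fixed $u\in\Sp(v)$ the space $N$ is spanned by the images $q(b_u(y))$ with $b_u(y)$ an isotropic simple root. Hence $\mathbb{C}\pi_S=\mathbb{C}\Delta$ if and only if $q(b_u(y))=0$ for every isotropic simple root $b_u(y)$ at one, equivalently every, $u\in\Sp(v)$.

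Next I would reduce the hypothesis. An isotropic root is real and is a simple root at some vertex of the skeleton (see~\ref{deltasdef}); combining this with the $W$‑action on $\Sk(v)$ — which sends $u$ to $wu$ with $b_{wu}(x)=wb_u(x)$, $A^{wu}=A^u$, $p_{wu}=p_u$ (see~\ref{l(w)}) — and with the fact that every skeleton vertex is $W$‑conjugate into the spine, any element of $\mathbb{C}\pi_S\cap\Delta_{\is}$ is $W$‑conjugate to an isotropic simple root $\gamma=b_u(x)$ with $x$ isotropic at some $u\in\Sp(v)$. By the constancy of $q$ on $W$‑orbits, $q(\gamma)=0$ forces $q(b_u(x))=0$.

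The core consists of two facts about isotropic (i.e. odd, $a_{xx}=0$) simple roots at a spine vertex $u'$, each obtained by one isotropic reflexion (which keeps us in $\Sp(v)$), using the formulae of~\ref{refdef}. (i) If $b_{u'}(x')$ is isotropic with $q(b_{u'}(x'))=0$ and $b_{u'}(y)$ is isotropic and adjacent to it ($a^{u'}_{x'y}\neq0$, hence $a^{u'}_{yx'}\neq0$), then after $r_{x'}\colon u'\to u''$ the node $y$ becomes anisotropic, since $a^{u''}_{yy}=a^{u'}_{yy}+2a^{u'}_{yx'}=2a^{u'}_{yx'}\neq0$; thus $b_{u''}(y)\in\Sigma_{\pr}\subset\mathbb{C}\pi_S$, and since $b_{u''}(y)=b_{u'}(y)+b_{u'}(x')$ we get $q(b_{u'}(y))=0$. (ii) If $b_{u'}(y)$ is isotropic and has an anisotropic neighbour $z$ with $a^{u'}_{zz}+2a^{u'}_{zy}\neq0$, then after $r_y\colon u'\to u''$ the node $z$ stays anisotropic ($a^{u''}_{zz}=a^{u'}_{zz}+2a^{u'}_{zy}\neq0$), so $b_{u''}(z)\in\mathbb{C}\pi_S$; since $b_{u''}(z)=b_{u'}(z)+b_{u'}(y)$ and $b_{u'}(z)\in\Sigma_{\pr}\subset\mathbb{C}\pi_S$, again $q(b_{u'}(y))=0$. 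Finally, in the remaining ``collapsing'' case, $a^{u'}_{zz}+2a^{u'}_{zy}=0$ (which forces $z$ even), the reflexion $r_y$ turns $z$ into an isotropic simple root with $q(b_{u''}(z))=q(b_{u'}(y))$.

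Now suppose for contradiction that some isotropic simple root at $u$ has nonzero $q$‑image; call an isotropic simple root \emph{null} if its $q$‑image is $0$. By the reduction $b_u(x)$ is null; by~(ii) every non‑null isotropic simple root has all of its anisotropic neighbours collapsing; and by~(i) no null isotropic simple root is adjacent to a non‑null one. Among all $u'\in\Sp(v)$, minimise the Dynkin‑diagram distance $d(x',y')$ over pairs with $b_{u'}(x')$ null isotropic and $b_{u'}(y')$ non‑null isotropic (such pairs exist, e.g. at $u$). The minimum cannot be $1$ by~(i). If it is $k\ge2$, choose a shortest realising path $x'=p_0,\dots,p_k=y'$; its penultimate vertex $p_{k-1}$ is not non‑null isotropic (shortest path) and not null isotropic (else it would be adjacent to the non‑null $y'$, contradicting~(i)), hence it is anisotropic, and by~(ii) collapsing relative to $y'$. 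Reflecting $r_{y'}$ turns $p_{k-1}$ into a non‑null isotropic simple root, leaves $x'$ and $p_1,\dots,p_{k-2}$ (none of which is adjacent to $y'$, by minimality of the path) unchanged, and keeps $p_0,\dots,p_{k-1}$ a path; this produces a null/non‑null isotropic pair at distance $\le k-1$, contradicting minimality. Hence every isotropic simple root at $u$ is null, i.e. $N=0$ and $\mathbb{C}\pi_S=\mathbb{C}\Delta$. I expect the main obstacle to be precisely this last step: the bookkeeping showing that $r_{y'}$ genuinely lowers the distance (that the path survives and that $x'$ is untouched, which needs $k\ge2$ and the fact that the path is \emph{shortest}), together with — outside the fully reflectable Kac--Moody case — checking that the anisotropic nodes produced along the way are reflectable, so that their simple roots indeed lie in $\Sigma_{\pr}\subset\mathbb{C}\pi_S$; this is where weak symmetrizability and the structure of admissible Cartan data are used.
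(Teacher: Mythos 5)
Your argument is correct and follows essentially the same route as the paper's: after using the $W$-invariance of $\mathbb{C}\pi_S$ to move the hypothesis into the spine, both proofs run a minimal-length path argument on the Dynkin diagram of a spine vertex, shortening the path by one isotropic reflexion at the non-null isotropic endpoint. The differences are cosmetic — you phrase membership in $\mathbb{C}\pi_S$ via the quotient $N=\mathbb{C}\Delta/\mathbb{C}\pi_S$ and explicitly track the ``collapsing'' of the penultimate vertex, whereas the paper works with the sets $X'_u$, $X''_u$ and lets the trichotomy for simple roots at spine vertices (each is either in $\mathbb{C}\pi_S$ or isotropic) absorb that bookkeeping.
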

\begin{proof}
For any $u\in\Sp(v)$ we introduce
$$X'_u:=\{x\in X|\ b_u(x)\in \mathbb{C}\pi_S\},\ \ \ X''_u:=\{x\in X|\ b_u(x)\in\Delta_{\is}\}$$
and observe that $X'_u\cup X''_u=X$. 

Assume that the formula~(\ref{lempiseq}) does not hold, i.e.
$\mathbb{C}\pi_S\cap \Delta_{\is}\not=\emptyset$ and  
$\mathbb{C}\Delta\not=\mathbb{C}\pi_S$. 

Since $\mathbb{C}\pi_S\cap \Delta_{\is}\not=\emptyset$ one has
$b_{u_0}(x)\in \mathbb{C}\pi_S$ for some $u_0\in \Sk(v)$ and $x\in X$ such that $a_{xx}^{u_0}=0$.
By~\ref{l(w)}, $u_0=wu$ for some $w\in W$ and $u\in \Sp(v)$. Since $W$ is generated by the reflections
with respect to the elements in $\pi\subset \pi_S$, the set $\mathbb{C}\pi_S$ is $W$-invariant, so
 $b_{u}(x)=w^{-1} b_{u_0}(x)\in \mathbb{C}\pi_S$. Since $\Delta_{\is}$ is $W$-invariant,
 $b_u(x)\in \Delta_{\is}$. Thus $x\in X'_u\cap X''_u$, so  $X'_u\cap X''_u\not=\emptyset$.

For any $u\in\Sp(v)$ we define
 a ``simplified Dynkin diagram'' $\Gamma(\Sigma_u)$: this is a graph with the set of vertices
$X$ where $x$ and $y$ are connected by an edge $x\to y$ if $\langle a_u(x), b_u(y)\rangle\not=0$. Since $\Sp(v)$
is indecomposable, $\Gamma(\Sigma_u)$ is a connected graph for each $u\in\Sp(v)$.

Since $\mathbb{C}\Delta\not=\mathbb{C}\pi_S$ we have $X\setminus X'_u\not=\emptyset$ for all
$u\in\Sp(v)$; by above, the set $X'_u\cap X''_u$  is non-empty for some $u\in\Sp(v)$.
Consider all paths in all  graphs $\Gamma(\Sigma_u)$  which connect
a vertex in $X\setminus X'_u$ with a vertex in $X'_u\cap X''_u$ (by above, this set is non-empty for some $u\in\Sp(v)$).
Let  $u\in\Sp(v)$ be such that $\Gamma(\Sigma_u)$ contains a path of the minimal possible length and
let $x_1\to x_2\to \ldots\to x_s$ be such a path. Then 
\begin{equation}\label{xxx}\begin{array}{lcl}
\langle a_u(x_{j-1}), b_u(x_{j})\rangle\not=0\   & & \text{ for }\ 1<j\leq s;\\
x_1\in X\setminus X'_u,\ \ x_s\in X'_u\cap X''_u,\ \ x_i\in X'_u\setminus  X''_u \    & & \text{ for }\ 1<i<s\\
\langle a_u(x_{i}), b_u(x_{j})\rangle=0\    & & \text{ for }\ 1\leq i<j-1,\ \ j\leq s.\\
\end{array}
\end{equation} 
(The last two lines follow from the minimality of the length).

Since $x_1\in X\setminus X'_u$, the root $b_u(x_1)$ is isotropic. Then $\Sp(v)$ contains a vertex $u_1$
with the reflexion  $r_{x_1}: u\to u_1$. By~(\ref{xxx}) one has $a^u_{x_1 x_2}\not=0$  and $a^u_{x_1 x_i}=0$ 
for $2<i\leq s$. This gives
\begin{equation}\label{bu1}
b_{u_1}(x_2)=b_u(x_1)+b_u(x_2),\ \ \ b_{u_1}(x_j)=b_u(x_j),\ \ a_{u_1}(x_j)=a_u(x_j)\ \ \text{ for }2<j\leq s.
\end{equation} 
Since $x_1\not\in X'_u$ and $x_2\in X'_u$, we have 
$b_{u_1}(x_2)\not\in \mathbb{C}\pi_S$, so $x_2\in X \setminus X'_{u_1}$. 

If $s=2$, then 
$b_u(x_1),b_u(x_2)\in\Delta_{\is}$, so $b_{u_1}(x_2)$ is even and thus $b_{u_1}(x_2)\in\pi_S$, a contradiction.
Hence $s\geq 3$ and
$$a^{u_1}_{x_2 x_3}=\langle a_{u_1}(x_2), b_{u_1}(x_3)\rangle=
\langle a_u(x_2)+ \frac{a^u_{x_2 x_1}}{a^u_{x_1 x_2}}a_u(x_1), b_u(x_3)\rangle=\langle a_u(x_2),
b_u(x_3)\rangle\not=0.$$
Using~(\ref{bu1}) we conclude that $\Gamma(\Sigma_{u_1})$
contains a path $x_2\to x_3\to \ldots \to x_s$ and $x_s\in X'_{u_1}\cap X''_{u_1}$. By above,
$x_2\in X\setminus X'_{u_1}$. This contradicts to the assumption that 
the path $x_1\to \ldots\to x_s$ has the minimal possible length.
\end{proof}

\subsubsection{}\label{TypeIandII}
Let $Q_0$ be the lattice spanned by $\Delta_{\ol{0}}$. 
By~\cite{S}, Theorem 2.5 the group $Q_v/Q_0$ is either $1$ or $\mathbb{Z}$ or $\mathbb{Z}_2$; moreover,
if $Q_v/Q_0\cong \mathbb{Z}$, then
$\fg^{\ttC}$ admits a $\mathbb{Z}$-grading $\fg^{\ttC}=\fg_{-1}\oplus \fg_0\oplus \fg_1$
such that $\fg_0=\fg_{\ol{0}}$ and $\fg_{-1}\oplus \fg_1=\fg_{\ol{1}}$. The superalgebra
$\fg^{\ttC}$  is called of {\em type I} if  $Q_v/Q_0\cong \mathbb{Z}$ and of {\em type
 II} otherwise.   We have 
 \begin{equation}\label{eqtypeIandII}
  \mathbb{C}\Delta_{\ol{0}}=\mathbb{C}\Delta \ \ \Longleftrightarrow\ \ \ \mathbb{Q}\Delta_{\ol{0}}=\mathbb{Q}\Delta \ \ \Longleftrightarrow\ \ \ \text{ type II}.
 \end{equation}

 \subsubsection{}
 \begin{cor}{cortypeI}
  \begin{equation}\label{typeIimplication}
  \text{ type I}\ \ \ \Longrightarrow\ \ \ 
 \mathbb{C}\pi_S\cap \Delta_{\is}=\emptyset.
\end{equation}
 \end{cor}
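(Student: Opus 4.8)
The plan is to obtain this as an immediate consequence of \Prop{lempis} together with the characterization of type II recorded in~(\ref{eqtypeIandII}), arguing by contraposition. Recall that we are under the standing assumption of this subsection that $\Sp(v)$ is an indecomposable admissible component, so \Prop{lempis} is available.

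First I would assume $\fg^{\ttC}$ is of type I and, for contradiction, that $\mathbb{C}\pi_S\cap\Delta_{\is}\neq\emptyset$. Then \Prop{lempis} gives $\mathbb{C}\Delta=\mathbb{C}\pi_S$. Next I would note that every $S$-principal element is an even root: by \Defn{defnpiS} it is either $b_u(x)$ with $p_u(x)=\ol{0}$, or $2b_u(x)$ with $p_u(x)=\ol{1}$ and $a^u_{xx}\neq 0$, and in both cases it lies in $\Delta_{\ol{0}}$ (using~(\ref{2alpha}) and the remark following \Defn{defnpiS}, which also records $\pi_S\subset\Delta^+_u\subset\Delta$). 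Hence $\mathbb{C}\pi_S\subseteq\mathbb{C}\Delta_{\ol{0}}$, and combining with the previous step,
$$\mathbb{C}\Delta=\mathbb{C}\pi_S\subseteq\mathbb{C}\Delta_{\ol{0}}\subseteq\mathbb{C}\Delta,$$
so that $\mathbb{C}\Delta_{\ol{0}}=\mathbb{C}\Delta$. Finally I would invoke~(\ref{eqtypeIandII}), which asserts $\mathbb{C}\Delta_{\ol{0}}=\mathbb{C}\Delta$ if and only if $\fg^{\ttC}$ is of type II; this contradicts the type I hypothesis, proving~(\ref{typeIimplication}).

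There is essentially no obstacle here: all the real work sits in \Prop{lempis}. The only points demanding (routine) care are verifying that $\pi_S\subset\Delta_{\ol{0}}$ — which is immediate from the definition of $S$-principal elements — and making sure the indecomposability hypothesis underlying \Prop{lempis} is in force, which it is by the convention fixed at the start of the subsection.
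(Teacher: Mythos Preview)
Your argument is correct and essentially identical to the paper's: both use that $\pi_S$ consists of even roots (the paper phrases this as $\pi_S\subset Q_0$) so that $\mathbb{C}\pi_S\subset\mathbb{C}\Delta_{\ol{0}}$, and then invoke \Prop{lempis} together with~(\ref{eqtypeIandII}). The only cosmetic difference is that the paper writes it as a direct contrapositive in three lines rather than as a proof by contradiction.
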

 \begin{proof}
 Let $\Sp(v)$ be of the type I. Then $\mathbb{C}\Delta_{\ol{0}}\subsetneq\mathbb{C}\Delta$
 so  $\mathbb{C}\pi_S\not=\mathbb{C}\Delta$ (since $\pi_S\subset Q_0$). \Prop{lempis} gives
  $\mathbb{C}\pi_S\cap \Delta_{\is}=\emptyset$ as required.\end{proof}

 \subsubsection{Kac-Moody components}\label{KMtypeIandII}
 For a Kac-Moody component we have $\pi_S=\pi$.
 For a purely non-isotropic Kac-Moody component we have 
 $\mathbb{Q}\pi_S=\mathbb{Q}\Delta$ (in particular, such component is   of type II).
 From the van de Leur-Hoyt classification 
it is easy to see that  the only  Kac-Moody components of type I are $A(m|n)$,   
$C(k+1)$, $A(m|n)^{(1)}$ , $C(k+1)^{(1)}$,
 and $S(1|2;b)$ (with $m,n\geq 0, mn>0,
  k\geq 1, b\in\mathbb{C}\setminus\mathbb{Z}$).  
  
 Since
 $\mathbb{Q}\Delta_{\ol{0}}=\mathbb{Q}\pi+\mathbb{Q}\Delta^{\ima}_{\ol{0}}$, we have
 $\mathbb{Q}\Delta=\mathbb{Q}\pi$ for the type (Fin) II and 
$\mathbb{Q}\Delta=\mathbb{Q}\pi+\mathbb{Q}\delta$
for the type (Aff) II. From the  van de Leur-Hoyt classification it follows that for the type (Aff) we have
 $\delta\in \mathbb{Q}\pi$. Hence $\mathbb{Q}\Delta=\mathbb{Q}\pi$ for the types (Fin) II
 and (Aff) II.

\subsection{Root bases}
The following definition extends the definition in~\cite{Kbook}, 5.9  given in the context of Kac-Moody algebra.
\subsubsection{}
\begin{defn}{rootbasis} 
A linearly independent set of roots $\Sigma'$ is called a \emph{root basis of $\Delta$} if
$$\Delta\subset (\sum_{\alpha\in\Sigma'} \mathbb{Z}_{\geq 0}\alpha) \bigcup (\sum_{\alpha\in\Sigma'} \mathbb{Z}_{\leq 0}\alpha).$$
If  $\Sigma'$ is  a root basis of $\Delta$ we set 
$\Delta^+(\Sigma'):=(\sum_{\alpha\in\Sigma'} \mathbb{Z}_{\geq 0}\alpha)\cap \Delta$.
Then 
$$\Delta=-\Delta^+(\Sigma')\coprod \Delta^+(\Sigma').$$
\end{defn}

\subsubsection{}
\begin{defn}{rootbases} 
	 We call $h\in\fh^*$ {\em generic} if $\Ree\langle \alpha,h\rangle \not=0$ for all $\alpha\in\Delta$.
For a generic $h\in\fh^*$ we set
$$\Delta_{>0}(h):=\{\alpha\in\Delta|\ \Ree\langle \alpha,h\rangle >0\}.$$
We denote by $\tilde{\Sigma}_{h}$ the set of indecomposable elements in $\Delta_{>0}(h)$.
By definition, any element in $\Delta_{>0}(h)$ is a non-negative integral linear combination of elements
in  $\tilde{\Sigma}_{h}$.
\end{defn}

\subsubsection{}
Recall that  ``$\operatorname{Root\ Bases}$''  denotes  the set of root bases. One has
\begin{equation}\label{basesinclusion}
\{\Sigma_u\}_{u\in \Sk(v)} \subset  \operatorname{Root\ Bases}\subset 
\{\tilde{\Sigma}_{h}\}_{h\text{ is generic in }\fh}.\end{equation}

\subsection{}
\begin{lem}{lemfindif}
\begin{enumerate}
\item  For any $u\in\Sk(v)$ the set  $\Delta^+_u\setminus \Delta^+_v$ is finite.
\item Let $\Sk(v)$ be a Kac-Moody component and $h_0\in\fh^*$  
be a generic element. The following conditions are equivalent
\begin{enumerate}
\item the set $\Delta_{>0}(h_0)\setminus \Delta^+_v$ is finite;
\item the set $(\Delta_{>0}(h_0)\cap \Delta_{\re})\setminus \Delta^+_v$ is finite;
\item  $\Delta_{>0}(h_0)=\Delta^+_u$ for some $u\in\Sk(v)$.
\end{enumerate}
\end{enumerate}
\end{lem}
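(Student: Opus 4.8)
The plan is to prove the two parts separately, using the finiteness of symmetric differences under reflections as the common backbone.

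For part (i), I would argue by induction on the length of a chain of reflections connecting $v$ to $u$ inside $\Sk(v)$. For a single reflection $r_x: v\to v'$ one has, by~(\ref{Delta+uv}), that $\Delta^+_{v'}$ differs from $\Delta^+_v$ only in the roots proportional to $\alpha:=b_v(x)$, so $\Delta^+_{v'}\setminus \Delta^+_v\subseteq\{-\alpha,-2\alpha\}$ is finite. For a chain $v=v_0\to v_1\to\cdots\to v_n=u$, I would write $\Delta^+_u\setminus \Delta^+_v\subseteq \bigcup_{i=1}^{n}\bigl(\Delta^+_{v_i}\setminus \Delta^+_{v_{i-1}}\bigr)$, which is a finite union of finite sets. (This reproves what was already noted at the start of Section~2, so the proof is essentially a pointer to that discussion.)

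For part (ii), I would establish the cycle of implications (c) $\Rightarrow$ (a) $\Rightarrow$ (b) $\Rightarrow$ (c). The implication (c) $\Rightarrow$ (a) is immediate from part (i) with $u$ in place of $v$, since $\Delta_{>0}(h_0)=\Delta^+_u$ and $\Delta^+_u\setminus\Delta^+_v$ is finite. The implication (a) $\Rightarrow$ (b) is trivial since $(\Delta_{>0}(h_0)\cap\Delta_{\re})\setminus\Delta^+_v\subseteq \Delta_{>0}(h_0)\setminus\Delta^+_v$. The substantive content is (b) $\Rightarrow$ (c). Here I would proceed by induction on the cardinality $N$ of the finite set $S:=(\Delta_{>0}(h_0)\cap\Delta_{\re})\setminus\Delta^+_v$. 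If $N=0$, then every real positive root with respect to $v$ lies in $\Delta_{>0}(h_0)$ and no real root outside $\Delta^+_v$ does; for a Kac-Moody component $\Delta_{\re}=\Delta_{\an}\cup\Delta_{\is}$ and the imaginary roots satisfy $(\Delta^{\ima})^+$ independent of the vertex and contained in every $\Delta^+_u$ consistently, so one checks $\Delta_{>0}(h_0)=\Delta^+_v$ directly (using that $\Delta=-\Delta^+_v\sqcup\Delta^+_v$ and that $h_0$ being generic forces $\Delta_{>0}(h_0)$ to be one of the two halves once it agrees with $\Delta^+_v$ on the real roots, which span $Q_v$ — note $\Delta^{\re}$ spans $Q_v$ by~\ref{introroot}). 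If $N>0$, I would pick a simple root $\alpha=b_v(x)\in\Sigma_v$ lying in $S$; such an $\alpha$ exists because $S$ is a nonempty set of real roots closed under "taking a positive real root not in $\Delta^+_v$'', and a minimal element of $S$ with respect to the partial order on $Q^+_v$ coming from $-\Delta^+_v$ must be (minus) a simple root — concretely, if $-\beta\in S$ with $\beta\in\Delta^+_v\setminus\Sigma_v$ of minimal height, then $\beta$ is not simple, hence by the standard argument (as in~\cite{Kbook}, Lemma 3.7, in the form valid for Kac-Moody components) applying a suitable $s_{\gamma}$, $\gamma\in\pi$, or using reflectability, one lowers height, contradicting minimality; I must be slightly careful because simple roots in the super setting may be isotropic, but isotropic simple roots are still reflectable, which is all that is needed. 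Then I apply the reflection $r_x:v\to v'$, which is legitimate since the component is Kac-Moody hence fully reflectable, and observe that passing from $v$ to $v'$ removes $\alpha$ (and possibly $2\alpha$) from the positive system and adds $-\alpha$ (and possibly $-2\alpha$), so $(\Delta_{>0}(h_0)\cap\Delta_{\re})\setminus\Delta^+_{v'}$ has strictly smaller cardinality than $S$. Applying the induction hypothesis with $v'$ in place of $v$ (note $\Sk(v')=\Sk(v)$) yields $\Delta_{>0}(h_0)=\Delta^+_u$ for some $u\in\Sk(v')=\Sk(v)$, as desired.

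**The main obstacle** I anticipate is the base case and the "pick a simple root in $S$'' step in (b) $\Rightarrow$ (c): one needs to know that a nonempty finite set of real roots that is "downward closed'' in the appropriate sense must contain a simple root, and that reflecting at it genuinely decreases the count. This requires the super-analogue of the height-decreasing argument from~\cite{Kbook}, handling the odd isotropic case (where $r_x$ removes only $\{\alpha\}$, not $\{\alpha,2\alpha\}$) and the odd non-isotropic case (where both $\alpha$ and $2\alpha$ are involved and $2\alpha\notin\Delta^{\re}$ but $2\alpha\in\Delta$) uniformly; the book's Lemma 3.7 must be replaced by its purely-anisotropic/super modification noted in~\ref{Deltareisoempty}. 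The genericity of $h_0$ is what guarantees that once $\Delta_{>0}(h_0)$ matches a genuine positive system on a spanning set of roots it matches it everywhere, closing the base case cleanly.
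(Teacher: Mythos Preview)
Your approach is essentially the paper's, but you have overcomplicated the inductive step and introduced a small confusion. You write that you would ``pick a simple root $\alpha=b_v(x)\in\Sigma_v$ lying in $S$'', but elements of $S=(\Delta_{>0}(h_0)\cap\Delta_{\re})\setminus\Delta^+_v$ lie in $-\Delta^+_v$, so no simple root can lie in $S$; what you actually need is $\alpha\in\Sigma_v$ with $-\alpha\in S$, i.e.\ with $\alpha\notin\Delta_{>0}(h_0)$. The paper gets this without any minimal-height or Lemma~3.7 argument: if $D_v\neq\emptyset$ then $\Sigma_v\not\subset\Delta_{>0}(h_0)$, because $\Sigma_v\subset\Delta_{>0}(h_0)$ would give $\Delta^+_v\subset\Delta_{>0}(h_0)$ (positive integral combinations preserve positivity of $\operatorname{Re}\langle\,\cdot\,,h_0\rangle$) and hence $D_v=\emptyset$. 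So one simply picks any $\alpha\in\Sigma_v\setminus\Delta_{>0}(h_0)$ and reflects; the ``main obstacle'' you anticipated dissolves. The base case is equally direct: $D_v=\emptyset$ forces $\Sigma_v\subset\Delta_{>0}(h_0)$ (else $-\alpha\in D_v$ for some $\alpha\in\Sigma_v$), whence $\Delta^+_v\subset\Delta_{>0}(h_0)$, and since $\Delta=(-\Delta^+_v)\sqcup\Delta^+_v=(-\Delta_{>0}(h_0))\sqcup\Delta_{>0}(h_0)$ they coincide; no separate treatment of imaginary roots or spanning is needed.
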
	
\begin{proof}
By~(\ref{Delta+uv}), the set $\Delta^+_u\setminus\Delta^+_v$ is finite for any $u\in\Sk(v)$.
This gives (i) and the implication (c) $\Longrightarrow $ (a) in (ii).
Now it is enough to verify  the  implication (b) $\Longrightarrow $ (c).
For $u\in\Sk(v)$ we set 
$$D_u:=(\Delta_{>0}(h_0)\cap \Delta_{\re})\setminus \Delta^+_u.
$$
and proceed by by induction on the cardinality of $D_v$.
If $D_v$ is empty, then $\Sigma_v\subset \Delta_{>0}(h_0)$; using
$$\Delta=-\Delta_{>0}(h_0)\coprod \Delta_{>0}(h_0)= -\Delta^+_v\coprod\Delta^+_v,$$
we obtain $\Delta^+_v=\Delta_{>0}(h_0)$.
Assume that $D_v$ is a non-empty finite set. Then  
$\Sigma_v\not\subset \Delta_{>0}(h_0)$.
Take $\alpha=b_v(x)$ such that
$\alpha\not\in  \Delta_{>0}(h_0)$ and consider the reflexion  $r_x: v\to u$.
By~(\ref{Delta+uv}), $D_u\subset (D_v\setminus\{-\alpha\})$, so the cardinality of $D_u$ is less than the cardinality of $D_v$.
By induction, $\Sk(v)=\Sk(u)$  contains a vertex $u'$ such that $\Delta_{>0}(h_0)=\Delta^+_{u'}$.
\end{proof}

\subsubsection{}
By~\cite{Kbook}, for an indecomposable infinite-dimensional Kac-Moody algebra
 for any root basis $\Sigma'$  there exists $w\in W$ such that
 $\Sigma'=w\Sigma$ or $-\Sigma'=w\Sigma$.
 In this case $\Sk(v)=Wv$, so this can be reformulated in the following way:
 for any root basis $\Sigma'$  there exists $u\in \Sk(v)$ such that
 $\Sigma'=\Sigma_u$ or $-\Sigma'=\Sigma_u$.
The following proposition extends this result to the types (Fin) and (Aff).
For the type (Ind) see~\Prop{propiii}.
 
\subsection{}
\begin{prop}{proprootbasisfinaff}
\begin{enumerate}
\item For a  Kac-Moody component of type (Fin) we have 
$$\{\Sigma_u\}_{u\in \Sk(v)}=\operatorname{Root\ Bases} =
\{\tilde{\Sigma}_{h}\}_{h\text{ is generic in } \fh}.$$
\item For an indecomposable Kac-Moody component of type (Aff) we have
$$\{\Sigma_u\}_{u\in \Sk(v)}\coprod\{-\Sigma_u\}_{u\in \Sk(v)}=\operatorname{Root\ Bases}=
\{\tilde{\Sigma}_{h}\}_{h\text{ is generic in } \fh}.$$
\end{enumerate}
\end{prop}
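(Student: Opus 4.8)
The strategy is to extract both statements from the inclusions~(\ref{basesinclusion}), $\{\Sigma_u\}_{u\in\Sk(v)}\subseteq\operatorname{Root\ Bases}\subseteq\{\tilde\Sigma_h\}_h$, by proving the reverse inclusion: in case (i) that every $\tilde\Sigma_h$ equals some $\Sigma_u$, and in case (ii) that every $\tilde\Sigma_h$ equals some $\Sigma_u$ or some $-\Sigma_u$. In case (ii) one also notes that each $-\Sigma_u$ is a root basis (immediate from $\Delta=-\Delta$, with $\Delta^+(-\Sigma_u)=-\Delta^+(\Sigma_u)$), so that $\{\Sigma_u\}\cup\{-\Sigma_u\}\subseteq\operatorname{Root\ Bases}$ and then all three sets coincide. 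The disjointness of $\{\Sigma_u\}$ and $\{-\Sigma_u\}$ in case (ii) follows since $\Delta^+_v$ is infinite while each $\Delta^+_u$ differs from it by a finite set by~\Lem{lemfindif}(i), so an equality $\Sigma_u=-\Sigma_{u'}$ would force $\Delta^+_u\cap\Delta^+_{u'}=\emptyset$, which is impossible.

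The argument hinges on one elementary observation: \emph{if $h$ is generic and $\Delta_{>0}(h)=\Delta^+_u$ for some $u\in\Sk(v)$, then $\tilde\Sigma_h=\Sigma_u$}. Indeed, whether an element of $\Delta_{>0}(h)$ is decomposable depends only on the set $\Delta_{>0}(h)=\Delta^+_u$, so $\tilde\Sigma_h$ is the set of indecomposable elements of $\Delta^+_u$. A simple root $b_u(x)$ is indecomposable by comparing heights with respect to $\Sigma_u$; conversely, if $\alpha\in\Delta^+_u$ has height $\geq 2$, then since $\fn^+$ is generated by the simple root spaces of $\fg^{\ttC}$ one has $\fg^{\ttC}_{\alpha-b_u(x)}\neq 0$ for some $x$, necessarily with $\alpha-b_u(x)\in\Delta^+_u$, so $\alpha=b_u(x)+(\alpha-b_u(x))$ is decomposable. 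Hence the indecomposable elements of $\Delta^+_u$ are exactly $\Sigma_u$.

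Given the observation, case (i) is immediate: $\fg^{\ttC}$ is finite-dimensional, so $\Delta$, and hence every $\Delta_{>0}(h)$, is finite; in particular $\Delta_{>0}(h)\setminus\Delta^+_v$ is finite, and~\Lem{lemfindif}(ii) yields $\Delta_{>0}(h)=\Delta^+_u$ for some $u\in\Sk(v)$, whence $\tilde\Sigma_h=\Sigma_u$. For case (ii) the task is to produce the same finiteness. Let $h$ be generic; as $(\Delta^{\ima})^+=\ZZ_{>0}\delta$ we have $\delta\in\Delta$, so $\Ree\langle\delta,h\rangle\neq 0$. Suppose first $\Ree\langle\delta,h\rangle>0$. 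Since $\Delta=\Delta^+_v\coprod(-\Delta^+_v)$, the map $\alpha\mapsto-\alpha$ is a bijection from $\Delta_{>0}(h)\setminus\Delta^+_v$ onto $\{\beta\in\Delta^+_v\mid\Ree\langle\beta,h\rangle<0\}$, so it suffices to see the latter is finite. A positive imaginary root $n\delta$ has $\Ree\langle n\delta,h\rangle=n\Ree\langle\delta,h\rangle>0$, so is not counted; the remaining positive roots lie in finitely many cosets modulo $\CC\delta$ by~(\ref{affmodulodelta}). Fix such a coset, a representative $\alpha_0=\sum c_x b_v(x)\in Q_v$, and write $\delta=\sum a_x b_v(x)$ with $a_x\in\ZZ_{\geq 0}$ and $a_{x_0}\geq 1$ for some $x_0$; the roots of the coset have the form $\alpha_0+n\delta$ with $n$ in a discrete subgroup of $\QQ$. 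The condition $\alpha_0+n\delta\in Q^+_v$ forces $n\geq -c_{x_0}/a_{x_0}$, while $\Ree\langle\alpha_0+n\delta,h\rangle<0$ forces $n<-\Ree\langle\alpha_0,h\rangle/\Ree\langle\delta,h\rangle$; thus only finitely many $n$ qualify, and summing over the finitely many cosets gives the finiteness. Now~\Lem{lemfindif}(ii) and the observation give $\tilde\Sigma_h=\Sigma_u$. If instead $\Ree\langle\delta,h\rangle<0$, apply this to $-h$ to get $\tilde\Sigma_{-h}=\Sigma_u$ for some $u$; since $\Delta_{>0}(h)=-\Delta_{>0}(-h)$ and decomposability is preserved by $\alpha\mapsto-\alpha$, we get $\tilde\Sigma_h=-\tilde\Sigma_{-h}=-\Sigma_u$.

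The main obstacle is the finiteness estimate in case (ii): organizing the positive roots into the finitely many cosets modulo $\CC\delta$ supplied by~(\ref{affmodulodelta}) and checking, uniformly in the coset, that a translate $\alpha_0+n\delta$ can be simultaneously positive (which bounds $n$ below) and $h$-negative (which bounds $n$ above) only for finitely many $n$. Everything else is bookkeeping with~(\ref{basesinclusion}), \Lem{lemfindif}, and the triangular decomposition of $\fg^{\ttC}$.
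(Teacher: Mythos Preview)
Your proof is correct and follows essentially the same route as the paper: use the inclusions~(\ref{basesinclusion}), reduce to showing $\Delta_{>0}(h)\setminus\Delta^+_v$ (or its negative) is finite, and invoke \Lem{lemfindif}(ii). The paper packages the affine finiteness slightly more concisely---choosing a finite $U\subset\Delta_v$ with $\Delta^+_v\subset U+\mathbb{Z}\delta$ and noting that almost all $\alpha+j\delta$ lie in $\Delta_{>0}(h)$---while you do an equivalent coset-by-coset bound; and you make explicit (via the triangular decomposition) the identification $\tilde\Sigma_h=\Sigma_u$ once $\Delta_{>0}(h)=\Delta^+_u$, which the paper leaves implicit.
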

\begin{proof}
\Lem{lemfindif} implies (i) and the fact that
 $\{\Sigma_u\}_{u\in \Sk(v)}\cap\{-\Sigma_u\}_{u\in \Sk(v)}=\emptyset$ if 
 $\Delta$ is infinite. 
 
 For (ii) assume that $\Delta$ is of type (Aff). In light of~(\ref{basesinclusion}) it is enough
 to check that for any generic $h$ one of the sets
 $\Delta_{>0}(h)\setminus \Delta^+_v$ or $-\Delta_{>0}(h)\setminus \Delta^+_v$ is finite.  
 By~(\ref{affmodulodelta}) the set $\Delta_v$ contains a finite set $U$ such that
 $$\Delta^+_v\subset U+\mathbb{Z}\delta.$$
  Without loss of generality we can (and will) assume that $\Ree\langle \delta,h\rangle>0$.
 Then for each $\alpha\in U$ almost all roots of the form $\alpha+j\delta$ lie in $\Delta_{>0}(h)$.
 Hence $\Delta_{>0}(h)\setminus \Delta^+_v$ is finite. This completes the proof of (ii).
\end{proof}

\subsubsection{}
\begin{lem}{Sigmapraff}
If $\ \Sk(v)$ is an indecomposable Kac-Moody component of the type  (Aff), then
$\pi$ is finite.
\end{lem}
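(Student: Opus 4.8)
The plan is to show that the natural map $\pi\to\fh^*/\mathbb{C}\delta$ is injective. Once this is established the lemma is immediate: indeed $\pi\subset\Delta$ — an even principal root lies in $\Delta^{\re}$, and $2\alpha\in\Delta$ for every odd $\alpha\in\Sigma_{\pr}$ by~\ref{KMcomponents} — while by~(\ref{affmodulodelta}) the image of $\Delta$ in $\fh^*/\mathbb{C}\delta$ is finite; hence the image of $\pi$, and therefore $\pi$ itself, is finite. (When $\Sp(v)$ is finite the assertion is trivial since each $\Sigma_u$ has $|X|$ elements; the content lies entirely in the type I case, where the spine is infinite.)

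To prove the injectivity I would take $\alpha,\beta\in\pi$ with the same image modulo $\mathbb{C}\delta$. Since $\alpha,\beta,\delta\in Q_v$ and $\delta\neq 0$ one gets $\beta-\alpha=c\delta$ with $c\in\mathbb{Q}$; after possibly swapping $\alpha$ and $\beta$ we may assume $c\geq 0$, and the goal becomes to rule out $c>0$. By the definition of $\pi$ there exist $u\in\Sp(v)$ and $x\in X$ with $b_u(x)\in\Sigma_{\pr}$ and $\beta\in\{b_u(x),2b_u(x)\}$, so the coordinate vector of $\beta$ in the basis $\Sigma_u$ is supported on $\{x\}$. The crucial observation is that at this same vertex $\alpha$ and $\delta$ are positive: by~\ref{Delta+} the set $\Delta^+_{\ol 0}$ does not depend on the vertex of $\Sp(v)$ and $(\Delta^{\ima})^+$ does not depend on the vertex of $\Sk(v)$, so $\alpha\in\Delta^+_{\ol 0}\subset Q^+_u$ and $\delta\in(\Delta^{\ima})^+\subset Q^+_u$. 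Writing $\alpha=\sum_y c_y b_u(y)$ and $\delta=\sum_y d_y b_u(y)$ with $c_y,d_y\in\mathbb{Z}_{\geq 0}$, I would then compare, for $y\neq x$, the $y$-th coordinates in $\beta=\alpha+c\delta$: this gives $c_y+cd_y=0$, forcing $c_y=d_y=0$, so that $\delta=d_x b_u(x)$ with $d_x>0$.

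Thus $\delta$ would be a positive integral multiple of the anisotropic real root $b_u(x)$, so by~(\ref{2alpha}) we would have $\delta\in\{b_u(x),2b_u(x)\}$ — impossible, since $b_u(x)\in\Delta^{\re}$ whereas $\delta\notin\Delta^{\re}$, and $\tfrac12\delta=b_u(x)\in\Delta_{\an}$ would contradict $\delta\in\Delta^{\ima}$ (recall~\ref{-Delta}). This contradiction yields $c=0$, i.e.\ $\alpha=\beta$. I expect the only delicate point to be the bookkeeping in the middle paragraph — choosing a vertex of $\Sp(v)$ at which $\beta$ or $\beta/2$ is simple while $\alpha$ and $\delta$ remain positive — which is exactly what the invariance statements of~\ref{Delta+} provide; everything else is a linear-coordinate count.
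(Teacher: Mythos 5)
Your proof is correct and follows essentially the same route as the paper: both arguments reduce the lemma to injectivity of the map to $\fh^*/\mathbb{C}\delta$ (using~(\ref{affmodulodelta}) for finiteness of the image of $\Delta$), and both establish that injectivity by fixing a spine vertex $u$ at which one of the two roots is simple and exploiting positivity of the other root and of $\delta$ in $Q^+_u$. Your coordinate-by-coordinate bookkeeping, ending in the contradiction that $\delta$ would be an integral multiple of an anisotropic real root, is just a more explicit version of the paper's one-line observation that $\alpha-\beta\notin\mathbb{R}_{>0}\delta$.
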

\begin{proof}
Denote by $V$ the $\mathbb{R}$-span of $\Delta$.
By~(\ref{affmodulodelta})
one has $(\Delta^{\ima})^+=\mathbb{Z}_{>0}\delta$ and 
 the image of $\Delta$
 in $V/\mathbb{R}\delta$ is finite.
 Take $\alpha,\beta\in\Sigma_{\pr}$. 
 Fix $u$ such that $\alpha\in\Sigma_u$. Since $\beta,\delta\in\Delta^+_u$ we have
 $\alpha-\beta\not\in \mathbb{R}_{>0}\delta$. Similarly, 
 $\beta-\alpha\not\in \mathbb{R}_{>0}\delta$. 
 Hence the restriction of the map
 $V\to V/\mathbb{R}\delta$   to $\Sigma_{\pr}$ is injective. Since the image of $\Delta$
 is finite, $\Sigma_{\pr}$ is finite. 
\end{proof}

\subsubsection{}
\begin{cor}{corrootbasisfinaff}
Let $\Sk(v)$ be an indecomposable Kac-Moody component of the type (Fin) or (Aff).
\begin{enumerate}
\item
If $h$ is generic and $\Delta_{>0}(h)_{\ol{0}}=\Delta_{\ol{0}}^+$, then
$\Delta_{>0}(h)=\Delta^+_u$ for some  $u\in \Sp(v)$.
\item One has $\bigcap\limits_{u\in \Sp(v)} C(\Sigma_u)=C(\pi)$.
\end{enumerate}
\end{cor}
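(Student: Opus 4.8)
The plan is to deduce (i) from Proposition~\ref{proprootbasisfinaff} together with the description~\ref{spineDelta0} of $\Sp(v)$ inside $\Sk(v)$, and then to deduce (ii) from (i) by a hyperplane--separation argument. Throughout I let $V_{\RR}$ be the $\RR$-span of $\Delta$ and, as in~\ref{rootbases}, regard a generic $h$ as a real functional on $V_{\RR}$ not vanishing on $\Delta$.

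For (i): the first observation is that $\Delta_{>0}(h)=\Delta^+(\tilde\Sigma_h)$, since any element of $\Delta^+(\tilde\Sigma_h)$ is a non-negative integral combination of elements of $\Delta_{>0}(h)$, hence pairs positively with $h$ and lies in $\Delta_{>0}(h)$, while the reverse inclusion is the definition of $\tilde\Sigma_h$. By Proposition~\ref{proprootbasisfinaff} one has $\tilde\Sigma_h=\Sigma_u$ for some $u\in\Sk(v)$ in type (Fin), and $\tilde\Sigma_h=\Sigma_u$ or $-\Sigma_u$ in type (Aff); correspondingly $\Delta_{>0}(h)=\Delta^+_u$ or $\Delta_{>0}(h)=-\Delta^+_u$. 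If $\Delta_{>0}(h)=\Delta^+_u$, then intersecting with $\Delta_{\ol 0}$ gives $\Delta_{\ol 0}\cap\Delta^+_u=\Delta_{>0}(h)_{\ol 0}=\Delta^+_{\ol 0}=\Delta_{\ol 0}\cap\Delta^+_v$, so $u\in\Sp(v)$ by~\ref{spineDelta0}, which is what we want. It remains to exclude $\Delta_{>0}(h)=-\Delta^+_u$, possible a priori only in type (Aff): writing $u=wu'$ with $u'\in\Sp(v)$ and $w\in W$ we would get $w\Delta^+_{\ol 0}=w(\Delta_{\ol 0}\cap\Delta^+_{u'})=\Delta_{\ol 0}\cap\Delta^+_u=-\Delta^+_{\ol 0}$; but $2\delta$ is a positive even root (it lies in $(\Delta^{\ima})^+=\ZZ_{>0}\delta$ by~(\ref{affmodulodelta}) and $p(2\delta)=\ol 0$), and $(\Delta^{\ima})^+$ is $W$-stable by~\ref{111}, so $w(2\delta)\in(\Delta^{\ima})^+$ is a positive root, contradicting $w(2\delta)\in w\Delta^+_{\ol 0}=-\Delta^+_{\ol 0}$.

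For (ii): the inclusion $C(\pi)\subset\bigcap_{u\in\Sp(v)}C(\Sigma_u)$ is immediate from $\pi\subset\Delta^+_u\subset Q^+_u\subset C(\Sigma_u)$ for every $u\in\Sp(v)$ (Remark after Definition~\ref{defnpiS} and~\ref{KMtypeIandII}). For the reverse inclusion I would argue by contradiction. Since $\pi$ is finite (trivially in type (Fin), by Lemma~\ref{Sigmapraff} in type (Aff)), $C(\pi)$ is a closed convex cone in $V_{\RR}$. Given $\mu\in\bigcap_{u\in\Sp(v)}C(\Sigma_u)$ with $\mu\notin C(\pi)$, pick a real functional $\phi$ on $V_{\RR}$ with $\phi\ge 0$ on $\pi$ and $\phi(\mu)<0$. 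The cone $G:=\{\psi\in V_{\RR}^{*}\mid\psi(\alpha)>0\text{ for all }\alpha\in\pi\}$ is open, nonempty (there is a strictly $\pi$-dominant functional), and has $\phi$ in its closure; moreover every $\psi\in G$ is strictly positive on all of $\Delta^+_{\ol 0}$, since $\Delta^+_{\ol 0}\subset\ZZ_{\ge 0}\pi$ (the even positive roots are the images of the positive roots of the Kac-Moody algebra $\fg(B_{\pi})$; cf.\ Section~3 and~\ref{introroot}), and in type (Aff) one has in addition $\psi(\delta)>0$ because $2\delta\in\Delta^+_{\ol 0}$. Now choose $h\in G$ with $h(\mu)<0$ and perturb it inside the open set $G\cap\{\psi:\psi(\mu)<0\}$ to make it generic; in type (Aff) this is possible because $\langle\delta,h\rangle>0$ and, by~(\ref{affmodulodelta}), $\Delta$ lies in $\pm(U+\ZZ_{\ge 0}\delta)$ for a finite set $U$, so only finitely many of the hyperplanes $\langle\alpha,\cdot\rangle=0$, $\alpha\in\Delta$, meet a small neighbourhood of $h$. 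Then $\Delta_{>0}(h)_{\ol 0}=\Delta^+_{\ol 0}$, so by part (i) there is $u\in\Sp(v)$ with $\Delta_{>0}(h)=\Delta^+_u$, whence $C(\Sigma_u)=C(\Delta^+_u)=C(\Delta_{>0}(h))$. But $\mu\in C(\Sigma_u)$ while every element of $C(\Delta_{>0}(h))$ pairs non-negatively with $h$, contradicting $h(\mu)<0$; hence $\mu\in C(\pi)$.

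The main obstacle is the input used above that $\pi$ behaves as a positive simple system for $\Delta_{\ol 0}$, i.e.\ $\Delta^+_{\ol 0}\subset\ZZ_{\ge 0}\pi$; this rests on the study of $\fg(B_{\pi})$ in Section~3 and on the van de Leur--Hoyt classification, and is the place where the argument is not purely formal. The genericity perturbation in the infinite-spine case, type (Aff) of type I, likewise relies on the structural fact~(\ref{affmodulodelta}); for components with finite spine, i.e.\ types (Fin) and (Aff) of type II, part (ii) is alternatively~\cite{S}, Lemma~3.10.
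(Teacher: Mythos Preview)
Your proof of (i) is correct and matches the paper's argument, with the welcome addition that you spell out why the case $\tilde\Sigma_h=-\Sigma_u$ cannot occur in type (Aff).

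For (ii), your approach differs from the paper's and contains a gap in the justification. You route the argument through (i), which forces you to verify $\Delta_{>0}(h)_{\ol 0}=\Delta^+_{\ol 0}$, and for this you invoke $\Delta^+_{\ol 0}\subset\mathbb{Z}_{\ge 0}\pi$, claiming that ``the even positive roots are the images of the positive roots of the Kac--Moody algebra $\fg(B_\pi)$''. This claim is false: the paper notes explicitly (Remark~\ref{remC32}, and the remark after~\ref{Bpisymm}) that $\Delta^+_\pi\subsetneq\Delta^+_{\ol 0}$ can happen, e.g.\ for $D(m{+}1|n)^{(2)}$ one has $\delta\in\Delta^+_{\ol 0}\setminus\Delta^+_\pi$, and for $\fsl(m|n)^{(1)}$ the root space $\fg_\delta$ is strictly larger than $\fg(B_\pi)_\delta$. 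What \emph{is} true, and suffices for your argument, is the weaker inclusion $\Delta^+_{\ol 0}\subset C(\pi)$ (equivalently $\subset\mathbb{Q}_{\ge 0}\pi$): the real part follows from Proposition~\ref{propprin} (every element of $\Delta^+_{\an}$ is a nonnegative integral combination of $\Sigma_{\pr}$, and $\Sigma_{\pr}\subset\mathbb{Q}_{\ge 0}\pi$), while for the imaginary part one uses that each component of $B_\pi$ is of type (Aff) (see~\ref{BpiKM}) so some positive multiple of $\delta$ lies in $\mathbb{Z}_{\ge 0}\pi$. With this correction your argument goes through.

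The paper's route for (ii) is shorter and avoids this detour entirely: rather than going through (i), it uses only the condition $\pi\subset\Delta_{>0}(h)$ (which is immediate from $\langle C(\pi),h\rangle>0$) together with the characterization $\Sp(v)=\{u\in\Sk(v)\mid \pi\subset\Delta^+_u\}$ from~\ref{spineDelta0}. Once Proposition~\ref{proprootbasisfinaff} gives $\Delta_{>0}(h)=\Delta^+_u$ for some $u\in\Sk(v)$, the inclusion $\pi\subset\Delta^+_u$ puts $u$ in $\Sp(v)$ directly---no need to know where all of $\Delta^+_{\ol 0}$ sits relative to $C(\pi)$.
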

\begin{proof}
Combining~\Prop{proprootbasisfinaff} and~\ref{spineDelta0} we obtain (i).

For (ii) note that  $C:=\bigcap\limits_{u\in \Sp(v)} C(\Sigma_u)$
is a closed convex cone (since each  $C(\Sigma_u)$ is a closed convex cone).
By~\Lem{Sigmapraff}, $C(\pi)$ is also a closed convex cone.
One has $C(\pi)\subset C$, since $\pi\subset Q^+_u$ for any $u\in \Sp(v)$.
If $C(\pi)\not=C$ there exists a generic $h\in\fh$ such that $\langle C(\pi),h\rangle >0$
and  $\langle \mu,h\rangle <0$ for some $\mu\in C$. By~\Prop{proprootbasisfinaff} 
 $\Delta_{>0}(h)=\Delta^+_{u}$ for some $u\in \Sk(v)$. Since
$\pi\subset \Delta_{>0}(h)$, \ref{spineDelta0} gives $u\in\Sp(v)$.
Then $\mu\in \mathbb{R}_{\geq 0}\Sigma_u$, so $\langle \mu,h\rangle> 0$, a contradiction. 
\end{proof}

\subsubsection{}
\begin{cor}{corinf}
 If $\Sp(v)$ 
is  an indecomposable component of the type (Aff) I, then 
$\Sp(v)$ is infinite.
\end{cor}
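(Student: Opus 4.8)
The plan is to argue by contradiction. Assuming $\Sp(v)$ is finite, I would produce infinitely many of its vertices by moving a generic element of $\fh$ along a suitable one‑parameter family and invoking~\Cor{corrootbasisfinaff}(i).

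First I would record the structural features of the type I, (Aff) situation. Being of type I, the component gives a surjection $\deg\colon Q_v\to\ZZ$ with kernel $Q_0$, and the $\ZZ$-grading of~\ref{TypeIandII} shows that $\deg$ vanishes on $\Delta_{\ol 0}$ and equals $\pm1$ on $\Delta_{\ol 1}$; I would extend $\deg$ to a linear functional on $\CC Q_v\subseteq\fh^*$ and realize it by some $d\in\fh$, so that $\langle\alpha,d\rangle=\deg\alpha$ for all $\alpha\in\Delta$. Since $\pi\subseteq\Delta^+_{\ol 0}$ and $\delta\in\QQ\pi$ (by~\ref{KMtypeIandII}), $\deg\delta=0$, so $(\Delta^{\ima})^+=\ZZ_{>0}\delta$ is even; and type I admits no odd anisotropic real roots, since for such $\alpha$ the root $2\alpha$ would be even of degree $\pm2$. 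Hence $\Delta_{\ol 1}=\Delta_{\is}$. Next I would check that $\Delta_{\is}$ is infinite: the component being indecomposable, $\fg^{\ttC}$ is quasisimple, and $\fg^{\ttC}_{\ol 1}+[\fg^{\ttC}_{\ol 1},\fg^{\ttC}_{\ol 1}]$ is an ideal not contained in $\fh$, hence all of $\fg^{\ttC}$; as $\fg^{\ttC}$ is infinite-dimensional (type (Aff)) while $\fh$ is finite-dimensional and $\fg^{\ttC}_{\ol 1}\neq 0$ (type I), the space $\fg^{\ttC}_{\ol 1}=\bigoplus_{\alpha\in\Delta_{\ol 1}}\fg^{\ttC}_\alpha$ is infinite-dimensional, and since $\dim\fg^{\ttC}_\alpha\le1$ for $\alpha\in\Delta_{\ol 1}\subseteq\Delta^{\re}$, the set $\Delta_{\is}$ is infinite. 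Finally, using that the image of $\Delta$ in $\fh^*/\CC\delta$ is finite~(\ref{affmodulodelta}), I would fix $\nu\in\Delta_{\is}$ and a generator $\delta_0$ of $Q_v\cap\CC\delta$ (so $\deg\delta_0=0$) with $\nu+k\delta_0\in\Delta_{\is}$ for all $k$ in an infinite set $S\subseteq\ZZ$.

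For the perturbation, I would pick $h_0\in\fh$ with $\langle b_v(x),h_0\rangle=1$ for all $x\in X$; then $h_0$ is generic, $\Delta_{>0}(h_0)=\Delta^+_v$, and all pairings $\langle\alpha,h_0\rangle$, $\langle\alpha,d\rangle$ ($\alpha\in\Delta$) are real. Setting $h_t:=h_0+td$ for $t\in\RR$ gives $\langle\alpha,h_t\rangle=\langle\alpha,h_0\rangle+t\deg\alpha$, which is independent of $t$ on $\Delta_{\ol 0}$; hence for $t$ outside $T:=\{-\langle\alpha,h_0\rangle/\deg\alpha:\alpha\in\Delta_{\is}\}$ the element $h_t$ is generic with $\Delta_{>0}(h_t)\cap\Delta_{\ol 0}=\Delta^+_{\ol 0}$, and~\Cor{corrootbasisfinaff}(i) provides $u(t)\in\Sp(v)$ with $\Delta^+_{u(t)}=\Delta_{>0}(h_t)$. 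The crossing values $-(\langle\nu,h_0\rangle+k\langle\delta_0,h_0\rangle)/\deg\nu$ ($k\in S$) form an infinite arithmetic progression, because $\langle\delta_0,h_0\rangle\neq0$ (as $\delta\in\ZZ\delta_0$ is a root and $h_0$ is generic); choosing $\tau_0<\tau_1<\cdots$ in $\RR\setminus T$ separated by consecutive such crossing values, the odd root crossed between $\tau_i$ and $\tau_j$ ($i\neq j$) lies in exactly one of $\Delta_{>0}(h_{\tau_i})$, $\Delta_{>0}(h_{\tau_j})$. Thus $\Delta^+_{u(\tau_i)}\neq\Delta^+_{u(\tau_j)}$, hence $u(\tau_i)\neq u(\tau_j)$, so $\{u(\tau_j)\}_{j\ge0}$ is an infinite subset of $\Sp(v)$ — a contradiction.

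The main obstacle is that infinitude of $\Delta_{\is}$ does not by itself yield infinitude of $\Sp(v)$: the vertices of $\Sp(v)$ correspond only to the \emph{attainable} sign patterns $\Delta_{\is}\cap Q^+_u$, of which there could a priori be finitely many. The one‑parameter family $h_t$, together with the $\delta$-periodicity of an (Aff) root system — an infinite $\delta_0$-string of isotropic roots giving infinitely many distinct crossing hyperplanes — is exactly what realizes infinitely many such patterns; verifying that $h_t$ stays admissible for~\Cor{corrootbasisfinaff}(i) and that consecutive chambers give distinct vertices is the technical heart of the argument.
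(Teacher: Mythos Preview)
Your argument is correct and takes a genuinely different route from the paper's. The paper argues as follows: if $\Sp(v)$ were finite, then since each $\Delta^+_u\setminus\Delta^+_v$ is finite, the infinite set $\Delta^+_{\is}$ would meet $\bigcap_{u\in\Sp(v)}\Delta^+_u$; by \Cor{corrootbasisfinaff}(ii) this intersection equals $C(\pi)\cap Q_v$, so there would be an isotropic root in $\CC\pi=\CC\pi_S$, contradicting \Cor{cortypeI} (formula~(\ref{typeIimplication})). Thus the paper's contradiction runs through the structural result $\CC\pi_S\cap\Delta_{\is}=\emptyset$ for type I, which in turn rests on \Prop{lempis}.

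Your proof instead \emph{constructs} infinitely many spine vertices directly: you perturb a generic $h_0$ in the degree direction $d$, keeping $\Delta_{>0}(h_t)\cap\Delta_{\ol 0}$ fixed (so \Cor{corrootbasisfinaff}(i) applies and each $h_t$ gives a vertex of $\Sp(v)$), while the infinite $\delta_0$-string of isotropic roots guarantees infinitely many wall-crossings producing pairwise distinct $\Delta_{>0}(h_t)$. This is more hands-on and bypasses \Cor{cortypeI} and \Prop{lempis} entirely. You also give an intrinsic reason (via quasisimplicity) for the infinitude of $\Delta_{\is}$, whereas the paper simply invokes the van de Leur--Hoyt classification. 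The paper's route is shorter given its machinery; yours is more self-contained and makes the geometry of the spine visible.
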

\begin{proof}
For the type I we have $\mathbb{Q}\Delta_{\ol{0}}\not=\mathbb{Q}\Delta$, so
$\Delta_{\is}\not=\emptyset$. From the van de Leur-Hoyt classification it follows that
 $\Delta^+_{\is}$ is infinite. 
Assume that $\Sp(v)$ is finite. Since for any $u\in \Sp(v)$ the set 
$\Delta^+_u\setminus \Delta^+_v$ is finite, 
the set 
$\bigl(\bigcap\limits_{u\in \Sp(v)} \Delta^+_{\is;u}\bigr)$ is non-empty.
Clearly, this set lies in $C=C(\pi)\subset \mathbb{C}\pi$. 
Since $\pi_S=\pi$, this contradicts to~(\ref{typeIimplication}).
\end{proof}

\subsubsection{Remark}
It is not difficult to show that 
the spine is finite for all other indecomposable Kac-Moody component, see, for example, \ref{infess}
or~\Lem{lemspinefin} below. The fact that the spine is infinite for  $A(m|n)^{(1)}$
and $S(1|2;b)$ was known, see~\cite{S} and~\cite{GHS} (see~\ref{Amnaff} for details).

\subsection{}
\begin{lem}{Winvariant}
One has $Q^{++}_{\mathbb{R}}=
\bigcup\limits_{w\in W} wK'_{\mathbb{R}}$, 
where 
$$K'_{\mathbb{R}}:=\{ \mu\in \bigcap\limits_{u\in \Sp(v)} C(\Sigma_u)|\ \langle \mu,\alpha^{\vee}\rangle \leq 0\ 
\text{ for all }\alpha\in\pi\}.$$
Moreover, any element in $Q^{++}_{\mathbb{R}}$ is $W$-conjugated to a unique element in $K'_{\mathbb{R}}$.
\end{lem}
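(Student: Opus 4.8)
The plan is to first put $Q^{++}_{\mathbb{R}}$ into a form adapted to the $W$-action. By~\ref{l(w)} every vertex of $\Sk(v)$ equals $wu$ for some $w\in W$ and $u\in\Sp(v)$, and $\Sigma_{wu}=w\Sigma_{u}$, so $C(\Sigma_{wu})=wC(\Sigma_{u})$; hence
\[
Q^{++}_{\mathbb{R}}=\bigcap_{u\in\Sk(v)}C(\Sigma_{u})=\bigcap_{w\in W} w\,C,\qquad C:=\bigcap_{u\in\Sp(v)}C(\Sigma_{u}),
\]
and in particular the right-hand side is $W$-stable. Write $\ol{C}^{-}:=\{\mu\in\fh^{*}\mid\langle\mu,\alpha^{\vee}\rangle\leq 0\text{ for all }\alpha\in\pi\}$ for the closed antidominant chamber of the Coxeter group $W=\langle s_{\alpha}\rangle_{\alpha\in\pi}$ acting on $\fh^{*}$, so that $K'_{\mathbb{R}}=C\cap\ol{C}^{-}$. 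Also record that by~\ref{Delta+} one has $\pi\subset\Delta^{+}_{\ol{0}}\subset Q^{+}_{u}$ for every $u\in\Sp(v)$, whence $C(\pi)\subseteq C$; and that $C(\pi)\subseteq C(\Sigma_{v})$, which is a salient cone since $\Sigma_{v}$ is linearly independent.

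For the inclusion $\bigcup_{w}wK'_{\mathbb{R}}\subseteq Q^{++}_{\mathbb{R}}$ the key input is the standard fact that $\mu\in\ol{C}^{-}$ implies $w\mu-\mu\in C(\pi)$ for every $w\in W$. I would prove this by induction on $\ell(w)$: choosing $\alpha\in\pi$ with $\ell(s_{\alpha}w)<\ell(w)$ and writing $w=s_{\alpha}w'$ with $w':=s_{\alpha}w$, one splits $w\mu-\mu=(w\mu-w'\mu)+(w'\mu-\mu)$; the second term lies in $C(\pi)$ by induction, while $w\mu-w'\mu=-\langle w'\mu,\alpha^{\vee}\rangle\alpha$ and $\langle w'\mu,\alpha^{\vee}\rangle=\langle\mu,\gamma^{\vee}\rangle\leq 0$, where $\gamma:=(w')^{-1}\alpha=-w^{-1}\alpha$ is a positive even real root (as $\ell(s_{\alpha}w)<\ell(w)$) and $\gamma^{\vee}$ is a nonnegative combination of the coroots $\beta^{\vee}$, $\beta\in\pi$. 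Granting this, for $\mu\in K'_{\mathbb{R}}$ and any $w$ we get $w^{-1}\mu=\mu+(w^{-1}\mu-\mu)\in C+C(\pi)=C$, so $\mu\in\bigcap_{w}wC=Q^{++}_{\mathbb{R}}$; since $Q^{++}_{\mathbb{R}}$ is $W$-stable this yields $\bigcup_{w}wK'_{\mathbb{R}}\subseteq Q^{++}_{\mathbb{R}}$.

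For the reverse inclusion and the uniqueness statement, observe that since $Q^{++}_{\mathbb{R}}$ is $W$-stable and $Q^{++}_{\mathbb{R}}\subseteq C$, it suffices to show that every $\mu\in Q^{++}_{\mathbb{R}}$ is $W$-conjugate to exactly one element of $\ol{C}^{-}$. Uniqueness is formal: if $\mu,w\mu\in\ol{C}^{-}$, applying the fact above to $\mu$ and to the antidominant $w\mu$ shows that both $w\mu-\mu$ and $\mu-w\mu$ lie in $C(\pi)\subseteq C(\Sigma_{v})$, and salience of $C(\Sigma_{v})$ forces $w\mu=\mu$. The existence of an antidominant conjugate is the crux, and the one place the type enters. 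For type (Fin) the group $W$ is finite and $Q^{++}_{\mathbb{R}}=\{0\}$, so there is nothing to prove; for type (Aff) the classification gives $r=1$, so $Q^{++}_{\mathbb{R}}$ is a ray, and by~(\ref{affmodulodelta}) together with the $W$-invariance of $(\Delta^{\ima})^{+}$ (see~\ref{111}) the vector $\delta$ is $W$-fixed, so $Q^{++}_{\mathbb{R}}=\mathbb{R}_{\geq 0}\delta$ with $\langle\delta,\alpha^{\vee}\rangle=0$ for all $\alpha\in\pi$, i.e.\ $Q^{++}_{\mathbb{R}}\subseteq\ol{C}^{-}$ and $w=1$ works for every $\mu$. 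I expect this existence step to be the genuine obstacle in general: for an indecomposable component it amounts to the statement that $Q^{++}_{\mathbb{R}}$ lies in the antidominant Tits cone of $W$, and this is precisely where finer information must be used — \Cor{corrootbasisfinaff} in the cases above, and (for type (Ind)) Hoyt's classification together with the associated Kac--Moody algebra $\fg(B_{\pi})$.
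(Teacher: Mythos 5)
Your overall architecture differs from the paper's: you split the statement into the inclusion $\bigcup_w wK'_{\mathbb{R}}\subseteq Q^{++}_{\mathbb{R}}$ (via the standard fact that $\mu$ antidominant implies $w\mu-\mu\in C(\pi)$), uniqueness (via salience of $C(\pi)\subseteq C(\Sigma_v)$), and existence of an antidominant conjugate. The first two pieces are correct, and the uniqueness argument is essentially the one the paper imports from \cite{Kbook}, Proposition 3.12(b). The problem is the third piece: you prove existence only in types (Fin) and (Aff) and explicitly leave type (Ind) open, gesturing at Hoyt's classification and $\fg(B_{\pi})$. That is a genuine gap — the lemma is stated for an arbitrary admissible component and is used in Section 6 precisely in type (Ind), so the case you defer is the one that carries the content. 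Moreover, no classification input is actually needed here, so the deferral also misdiagnoses where the difficulty lies.

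The device that closes the gap uniformly is to work first with lattice points. Set $Q^{++}:=Q^{++}_{\mathbb{R}}\cap Q_v=\bigcap_{u\in\Sk(v)}Q^+_u$; this set is $W$-invariant because $\Sk(v)$ contains $wv$ for every $w$ (see~\ref{l(w)}), and it is contained in $Q^+_v$. Hence for $\mu\in Q^{++}$ the height $\htt$ takes values in $\mathbb{Z}_{\geq 0}$ on the whole orbit $W\mu$ and attains a minimum at some $\mu_0$. For $\alpha\in\pi$ one has $\htt(s_\alpha\mu_0)=\htt\mu_0-\langle\mu_0,\alpha^\vee\rangle\,\htt\alpha$ with $\htt\alpha>0$, so minimality forces $\langle\mu_0,\alpha^\vee\rangle\leq 0$, i.e.\ $\mu_0$ is antidominant — and $\mu_0\in Q^{++}\subseteq\bigcap_{u\in\Sp(v)}C(\Sigma_u)$, so $\mu_0\in K'$. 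This proves $Q^{++}=\bigcup_w wK'$ with no case analysis; the real statement then follows because the rays $\mathbb{R}_{\geq 0}\mu$, $\mu\in Q^{++}$, are dense in $Q^{++}_{\mathbb{R}}$. The point you missed is not a deep positivity property of $Q^{++}_{\mathbb{R}}$ relative to the Tits cone, but simply that $W$-invariance of $Q^{++}$ bounds the height below by $0$ on every orbit of lattice points, so a minimizer exists. I would suggest replacing your final paragraph by this argument (your first inclusion then becomes redundant, since $Q^{++}=\bigcup_w wK'$ already gives both containments after the density step).
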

\begin{proof}
The proof is similar to the proof of Proposition 5.2 (b) in~\cite{Kbook}. 
We present it below for the sake of completeness. We set  
$$Q^{++}:=\bigcap\limits_{u\in \Sk(v)} Q^+_u=Q^{++}_{\mathbb{R}}\cap Q_v,\ \ \ 
K':=K'_{\mathbb{R}}\cap Q_v.$$

Let us verify that $Q^{++}=\bigcup\limits_{w\in W} wK'$. 
Since $\Sk(v)$ contains
$wv$, the set $Q^{++}$ is $W$-invariant.
For $\mu=\sum\limits_{\alpha\in\Sigma_v} k_{\alpha}\alpha\in Q_v$
the sum $\htt\mu:=\sum\limits_{\alpha\in\Sigma_v} k_{\alpha}$	is called the {\em height} of $\mu$.
Take $\mu\in Q^{++}$. By above,
$W\mu\in Q^{++}$, so $\htt (w\mu)\in\mathbb{Z}_{\geq 0}$.
Let $\mu_0$ be the element of minimal height in $W\mu$.
For $\alpha\in\pi$ we have  $\htt s_{\alpha}\mu_0\geq \htt\mu_0$
which gives  $\langle \mu_0, \alpha^\vee \rangle\leq 0$. Hence $\mu_0\in K'$.
This establishes the formula  $Q^{++}=\bigcup\limits_{w\in W} wK'$ which implies
 $Q^{++}_{\mathbb{R}}=\bigcup\limits_{w\in W} wK'_{\mathbb{R}}$ (since the union of the rays $\mathbb{R}_{\geq 0}\mu$ for $\mu\in Q^{++}$ is dense  in   $Q^{++}_{\mathbb{R}}$). 

The fact that each $W$-orbit has at most one intersection
with $K'_{\mathbb{R}}$ follows from~\cite{Kbook}, Proposition 3.12 (b). 
\end{proof}

\subsection{Essentially simple roots}
The following notion was used in~\cite{GKadm}. A root $\alpha$ is called {\em essentially simple}
if  $\alpha\in\Sigma_u$ for some $u\in\Sp(v)$. By~(\ref{Sigmau=Sigmav}),
the finiteness of spine is equivalent to the finiteness of the 
set of essentially simple roots.

By definition, $\Sigma_{\pr}$ is the set 
of anisotropic essentially simple roots.  By~(\ref{Delta+uv}) 
 a positive isotropic root $\alpha$ is not essentially simple if and only if
 $\alpha\in \bigcap\limits_{u\in \Sp(v)} C(\Sigma_u)$.

\subsubsection{}\label{essence}
Consider the case when $\Sk(v)$ is indecomposable of type (Fin) or (Aff).
By~\Cor{corrootbasisfinaff}, 
a positive isotropic root $\alpha$ is  not essentially simple if and only if $\alpha\in C(\pi)$. Hence the set of essentially simple isotropic roots is equal to
$$\Delta_{\is}\setminus \bigl( -C(\pi)\cup C(\pi)\bigr).$$

By~(\ref{typeIimplication}), 
 all isotropic roots  are essentially simple in the types (Fin) I and (Aff) I.

\subsubsection{}\label{infess}
Using van de Leur-Hoyt classification
it is not hard to show that the set of essentially simple roots are
 finite for all indecomposable  components of the type (Aff) II.

Indeed, for a component of type II, the span of  $\pi$ contains $\Delta$.
Moreover, any connected component $\Sigma^i_{\pr}$ of the Dynkin diagram of $\Sigma_{\pr}$
is of type (Aff) and the minimal imaginary root of this component is propotional to $\delta$,
so $\delta=\sum\limits_{\alpha\in \Sigma^i_{\pr}} k_{\alpha}\alpha$ 
for some $k_{\alpha}\in\mathbb{Q}_{>0}$. As a result, for any $\beta\in\Delta$
one has $\beta+j\delta\in C(\pi)$ for $j>>0$.
Since the image of $\Delta$ in $\fh^*/\mathbb{C}\delta$ is finite,
the set 
$$\Delta\setminus \bigl( -C(\pi)\cup C(\pi)\bigr)$$
is finite. Hence  the set of essentially simple isotropic roots is finite.
Since $\Sigma_{\pr}$ is finite, the set of essentially simple roots is finite.
This implies the finiteness of the spine for all indecomposable  components of the type (Aff) II.

\subsection{}
\begin{cor}{cor264}
\Thm{corintrospine} and the formulae~(\ref{suppconnected}), (\ref{eqimagine}) hold for the types (Fin) and (Aff).
\end{cor}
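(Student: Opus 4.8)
The plan is to derive all six statements---parts (i)--(iv) of \Thm{corintrospine} together with the formulae~(\ref{suppconnected}) and~(\ref{eqimagine})---by assembling results already proved; almost nothing new is required. I work throughout with an indecomposable Kac--Moody component $\Sk(v)$ of type (Fin) or (Aff), which is the setting of \Thm{corintrospine} (for a decomposable component each statement reduces to the indecomposable pieces via the product decomposition recalled in~\ref{KMcomponents}). The matching I aim for is: (i) $\leftrightarrow$ \Cor{corrootbasisfinaff}(ii); (ii) $\leftrightarrow$ \Lem{Winvariant} together with~\ref{l(w)}; (iii) $\leftrightarrow$ the description of the essentially simple isotropic roots in~\ref{essence}; (iv) $\leftrightarrow$ \Cor{corinf} and~\ref{infess}; and the imaginary-root formulae $\leftrightarrow$ (\ref{affmodulodelta}).

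For (i), the cone identity $\bigcap_{u\in\Sp(v)}C(\Sigma_u)=C(\pi)$ is precisely \Cor{corrootbasisfinaff}(ii), and $\pi$ is finite because in type (Fin) it is the set of simple roots of the finite-dimensional $\fg_{\ol{0}}$, and in type (Aff) it is \Lem{Sigmapraff}. For (ii), I would first note, using~\ref{l(w)}, that $\Sk(v)=\bigcup_{w\in W}w\Sp(v)$ with $\Sigma_{wu'}=w\Sigma_{u'}$; since each $w$ acts linearly this yields $Q^{++}_{\mathbb{R}}=\bigcap_{u\in\Sk(v)}C(\Sigma_u)=\bigcap_{w\in W}w\bigl(\bigcap_{u'\in\Sp(v)}C(\Sigma_{u'})\bigr)=\bigcap_{w\in W}wC(\pi)$ by~(i); and, by~(i) again, the cone $K'_{\mathbb{R}}$ of \Lem{Winvariant} equals $K_{\mathbb{R}}$, so that lemma supplies both $Q^{++}_{\mathbb{R}}=\bigcup_{w\in W}wK_{\mathbb{R}}$ and the uniqueness of the $W$-conjugate lying in $K_{\mathbb{R}}$. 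For (iii), nothing new is needed: it is exactly the identification of the essentially simple isotropic roots with $\Delta_{\is}\setminus(-C(\pi)\cup C(\pi))$ already recorded in~\ref{essence}.

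For (iv), in type (Aff) the set $\Delta$ is infinite and $\Sp(v)$ is infinite exactly when the component is of type I---this is \Cor{corinf} in one direction and~\ref{infess} in the other---which is the claimed equivalence; in type (Fin), $\Delta$ is finite, so by \Prop{proprootbasisfinaff}(i) there are finitely many root bases and $\Sk(v)$, a fortiori $\Sp(v)$, is finite, so both sides are false. For~(\ref{suppconnected}), the inclusion $\subseteq$ is general: by~\ref{Delta+} one has $(\Delta^{\ima})^+=\Delta^{\ima}\cap Q^+_u$ for every $u\in\Sk(v)$, hence $(\Delta^{\ima})^+\subseteq\bigcap_u Q^+_u=Q^{++}$, and $\supp\mu$ is connected for every root $\mu$ by~\ref{KMcomponents}. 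For $\supseteq$: in type (Fin), $Q^{++}_{\mathbb{R}}$ is $0$-dimensional by~\ref{classif}, so $Q^{++}=\{0\}$ and $\Delta^{\ima}=\emptyset$ (since $0\notin\Delta$), and both sides are empty; in type (Aff), (\ref{affmodulodelta}) gives $(\Delta^{\ima})^+=\mathbb{Z}_{>0}\delta$, so $Q^{++}_{\mathbb{R}}$ is a one-dimensional cone lying in the pointed cone $C(\Sigma_v)$ and containing $\delta$, hence equals $\mathbb{R}_{\geq 0}\delta$, whence $Q^{++}=\mathbb{Z}_{\geq 0}\delta$ once $\delta$ is known to be primitive in $Q_v$; then, as $\supp(j\delta)=\supp\delta$ is connected for $j>0$ while $\supp 0=\emptyset$ is not, the right-hand side of~(\ref{suppconnected}) equals $\mathbb{Z}_{>0}\delta=(\Delta^{\ima})^+$. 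Finally~(\ref{eqimagine}) follows from~(\ref{suppconnected}): when the component is not purely anisotropic, every nonzero element of $Q^{++}$ has connected support---in type (Aff) because it is a positive multiple of $\delta$, in type (Fin) vacuously---so the support condition is automatic and $(\Delta^{\ima})^+=Q^{++}$.

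The one step that is not purely formal, and which I expect to be the crux, is the identification $Q^{++}=\mathbb{Z}_{\geq 0}\delta$ in type (Aff), equivalently the primitivity of $\delta$ in $Q_v$; this is precisely what makes~(\ref{suppconnected}) nontrivial for type (Aff), and, not being forced by the general framework, I would establish it by appealing to the van de Leur--Hoyt classification of the Kac--Moody components of type (Aff). The remainder is bookkeeping with \Cor{corrootbasisfinaff}, \Lem{Winvariant}, \ref{essence}, \Cor{corinf}, \ref{infess} and \Prop{proprootbasisfinaff}, together with the harmless convention that the empty support diagram is not connected, under which~(\ref{suppconnected}) and~(\ref{eqimagine}) hold verbatim in type (Fin).
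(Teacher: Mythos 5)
Your proposal is correct and takes essentially the same route as the paper: the paper's proof likewise assembles (i)--(ii) from Lemma~\ref{Sigmapraff}, Corollary~\ref{corrootbasisfinaff} and Lemma~\ref{Winvariant}, obtains (iii)--(iv) from~\ref{essence}, Corollary~\ref{corinf} and~\ref{infess}, and deduces the imaginary-root formulae from $(\Delta^{\ima})^+\subset Q^{++}_{\mathbb{R}}\cap Q_v$ together with $Q^{++}_{\mathbb{R}}$ being trivial in type (Fin) and $Q^{++}_{\mathbb{R}}\cap Q_v=\mathbb{Z}_{>0}\delta$ in type (Aff), then gets~(\ref{suppconnected}) from~\ref{KMcomponents}. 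Your explicit attention to the primitivity of $\delta$ in $Q_v$ is a point the paper leaves implicit but does not alter the argument.
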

\begin{proof}
Combining~\ref{Sigmapraff}, \ref{corrootbasisfinaff} and~\ref{Winvariant} we obtain 
\Thm{corintrospine} (i) and (ii); (iii) follows from~\ref{essence} and (iv) from~~\ref{essence}, \ref{infess}.
By~\ref{Delta+} one has $(\Delta^{\ima})^+\subset Q^{++}_{\mathbb{R}}\cap Q_v$.
Since $ Q^{++}_{\mathbb{R}}$ is empty in the type (Fin)
and $ Q^{++}_{\mathbb{R}}\cap Q_v=\mathbb{Z}_{>0}\delta$
in the type (Aff), this gives  $(\Delta^{\ima})^+=Q^{++}_{\mathbb{R}}\cap Q_v$.
Finally, (\ref{suppconnected}) follows from the above formula and~\ref{KMcomponents}.
\end{proof}

\section{Principal roots}
In this section $\Sk(v)$ is an admissible component of  $\Sk$.

\subsection{Definitions and main results}\label{piandpr}
Let $\Sigma_{pr}=\{\alpha_i\}_{i\in I}$ be the set of principal roots.
We denote by $B_{pr}$ the matrix $\langle \alpha^{\vee}_i,\alpha_j\rangle$. 
For each odd root $\alpha\in \Sigma_{\pr}$  we set $(2\alpha)^{\vee}:=\frac{\alpha^{\vee}}{2}$ and introduce
$$\pi=\{\alpha\in \Sigma_{\pr}|\ p(\alpha)=\ol{0}\}\cup \{2\alpha|\ \alpha\in \Sigma_{\pr},\ p(\alpha)=\ol{1}\}.$$
  Note that $\pi\subset\Delta_{\ol{0}}$.
Set $\pi=\{\alpha'_i\}_{i\in I}$ and  denote by $B_{\pi}$ the matrix $\langle (\alpha')^{\vee}_i,\alpha'_j\rangle$.

 In~\Prop{lemBpr} below we show that 
$B_{pr}$ and $B_{\pi}$  are  generalized Cartan matrices in the sense of~\cite{Kbook}
if $\Sigma_{\pr}$ is finite, and ``infinite generalized Cartan matrix''
if $\Sigma_{\pr}$ is infinite.

We call $\alpha\in \Delta^+_{\an}$
{\em indecomposable} if $\alpha\not=\alpha_1+\alpha_2$ for 
all $\alpha_1,\alpha_2\in \Delta^+_{\an}$. In~\Prop{propprin}  we will show that 
the principal roots are precisely indecomposable elements in $\Delta^+_{\an}$.

The proofs  are very similar to the proofs in~\cite{S}. 
\subsection{}
\begin{lem}{ialphajbeta}
 Let $\alpha,\beta$ be distinct principal roots.
If $\gamma:=i\alpha-j\beta\in\Delta_{\an}$  for some $i,j>0$,  then $\gamma$ is an odd real isotropic root.
\end{lem}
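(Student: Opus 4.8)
The plan is to argue by contradiction, assuming that $\gamma=i\alpha-j\beta\in\Delta_{\an}$ is \emph{not} an odd isotropic root, i.e. $\gamma$ is anisotropic (possibly even, possibly odd non-isotropic) or $\gamma\in\Delta_{\is}$ but even — in any case $\gamma$ has a genuine coroot $\gamma^{\vee}$ or an $\fsl_2$/$\osp(1|2)$-triple attached to it. The key structural input I would use is that $\alpha,\beta$ are principal, hence \emph{simple} at some vertices of the spine $\Sp(v)$; in particular, by~\ref{spineDelta0} and the fact that simple roots at a vertex $u$ span $Q^+_u$, both $\alpha$ and $\beta$ lie in $\Delta^+_{\an}$ and any root is, up to sign, a non-negative combination of $\Sigma_u$ for the appropriate $u$. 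The expression $\gamma=i\alpha-j\beta$ with $i,j>0$ is a ``mixed sign'' combination, so $\gamma$ cannot be simple at a vertex where both $\alpha,\beta$ are positive; this is the tension I want to exploit.

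First I would fix a vertex $u\in\Sp(v)$ at which $\alpha=b_u(x)$ is simple, say $p_u(x)=\ol{0}$ or odd with $a^u_{xx}\neq 0$ so that $\pi\ni\alpha$ or $2\alpha\in\pi$; then $\alpha^{\vee}$ acts semisimply on $\fg^{\ttC}$ with integer eigenvalues on $Q_v$ by~(\ref{betaalphavee}). Consider the $\alpha$-string through $\gamma$: since $\gamma\in\Delta_{\an}$ and $\alpha\in\pi$ (or $2\alpha\in\pi$), the subalgebra $\fg(\alpha)\cong\fsl_2$ or $\osp(1|2)$ acts on $\bigoplus_{n}\fg_{\gamma+n\alpha}$, and the usual $\fsl_2$-string/reflection argument controls which $\gamma+n\alpha$ are roots. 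The idea: using the reflection $s_\alpha$ (or $r_x$ along the spine), I can replace $\gamma$ by $s_\alpha\gamma = i'\alpha - j\beta$ possibly decreasing the $\alpha$-coefficient, and iterate; but the $\beta$-coefficient $-j$ stays negative throughout (reflections in $\pi$ permute $\Delta^+_{\an}\setminus\{\text{multiples of }\alpha\}$, keeping $\beta$ positive), so $\gamma$ can never be brought to $\Sigma_{\pr}$ nor to a positive root — yet $\Delta_{\an}=W\Sigma_{\pr}$ by~\ref{111}, forcing $\gamma$, if anisotropic, to be $W$-conjugate to a principal root, i.e. eventually to become a \emph{positive} combination after applying suitable $w\in W$. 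Tracking the signs of the $\beta$-coefficient along this Weyl-group action gives the contradiction. For the case $\gamma$ even isotropic (the fourth line of~(\ref{2alpha}) shows even isotropic anisotropic-type roots behave like $\fsl_2$-roots too for the purpose of reflections) the same string argument applies.

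The main obstacle I anticipate is bookkeeping the effect of $s_\alpha$ (and $s_\beta$) on the pair of coefficients $(i,-j)$ of $\gamma$ while simultaneously ensuring one stays inside $\Delta_{\an}$ and does not accidentally land on a multiple of $\alpha$ or $\beta$ — the delicate point being the distinction between odd non-isotropic roots $\alpha$, where $\{\alpha,2\alpha\}\subset\Delta$ and $\pi\ni 2\alpha$, versus the even case. I would handle this by working uniformly with $\pi$ and the coroots $\alpha^{\vee}$ normalized as in~\ref{pidef}, so that $\langle\beta,\alpha^{\vee}\rangle\in\ZZ$ always and the reflection $s_{\alpha}$ on $Q_v$ is the standard one; then the sign of the $\beta$-coefficient of $w\gamma$ for $w\in W$ is governed purely by the combinatorics of the Coxeter group generated by $\{s_{\delta}\}_{\delta\in\pi}$ acting on the cone spanned by $\Sigma_{\pr}$, and a positive linear combination of principal roots can never be written as $w(i\alpha-j\beta)$ with the $\beta$-coefficient forced negative. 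The conclusion $p(\gamma)=\ol{1}$ and $\gamma\in\Delta_{\is}$ then follows because the only anisotropic-type roots excluded from this contradiction are exactly the odd isotropic ones, for which $\fg(\gamma)$ does not exist and the string argument breaks down precisely as needed; one checks $p(\gamma)=p(i\alpha-j\beta)=ip(\alpha)-jp(\beta)$ and combines with $\langle\gamma,\gamma\rangle=0$, which I would verify directly from $\gamma\in\Delta^{\re}$ not anisotropic.
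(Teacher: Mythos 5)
There is a genuine gap, and it sits at the heart of your argument. You want to say that $\gamma$, being anisotropic, is $W$-conjugate to a principal root, hence must become a non-negative combination of $\Sigma_{\pr}$ after applying a suitable $w\in W$, while ``the $\beta$-coefficient of $w\gamma$ stays negative''. This step is not available. First, ``the $\beta$-coefficient'' is only well defined inside the two-dimensional span of $\alpha$ and $\beta$; a general $w\in W$ is a product of reflections $s_\delta$ with $\delta$ ranging over all of $\pi$, so $w\gamma$ leaves that span (and $\Sigma_{\pr}$ need not be linearly independent, cf.~\ref{c=0}, so coefficients with respect to $\Sigma_{\pr}$ are not well defined either). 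Second, even for $w=s_\beta$ one gets $s_\beta(i\alpha-j\beta)=i\alpha+(j-i\langle\alpha,\beta^\vee\rangle)\beta$, whose $\beta$-coefficient is controlled only if one already knows $\langle\alpha,\beta^\vee\rangle\le 0$ --- but that is exactly the content of \Prop{lemBpr}, which the paper deduces \emph{from} the present lemma; likewise, the statement that every element of $\Delta^+_{\an}$ is a non-negative combination of $\Sigma_{\pr}$ is \Prop{propprin}, also downstream of this lemma. So the combinatorial picture you invoke (the Coxeter group acting on the cone spanned by $\Sigma_{\pr}$ as if $\Sigma_{\pr}$ were a set of simple roots) is precisely what this lemma is needed to establish, and the argument is circular. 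A smaller point: there are no even isotropic roots in this framework (reflectability with $a_{xx}=0$ forces $p(x)=\ol{1}$), and in the non-symmetrizable case isotropy is not detected by a bilinear form, so the closing computation of $(\gamma,\gamma)$ is not the right way to land the conclusion.

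The tension you correctly identify at the start is already sufficient, and is how the paper argues: assume WLOG $\gamma\in\Delta^+_v$ with $v$ in the spine, and pick $v'\in\Sp(v)$ with $\alpha\in\Sigma_{v'}$. Since $\Delta^+_{\an}$ is the same at every vertex of the spine (\ref{Delta+}), $\beta\in\Delta^+_{v'}$; then $i\alpha=j\beta+\gamma$ with $\alpha$ simple at $v'$ and $\beta$ not proportional to $\alpha$ forces $\gamma\in\Delta^-_{v'}$. A root that is positive at $v$ and negative at $v'$, with $v,v'$ joined by a path of isotropic reflections, must by (\ref{Delta+uv}) be a simple isotropic root at some vertex of that path, and isotropic simple roots are odd. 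No string arguments or Weyl-orbit bookkeeping are needed.
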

\begin{proof}
Without loss of generality we can assume that $\gamma\in\Delta^+_v$.
Taking $v'\in\Sp(v)$ such that $\alpha\in\Sigma_{v'}$
	we obtain $i\alpha=j\beta+\gamma$.
	Since $\alpha,\beta$ are distinct principal roots, they are not proportional and
	$\beta\in\Delta^+_{v'}$. Since $\alpha\in\Sigma_{v'}$, this implies
	$\gamma\in\Delta^-_{v'}$. Thus $\gamma\in\Delta^+_v\cap \Delta^-_{v'}$, so
$\gamma\in\Sigma_{v''}$ for some $v''\in 	\Sp(v)$.
 \end{proof}	
	
\subsection{}
\begin{prop}{lemBpr}
$B_{pr}$ and $B_{\pi}$  a generalized Cartan matrix i.e.,  for $\alpha,\beta\in\Sigma_{\pr}$ 
or $\alpha,\beta\in\pi$ one has
 \begin{enumerate}
 \item $\langle \alpha,\alpha^{\vee}\rangle=2$.
 \item   if $\alpha\not=\beta$, then 
  $\langle \beta,\alpha^{\vee}\rangle\in\mathbb{Z}_{\leq 0}$, and
	$\langle \beta,\alpha^{\vee}\rangle$ is even if $p(\alpha)=\ol{1}$. 
	\item $\langle \alpha,\beta^{\vee}\rangle=0\ \Longrightarrow\ \ 
\langle \beta,\alpha^{\vee}\rangle=0$.
\end{enumerate}
\end{prop}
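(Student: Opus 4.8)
The plan is to reduce everything to $\Sigma_{\pr}$ and then use the reflections $r_x$ together with the $\fsl_2$/$\osp(1|2)$ theory from~\ref{sllocnil}. Assertion (i) is immediate: by the normalization in~\ref{sllocnil} one has $\langle \alpha,\alpha^{\vee}\rangle=2$ for each $\alpha\in\Delta_{\an}$, and for the even root $2\alpha$ with $\alpha$ odd principal we have $(2\alpha)^{\vee}=\alpha^{\vee}/2$, so $\langle 2\alpha,(2\alpha)^{\vee}\rangle=\langle 2\alpha,\alpha^{\vee}/2\rangle=\langle\alpha,\alpha^{\vee}\rangle=2$. For (iii) I would argue by symmetry of the adjacency relation: $\langle\alpha,\beta^{\vee}\rangle=0$ means $[\fg_{\pm\beta},\fg_{\alpha}]$ has no component in the $\beta$-string through $\alpha$ in a suitable sense; more concretely, I would first prove (ii) — which gives that $\langle\beta,\alpha^{\vee}\rangle$ and $\langle\alpha,\beta^{\vee}\rangle$ are simultaneously zero or simultaneously nonzero — by a weak-symmetrizability/consistency argument on the attainable Cartan data. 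Indeed, by~\ref{attain} and the description of $\Sk(v)$, if $\alpha,\beta\in\Sigma_{\pr}$ are distinct then both lie in $\Sigma_u$ for some $u\in\Sp(v)$ up to applying some reflections inside the spine, and the admissibility condition $a^u_{xy}=0\Rightarrow a^u_{yx}=0$ (weak symmetrizability, stated in~\ref{rootalgebras}) gives exactly (iii) once we know $\langle\beta,\alpha^{\vee}\rangle$ is (a positive multiple of) the Cartan entry $a^u_{xy}$.

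The heart is (ii), the statement $\langle\beta,\alpha^{\vee}\rangle\in\ZZ_{\le 0}$ for distinct $\alpha,\beta\in\Sigma_{\pr}$, plus the parity/evenness clause. First I would fix $v'\in\Sp(v)$ with $\alpha\in\Sigma_{v'}$, say $\alpha=b_{v'}(x)$; then $\beta\in\Delta^+_{v'}$, so $\beta=\sum_{y} k_y b_{v'}(y)$ with $k_y\in\ZZ_{\ge0}$ and $k_x\ge 0$. Since $\alpha$ is anisotropic and reflectable at $v'$, the root spaces $\fg_{\pm\alpha}$ generate $\fg(\alpha)\cong\fsl_2$ or $\osp(1|2)$, and $\fg_{\beta}$ sits inside an integrable (finite-dimensional, by local nilpotence) module over $\fg(\alpha)$; hence $\langle\beta,\alpha^{\vee}\rangle\in\ZZ$ (already recorded in~(\ref{betaalphavee})), and the reflection $s_\alpha$ maps $\Delta_{\an}$ to itself. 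The key inequality comes from~\Lem{ialphajbeta}: consider $s_\alpha(\beta)=\beta-\langle\beta,\alpha^{\vee}\rangle\alpha$. If $\langle\beta,\alpha^{\vee}\rangle=:m>0$ then $\beta-m\alpha\in\Delta_{\an}$ is, by~\Lem{ialphajbeta}, an odd real \emph{isotropic} root — but I then want to derive a contradiction with $\beta$ being anisotropic, by running the $\fg(\alpha)$-string argument: the $\alpha$-string through $\beta$ would have to contain both an anisotropic root ($\beta$) and, on reflecting, an isotropic one, which is incompatible with the $\fsl_2$/$\osp(1|2)$ representation theory where the string is symmetric about a hyperplane and all weights in a string through an anisotropic weight stay anisotropic (their $\alpha^{\vee}$-values are integers of the same parity class, and isotropy of $\beta-m\alpha$ forces $(\beta-m\alpha,\beta-m\alpha)=0$, pushing $\beta$ itself to be isotropic in the symmetrizable case, and in general contradicting reflectability). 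So $m\le 0$. For the parity clause: when $p(\alpha)=\ol{1}$, i.e. $\alpha$ is an odd principal root, $\fg(\alpha)\cong\osp(1|2)$ and the relevant coroot in $\pi$ is $(2\alpha)^{\vee}=\alpha^{\vee}/2$; an integrable $\osp(1|2)$-module has its weights (measured against the even coroot $\alpha^{\vee}$) all of one parity on each string, which forces $\langle\beta,\alpha^{\vee}\rangle\in 2\ZZ$, hence $\langle\beta,(2\alpha)^{\vee}\rangle\in\ZZ$ and, for $\pi$, evenness of the off-diagonal entry when the ``$\alpha$''-node is odd.

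I expect the main obstacle to be making the string argument for (ii) fully rigorous in the non-symmetrizable case, where one cannot simply invoke a bilinear form to say ``isotropic = length zero''. The clean route is to keep everything inside a single attainable vertex: use~\ref{spineDelta0} and the reflection formulas~(\ref{Delta+uv}) to track how $\beta$ and $\beta-\alpha$ move between positive and negative systems under $r_x$, combined with~\Lem{ialphajbeta} to pin down that any anisotropic root of the form $i\alpha-j\beta$ must actually be isotropic — and then observe that $s_\alpha$, being induced by an element of $W$ acting on $\Sk(v)$ (see~\ref{l(w)}), permutes $\Sigma_{u}$ for $u\in\Sp(v)$ in a way that cannot turn the anisotropic $\beta$ into an isotropic root. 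Concretely, I would finish by: (a) if $m:=\langle\beta,\alpha^{\vee}\rangle>0$, then among $\beta,\beta-\alpha,\dots,\beta-m\alpha$ at least one is in $\Delta_{\an}$ and written as $i\alpha-j\beta'$-type difference, apply~\Lem{ialphajbeta} to get it isotropic; (b) but $s_\alpha\beta=\beta-m\alpha$ is $W$-conjugate to $\beta\in\Sigma_{\pr}\subset\Delta_{\an}$, so it is anisotropic — contradiction; hence $m\le 0$. The evenness then drops out of the $\osp(1|2)$ local structure at an odd node. The remaining items (i) and (iii) are, as above, routine given the normalization and the weak symmetrizability of every attainable Cartan datum.
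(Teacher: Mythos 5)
Your treatment of (i) is the paper's, and your final, cleaned-up argument for (ii) --- $s_\alpha\beta=\beta-m\alpha$ lies in $\Delta_{\an}$ because $\Delta_{\an}$ is $W$-stable, while for $m>0$ \Lem{ialphajbeta} applied to $m\alpha-\beta$ would force this anisotropic root to be isotropic, a contradiction --- is exactly the paper's proof of (ii); the evenness clause also comes out the same way, from $(2\alpha)^{\vee}=\alpha^{\vee}/2$ together with~(\ref{betaalphavee}). (Your first sketch of (ii) via root strings and ``length zero'' would not survive the non-symmetrizable case, but you discard it yourself in favour of the correct argument.)

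The genuine gap is in (iii). You want to read off $\langle\alpha,\beta^{\vee}\rangle=0\Rightarrow\langle\beta,\alpha^{\vee}\rangle=0$ from weak symmetrizability of the Cartan matrix at a vertex $u$ where $\alpha$ and $\beta$ are \emph{both} simple. Such a vertex need not exist: two distinct principal roots are in general never simultaneously simple anywhere on the spine. For $\fosp(3|2)$ the spine has exactly two vertices, with simple-root sets $\{\delta-\vareps,\vareps\}$ and $\{\vareps-\delta,\delta\}$, so $\Sigma_{\pr}=\{\vareps,\delta\}$ but no attainable set of simple roots contains both $\vareps$ and $\delta$ (the paper's own remark that $\fosp(3|2)$ contains no subalgebra isomorphic to $\fg(B_{\pr})/\mathfrak{c}$ is a symptom of the same phenomenon). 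Also, your parenthetical claim that (ii) already gives ``simultaneously zero or simultaneously nonzero'' is not right: (ii) only gives that both entries are non-positive integers. The paper closes (iii) with a short Weyl-group computation that needs no common vertex: if $\langle\alpha,\beta^{\vee}\rangle=0$ then $s_\beta\alpha=\alpha$ and $s_\beta s_\alpha\beta=-\beta-j\alpha$ with $j:=\langle\beta,\alpha^{\vee}\rangle\le 0$ by (ii); this element lies in $\Delta_{\an}$, and if $j<0$ then $(-j)\alpha-\beta$ would be isotropic by \Lem{ialphajbeta}, a contradiction, so $j=0$. You already have every ingredient for this argument from your proof of (ii).
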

\begin{proof}
 (i) follows from the definition of $\alpha^{\vee}$. 
 
 Let $\alpha,\beta$ be distinct principal roots.
Set $j:=\langle \beta,\alpha^{\vee}\rangle$. 
By~(\ref{betaalphavee}), we have $j\in\mathbb{Z}$
and $j\in 2\mathbb{Z}$ if $p(\alpha)=\ol{1}$ (since $p(\alpha)=\ol{1}$ implies $2\alpha\in \pi$ and $(2\alpha)^{\vee}=\frac{\alpha^{\vee}}{2}$).
Without loss of generality, we can assume that $\alpha\in\Sigma_v$. Since 
	$s_{\alpha}\beta=\beta-j\alpha\in\Delta_{\an}$, \Lem{ialphajbeta} implies 
	$j\leq 0$. This proves (ii).

Assume that $\langle \alpha,\beta^{\vee}\rangle=0$. Then $s_{\beta}\alpha=\alpha$ and 
$$s_{\beta}s_{\alpha}\beta=s_{\beta}(\beta-j\alpha)=-\beta-j\alpha.$$
By above, $j\leq 0$. Since $s_{\beta}s_{\alpha}\beta\in\Delta_{\an}$, \Lem{ialphajbeta} gives
$j=0$ and establishes (iii).
\end{proof}

\subsection{}
\begin{prop}{propprin} 
The set of principal roots coincides with the set of indecomposable elements in $\Delta^+_{\an}$.
\end{prop}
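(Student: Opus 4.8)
The plan is to show the two inclusions: every principal root is indecomposable in $\Delta^+_{\an}$, and every indecomposable element of $\Delta^+_{\an}$ is principal. For the first inclusion, suppose $\alpha\in\Sigma_{\pr}$ and $\alpha=\alpha_1+\alpha_2$ with $\alpha_1,\alpha_2\in\Delta^+_{\an}$. Since $\alpha$ is principal, we may (after applying the $W$-action from~\ref{111}, which preserves $\Delta_{\an}$, or more directly using~\ref{prindef}) pass to a vertex $v'\in\Sp(v)$ with $\alpha\in\Sigma_{v'}$. Then $\alpha_1,\alpha_2\in\Delta^+_{\an}\subset\Delta^+_{v'}$ are positive roots summing to the simple root $\alpha=b_{v'}(x)$; writing each $\alpha_i$ in the basis $\Sigma_{v'}$ forces one of them to be a nonzero multiple of $b_{v'}(x)$ with the other's $b_{v'}(x)$-coefficient making the sum work, and since all coefficients are nonnegative integers this is impossible for a sum of two \emph{positive} roots adding to a simple root (this is the standard Kac-Moody argument, e.g.~\cite{Kbook}, adapted: a simple root cannot be written as a sum of two positive roots).

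For the reverse inclusion, let $\alpha\in\Delta^+_{\an}$ be indecomposable; I must produce $v'\in\Sp(v)$ with $\alpha=b_{v'}(x)$. By~\ref{111} we have $\Delta_{\an}=W\Sigma_{\pr}$, so $\alpha=w\beta$ for some $\beta\in\Sigma_{\pr}$ and $w\in W$; I would take such a $w$ of minimal length. Using~\ref{l(w)}, choose $u\in\Sp(v)$ with $\beta\in\Sigma_u$, and consider the vertex $wu\in\Sk(v)$, so that $\alpha=w\beta=b_{wu}(x)$ is a \emph{simple root at the vertex $wu$}. The remaining task is to show $wu\in\Sp(v)$, i.e. that $w$ can be taken trivial — equivalently, that $\alpha$ being indecomposable in $\Delta^+_{\an}$ forces the minimal-length $w$ to fix the set of positive anisotropic roots, hence $w=1$ by faithfulness. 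Suppose $w\neq 1$; pick a reduced expression and the first reflexion $r_x$ it applies. If $\Delta^+_{\an}(wu)\neq\Delta^+_{\an}$ one finds $\gamma\in\pi$ (or a principal root) with $\langle\alpha,\gamma^\vee\rangle<0$ or $>0$ controlling a decomposition; more cleanly, if $\alpha=b_{wu}(x)$ but $wu\notin\Sp(v)$, then $\alpha\notin\Delta^+_v$ would contradict $\alpha\in\Delta^+_{\an}\subset\Delta^+_v$ — so in fact I want: $\alpha\in\Delta^+_{\an}\cap\Sigma_{wu}$ together with~\ref{spineDelta0} forces reflexion back down.

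The cleanest route, which I would actually write, is: since $\alpha\in\Delta^+_{\an}$, repeatedly apply anisotropic reflexions $r_\gamma$ with $\gamma\in\pi$ to decrease $\htt\alpha$ as in~\ref{Winvariant} — but here instead track when $\alpha$ becomes simple. Concretely, run the standard induction on $\htt_v\alpha:=\sum k_x$ where $\alpha=\sum k_x b_v(x)$. If $\alpha$ is not simple at $v$, then by~\Lem{ialphajbeta}-type reasoning (or directly: $\langle\alpha,\alpha^\vee\rangle=2>0$ with $\alpha$ not simple implies $s_\gamma\alpha$ has smaller height for some $\gamma\in\Sigma_v\cap\Delta_{\an}$, using that $\langle\alpha,\gamma^\vee\rangle>0$ for some such $\gamma$ — else $\alpha$ would lie in the negative cone), so replacing $v$ by $r_\gamma v$ decreases height; but since $\alpha$ is indecomposable, this $s_\gamma\alpha$ is still indecomposable (reflexions by simple roots preserve indecomposability among positive roots except possibly producing the simple root itself). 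Iterating, we reach a vertex where $\alpha$ is simple; since indecomposability is incompatible with the "otherwise" branches, this simple-ness occurs at a vertex in $\Sp(v)$, giving $\alpha\in\Sigma_{\pr}$.

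\textbf{Main obstacle.} The delicate point is the isotropic reflexions: along the height-reducing chain one may be forced through vertices where the relevant simple root is isotropic, and one must verify that passing an indecomposable anisotropic root $\alpha$ through an isotropic reflexion $r_x$ keeps it anisotropic, positive, and indecomposable, and moreover that one can always choose the reduction to stay inside a single spine component (controlling $\Delta^+_{\an}$ rather than all of $\Delta^+$). This is exactly the subtlety that makes the superalgebra case harder than~\cite{Kbook}, and I expect the proof to lean on~\Lem{ialphajbeta} together with~\ref{spineDelta0} (which characterizes $\Sp(v)$ inside $\Sk(v)$ via $\pi\subset\Delta^+_u$) and on the faithfulness of the $W$-action on $\Sk(v)$ from~\ref{l(w)}.
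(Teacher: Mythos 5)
Your first inclusion (principal $\Rightarrow$ indecomposable) is correct and is essentially the paper's own argument: a principal root is simple at some $v'\in\Sp(v)$, and since $\Delta^+_{\an}\subset\Delta^+_{v'}$, any decomposition inside $\Delta^+_{\an}$ would exhibit a simple root at $v'$ as a sum of two positive roots at $v'$, which is impossible for height reasons.

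The reverse inclusion is where your proposal has a genuine gap, and you have in fact flagged it yourself without closing it. Two concrete problems. First, the step ``if $\alpha=b_{wu}(x)$ but $wu\notin\Sp(v)$, then $\alpha\notin\Delta^+_v$'' is false: $w\beta$ is simple at the vertex $wu$ for \emph{every} $w\in W$, and it lies in $\Delta^+_v$ whenever it lies in $\Delta^+_{\an}$, so being simple somewhere in the skeleton says nothing about being principal. Second, your ``cleanest route'' --- a height induction pushing $\alpha$ down until it becomes simple while staying in the spine --- rests exactly on the unproved assertions you list as the main obstacle: that indecomposability, positivity and anisotropy survive the isotropic reflexions one is forced through, and that the terminal vertex lands in $\Sp(v)$. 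Even the purely anisotropic step is not justified: if $s_\gamma\alpha=\beta_1+\beta_2$ then $\alpha=s_\gamma\beta_1+s_\gamma\beta_2$, and one must still rule out that one of the $s_\gamma\beta_i$ leaves $\Delta^+_{\an}$, which is the same difficulty in disguise. The paper circumvents all of this by a different device: writing $\gamma=w\alpha$ with $\alpha\in\Sigma_{\pr}$ and $w=s_{\alpha_{i_1}}\cdots s_{\alpha_{i_m}}$ of minimal length, it forms the finite set $\Sigma'=\{\alpha\}\cup\{\alpha_{i_j}\}_{j=1}^m\subset\Sigma_{\pr}$ and the corresponding submatrix $B'_{\pr}$ of $B_{\pr}$, which by \Prop{lemBpr} is a generalized Cartan matrix; in the auxiliary Kac--Moody algebra built from $B'_{\pr}$ the element $w\alpha$ is a real root, hence an integral combination of $\Sigma'$ with all coefficients of one sign, and since $\Sigma'\subset\Delta^+_{\an}$ that sign is non-negative. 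This shows every element of $\Delta^+_{\an}$ is a non-negative integral combination of principal roots, from which the reverse inclusion follows. That reduction to a Kac--Moody algebra over $B_{\pr}$ is the missing idea; without it, or an equivalent substitute, your induction does not go through.
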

\begin{proof}
Let $\beta$ be a  principal root. Then $\beta\in\Sigma_v$ for some $v$ in the spine. Since 
$\Delta^+_{\an}\subset \Delta^+_v$, any decomposable element
in $\Delta^+_{\an}$ is decomposable in $\Delta^+_v$. Since $\beta\in\Sigma_v$,
$\beta$ is indecomposable in $\Delta^+_{\an}$.   

It remains to verify that any $\gamma\in\Delta^+_{\an}$ can be  written as
	a linear combination of finitely many elements in $\Sigma_{\pr}$ with a non-negative coefficients.
Take $\gamma\in\Delta_{\an}$. By~\cite{GHS} Remark 4.3.13,
 $\gamma=w\alpha$ for some $\alpha\in\Sigma_{pr}$ and $w\in W$; moreover,
 $w=s_{\alpha_{i_1}}\ldots s_{\alpha_{i_m}}$ for some $i_1,\ldots,i_m\in I$.
	Choose $w$ of the minimal length (i.e. $m$ is minimal) and set $\Sigma':=\{\alpha\}\cup
	\{\alpha_{i_j}\}_{j=1}^m$. 
	Denote by $B'_{\pr}$ the submatrix of $B_{\pr}$ which corresponds 
	to $\Sigma'$. Then  $B'_{\pr}$ is a generalized Cartan matrix (of finite size)
	and we can construct a Kac-Moody algebra for this Cartan matrix. Let $W'$ be the corresponding Weyl group. Clearly, $w\in W'$. Then
	$W'\Sigma'$ is the set of real roots for this  Kac-Moody Lie algebra, so  
	$$\gamma=w\alpha=\sum_{\beta\in \Sigma'} m_{\beta}\beta$$
	for some coefficients $m_{\beta}\in\mathbb{Z}$ and all $m_{\beta}$
	are either non-positive or non-negative. Since $\Sigma'\subset\Delta^+_{\an}$, 
	the coefficients are non-negative.  Hence any $\gamma\in\Delta^+_{\an}$ can be  written as
	a linear combination of finitely many elements in $\Sigma_{\pr}$ with a non-negative coefficients.
\end{proof}

\subsection{The algebra $\fg(B_{\pi})$}\label{gpr}
Let $\fg$ be any root algebra for an admissible component $\Sk(v)$. 
We denote by  $\fg(B_{\pi})$ the Kac-Moody algebra corresponding to the matrix $B_{\pi}$.
Denote by $\fg'$ the subalgebra of $\fg_{\ol{0}}$
which is generated by $\fg_{\pm \alpha}$ for $\alpha\in \pi$.

The following construction is similar to one appeared  in Lemma 3.7 in~\cite{S}.

\subsubsection{}\label{tildehab}
Let $(\tilde{\fh}, \tilde{a}, \tilde{b})$ be a realization of $B_{\pi}$. This means that $\tilde{\fh}^*$
contains linearly independent set $\{\tilde{\alpha}\}_{\alpha\in\pi}$ and
$\tilde{\fh}$ contains linearly independent set $\{h_{\alpha}\}_{\alpha\in\pi}$ 
such that 
$$\langle h_{\alpha},\tilde{\beta}\rangle=\langle \alpha^{\vee},\beta\rangle\ \ \text{ for all }\alpha,\beta\in\pi.$$
We choose $\tilde{\fh}$ of the minimal possible dimension
(i.e., $\dim \tilde{\fh}=|\Sigma_{\pr}|+\corank B_{\pi}$, see~\cite{Kbook} Proposition 1.1). Let
$$\tilde{\fg}(B_{\pi})=\tilde{\fn}^{-}\oplus\tilde{\fh}\oplus\tilde{\fn}^{+}$$
be the half-baked algebra for $B_{\pi}$: this  is a Lie algebra generated by  $\tilde{\fh}$
and the even elements $\tilde{e}_{\pm\alpha}$ for $\alpha\in\pi$  subject to the relations
(\ref{relationshalfbaked}). 
One has
$$ [\tilde{\fg}(B_{\pi}),\tilde{\fg}(B_{\pi})]=\tilde{\fn}^{-}\oplus\tilde{\fh}'\oplus\tilde{\fn}^{+}$$
where $\tilde{\fh}'$ is spanned by $h_{\alpha}$, $\alpha\in\pi$.
By~\cite{Kbook}, Theorem 1.2, $\tilde{\fn}^+$ (resp., $\tilde{\fn}^{-}$) is a
 free Lie algebra generated by $\tilde{e}_{\alpha}$, $\alpha\in\pi$  (resp., $\tilde{e}_{-\alpha}$, $\alpha\in\pi$). 
 Using this result, it is not hard to show that $[\tilde{\fg}(B_{\pi}),\tilde{\fg}(B_{\pi})]$ is generated by  
 $h_{\alpha},\tilde{e}_{\pm\alpha}$ for $\alpha\in\pi$ subject to the relations~(\ref{relationshalfbaked}) for $h\in\tilde{\fh}'$
 (the freeness result implies that there are no additional relations).
Combining Lemmatta~\ref{ialphajbeta} and~\ref{lemBpr} we conclude
that there exists a surjective homomorphism 
$$s:\ [\tilde{\fg}(B_{\pi}),\tilde{\fg}(B_{\pi})]\twoheadrightarrow \fg'$$
which maps $\tilde{\fn}^{\pm}_{\pm\alpha}$ to $\fg_{\pm\alpha}$
for all $\alpha\in\pi$.  Since $\Sigma_{\pr}\subset\Delta^+$
there exists $h\in\fh$ such that $\langle h,\alpha\rangle\in \mathbb{R}_{>0}$
for  all $\alpha\in\pi$. Take $\tilde{h}\in \tilde{\fh}$ such that
$\langle \tilde{h},\alpha\rangle=\langle h,\alpha\rangle$.
The actions of $h$ and $\tilde{h}$ define  $\mathbb{R}$-gradings
on $[\tilde{\fg}(B_{\pi}),\tilde{\fg}(B_{\pi})]$ and on
$\fg'$. Then $s$ is the homomorphism of the graded algebras, so $\Ker s$ is a graded ideal. Therefore
$$\Ker s=\fm^-\oplus \fm_0\oplus \fm^+$$
where $\fm_0\subset \tilde{\fh}$ and $\fm^{\pm}\subset \tilde{\fn}^{\pm}$.
For any $\alpha\in\Sigma_{\pr}$ we have
 $\tilde{\fg}(B_{\pi})_{\pm\alpha}\not\in\Ker s$, so
 $$[\tilde{\fg}(B_{\pi})_{\alpha}, \fm_0]=0,\ \ \ 
 [\tilde{\fg}(B_{\pi})_{-\alpha},\fm^{+}]\subset \fm^+,\ \ 
   [\tilde{\fg}(B_{\pi})_{\alpha},\fm^{-}]\subset \fm^-.$$
 Therefore $\fm_0$ lies
in the centre of $\tilde{\fg}(B_{\pi})$ and
$\fm^-\oplus  \fm^+$ is an ideal of $\tilde{\fg}(B_{\pi})$.
Let
$$q_{\U}:\tilde{\fg}(B_{\pi})\to \fg^{\U}(B_{\pi}),\ \ \ \ \ q_{\ttC}:\tilde{\fg}(B_{\pi})\to \fg^{\ttC}(B_{\pi})=\fg(B_{\pi})$$
be the canonical epimorphisms. 
Recall that $\Ker q_{\ttC}$ is the maximal ideal having zero intersection with $\tilde{\fh}$; 
by above, $\fm^-\oplus  \fm^+\subset \Ker q_{\ttC}$.
On the other hand, $\Ker q_{\U}$ is the minimal ideal $I$ with property that 
for each $\alpha\in\pi$ the root space 
$\tilde{\fg}(B_{\pi})_{\pm\alpha}$ does not lie in $I$ and
 acts locally nilpotently
on the quotient $\tilde{\fg}(B_{\pi})_{\pm\alpha}/I$.
Since $\fg_{\pm\alpha}$ acts locally nipotently on $\fg'$ we conclude that
$\Ker q_{\U}$ lie in $\fm^-\oplus  \fm^+$, Hence
$$\Ker q_{\U}\subset (\fm^-\oplus  \fm^+)\subset \Ker q_{\ttC}.$$
 This gives a surjective map
\begin{equation}\label{s'}
s':\ \fg'\twoheadrightarrow [{\fg}(B_{\pi}),{\fg}(B_{\pi})]/\mathfrak{c},\end{equation}
where  $\mathfrak{c}$, which is the image of $\fm_0$, is
a subalgebra of  the centre of ${\fg}(B_{\pi})$.

\subsubsection{Remark}\label{c=0}
It is easy to see that $\mathfrak{c}=0$  if and only if $\pi$
is linearly independent.

\subsubsection{Remark}
The above construction does not work
if we substitute $\pi$ by $\Sigma_{\pr}$. For example, $\fg:=\fosp(3|2)$  does not contain a subalgebra
isomorphic to $\fg(B_{\pr})/\mathfrak{c}$.

\subsubsection{}\label{Bpisymm}
Consider the case when  $B_{\pi}$ is symmetrizable. By~\cite{GabberKac} symmetrizability of $B_{\pi}$ implies that for any finite subset
$\pi'\subset \pi$ the canonical map
$\fg^{\U}(B_{\pi'})\to\fg (B_{\pi'})$ is bijective.
This gives  $\fg^{\U}(B_{\pi})\iso\fg^{\ttC}(B_{\pi})$, so 
$\Ker q_{\U}= \Ker q_{\ttC}$. Then 
$\fm^-\oplus  \fm^+=\Ker q_{\U}$ and  $s$ induces an algebra isomorphism
$[{\fg}(B_{\pi}),{\fg}(B_{\pi})]/\mathfrak{c}\cong \fg'$.

\subsubsection{Remark}
By construction, $\fg'\subset [\fg_{\ol{0}},\fg_{\ol{0}}]$. We have $\fg'=[\fg_{\ol{0}},\fg_{\ol{0}}]$
in the type (Fin). The following example demonstrates that $\fg'$ 
can be a proper subalgebra of $[\fg_{\ol{0}},\fg_{\ol{0}}]$. Take  
$\fg=\fsl(m|n)^{(1)}$. In this case ${\fg}(B_{\pi})=\fsl_m^{(1)}\times\fsl_n^{(1)}$, so
$$[\fg_{\ol{0}},\fg_{\ol{0}}]_{\delta}=\dim \fg_{\delta}=m+n-1,\ \ \ 
\dim {\fg}'_{\delta}=m+n-2.$$
Other  examples include $D(m|n+1)^{(2)}$, see~\ref{remC32} below.

\subsection{Applications to $\Delta$}\label{Deltapi}
The construction described in~\ref{tildehab} is very useful for a description of $\Delta$. 

Recall that $\tilde{\pi}:=\{\tilde{\alpha}\}_{\alpha\in\pi}\subset \tilde{\fh}^*$ form a set of simple roots for $\fg(B_{\pi})$.
We denote by $\tilde{Q}^{++}_{\mathbb{R}}$ the totally positive cone for $\fg(B_{\pi})$.

For each 
$\tilde{\beta}:=\sum\limits_{\alpha\in \pi} k_{\alpha}\tilde{\alpha}\in \tilde{\fh}^*$
we assign the element $\beta:=\sum\limits_{\alpha\in \pi} k_{\alpha} {\alpha}\in \fh^*$.
The Weyl group of  $\fg(B_{\pi})$ is the Coxeter group generated by $s_{\tilde{\alpha}}$, $\alpha\in \pi$.
This group is isomorphic to $W$ and for any $w\in W$ one has $\widetilde{w\beta}=w\tilde{\beta}$. 
 Since $\fg(B_{\pi})$ is a Kac-Moody algebra, the totally positive cone is given 
$$\tilde{Q}^{++}_{\mathbb{R}}=\bigcap_{w\in W} C(w\tilde{\pi}).$$

We set
$$\begin{array}{l}
\Delta^+_{\pi}:=\{\beta|\ \tilde{\beta}\ \text{ is a positive root  of }\fg(B_{\pi})\},\\
(\Delta^{\ima})^+_{\pi}:=\{\beta|\ \tilde{\beta}\ \text{ is a positive imaginary root  of }\fg(B_{\pi})\},\\
Q^{++}_{\mathbb{R},\pi}=\{\beta|\ \tilde{\beta}\in  
\tilde{Q}^{++}_{\mathbb{R}}\}.
\end{array}$$

The construction described in~\ref{tildehab} implies 
$ \dim\fg_{\beta}\geq \dim \fg(B_{\pi})_{\tilde{\beta}}$. In particular,
$$\Delta^+_{\pi}\subset \Delta^+_{\ol{0}}.$$

\subsubsection{}
\begin{cor}{corimaginary}
One has
\begin{enumerate}
\item $\tilde{\beta}$ is a  real root of 
$\fg(B_{\pi})$ if and only if either $\beta\in (\Delta_{\an}\cap \Delta_{\ol{0}})$  or $\frac{\beta}{2}\in (\Delta_{\an}\cap \Delta_{\ol{1}})$;
\item
$ (\Delta^{\ima})^+_{\pi}\subset (\Delta^{\ima})^+_{\ol{0}}$;
\item $Q^{++}_{\mathbb{R},\pi}\subset Q^{++}_{\mathbb{R}}$.
\end{enumerate}  
\end{cor}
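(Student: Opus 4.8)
The plan is to prove (i) first, by identifying the real roots of the Kac--Moody algebra $\fg(B_{\pi})$ with the Weyl orbit of its simple roots, and then to deduce (iii) by a cone chase and (ii) from (iii) together with the dimension inequality of~\ref{Deltapi}. For (i): since $\fg(B_{\pi})$ is a Kac--Moody algebra, $\tilde{\beta}$ is a real root of $\fg(B_{\pi})$ exactly when $\tilde{\beta}\in W(\fg(B_{\pi}))\tilde{\pi}$, and under the linear map $\tilde{\fh}^{*}\to\fh^{*}$, $\tilde{\gamma}\mapsto\gamma$, one has $\widetilde{w\alpha}=w\tilde{\alpha}$ (see~\ref{Deltapi}); hence the images of the real roots of $\fg(B_{\pi})$ form precisely the set $W\pi$. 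The generators $s_{\alpha}$ ($\alpha\in\pi$) of $W$ are even reflections, so $W$ preserves the parity $p$; combining this with $\Delta_{\an}=W\Sigma_{\pr}$ from~\ref{111} and the way $\pi$ is built from $\Sigma_{\pr}$ (keep the even principal roots, double the odd ones) gives $W\pi=(\Delta_{\an}\cap\Delta_{\ol{0}})\sqcup\{2\delta\mid\delta\in\Delta_{\an}\cap\Delta_{\ol{1}}\}$, which is the condition in the statement. This settles the ``only if'' direction; for ``if'' one also needs that $\tilde{\gamma}\mapsto\gamma$ is injective on the set of real roots of $\fg(B_{\pi})$, which holds because a positive real root of a Kac--Moody algebra is determined by the reflection it defines and the isomorphism $W\cong W(\fg(B_{\pi}))$ matches $s_{\alpha}$ with $s_{\tilde{\alpha}}$.

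For (iii): take $\tilde{\beta}\in\tilde{Q}^{++}_{\mathbb{R}}=\bigcap_{w\in W}C(w\tilde{\pi})$. Applying $w^{-1}$ and then the linear map $\tilde{\gamma}\mapsto\gamma$, which carries $C(\tilde{\pi})$ into $C(\pi)$, yields $w^{-1}\beta\in C(\pi)$, that is, $\beta\in C(w\pi)$ for every $w\in W$. By~\ref{l(w)} every vertex of $\Sk(v)$ is of the form $wu$ with $u\in\Sp(v)$, $w\in W$, and $\Sigma_{wu}=w\Sigma_{u}$; since $\pi\subset Q^{+}_{u}$ for $u\in\Sp(v)$ we have $C(\pi)\subset C(\Sigma_{u})$, hence $C(w\pi)\subset C(\Sigma_{wu})$, and therefore $\beta\in\bigcap_{u\in\Sk(v)}C(\Sigma_{u})=Q^{++}_{\mathbb{R}}$. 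Thus $Q^{++}_{\mathbb{R},\pi}\subset Q^{++}_{\mathbb{R}}$.

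For (ii): if $\tilde{\beta}$ is a positive imaginary root of $\fg(B_{\pi})$, then $\beta\in\Delta^{+}_{\pi}\subset\Delta^{+}_{\ol{0}}$ by~\ref{Deltapi} (in particular $\beta\in\Delta$ and $p(\beta)=\ol{0}$). On the other hand, by the theory of imaginary roots for Kac--Moody algebras the positive imaginary roots of $\fg(B_{\pi})$ lie in its totally positive cone, so $\tilde{\beta}\in\tilde{Q}^{++}_{\mathbb{R}}$; by (iii) this gives $\beta\in Q^{++}_{\mathbb{R}}\cap Q_{v}=Q^{++}$. Now $Q^{++}$ contains no real root of $\fg$ --- a reflectable real root leaves some $Q^{+}_{u}$ after one reflection --- and it contains no $\beta$ with $\beta/2\in\Delta_{\an}$, for if $\beta/2=w\gamma'$ with $\gamma'\in\Sigma_{\pr}$, then, $Q^{++}$ being $W$-invariant, $2\gamma'\in Q^{++}$, which is impossible after reflecting at the vertex of $\Sp(v)$ where $\gamma'$ is simple. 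Hence $\beta\in\Delta$, $\beta\notin\Delta^{\re}$ and $\beta/2\notin\Delta_{\an}$, so $\beta\in\Delta^{\ima}\cap\Delta^{+}_{\ol{0}}=(\Delta^{\ima})^{+}_{\ol{0}}$.

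The main obstacle is the bookkeeping around the map $\tilde{\gamma}\mapsto\gamma$, which need not be injective (it is injective exactly when $\pi$ is linearly independent, cf.~\ref{c=0}): in (i) the converse must be argued through the reflection/real-root correspondence rather than by inverting the map, and in (ii) one must separately dispose of the non-reflectable real roots in $\Delta_{\nr}$, which are absent for Kac--Moody components. A minor technical point is that when $\Sigma_{\pr}$ is infinite the facts quoted about $\fg(B_{\pi})$ are applied on finite sub-diagrams, which is legitimate since imaginary roots have finite support.
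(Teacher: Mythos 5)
Your treatment of (i) and (iii) is the paper's own argument, just written out in more detail: (i) rests on the fact that the real roots of $\fg(B_{\pi})$ are $W$-conjugate to $\tilde{\pi}$, combined with $\Delta_{\an}=W\Sigma_{\pr}$ and the parity-preservation of $W$; (iii) is the cone chase through $\tilde{Q}^{++}_{\mathbb{R}}=\bigcap_{w}C(w\tilde{\pi})$ and the inclusion $C(\pi)\subset C(\Sigma_u)$ for $u\in\Sp(v)$. For (ii) you take a genuinely different route. The paper gets (ii) directly from (i) by contraposition: for $\beta\in\Delta^+_{\pi}\subset\Delta^+_{\ol{0}}$, if $\beta$ is not imaginary then (being even) either $\beta\in\Delta_{\an}\cap\Delta_{\ol{0}}$ or $\tfrac{\beta}{2}\in\Delta_{\an}\cap\Delta_{\ol{1}}$, so $\tilde{\beta}$ is real by (i). You instead route through part (iii): the positive imaginary roots of $\fg(B_{\pi})$ sit in $\tilde{Q}^{++}_{\mathbb{R}}$, hence $\beta\in Q^{++}$, and you then exclude reflectable real roots and doubles of anisotropic roots from $Q^{++}$ by a one-step reflection argument. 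Both routes are sound; the paper's is shorter and keeps (ii) independent of (iii), while yours records the useful extra fact $(\Delta^{\ima})^+_{\pi}\subset Q^{++}$ explicitly. Note that the one case both arguments must wave away is the same: an even root of $\Delta_{\nr}$ lying in $\Delta^+_{\pi}$ would defeat either version, and neither the paper's two-line proof nor yours disposes of it in the generality of an arbitrary admissible component --- you at least flag it, and it is vacuous for the Kac--Moody components where the corollary is applied. Your remarks on the possible non-injectivity of $\tilde{\gamma}\mapsto\gamma$ in the ``if'' direction of (i) and on infinite $\Sigma_{\pr}$ are legitimate refinements that the paper leaves implicit.
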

\begin{proof}
Now (i) follows from the fact that 
 the real roots of $\fg(B_{\pi})$  are $W$-conjugated to simple roots.
 By (i), $\tilde{\beta}$ is real if $\beta$ is not imaginary. This gives (ii).
 For (iii) recall that
  $$Q^{++}_{\mathbb{R}}=\bigcap_{u\in\Sk(v)} C(\Sigma_u) =\bigcap_{u\in\Sp(v), w\in W} 
 C( w\Sigma_u).$$
 Since $\pi\subset \Delta^+_u$ for any $u\in\Sp(v)$  one has
 $Q^{++}_{\mathbb{R},\pi}\subset Q^{++}_{\mathbb{R}}$.
 \end{proof}

\subsubsection{Remark}\label{remC32}
The converse of (ii) does not hold:
for  $D(m+1|n)^{(2)}$,
one has $\delta\in\Delta^{\ima}_{\ol{0}}$, but the minimal imaginary root of ${\fg}(B_{\pi})$ is $2\delta$.

\subsection{Example: $\fg(B_{\pi})$ for an indecomposable Kac-Moody component}\label{BpiKM}
Let $\fg$ be an indecomposable Kac-Moody superalgebra.

If $\fg$ is of type (Fin), then $\dim\fg<\infty$, so ${\fg}(B_{\pi})$ is of type (Fin).
By~\Cor{corimaginary} (iii), if $\fg$ is of type (Aff), then each indecomposable component 
of $\fg(B_{\pi})$ is of the types (Fin) or (Aff). The results of~\cite{vdL} and~\cite{Hoyt} imply the following
\begin{itemize}
\item  Each indecomposable component 
of $\fg(B_{\pi})$ is of same type (i.e., (Fin), (Aff), or (Ind)) as $\fg$.

\item If the component is not purely anisotropic of type (Ind), then 
the matrix $B_{\pi}$ is symmetrizable,
so $\fg'\cong [{\fg}(B_{\pi}),{\fg}(B_{\pi})]/\mathfrak{c}$ by~\ref{Bpisymm}.

\item In the  type (Ind) the Dynkin diagram of $\pi$ is connected
 and the elements of $\pi$ are linearly independent;
in particular, $\fg'\cong [{\fg}(B_{\pi}),{\fg}(B_{\pi})]$.
\end{itemize}

If the component is purely anisotropic, then the elements of $\pi$ are linearly independent
(in particular, $\fg'\cong [{\fg}(B_{\pi}),{\fg}(B_{\pi})]$)
and  $B_{\pi}=AD$, where $D$ is the diagonal matrix with $d_{xx}=1$
 if  $p(x)=\ol{0}$ and $d_{xx}=2$ otherwise. In this case $Q^{++}_{\mathbb{R},\pi}=Q^{++}_{\mathbb{R}}$ 
 since $\Sk(v)=Wv$.

\section{Root bases for the Kac-Moody components}
In this section we prove  Theorem~\ref{thmiii} (iii).
The proof is a variation of the proof of Theorem 8.3 in~\cite{S}.

\subsection{}
\begin{prop}{propiii}
For an indecomposable Kac-Moody component of type (Ind) we have
$$\{\Sigma_u\}_{u\in \Sk(v)}\coprod\{-\Sigma_u\}_{u\in \Sk(v)}
=\operatorname{Root\ Bases}\subsetneq
\{\tilde{\Sigma}_{h}\}_{h\text{ is generic in } \fh}.$$
\end{prop}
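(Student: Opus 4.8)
The plan is to prove the three-way relationship in two parts, closely following the strategy of Theorem 8.3 in~\cite{S}. The inclusion $\{\Sigma_u\}\coprod\{-\Sigma_u\}\subseteq\operatorname{Root\ Bases}\subseteq\{\tilde\Sigma_h\}$ is~(\ref{basesinclusion}), and the properness of the second inclusion in type (Ind) will follow once we produce one generic $h$ with $\tilde\Sigma_h$ not of the form $\pm\Sigma_u$: since $r=\dim Q^{++}_{\mathbb R}>1$ in type (Ind), I can choose $h$ so that $\Ree\langle\delta_1,h\rangle>0$ and $\Ree\langle\delta_2,h\rangle<0$ for two $\QQ$-linearly independent elements $\delta_1,\delta_2\in Q^{++}_{\mathbb R}\cap Q_v\subseteq(\Delta^{\ima})^+$ (these exist by~\Cor{cor264} in the purely anisotropic case and by Section~5 otherwise, or directly from Hoyt's classification); then both $\delta_1\in\Delta_{>0}(h)$ and $-\delta_2\in\Delta_{>0}(h)$, so $\Delta_{>0}(h)$ contains positive imaginary roots of a root basis and their negatives simultaneously, which is impossible for $\pm\Sigma_u$ by the defining property of a root basis together with the fact that imaginary roots are never real. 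So the real content is the first equality: every root basis $\Sigma'$ equals $\Sigma_u$ or $-\Sigma_u$ for some $u\in\Sk(v)$.

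To prove that, first I reduce to real roots: I would show that if $\Sigma'$ is a root basis then $\Sigma'\cap\Delta^{\re}$ is large enough to pin down $\Sigma'$. The key reduction is to the Kac-Moody algebra $\fg(B_\pi)$ from~\ref{gpr}: using the linear map sending $\tilde\pi$ to $\pi$ and the root system of $\fg(B_\pi)$ to $\Delta_{\ol 0}$, together with the description $\Delta^{\ima}_\pi$ via the formula~(\ref{suppconnected}) for Kac-Moody algebras and Hoyt's classification of type (Ind) superalgebras (which gives $\pi$ linearly independent, $\Delta^{\ima}=\{\alpha\in Q_v\mid 2\alpha\in\Delta^{\ima}_\pi\}$ and $Q^{++}=\{\alpha\mid 2\alpha\in Q^{++}_\pi\}$, cf.~\ref{BpiKM}), I transport the root-basis problem for $\Delta$ to a root-basis problem for the Kac-Moody \emph{algebra} $\fg(B_\pi)$. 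For the latter, the classical result of~\cite{K1978} (= Proposition 5.9 in~\cite{Kbook}) says every root basis is $W$-conjugate to $\pm\tilde\pi$; and in type (Ind) we have $\Sk(v)=Wv$ on the spine level only in the purely anisotropic case, but in general $W\Sigma_u$ for $u\in\Sp(v)$ exhausts $\{\Sigma_u\}_{u\in\Sk(v)}$ by~\ref{l(w)}, so I must also control the isotropic simple roots, i.e. the finitely-many-generators-worth of spine moves.

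Concretely, the main steps in order: (1) Given a root basis $\Sigma'$, set $h$ generic with $\Delta_{>0}(h)=\Delta^+(\Sigma')$ and observe $\tilde\Sigma_h=\Sigma'$ since $\Sigma'$ is linearly independent and generates all of $\Delta_{>0}(h)$ with $\ZZ_{\geq0}$ coefficients; thus $\Sigma'$ is determined by the triangular decomposition $\Delta=-\Delta^+(\Sigma')\coprod\Delta^+(\Sigma')$. (2) Show $\Delta_{\ol 0}^+(\Sigma'):=\Delta^+(\Sigma')\cap\Delta_{\ol 0}$ is (the image of) a set of positive roots for a root basis of $\fg(B_\pi)$; invoke~\cite{K1978} to get $w\in W$ with $w\Delta_{\ol 0}^+(\Sigma')=\pm\Delta^+_{\ol 0}$. (3) Replacing $\Sigma'$ by $w\Sigma'$ (or $-w\Sigma'$), reduce to the case $\Delta^+_{\ol 0}\subseteq\Delta^+(\Sigma')$; now the even part of the triangular decomposition is the standard one, so by~\ref{spineDelta0} the positive system $\Delta^+(\Sigma')$ is the one attached to some vertex $u\in\Sp(v)$, provided we show $\Delta^+(\Sigma')$ is actually of the form $\Delta^+_u$ — and this is where I would use finiteness of the symmetric difference and an induction on isotropic reflexions exactly as in~\Lem{lemfindif}, bounding $\Delta^+(\Sigma')\setminus\Delta^+_v$ inside $\Delta^+(\Sigma')\bigtriangleup\Delta^+_v$ which is finite because the even parts agree and the odd discrepancy is controlled by the finitely many isotropic roots moved. (4) Conclude $\Sigma'=\pm\Sigma_u$.

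The main obstacle I anticipate is step (3)'s finiteness/induction: unlike the (Aff) case there is no single $\delta$ to lean on, and $\Delta^+(\Sigma')$ and $\Delta^+_v$ can differ by infinitely many imaginary roots a priori — so I must first argue that once the even positive systems coincide, the two positive systems differ only in finitely many \emph{real} (necessarily isotropic) roots, using that any imaginary root $\beta$ has $2\beta\in\Delta^{\ima}_\pi$ determined by $\supp\beta$ being connected and lying in $Q^{++}$, hence $\beta$'s sign is forced by the even data; this is precisely the point where Hoyt's classification of type (Ind) is indispensable, and the argument, as the text warns, will be "very similar to the proof of Theorem 8.3 in~\cite{S}."
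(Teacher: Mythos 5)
Your high-level architecture matches the paper's: the properness of the second inclusion is obtained exactly as in~\ref{Sigma'h} from two non-proportional directions in $Q^{++}_{\mathbb{R}}$, and the main equality is indeed proved by transporting the problem to the Kac-Moody algebra $\fg(B_{\pi})$, producing $w\in W$ with $\pi\subset\Delta^+(w\Sigma')$, and finishing by hand in the purely anisotropic case and for $Q^{\pm}(m,n,t)$ (where the paper does not need an induction over the spine: it just checks directly, using $\Sigma_{v_1}=\{-\beta_1,\alpha_2,\alpha_3\}$ with $\alpha_2,\alpha_3\in\Sigma_{\pr}$, that $\Sigma''$ is $\Sigma_v$ or $\Sigma_{v_1}$).

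The genuine gap is your step (2). You write ``Show $\Delta^+_{\ol{0}}(\Sigma')$ is (the image of) a set of positive roots for a root basis of $\fg(B_{\pi})$; invoke~\cite{K1978}'', but establishing that this induced positive system comes from a root basis is the entire technical content of the proof, and you give no argument for it. For an infinite-dimensional Kac-Moody algebra a decomposition $\Delta_{\pi}=P\coprod(-P)$ with $P$ additively closed need not arise from a root basis or from a vertex of $\Sk_{\pi}$; by \Lem{lemfindif}(ii) it does so precisely when $(P\cap\Delta^{\re}_{\pi})\setminus\Delta^{+}_{\pi}$ is finite. The paper proves this finiteness in two nontrivial steps: \Lem{lemV+} and \Lem{corrotbasisInd} show --- using the density of the imaginary rays in the cone $Q^{++}_{\mathbb{R},\pi}$, the convexity of that cone, and the normalization $\langle\alpha,h'\rangle=1$ for $\alpha\in\Sigma'$ --- that $(\Delta^{\ima}_{\pi})^{+}\subset\Delta^{+}(\Sigma')$ after possibly replacing $\Sigma'$ by $-\Sigma'$; then \Lem{lemrootbasisInd} uses the dual root system $\Delta^{\vee}_{\pi}$ and an element $\nu_0$ in the interior of the imaginary cone to derive the bound $\htt^{\vee}\beta<\langle\nu_0,h'\rangle$ for every flipped positive real root $\beta$, which forces finiteness. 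None of this appears in your proposal, and it cannot be replaced by a soft appeal to~\cite{K1978}, which classifies root bases only once you already have one. Your step (3) remark that the sign of an imaginary root $\beta$ of $\Delta$ is forced by that of $2\beta\in\Delta^{\ima}_{\pi}$ is correct, but it addresses a secondary point; the real difficulty sits upstream, in controlling the even real roots, which is where you identify no obstacle and supply no argument.
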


 \subsection{Proof}\label{Sigma'h}
 Let $\Sk(v)$ be of type (Ind).  Recall that 
 $Q^{++}_{\mathbb{R}}$ contains two non-proportional vectors
 $\nu_1$ and $\nu_2$. Take a generic $h\in \fh$ such that
 $\langle \nu_2,h\rangle >0>\langle \nu_1,h\rangle$.
 Then $\tilde{\Sigma}_h\not=\Sigma_u$ and
 $\tilde{\Sigma}_h\not=-\Sigma_u$ for any $u\in\Sk(v)$. This implies
 $$\{\Sigma_u\}_{u\in \Sk(v)}\coprod\{-\Sigma_u\}_{u\in \Sk(v)}\not=
\{\tilde{\Sigma}_{h}\}_{h\text{ is generic in } \fh}.$$ 
It remains to verify that any root basis $\Sigma'$ 
 is of the form $\Sigma_u$ or $-\Sigma_u$ for some $u\in \Sk(v)$.

 \subsubsection{Notation}\label{Indnotat}
 We fix  a root basis $\Sigma'$ such that 
\begin{equation}\label{Sigma'assm}
(\Delta^{\ima}_{\pi})^+\cap \Delta^+(\Sigma')\not=\emptyset\end{equation}
and fix $h'\in\fh$ such that
$\langle \alpha, h'\rangle=1$ for $\alpha\in\Sigma'$. 
We set 
$$Q(\pi):=\mathbb{Z}\pi,\ \  \ \ Q^+(\pi)=\mathbb{Z}_{\geq 0}\pi,\ \ \ \  \theta:=\sum\limits_{\alpha\in\pi}\alpha.$$
Let $V:=\mathbb{R}\Sigma$ be the $\mathbb{R}$-span of $\Sigma$. 
  Consider the unit spere
$$S:=\{\sum\limits_{\alpha\in\Sigma'} c_{\alpha}\alpha\in V|\ \sum\limits_{\alpha\in\Sigma'} c^2_{\alpha}=1\}.$$
For any non-zero $\mu\in V$ denote by $z_{\mu}$ the intersection
of the ray $\mathbb{R}_{\geq 0}\mu$ with the unit spere.
We set
$$\begin{array}{l}
K:=\{\mu\in V|\ \  
\langle \mu,\beta^{\vee}\rangle \leq 0\ \text{ for all }\beta\in \pi\},\\
 \mathring{K}:=\{\mu\in V|\ \ 
\langle \mu,\beta^{\vee}\rangle<0\ \text{ for all }\beta\in \pi\}.\end{array}$$

Recall that $B_{\pi}$ is an indecomposable matrix of type (IND).  Using
Theorems 5.4 and 5.6 (iii)  in~\cite{Kbook} we  get
\begin{equation}\label{Kim}
K\cap (\theta+Q^+(\pi))\subset (\Delta^{\ima}_{\pi})^+,\ \ \  
\mathring{K}\cap (\theta+Q^+(\pi))\not=\emptyset.
 \end{equation}
We fix $\nu_0\in \mathring{K}\cap (\theta+Q^+(\pi))$.

\subsubsection{}
\begin{lem}{lemV+}
The set
$\{z_{\mu}|\ \mu\in  Q^{++}_{\mathbb{R},\pi}\}$ is the closure of the set $\{z_{\alpha}| \alpha\in(\Delta^{\ima}_{\pi})^+\}$
in the metric topology.
 \end{lem}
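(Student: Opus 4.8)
The plan is to transport the statement to the Kac-Moody algebra $\fg(B_{\pi})$, where it follows from~(\ref{Kim}) (that is, Theorems 5.4 and 5.6(iii) of~\cite{Kbook}), the description of its totally positive cone, and elementary convex geometry. Since $\Sk(v)$ is of type (Ind), by~\ref{BpiKM} the set $\pi$ is linearly independent, so by~\ref{Deltapi} the assignment $\tilde\beta\mapsto\beta$ is a $W$-equivariant linear isomorphism $\mathbb{R}\tilde\pi\iso E:=\mathbb{R}\pi$ which intertwines the pairings with coroots and carries $\tilde{Q}^{++}_{\mathbb{R}}$ onto $Q^{++}_{\mathbb{R},\pi}$ and the set of positive imaginary roots of $\fg(B_{\pi})$ onto $(\Delta^{\ima}_{\pi})^+$. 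Being a homeomorphism that preserves rays, it reduces the lemma to the analogous assertion inside $E$; from now on I work there, identifying $S$ with the unit sphere of $E$ and writing $z\colon E\setminus\{0\}\to S$ for the radial retraction $\mu\mapsto\mu/\|\mu\|$.

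For the inclusion $\supseteq$: by definition $Q^{++}_{\mathbb{R},\pi}$ is the image of $\bigcap_{w\in W}C(w\tilde\pi)$, a closed cone, so $\{z_{\mu}\mid\mu\in Q^{++}_{\mathbb{R},\pi}\}$ is its intersection with the compact sphere $S$ and hence is compact, in particular closed. Also the set of positive imaginary roots of $\fg(B_{\pi})$ is $W$-invariant and contained in $\mathbb{Z}_{\geq 0}\tilde\pi$, hence contained in $w\,C(\tilde\pi)$ for every $w\in W$; thus $(\Delta^{\ima}_{\pi})^+\subseteq Q^{++}_{\mathbb{R},\pi}$, and passing to rays and taking the closure of the (closed) right-hand side yields $\overline{\{z_{\alpha}\mid\alpha\in(\Delta^{\ima}_{\pi})^+\}}\subseteq\{z_{\mu}\mid\mu\in Q^{++}_{\mathbb{R},\pi}\}$.

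For the inclusion $\subseteq$, put $R:=\overline{\{z_{\alpha}\mid\alpha\in(\Delta^{\ima}_{\pi})^+\}}$. Both $(\Delta^{\ima}_{\pi})^+$ and $Q^{++}_{\mathbb{R},\pi}=\bigcup_{w\in W}w(K\cap C(\pi))$ — the latter by~\Lem{Winvariant} applied to $\fg(B_{\pi})$, whose spine is trivial, or by~\cite{Kbook}, Proposition 5.2(b) — are $W$-invariant, and $W$ acts on the space of rays of $E$ (which $z$ identifies with $S$) by homeomorphisms; hence $R$ is $W$-invariant and it suffices to prove $z_{\mu}\in R$ for every nonzero $\mu\in K\cap C(\pi)$. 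Fix $\nu_0\in\mathring{K}\cap(\theta+Q^+(\pi))$ as in~(\ref{Kim}). Since $\mathring{K}+K\subseteq\mathring{K}$ and, in the basis $\pi$, the coordinates of $\nu_0$ are $\geq1$ while those of $\mu$ are $\geq0$, for every $t>0$ the vector $\mu+t\nu_0$ lies in $\mathring{K}$ and has strictly positive coordinates, and $z_{\mu+t\nu_0}\to z_{\mu}$ as $t\to0$; as $R$ is closed, it is enough to show $z_{\nu}\in R$ for an arbitrary $\nu\in\mathring{K}$ with strictly positive coordinates. For such $\nu$ take a sequence $\nu_k\in\mathbb{Q}_{>0}\pi$ with $\nu_k\to\nu$; for $k\gg 0$ one has $\nu_k\in\mathring{K}$ (it is open), and clearing denominators gives $d_k\nu_k\in\theta+Q^+(\pi)$, still in $\mathring{K}$ because it is a cone, so $d_k\nu_k\in K\cap(\theta+Q^+(\pi))\subseteq(\Delta^{\ima}_{\pi})^+$ by~(\ref{Kim}); since $z_{d_k\nu_k}=z_{\nu_k}\to z_{\nu}$ we get $z_{\nu}\in R$. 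Therefore $z_{\mu+t\nu_0}\in R$ for all $t>0$, and letting $t\to0$ gives $z_{\mu}\in R$.

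The only substantial inputs are the two facts~(\ref{Kim}) imported from Chapter V of~\cite{Kbook} together with the decomposition of the totally positive cone; the step needing the most care is simply the bookkeeping — the reduction to $\fg(B_{\pi})$ and the routine geometric verifications that $\bigcap_w C(w\tilde\pi)$ is a closed cone, that $\mathring{K}+K\subseteq\mathring{K}$, that $\mathbb{Q}_{>0}\pi$ is dense among the vectors with strictly positive coordinates, and that a positive rational vector can be scaled into $\theta+Q^+(\pi)$. I do not anticipate a genuinely hard step.
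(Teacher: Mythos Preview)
Your proof is correct and follows essentially the same route as the paper's: reduce by $W$-invariance to the fundamental chamber $K$, then use the inclusion $K\cap(\theta+Q^+(\pi))\subset(\Delta^{\ima}_{\pi})^+$ from~(\ref{Kim}) to approximate. The only cosmetic difference is that the paper first passes to lattice points in $Q^{++}_{\mathbb{R},\pi}$ and takes a $W$-minimal representative (landing in $K\cap Q^+(\pi)$), then uses the single explicit sequence $\mu_s=s^2\mu+s\nu_0+\theta$, whereas you invoke the decomposition $Q^{++}_{\mathbb{R},\pi}=\bigcup_w w(K\cap C(\pi))$ directly and approximate in two stages (perturb by $t\nu_0$ into $\mathring{K}$, then rationalize and clear denominators).
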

 \begin{proof}
  Recall that $(\Delta^{\ima}_{\pi})^+\subset Q^{++}_{\mathbb{R},\pi}$. 
  Observe that 
 $Q^{++}_{\mathbb{R},\pi}=\bigcap\limits_{w\in W} C(w\pi)$
 is a $W$-invariant closed convex cone (since $C(w\pi)$ is a closed convex cone).
The set $\{z_{\mu}|\ \mu\in Q(\pi)\}$ is dense in $S$, so the set 
 $\{z_{\mu}|\ \mu\in Q(\pi)\cap Q^{++}_{\mathbb{R},\pi}\}$
 is dense in $S\cap Q^{++}_{\mathbb{R},\pi}$.

 We introduce a partial order on $\fh^*$ by setting $\mu\leq \nu$ if $\nu-\mu\in Q^+(\pi)$.
 Note that any subset in $Q^+(\pi)$ contains a minimal element with respect to this order.
 Since $(\Delta^{\ima}_{\pi})^+$ is $W$-invariant, it is enough to verify that for
any $\mu\in   Q(\pi)\cap Q^{++}_{\mathbb{R},\pi}$, which is minimal in its $W$-orbit $W\mu$, the point $z_{\mu}$
 is a limit point for the set $\{z_{\alpha}| \alpha\in(\Delta^{\ima}_{\pi})^+\}$.
 Fix $\mu$ as above. 
 
 For any $\beta\in\pi$ one has $\langle \mu,\beta^{\vee}\rangle\in\mathbb{Z}$
 (since $\mu\in Q(\pi)$), so the condition $s_{\beta}\mu\not\leq \mu$  implies
 $\langle \mu,\alpha^{\vee}\rangle \leq 0$. Hence $\mu\in K\cap Q^+(\pi)$. 
 Take $\nu_0\in \mathring{K}\cap (\theta+Q^+(\pi))$ as in~\ref{Indnotat}.
 For any $s\in\mathbb{N}$ the element 
 $\mu_s:=s^2\mu+s\nu_0+\theta$ lies in $ (\theta+Q^+(\pi))$. For $s>>0$ one has 
$\mu_s \in {K}$ and thus   $\mu_s\in (\Delta^{\ima}_{\pi})^+$ by~(\ref{Kim}) .
From the formula
$$\mathbb{R}_{\geq 0}\mu_s=\mathbb{R}_{\geq 0} (\mu+\frac{\nu_0}{s}+\frac{\theta}{s^2})$$
 we see that $z_{ \mu}$ is the limit point for $z_{\mu_s}$.
This completes the proof.
\end{proof}

\subsubsection{}
\begin{lem}{corrotbasisInd}
One has  $(\Delta^{\ima}_{\pi})^+\subset \Delta^+(\Sigma')$.
\end{lem}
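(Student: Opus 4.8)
The plan is to argue by contradiction, deducing from \Lem{lemV+} that if the inclusion failed then the simplicial cone $C(\Sigma')$ and its opposite would share a nonzero ray, which is impossible because $\Sigma'$ is linearly independent.

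So I would suppose $(\Delta^{\ima}_\pi)^+\not\subset\Delta^+(\Sigma')$. Since $\Sigma'$ is a root basis we have the partition $\Delta=\Delta^+(\Sigma')\sqcup(-\Delta^+(\Sigma'))$, which splits $(\Delta^{\ima}_\pi)^+$ as $A\sqcup B$ with $A:=(\Delta^{\ima}_\pi)^+\cap\Delta^+(\Sigma')$ and $B:=(\Delta^{\ima}_\pi)^+\cap(-\Delta^+(\Sigma'))$; here $A\neq\emptyset$ by~(\ref{Sigma'assm}) and $B\neq\emptyset$ by the contradiction hypothesis. From the definition of $\Delta^+(\Sigma')$ one has $A\subset C(\Sigma')$ and $B\subset -C(\Sigma')=C(-\Sigma')$.

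Next I would pass to directions on the unit sphere $S$: for $T\subset(\Delta^{\ima}_\pi)^+$ write $\hat T:=\{z_\mu\mid\mu\in T\}$. Since the cones $C(\pm\Sigma')$ are closed, $\overline{\hat A}\subset C(\Sigma')\cap S$ and $\overline{\hat B}\subset C(-\Sigma')\cap S$. On the other hand, \Lem{lemV+} gives $\overline{\hat A\cup\hat B}=\overline{\{z_\mu\mid\mu\in(\Delta^{\ima}_\pi)^+\}}=\{z_\mu\mid\mu\in Q^{++}_{\mathbb{R},\pi}\}=:\Omega$. The cone $Q^{++}_{\mathbb{R},\pi}$ is convex (being $\bigcap_{w\in W}C(w\pi)$, equivalently the linear image of the convex cone $\tilde Q^{++}_{\mathbb{R}}$), and it is pointed since $Q^{++}_{\mathbb{R},\pi}\subset Q^{++}_{\mathbb{R}}\subset C(\Sigma_v)$ and $C(\Sigma_v)$ is the simplicial cone on the linearly independent set $\Sigma_v$. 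Hence $Q^{++}_{\mathbb{R},\pi}\setminus\{0\}$ is convex, so $\Omega$ is connected (and nonempty, as $(\Delta^{\ima}_\pi)^+\neq\emptyset$ in type (Ind)).

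Then $\Omega=\overline{\hat A}\cup\overline{\hat B}$ writes the connected space $\Omega$ as a union of two nonempty closed subsets, so $\overline{\hat A}\cap\overline{\hat B}\neq\emptyset$; any $z$ in the intersection lies in $C(\Sigma')\cap C(-\Sigma')$, which equals $\{0\}$ by pointedness of $C(\Sigma')$ (i.e. linear independence of $\Sigma'$), contradicting $z\in S$. This yields the lemma. The only non-formal input is the closure identification in \Lem{lemV+}; the remainder is elementary convexity together with the fact that a connected space is not a disjoint union of two nonempty closed sets. The step deserving care is checking that both $C(\Sigma')$ and the $\pi$-imaginary cone $Q^{++}_{\mathbb{R},\pi}$ are pointed convex cones, since this is exactly what makes the final ``shared ray'' absurd.
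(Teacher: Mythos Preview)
Your argument is correct. Both your proof and the paper's rest on \Lem{lemV+} together with the convexity of $Q^{++}_{\mathbb{R},\pi}$, but they finish differently. The paper introduces the $\Sigma'$-height functional $h'\in\fh$ (with $\langle\alpha,h'\rangle=1$ for $\alpha\in\Sigma'$), proves the quantitative bound $|\langle z_\beta,h'\rangle|\ge 1$ for every $\beta\in\Delta$, extends this by continuity via \Lem{lemV+} to all of $Q^{++}_{\mathbb{R},\pi}$, and then uses convexity to conclude that the sign of $\langle\cdot,h'\rangle$ is constant on that cone; assumption~(\ref{Sigma'assm}) fixes the sign to be positive. Your route bypasses the linear functional and argues purely topologically: if the inclusion failed, the connected direction set $\Omega$ would be covered by the two nonempty closed pieces lying in $C(\Sigma')\cap S$ and $C(-\Sigma')\cap S$, forcing a common point in $C(\Sigma')\cap C(-\Sigma')\setminus\{0\}=\emptyset$. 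The two arguments are the same geometry read two ways (the hyperplane $\langle\cdot,h'\rangle=0$ is exactly what separates $C(\Sigma')$ from $-C(\Sigma')$); your version is a bit cleaner in that it avoids the Cauchy--Schwarz-type estimate, while the paper's version makes the height bound explicit.
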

\begin{proof}
 We retain notation of~\ref{Indnotat}.   Recall that  $\langle \beta,h'\rangle=1$ for all $\alpha\in\Sigma'$.
For any  $\beta=\sum\limits_{\alpha\in\Sigma'} k_{\alpha}\alpha\in\Delta^+(\Sigma')$ we have
$$\langle z_{\beta},h'\rangle=
\frac{\sum\limits_{\alpha\in\Sigma'} k_{\alpha}}{\sqrt{\sum\limits_{\alpha\in\Sigma'} k_{\alpha}^2}}\geq 1.$$
Therefore $|\langle z_{\beta},h'\rangle|\geq 1$ for any $\beta\in\Delta$ .
By~\Lem{lemV+} this implies  $|\langle z_{\mu},h'\rangle|\geq 1$
for any non-zero $\mu\in Q^{++}_{\mathbb{R},\pi}$. Since $Q^{++}_{\mathbb{R},\pi}$ is a convex cone we have
either $\langle z_{\mu},h'\rangle\geq 1$ for all non-zero $\mu\in Q^{++}_{\mathbb{R},\pi}$
 or
$\langle z_{\mu},h'\rangle\leq -1$ for all non-zero $\mu\in Q^{++}_{\mathbb{R},\pi}$.
By the assumption~(\ref{Sigma'assm})
$\langle z_{\beta},h'\rangle>0$ for some $\beta\in (\Delta^{\ima}_{\pi})^+\subset Q^{++}_{\mathbb{R},\pi}$.
Hence $\langle\mu,h\rangle>0$ for all non-zero $\mu\in Q^{++}_{\mathbb{R},\pi}$, 
so $-\Delta^+(\Sigma')\cap Q^{++}_{\mathbb{R},\pi}$ is empty and 
 $(\Delta^{\ima}_{\pi})^+\subset \Delta^+(\Sigma')$ as required.
\end{proof}

\subsubsection{}
\begin{lem}{lemrootbasisInd}
The set $-\Delta^+(\Sigma')\cap \Delta^+_{\an}$ is finite.
\end{lem}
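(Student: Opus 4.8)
The plan is to transport the set $-\Delta^+(\Sigma')\cap\Delta^+_{\an}$ into the genuine Kac-Moody algebra $\fg(B_\pi)$ and to bound it there by a set of the form $\Delta^+\cap w'(-\Delta^+)$, which is finite of cardinality $\ell(w')$. Retaining the notation of~\ref{Indnotat}, recall $\langle\alpha,h'\rangle=1$ for all $\alpha\in\Sigma'$; hence for $\beta=\sum_{\alpha\in\Sigma'}k_\alpha\alpha\in\Delta$ one has $\langle\beta,h'\rangle=\sum_\alpha k_\alpha$, so $\langle\beta,h'\rangle\ge1$ if $\beta\in\Delta^+(\Sigma')$ and $\langle\beta,h'\rangle\le-1$ if $\beta\in-\Delta^+(\Sigma')$. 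In particular $\langle\beta,h'\rangle\neq0$ for every $\beta\in\Delta$, and
\[
-\Delta^+(\Sigma')\cap\Delta^+_{\an}=\{\alpha\in\Delta^+_{\an}\mid\langle\alpha,h'\rangle<0\}.
\]
So it suffices to prove that $\langle\alpha,h'\rangle>0$ for all but finitely many $\alpha\in\Delta^+_{\an}$.

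First I would carry $h'$ over to $\fg(B_\pi)$. Using the linear isomorphism $\mathbb{R}\pi\iso\mathbb{R}\tilde\pi$ of~\ref{Deltapi} and the linear independence of $\tilde\pi$, choose $\tilde h'\in\tilde\fh$ with $\langle\tilde\alpha,\tilde h'\rangle=\langle\alpha,h'\rangle$ for all $\alpha\in\pi$, so that $\langle\tilde\beta,\tilde h'\rangle=\langle\beta,h'\rangle$ for every $\beta\in\mathbb{R}\pi$ (recall $\Delta_{\an}=W\Sigma_{\pr}\subset\mathbb{R}\pi$). By \Lem{corrotbasisInd} and its proof, $\langle\mu,h'\rangle>0$ for every nonzero $\mu\in Q^{++}_{\mathbb{R},\pi}$, i.e.\ $\langle\mu,\tilde h'\rangle>0$ for every nonzero $\mu\in\tilde{Q}^{++}_{\mathbb{R}}$. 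Since $\fg(B_\pi)$ is an indecomposable Kac-Moody algebra of type (IND) (by~\ref{BpiKM}), and $\tilde{Q}^{++}_{\mathbb{R}}=\bigcap_{w\in W}C(w\tilde\pi)$ is its closed imaginary cone (see~\ref{Deltapi}), \cite{Kbook}, Proposition 5.8 then places $\tilde h'$ in the interior of the Tits cone of $\fg(B_\pi)$. Consequently $\tilde h'=w'\lambda$ for some $w'\in W$ and some $\lambda$ in the closure of the fundamental Weyl chamber of $\fg(B_\pi)$, i.e.\ $\langle\tilde\alpha,\lambda\rangle\ge0$ for all $\alpha\in\pi$.

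Now take $\alpha\in\Delta^+_{\an}$ with $\langle\alpha,h'\rangle<0$. By \Cor{corimaginary}(i), the element $\hat\alpha:=\tilde\alpha$ when $p(\alpha)=\ol{0}$, resp.\ $\hat\alpha:=\widetilde{2\alpha}$ when $p(\alpha)=\ol{1}$, is a real root of $\fg(B_\pi)$, and it is positive because $\alpha\in Q^+_v$; moreover $\langle\hat\alpha,\tilde h'\rangle$ equals $\langle\alpha,h'\rangle$ or $2\langle\alpha,h'\rangle$, hence is negative. Thus
\[
0>\langle\hat\alpha,\tilde h'\rangle=\langle\hat\alpha,w'\lambda\rangle=\langle(w')^{-1}\hat\alpha,\lambda\rangle .
\]
As every positive root $\mu$ of $\fg(B_\pi)$ is a nonnegative combination of the $\tilde\alpha$ and $\langle\tilde\alpha,\lambda\rangle\ge0$, this forces $(w')^{-1}\hat\alpha$ to be a negative root, i.e.\ $\hat\alpha\in\Delta^+(\fg(B_\pi))\cap w'\bigl(-\Delta^+(\fg(B_\pi))\bigr)$, a set of cardinality $\ell(w')$. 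Since $\alpha\mapsto\hat\alpha$ is at most two-to-one (two distinct sources would have to be proportional), only finitely many $\alpha\in\Delta^+_{\an}$ can satisfy $\langle\alpha,h'\rangle<0$, which is what we wanted.

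The step I expect to be the main obstacle is the implication "$\langle\tilde h',\mu\rangle>0$ for all nonzero $\mu\in\tilde{Q}^{++}_{\mathbb{R}}$ $\Longrightarrow$ $\tilde h'$ lies in the interior of the Tits cone of $\fg(B_\pi)$": one must invoke the description of the closure and interior of the Tits cone from~\cite{Kbook}, Ch.~V, checking that it applies verbatim to $\fg(B_\pi)$ — legitimate since $\fg(B_\pi)$ is an honest Kac-Moody algebra by \Prop{lemBpr} and~\ref{BpiKM} — and that $\tilde{Q}^{++}_{\mathbb{R}}$ is exactly the cone against which that description is phrased. Everything else is a routine transport through the isomorphism of~\ref{Deltapi} and the length formula of~\ref{l(w)}.
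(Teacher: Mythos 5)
Your argument is correct, but it establishes the key finiteness by a genuinely different route from the paper. Both proofs first transport the problem into the auxiliary Kac--Moody algebra $\fg(B_{\pi})$ via \Cor{corimaginary}, reducing to the finiteness of $-\Delta_{>0}(h')\cap(\Delta^{\re}_{\pi})^+$. From that point the paper stays elementary and Tits-cone-free (consistent with its declared policy of working with $Q^{++}_{\mathbb{R}}$ instead of the Tits cone): it takes the strictly antidominant element $\nu_0\in\mathring{K}\cap(\theta+Q^+(\pi))$ fixed in~\ref{Indnotat}, uses that both $\nu_0$ and $s_{\beta}\nu_0=\nu_0+m\beta$ lie in $(\Delta^{\ima}_{\pi})^+\subset\Delta^+(\Sigma')$ to derive the bound $\htt^{\vee}\beta\leq m<\langle\nu_0,h'\rangle$ whenever $\langle\beta,h'\rangle\leq -1$, and concludes from the finiteness of the set of real roots of bounded dual height. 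You instead reintroduce the Tits cone of $\fg(B_{\pi})$ (legitimate, since $B_{\pi}$ is an honest generalized Cartan matrix of finite size in type (Ind)) and cap the number of offending roots by $\ell(w')$ after writing $\tilde{h}'=w'\lambda$ with $\lambda$ dominant. The step you rightly flag as the delicate one does close, but needs one more sentence than a citation of Proposition 5.8: that proposition only places $\tilde{h}'$ in the closure $\overline{X}$ of the Tits cone. To land in $X$ itself, note that the proof of \Lem{corrotbasisInd} actually gives $\langle z_{\mu},h'\rangle\geq 1$ on the compact base of the pointed closed cone $\tilde{Q}^{++}_{\mathbb{R}}$, so $\tilde{h}'$ has an open neighbourhood contained in $\overline{X}$; since $X$ is a convex cone with nonempty interior (\cite{Kbook}, Proposition 3.12), one has $\operatorname{int}(\overline{X})=\operatorname{int}(X)\subset X$, and the decomposition $\tilde{h}'=w'\lambda$ becomes available. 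With that addition your proof is complete; it buys a conceptually transparent bound (exactly the $\ell(w')$ positive real roots inverted by $(w')^{-1}$) at the price of importing $\tilde{\fh}_{\mathbb{R}}$ and the Tits-cone machinery that the paper deliberately avoids, whereas the paper's computation with $\nu_0$ and $\htt^{\vee}$ is more self-contained.
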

\begin{proof}
In light of~\Cor{corimaginary} it is enough to check that
$-\Delta_{>0}(h')\cap (\Delta^{\re}_{\pi})^+$  is finite.

Let $\Delta^{\vee}_{\pi}$ be the dual root system for $\Delta_{\pi}$; this system has 
the set of simple roots
$\{\alpha^{\vee}\}_{\alpha\in\pi}$  and the Cartan matrix which is the transpose of $B_{\pi}$.

We claim  that the map $\beta\mapsto \beta^{\vee}$ is a bijection
between $(\Delta^{\re}_{\pi})^+$ and $((\Delta^{\vee}_{\pi})^{\re})^+$. 
Take $\beta\in (\Delta^{\re}_{\pi})^+$. Then $\beta=w\beta_0$ for 
some $\beta_0\in \pi$ and $w\in W$. By~\ref{l(w)} one has
$\beta^{\vee}=w\beta_0^{\vee}$. Recall that $W$ is the Coxeter group
generated by $s_{\alpha}$, $\alpha\in\pi$ and is isomorphic
to the Coxeter group generated by   $s_{\alpha^{\vee}}$, $\alpha\in\pi$.
This gives
$$w\beta_0\in\Delta_{\pi}^+\ \ \Longleftrightarrow\ \ 
\ell(ws_{\beta_0})>\ell(w)
\ \ \Longleftrightarrow\ \ w\beta_0^{\vee}\in (\Delta^{\vee}_{\pi})^+.$$
where $\ell(w)$ stands for the length of $w$.
Therefore $\beta^{\vee}\in ((\Delta^{\vee}_{\pi})^{\re})^+$. 
Since $(\Delta_{\pi}^{\vee})^{\vee}=\Delta_{\pi}$, the map
$\beta\mapsto \beta^{\vee}$ is a bijection
between $(\Delta^{\re}_{\pi})^+$ and $((\Delta^{\vee}_{\pi})^{\re})^+$.

Take $\beta\in  (\Delta^{\re}_{\pi})^+$.
We write
$\beta^{\vee}=\sum\limits_{\alpha\in\pi} k_{\alpha}\alpha^{\vee}$
and set $\htt^{\vee}\beta:=\sum\limits_{\alpha\in\pi} k_{\alpha}$.

 Recall that
$\nu_0\in (\Delta^{\ima}_{\pi})^+$ is such that
$m_{\alpha}:=\langle \nu_0,\alpha^{\vee}\rangle<0$ for any $\alpha\in\pi$.
Since $\nu_0, s_{\beta}\nu_0$ lie in 
$(\Delta^{\ima}_{\pi})^+\subset \Delta^+(\Sigma')$, we have
$\langle s_{\beta}\nu_0,h'\rangle >0$.
Since $\langle \mu,\alpha^{\vee}\rangle\in\mathbb{Z}$ for any  $\alpha\in\pi$ and $\mu\in \Delta$,
we have $m_{\alpha}\leq -1$.  Then
$$0<\langle s_{\beta}\nu_0,h'\rangle=\langle \nu_0+m\beta, h'\rangle
=\langle \nu_0, h'\rangle+m\langle \beta, h'\rangle$$
where
$$m:=-\langle \beta^{\vee}, \nu_0\rangle =-\sum\limits_{\alpha\in\pi} k_{\alpha} \langle \nu_0,\alpha^{\vee}\rangle=-\sum\limits_{\alpha\in\pi} k_{\alpha} m_{\alpha}
\geq \htt^{\vee} \beta.$$

If $\beta\in (-\Delta_{>0}(h')\cap (\Delta^{\re}_{\pi})^+)$, then
$\langle \beta,h'\rangle\leq -1$, so $\htt^{\vee} \beta< \langle \nu_0,h'\rangle$.
Since the set
$\{\alpha\in (\Delta^{\re}_{\pi})^+|\ \htt^{\vee}\alpha<s\}$ is finite for any $s$,
the set   $-\Delta_{>0}(h')\cap (\Delta^{\re}_{\pi})^+$ is finite.
\end{proof}

\subsubsection{}
Let $\Sk_{\pi}$ be the component of $\Sk$ corresponding 
to $\fg(B_{\pi})$ (one has $v'=(\tilde{\fh},\tilde{a},\tilde{b},\ol{0})$ where
the triple $(\tilde{\fh},\tilde{a},\tilde{b})$ is described in~\ref{tildehab}). 
Combining Lemmatta~\ref{lemfindif} and~\ref{lemrootbasisInd}
we conclude that $\Sk_{\pi}$
contains a vertex with the set of simple roots
$\pi'$ we have
$\Delta^+(\pi')=\Delta^+(\Sigma')\cap \Delta_{\pi}$.
Since $W$ acts transitively on $\Sk_{\pi}$ 
we have $\pi'=w^{-1}\pi$ for some $w\in W$. This gives
$w^{-1}\pi\subset \Delta^+(\Sigma')$ that is
$\pi \subset \Delta^+(w\Sigma')$.
Then $\Sigma'':=w\Sigma'$ is a root basis with the property
\begin{equation}\label{953}
\pi\subset \Delta^+(\Sigma'').\end{equation}

In the pure anisotropic case $\Sigma_{\pr}=\Sigma$, so~(\ref{953}) implies
$\Sigma=\Sigma''$, so  $\Sigma'=w^{-1}\Sigma=\Sigma_{w^{-1}v}$
as required.

Consider  the remaining case $Q^{\pm}(m,n,t)$.
Fix $v$ such that $\Sigma=\{\beta_i\}_{i=1}^3$ consists of isotropic roots, see~\ref{Qmnpcon}. Then $\Sigma_{\pr}=\pi=\{\alpha_i\}_{i=1}^3$ where
$\alpha_j=\sum\limits_{i=1}^3 \beta_i-\beta_j$.
If $\Sigma\subset \Delta^+(\Sigma'')$, then 
$\Sigma=\Sigma''$. 
Assume that $\Sigma\not\subset \Delta^+(\Sigma'')$, for example,
 $\beta_1\not\in \Delta^+(\Sigma'')$.
For $v_1:=r_{v_1}(v)$ we have
$\Sigma_{v_1}=\{-\beta_1,\beta_1+\beta_2,\beta_1+\beta_3 \}$.
Since $\beta_1+\beta_2,\beta_1+\beta_3\in \Sigma_{\pr}$
we have $\Sigma_{v_1}\subset \Delta^+(\Sigma'')$
so $\Sigma''=\Sigma_{v_1}$ and
$\Sigma'=w^{-1}\Sigma_{v_1}=\Sigma_{w^{-1}v_1}$. 
 This completes the proof of~\Prop{propiii}.\qed

\subsection{Proof of~\Cor{corbase}}
The following definition is given in Section 3 of~\cite{S}.

\subsubsection{}
\begin{defn}{}  A linearly independent set \(\Sigma'\) of roots is called a \emph{base} of $\Delta$ if one can 
find $e'_{\pm\beta}\in \fg_{\pm \beta}$ for each $\beta\in \Sigma'$ such that 
$\fh$ and the elements
$\{e'_{\pm\beta}\}_{\beta\in \Sigma'}$ generate $\fg$, and for any 
$\alpha\neq\beta\in\Sigma'$ we have $[e'_{-\alpha},e'_{\beta}]=0$. 
\end{defn}

\subsubsection{}
The sets $\Sigma_u$ and $-\Sigma_u$ are bases for any $u\in\Sk(v)$.
The PBW theorem implies that any base is a root basis. Using~\Thm{thmiii} 
we conclude that the set of bases coincides with the set of root bases.

\subsubsection{}
Recall that the Cartan datum at $u\in\Sk(v)$ (=the Cartan datum of
$\Sigma_u$) is the pair $(A_u,p_u)$, where $A_u$ is the
Cartan matrix and  $p:X\to\mathbb{Z}_2$ is the parity function.

Let $\fg'$ be a Kac-Moody superalgebra
  with the Cartan subalgebra $\fh'$ and the set of simple roots $\Sigma'$.
  Assume that 
   $\fg\iso\fg'$  and that this isomorphism  maps $\fh$ to $\fh'$. By~\Thm{thmiii}
   the preimage of $\Sigma'$     in $\Delta$ equals to $\Sigma_u$
   or $-\Sigma_u$ for some $u\in\Sk(v)$. Clearly, the Cartan datum of
   $\Sigma'$ is $D$-equivalent to the Cartan datum of $\Sigma_u$ up to a permutation
   of indexes. One has $u=wv_1$ where $w\in W$ and $v_1\in\Sp(v)$.    
 The vertices $u$ and $v_1$ have the same Cartan datum and the Cartan datum
 at $v_1$ is obtained from the 
 Cartan datum at $v$ by a sequence of isotropic reflections. This establishes~\Cor{corbase}.

\section{Example: $Q^{\pm}(m,n,t)$}\label{Qmnp}
In this section we describe the root system of any root algebra 
for the components $Q^{\pm}(m,n,t)$. 
These components are of rank three and we set $X=\{x_1,x_2,x_3\}$.

\subsection{Construction}\label{Qmnpcon}
We fix a vertex $v$ with $p(x_1)=p(x_2)=p(x_3)=\ol{1}$. The Cartan matrix is
	 $$A_v=\begin{pmatrix}
			0 & a& 1\\
			1 & 0& b\\		
			c & 1& 0\\
		\end{pmatrix}$$
where 
	$$1+a+\frac{1}{c}=-m ,\ \ 1+b+\frac{1}{a}=-n,\ \ 1+c+\frac{1}{b}=-t$$
and $m,n,t$ are positive integers satisfying $mnt>1$.
One has   $-1<a,b,c<0$ for $Q^+(m,n,t)$ and $a,b,c\leq -1$ for $Q^-(m,n,t)$.	
Since $mnt>1$ without loss of generality we will assume that $a\not=-1$.

\subsubsection{}\label{spineQmnt}
Let $\beta_1,\beta_2,\beta_3$ be the elements of $\Sigma:=\Sigma_v$. Set
$\delta:=\sum\limits_{i=1}^3\beta_i$ and	 $\alpha_i:=\delta-\beta_i$.

Set $v_i:=r_{x_i}(v)$. The set $\Sigma_{v_i}$ contains only one odd root $-\beta_i$ and two even roots
$\alpha_j,\alpha_k$ for $j,k\not=i$. Therefore the spine consists of four vertices $v,v_1,v_2,v_3$ and
	$$\Sigma_{\pr}=\pi=\{\alpha_i\}_{i=1}^3.$$

\subsubsection{Remark}\label{SpDvQmnp}
	Note that all parity functions $p_v,p_{v_1},p_{v_2},p_{v_3}$ are different ($p_{v_i}(x_j)=\delta_{ij}$
	and $p_v\equiv \ol{1}$), so $\Sp^D(v)$ is trivial.

\subsubsection{}\label{deltaalphai}
Since $\det A\not=0$ the set $\{\beta_i\}_{i=1}^3$ forms a basis of $\fh^*$. Therefore
$\pi=\{\alpha_i\}_{i=1}^3$ also forms a basis of $\fh^*$. We denote by $Q^+(\pi)$ the semilattice spanned
by  $\pi$.
Note that $2\delta=\sum\limits_{i=1}^3 \alpha_i$.

 \subsubsection{}\label{gprforQ}
 The roots $\alpha_i$ are even. The matrix $B_{\pi}=\langle \alpha_i^{\vee},\alpha_j\rangle$ takes the form
		$$B_{\pi}=\begin{pmatrix}
			2 & -n& -n\\
			 -m& 2& -m\\		
			-t& -t & 2\\
		\end{pmatrix}$$
Note that $\pi$ is linearly independent and
$B_{\pi}$ is symmetrizable. By~\ref{gpr}, $\fg_{\ol{0}}$ contains the Kac-Moody algebra $\fg(B_{\pi})$.

We define a partial order on $\fh^*$ by $\mu\leq \nu$ if 
		$\nu-\mu\in\mathbb{N}\pi$.
		
\subsection{}
\begin{lem}{lemQ}
\begin{enumerate}
\item
$(\bigcap\limits_{u\in \Sp(v)} Q^+_u)_{\ol{0}}=Q^+(\pi),\ \  (\bigcap\limits_{u\in \Sp(v)} Q^+_u)_{\ol{1}}=\delta+Q^+(\pi)$.
\item  $\pm\delta\in\Delta$.
\end{enumerate}
\end{lem}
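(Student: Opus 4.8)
The plan is to prove (i) by an explicit computation in the rank-three lattice $Q_v$, and (ii) via the $\fsl_2$-theory attached to the principal root $\alpha_1$. Identify $Q_v$ with $\ZZ^3$ by writing $\mu=(k_1,k_2,k_3)$ for $\mu=k_1\beta_1+k_2\beta_2+k_3\beta_3$ (recall $\alpha_i=\delta-\beta_i$, so $\beta_i+\beta_j=\alpha_k$ when $\{i,j,k\}=\{1,2,3\}$). Since every off-diagonal entry of $A_v$ is nonzero, the isotropic reflexion $r_{x_i}\colon v\to v_i$ of~\ref{refdef} sends $b_v(x_i)\mapsto-\beta_i$ and $b_v(x_j)\mapsto\beta_i+\beta_j=\alpha_k$, so $\Sigma_{v_i}=\{-\beta_i\}\cup\{\alpha_j : j\neq i\}$ and $\Sp(v)=\{v,v_1,v_2,v_3\}$ (cf.~\ref{spineQmnt}). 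As each $\Sigma_u$ is a $\ZZ$-basis of $Q_v$, this yields $Q^+_v=\{(k_1,k_2,k_3):k_i\geq0\}$ and $Q^+_{v_i}=\{(k_1,k_2,k_3):k_j,k_k\geq0,\ k_i\leq k_j+k_k\}$, hence
$$\bigcap_{u\in\Sp(v)}Q^+_u=\{(k_1,k_2,k_3)\in\ZZ^3 : k_i\leq k_j+k_k\ \text{for all }\{i,j,k\}=\{1,2,3\}\}$$
(these inequalities already force $k_i\geq0$). On the other hand $\mu\in Q^+(\pi)$ exactly when the integers $n_i:=\tfrac{1}{2}(k_j+k_k-k_i)=\tfrac{1}{2}\bigl((k_1+k_2+k_3)-2k_i\bigr)$ are all $\geq0$, that is, when $k_i\leq k_j+k_k$ for all $i$ and $k_1+k_2+k_3$ is even. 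Since all $\beta_i$ are odd, $p(\mu)=\overline{k_1+k_2+k_3}$, so the even part of $\bigcap_{u\in\Sp(v)}Q^+_u$ equals $Q^+(\pi)$, which is the first claim of (i).

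For the odd part, the inclusion $\supseteq$ is immediate since $\delta=(1,1,1)$ preserves the triangle inequalities and reverses the parity of $k_1+k_2+k_3$. For $\subseteq$, take $\mu$ in the odd part and set $m_i:=k_i-1$; then $\sum_i m_i$ is even, and $(m_i)\in Q^+(\pi)$ amounts to $k_i\geq1$ and $k_i<k_j+k_k$ for all $i$. Both hold by the oddness of the height: if some $k_i=0$, the remaining two inequalities force $k_j=k_k$, so $k_1+k_2+k_3$ would be even; and if $k_i=k_j+k_k$ for some $i$, then $k_1+k_2+k_3=2(k_j+k_k)$ would be even. Hence $\mu\in\delta+Q^+(\pi)$, completing (i).

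For (ii), since $\Delta=-\Delta$ by~\ref{-Delta} it suffices to show $\delta\in\Delta$; all root spaces below are those of $\fg^{\ttC}$. The root $\alpha_1=\beta_2+\beta_3\in\pi$ is even and anisotropic, so the subalgebra $\fg(\alpha_1)$ is isomorphic to $\fsl_2$ and $\ad\fg_{\pm\alpha_1}$ act locally nilpotently on $\fg^{\ttC}$ by~\ref{sllocnil}; thus $\fg^{\ttC}$ is a locally finite $\fg(\alpha_1)$-module. From $2\delta=\alpha_1+\alpha_2+\alpha_3$ (see~\ref{deltaalphai}) we get $\beta_1=\delta-\alpha_1=\tfrac{1}{2}(-\alpha_1+\alpha_2+\alpha_3)$, and since the off-diagonal entries of $B_\pi$ (see~\ref{gprforQ}) are negative integers,
$$\langle\beta_1,\alpha_1^\vee\rangle=\tfrac{1}{2}\bigl(-\langle\alpha_1,\alpha_1^\vee\rangle+\langle\alpha_2,\alpha_1^\vee\rangle+\langle\alpha_3,\alpha_1^\vee\rangle\bigr)\leq\tfrac{1}{2}(-2-1-1)=-2.$$
Hence the one-dimensional weight space $\fg_{\beta_1}$ lies in a finite-dimensional $\fsl_2$-submodule as a vector of negative $\alpha_1^\vee$-weight, so it is not annihilated by $\ad\fg_{\alpha_1}$; therefore $0\neq[\fg_{\alpha_1},\fg_{\beta_1}]\subseteq\fg_{\alpha_1+\beta_1}=\fg_{\delta}$, giving $\delta\in\Delta$. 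The computation is otherwise routine; the delicate point is the parity bookkeeping in (i) — that oddness of the height $k_1+k_2+k_3$ sharpens $k_i\leq k_j+k_k$ to $k_i<k_j+k_k$ and to $k_i\geq1$.
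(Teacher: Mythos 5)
Your proof is correct. Part (i) is essentially the paper's argument in different coordinates: the paper expands $\mu=\sum k_i\alpha_i$ and reads off $2k_1,\,k_1+k_2,\,k_1+k_3\in\NN$ from $\Sigma_{v_1}=\{-\beta_1,\alpha_2,\alpha_3\}$ (so the dichotomy is "all $k_i$ integral" versus "all $k_i$ half-integral"), whereas you work in the $\beta_i$-basis and encode membership in $\bigcap_u Q^+_u$ by the triangle inequalities $k_i\le k_j+k_k$; the parity bookkeeping you highlight is exactly the paper's integrality-versus-half-integrality dichotomy, so the two computations are interchangeable. Part (ii) is where you genuinely diverge. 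The paper views $\fg$ as a module over the $\fsl(1|1)$ attached to the isotropic simple root $\beta_1$: since $\langle a_v(x_1),\beta_2+\beta_3\rangle=a+1\ne0$ (this is precisely where the normalization $a\ne-1$ from~\ref{Qmnpcon} is used), one of $\beta_2+\beta_3\pm\beta_1$ must be a root, and $\beta_2+\beta_3-\beta_1$ is excluded because it lies in neither $Q^+_v$ nor $-Q^+_v$. You instead use the $\fsl_2$ attached to the anisotropic principal root $\alpha_1$ acting on $\fg_{\beta_1}$, computing $\langle\beta_1,\alpha_1^\vee\rangle=-1-n\le-2$ from $B_\pi$ and invoking local finiteness to force $[\fg_{\alpha_1},\fg_{\beta_1}]\ne0$. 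Your route buys symmetry in the three indices and independence from the WLOG normalization $a\ne-1$ (only $m,n,t\ge1$ is used), at the cost of needing the $B_\pi$ computation from~\ref{gprforQ} and the local-finiteness input from~\ref{sllocnil}; the paper's route is more self-contained within Section 5 but leans on the sign convention and on the positivity decomposition $\Delta\subset Q^+_v\coprod(-Q^+_v)$ to kill the unwanted weight.
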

\begin{proof}
For (i) the embeddings  $\supset$ follow from the fact that $\pi\subset  \bigcap\limits_{u\in \Sp(v)} \Delta^+_u$ .
For the inverse embedddings 
recall that $\{\alpha_i\}_{i=1}^3$  forms a basis of $\fh^*$. 
Take $\mu=\sum\limits_{i=1}^3 k_i\alpha_i\in \bigcap\limits_{u\in \Sp(v)} Q^+_u$. Then
$$\mu=-2k_1\beta_1+(k_1+k_2)\alpha_2+(k_1+k_3)\alpha_3.$$
Recall that $\Sp(v)=\{v,v_1,v_2,v_3\}$.
Since $\Sigma_{v_1}=\{-\beta_1,\alpha_2,\alpha_3\}$, we have 
$2k_1, k_1+k_2,k_1+k_3\in\mathbb{N}$. Similarly, using $v_2$ and $v_3$, we obtain $2k_2,2k_3\in\mathbb{N}$.
This gives $\mu\in Q^+(\pi)$ if $p(\mu)=\ol{0}$ and $\mu\in \delta+Q^+(\pi)$ if $p(\mu)=\ol{1}$.
This implies (i).

For (ii) recall that $-\Delta=\Delta$ (see~\ref{-Delta}).
View $\fg$ as a module over the copy of $\fsl(1|1)$ corresponding to $\beta_1$. Since
		$$\langle a_v(x_1),\beta_2+\beta_3\rangle=a_{12}+a_{13}=a+1\not=0,$$
		the set $\Delta$ contains  $\beta_2+\beta_3-\beta_1$ or $\delta=\beta_2+\beta_3+\beta_1$. However,
		$\beta_2+\beta_3-\beta_1\not\in\Delta$, so
		$\delta\in\Delta$.   
\end{proof}

\subsection{}
\begin{thm}{thmQmnt}
Let $\fg$ be any root algebra for the component $Q^{\pm}(m,n,t)$.
\begin{enumerate}
\item 
$\bigcap\limits_{u\in \Sp(v)} Q^+_u=\{\mu\in Q_v|\ 2\mu\in Q^+(\pi)\}$ and 
$Q^{++}_{\mathbb{R}}=Q^{++}_{\mathbb{R},\pi}$.

\item 
One has $\Delta(\fg)=\Delta_{\an}\coprod\Delta_{\is}\coprod \Delta(\fg)^{\ima}$, where
$\Delta_{\an}=W\pi$,  $\Delta_{\is}=W(-\Sigma_v\cup\Sigma_v)$, 
$\Delta(\fg)^{\ima}=\Delta^{\ima}$ and 
$$(\Delta^{\ima})^+=W\{\mu\in (\bigcap\limits_{u\in \Sp(v)} Q^+_u)|\ \ \langle \mu,\alpha^{\vee}\rangle\leq 0\ \text{ for all }\alpha\in\pi\}.$$
Moreover, $(\Delta^{\ima})^+_{\ol{0}}=(\Delta^{\ima}_{\pi})^+$.
\end{enumerate}
\end{thm}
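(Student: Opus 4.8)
The plan is to organize the whole proof around the single reduction $Q^{++}\subseteq\Delta(\fg^{\ttC})$, where $Q^{++}:=\bigcap_{u\in\Sk(v)}Q^+_u=Q^{++}_{\mathbb{R}}\cap Q_v$. I first dispose of (i). \Lem{lemQ}(i) describes $\bigcap_{u\in\Sp(v)}Q^+_u$ by its even and odd parts; together with the elementary lattice facts $2\delta=\sum_i\alpha_i$ and $[Q_v:\ZZ\pi]=2$ (so that $\delta$ represents the nontrivial coset), this yields $\bigcap_{u\in\Sp(v)}Q^+_u=\{\mu\in Q_v\mid 2\mu\in Q^+(\pi)\}=C(\pi)\cap Q_v$, which is the first assertion of (i). For the second, since every vertex of $\Sk(v)$ is $W$-conjugate to one in $\Sp(v)$ (see~\ref{l(w)}) and $\Sigma_{wu}=w\Sigma_u$, one has $Q^{++}_{\mathbb{R}}=\bigcap_{w\in W}w\bigl(\bigcap_{u\in\Sp(v)}C(\Sigma_u)\bigr)$, so it suffices to check $\bigcap_{u\in\Sp(v)}C(\Sigma_u)=C(\pi)$: writing $\mu=\sum_i c_i\beta_i$, membership in the four cones $C(\Sigma_v),C(\Sigma_{v_1}),C(\Sigma_{v_2}),C(\Sigma_{v_3})$ is exactly the system $c_i\ge 0$ and $c_j+c_k\ge c_i$ for $\{i,j,k\}=\{1,2,3\}$, which is equivalent to $\mu\in C(\pi)$. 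As $\pi$ is a basis, $\tilde{\beta}\mapsto\beta$ identifies $\tilde{Q}^{++}_{\mathbb{R}}=\bigcap_{w}C(w\tilde{\pi})$ with $\bigcap_w C(w\pi)=Q^{++}_{\mathbb{R}}$, giving $Q^{++}_{\mathbb{R}}=Q^{++}_{\mathbb{R},\pi}$.

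Next, the real roots. Since $\Sigma_{\pr}=\pi=\{\alpha_i\}$ consists of even roots (\ref{spineQmnt}), \ref{111} gives $\Delta_{\an}=W\Sigma_{\pr}=W\pi\subset\Delta_{\ol 0}$. An isotropic real root equals $b_u(x)$ for some $u\in\Sk(v)$ with $a^u_{xx}=0$; writing $u=wv'$ with $v'\in\Sp(v)$ (see~\ref{l(w)}) and using $A^u=A^{v'}$, it is $w$ of a simple isotropic root at a spine vertex, and conversely all such $W$-translates are isotropic real; as the only isotropic simple root at $v_i$ is $-\beta_i$ while all $\beta_i$ are isotropic at $v$, this gives $\Delta_{\is}=W(\Sigma_v\cup(-\Sigma_v))$. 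Because the component is Kac-Moody (\ref{KMcomponents}) and has no anisotropic odd roots, $\ZZ\gamma\cap\Delta(\fg)=\{\pm\gamma\}$ for every $\gamma\in\Delta^{\re}$ (formula~(\ref{2alpha})), so $2\gamma$ is never a root; this produces the disjoint decomposition $\Delta(\fg)=\Delta_{\an}\sqcup\Delta_{\is}\sqcup\Delta(\fg)^{\ima}$, shows $Q^{++}\cap\Delta^{\re}=\emptyset$, and (positive imaginary roots staying positive under every reflexion) $Q^{++}\cap\Delta(\fg)=(\Delta(\fg)^{\ima})^+$ for every root algebra $\fg$.

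By~\Lem{Winvariant} and (i), $Q^{++}=\bigcup_{w\in W}wK$ with $K=\{\mu\in\bigcap_{u\in\Sp(v)}Q^+_u\mid\langle\mu,\alpha^{\vee}\rangle\le 0\text{ for all }\alpha\in\pi\}$. Since $(\Delta(\fg)^{\ima})^+\subseteq Q^{++}$ always, once $K\subseteq\Delta(\fg^{\ttC})$ is known we obtain $(\Delta^{\ima})^+=Q^{++}=WK$ for $\fg^{\ttC}$, and then $Q^{++}\subseteq\Delta(\fg^{\ttC})\subseteq\Delta(\fg)$ forces the same identity for every root algebra, so $\Delta(\fg)^{\ima}=\Delta^{\ima}$ and the displayed formula holds, the imaginary character being automatic from $Q^{++}\cap\Delta^{\re}=\emptyset$. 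For nonzero $\mu\in K$ that is even, i.e.\ $\mu\in Q^+(\pi)$, the support of $\mu$ has at least two vertices (a single vertex would force $\langle\mu,\alpha_i^{\vee}\rangle=2k_i\le 0$, hence $\mu=0$), and is therefore connected because the Dynkin diagram of $\pi$ is complete; so by~\cite{Kbook}, Theorem 5.4 applied to the Kac-Moody algebra $\fg(B_{\pi})$, $\tilde{\mu}$ is a positive imaginary root of $\fg(B_{\pi})$, and as $\tilde{\mu}\ne 0$ the corresponding root space lies in $[\fg(B_{\pi}),\fg(B_{\pi})]\cong\fg'\subseteq\fg^{\ttC}_{\ol{0}}$ (using $\pi$ linearly independent and $B_{\pi}$ symmetrizable, see~\ref{c=0},~\ref{Bpisymm},~\ref{gprforQ}), whence $\fg^{\ttC}_\mu\ne 0$. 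The same argument, combined with the transfer of~\cite{Kbook}, Theorem 5.4 and with~\Cor{corimaginary}(ii), also gives $(\Delta^{\ima})^+_{\ol 0}=W(K\cap Q^+(\pi))=(\Delta^{\ima}_{\pi})^+$.

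It remains to treat nonzero $\mu\in K$ with $\mu$ odd, i.e.\ $\mu\in\delta+Q^+(\pi)$ with $\langle\mu,\alpha^{\vee}\rangle\le 0$ for all $\alpha\in\pi$, and this is the main obstacle. The plan is to induct on $\htt(\mu-\delta)$ with respect to $\pi$: the base case $\mu=\delta$ is \Lem{lemQ}(ii), and for $\mu\ne\delta$ it suffices to produce an index $j$ with $\mu-\alpha_j\in\Delta^+(\fg^{\ttC})$, for then $\langle\mu-\alpha_j,\alpha_j^{\vee}\rangle=\langle\mu,\alpha_j^{\vee}\rangle-2<0$, and the $\fsl_2$-theory for $\alpha_j\in\pi$ (see~\ref{sllocnil}) gives $\mu=(\mu-\alpha_j)+\alpha_j\in\Delta(\fg^{\ttC})$. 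Producing such a $j$ — so that $\mu-\alpha_j$, after a suitable Weyl conjugation, again falls in the set handled by the induction — is the combinatorial heart of the argument, and is precisely the step that parallels the proof of~\cite{Kbook}, Proposition 5.2(b) and of Theorem 8.3 in~\cite{S}; here it is considerably streamlined by the completeness of the Dynkin diagram of $\pi$, since removing any $\alpha_j$ from a support of size $\ge 2$ keeps the support connected. Once $K\subseteq\Delta(\fg^{\ttC})$ is established, all the remaining assertions of the theorem follow as explained above.
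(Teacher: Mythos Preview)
Your treatment of (i), the description of $\Delta_{\an}$ and $\Delta_{\is}$, the disjoint decomposition, and the even imaginary part $(\Delta^{\ima})^+_{\ol 0}=(\Delta^{\ima}_{\pi})^+$ are all fine and match the paper's argument (the paper invokes~\ref{2bx} rather than~(\ref{2alpha}) to exclude $2\gamma$ for isotropic $\gamma$ in a general root algebra, but this is a minor point).

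The gap is in the odd case $K_1:=(\delta+Q^+(\pi))\cap K\subset\Delta(\fg^{\ttC})$. Your induction on $\htt_{\pi}(\mu-\delta)$ needs an index $j$ with $\mu-\alpha_j\in\Delta^+$, and you propose to obtain it by Weyl-conjugating $\mu-\alpha_j$ back into $K_1$ and invoking the inductive hypothesis. But for $\mu\in K_1$ and $k_j>0$ one only gets $\mu-\alpha_j\in\delta+Q^+(\pi)=\bigcap_{u\in\Sp(v)}Q^+_u$; there is no reason for $\mu-\alpha_j$ to lie in $Q^{++}=\bigcap_{w\in W}w\bigl(\bigcap_{u}Q^+_u\bigr)$, so it need not be $W$-conjugate to anything in $K_1$ at all, and the induction does not close. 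The references you cite (Proposition~5.2(b) in~\cite{Kbook} and Theorem~8.3 in~\cite{S}) concern the $W$-action on the totally positive cone and the classification of root bases; neither supplies the missing step.

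The paper avoids this by running the argument of~\cite{Kbook}, Lemma~5.3 directly inside $\fg$: take a \emph{minimal} $\alpha\in K_1\setminus\Delta(\fg)$, then a \emph{maximal} $\beta\in\Delta(\fg)$ with $\delta\le\beta\le\alpha$ (nonempty by~\Lem{lemQ}(ii)); writing $\alpha=\delta+\sum k_i\alpha_i$, $\beta=\delta+\sum m_i\alpha_i$ and $R=\{i\mid m_i\ne k_i\}$, maximality of $\beta$ together with $\fsl_2$-theory forces $\langle\beta,\alpha_i^{\vee}\rangle\ge 0$ for $i\in R$, hence $\langle\beta',\alpha_i^{\vee}\rangle>0$ for $\beta':=\sum_{i\in R}(m_i+\tfrac12)\alpha_i$, so by~\cite{Kbook}, Theorem~4.3 the subdiagram on $R$ is of finite type; then $\alpha\in K$ gives $\langle\alpha-\beta,\alpha_i^{\vee}\rangle\le 0$ for $i\in R$, and the same Theorem~4.3 forces $\alpha=\beta$, a contradiction. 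This ``minimal counterexample / maximal root below it'' device is exactly what replaces your unproven inductive step; once $K_1\subset\Delta(\fg)$ is established, the rest of your outline (including $\Delta(\fg)^{\ima}=\Delta^{\ima}$) goes through.
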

\begin{proof}
Recall that any vertex in $\Sk(v)$ can be presented as $wu$ for $w\in W$ and $u\in\Sp(v)$.
\Lem{lemQ} (i) implies the first formula in (i) and the formula
$$Q^{++}_{\mathbb{R}}=\mathbb{R}_{\geq 0}\bigcap\limits_{u\in \Sk(v)} Q^+_u =\mathbb{R}_{\geq 0}
\bigcap\limits_{w\in W}(w(\bigcap\limits_{u\in \Sp(v)} Q^+_u ))=\mathbb{R}_{\geq 0}
\bigcap\limits_{w\in W}(wQ^+(\pi))=Q^{++}_{\mathbb{R},\pi}.$$
This gives (i). In (ii)
the formulae for $\Delta_{\an}$ and $\Delta_{\is}$
 follow from~\ref{l(w)} and the formula 
 $\Delta(\fg)=\Delta_{\an}\coprod\Delta_{\is}\coprod \Delta(\fg)^{\ima}$
follows from (\ref{2alpha}) and~\ref{2bx}.
It remains to verify that $\Delta(\fg)^{\ima}=\Delta^{\ima}$ and
the formula for $(\Delta^{\ima})^+$.
 
 By~\Lem{lemQ}, $(\Delta(\fg)^{\ima})^+_{\ol{0}}\subset Q^+(\pi)$ and
 $(\Delta(\fg)^{\ima})^+_{\ol{1}}\subset \delta+Q^+(\pi)$.
 As in~\ref{Indnotat} we denote by $V$ the $\mathbb{R}$-span of $\pi$ and set 
 $$K:=\{\mu\in V|\ \langle \mu,\alpha^{\vee}\rangle\leq 0\ \text{ for all }\alpha\in\pi\},\ \ \ K_0:=Q^+(\pi)\cap K,\ \ 
 K_1:=(\delta+Q^+(\pi))\cap K.$$
 Using~\Lem{Winvariant}  and~\ref{Deltapi} we obtain 
$$(\Delta^{\ima}_{\pi})^{\pm}\subset (\Delta(\fg)^{\ima})^{\pm}_{\ol{0}}\subset \pm W K_0,\ \ \ (\Delta(\fg)^{\ima})^{\pm}_{\ol{1}}\subset \pm W K_1.$$
By~\cite{Kbook}, Lemma 5.3, one has
$(\Delta(\fg)^{\ima}_{\pi})^{\pm}=\pm W K_0$. This gives 
$$(\Delta^{\ima})^{\pm}_{\ol{0}}=(\Delta^{\ima}_{\pi})^{\pm}= \pm W K_0.$$
It remains to verify that $\pm K_1\subset\Delta(\fg)^{\ima}$.
First, let  us show that
		\begin{equation}\label{K1Delta}
	\pm K_1	\subset\Delta(\fg).
		\end{equation}
		
		The proof is similar to one in~\cite{Kbook}, Lemma 5.3.  Let 
		$\alpha$
		be a minimal element in $K_1$ which does not lie
		in $\Delta(\fg)$. 
		Since $\delta\in  \Delta\subset \Delta(\fg)$ the set 
		$$\{\beta'\in \Delta(\fg)|\ \delta\leq \beta'\leq \alpha\}$$
		is non-empty; let $\beta$ be a maximal element in this set. We have
		$$\alpha =\delta+\sum\limits_{i=1}^3 k_i\alpha_i,\ \ \ \ \beta=\delta+\sum\limits_{i=1}^3 m_i\alpha_i=\sum\limits_{i=1}^3 (m_i+\frac{1}{2})\alpha_i$$
	for some coefficients $k_i,m_i\in\mathbb{N}$ with
		  $m_i\leq k_i$. Set 
$$R:=\{i| \  m_i\not=k_i\},\ \ \ \beta':=\sum\limits_{i\in R} (m_i+\frac{1}{2})\alpha_i.$$
Since $\alpha\not\in \Delta(\fg)^{\ima}$, the set
 $R$ is non-empty. Take $i\in R$. 
The root spaces $\fg_{\pm\alpha_i}$ generate a copy of 
$\fsl_2$ which we denote by $\fsl_2(\alpha_i)$. 
The algebra $\fsl_2(\alpha_i)$ acts locally finitely in the adjoint representation of $\fg$;
thus  $\langle \beta,\alpha_i^{\vee}\rangle< 0\ \ \Longrightarrow\ \ \beta+\alpha_i\in\Delta(\fg)$. Using the maximality of $\beta$ 
we obtain
\begin{equation}\label{langleQmnt}
\langle \beta,\alpha_i^{\vee}\rangle\geq 0\ \text{ for all } i\in R.
\end{equation} 
Since $\beta'=\beta-\sum\limits_{j\not\in R} (m_j+\frac{1}{2})\alpha_j$
and $\langle \alpha_j,\alpha_i^{\vee}\rangle<0$ for $j\not\in R$ and $i\in R$,
the formula (\ref{langleQmnt}) implies
$$\langle \beta',\alpha_i^{\vee}\rangle>0\ \text{ for all } i\in R.$$
Recall that $\beta'$ is a positive linear combination of $\alpha_i$ for $i\in R$. Using
 Theorem 4.3  in~\cite{Kbook} we conclude that the Dynkin diagram of $R$ is of finite type.
One has
$$\alpha-\beta=\sum\limits_{i\in R} (k_i-m_i)\alpha_i.$$
Combining  (\ref{langleQmnt}) with the condition $\alpha\in K_1$  we obtain
$$\langle \alpha-\beta,\alpha_i^{\vee}\rangle \leq 0\ \text{ for all } i\in R.$$
By above, the Dynkin diagram of $R$ is of finite type. Since $\alpha-\beta$ is
a non-negative  linear combination of $\alpha_i$ for $i\in R$, 
Theorem 4.3  in~\cite{Kbook} gives $\alpha=\beta$, a contradiction.
Therefore $K_1\subset \Delta(\fg)$. 
The proof of $-K_1\subset \Delta(\fg)$ is similar.
This  completes the proof of~(\ref{K1Delta}).

It remains to verify that $K_1\cap\Delta(\fg)^{\re}=\emptyset$.
Take $\alpha\in K_1$. Note that
 $2\alpha\in K_0$, so $2\alpha\in\Delta(\fg)^{\ima}$.
Hence $\alpha\not\in\Delta(\fg)^{\re}$. This implies
$\alpha\in (\Delta(\fg)^{\ima})^+$.
	\end{proof}

\subsubsection{}
\begin{cor}{corQmnt}
\begin{enumerate}
\item $\Delta(\fg)=\Delta$ (cf.~\cite{Kbook}, Corollary 5.12).	
\item  
$(\Delta^{\ima})^+=Q^{++}_{\mathbb{R}}\cap Q_v$.
\item 
The element $\delta$ is the unique minimal element in 
 $(\Delta^{\ima})^+_{\ol{1}}$ and $\langle \delta,\alpha^{\vee}\rangle<0$ for all $\alpha\in\pi$.
\end{enumerate}
\end{cor}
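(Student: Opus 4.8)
The plan is to obtain all three assertions directly from \Thm{thmQmnt} (just proved), reconciling its various descriptions by means of \Lem{Winvariant} and \Lem{lemQ}; no new argument is needed beyond some bookkeeping. Item (i) is immediate: by \Thm{thmQmnt}(ii) we have $\Delta(\fg)=\Delta_{\an}\coprod\Delta_{\is}\coprod\Delta(\fg)^{\ima}$ with $\Delta(\fg)^{\ima}=\Delta^{\ima}$; since $\Delta_{\an},\Delta_{\is}\subset\Delta^{\re}\subset\Delta$ and $\Delta^{\ima}\subset\Delta$, while the opposite inclusion $\Delta\subset\Delta(\fg)$ always holds (see~\ref{pidef}), we get $\Delta(\fg)=\Delta$.

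For item (ii), the inclusion $(\Delta^{\ima})^+\subset Q^{++}_{\mathbb R}\cap Q_v$ holds in general: by~\ref{Delta+} the set $(\Delta^{\ima})^+$ is independent of the vertex of $\Sk(v)$, so it sits inside $\bigcap_{u\in\Sk(v)}Q^+_u$, which by \Lem{Winvariant} equals $Q^{++}_{\mathbb R}\cap Q_v$. For the reverse inclusion I would line up the two $W$-orbit presentations: \Thm{thmQmnt}(ii) expresses $(\Delta^{\ima})^+$ as $W\cdot\{\mu\in\bigcap_{u\in\Sp(v)}Q^+_u\mid\langle\mu,\alpha^\vee\rangle\le0\ \forall\alpha\in\pi\}$, while the proof of \Lem{Winvariant} expresses $Q^{++}_{\mathbb R}\cap Q_v$ as $W\cdot\{\mu\in(\bigcap_{u\in\Sp(v)}C(\Sigma_u))\cap Q_v\mid\langle\mu,\alpha^\vee\rangle\le0\ \forall\alpha\in\pi\}$. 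These two sets coincide once one checks that $(\bigcap_{u\in\Sp(v)}C(\Sigma_u))\cap Q_v=\bigcap_{u\in\Sp(v)}Q^+_u$, which is easy here: by~\ref{spineQmnt} the spine $\Sp(v)$ consists of just the four vertices $v,v_1,v_2,v_3$, and for each of them $\Sigma_u$ is a $\ZZ$-basis of $Q_v$ (the defining reflexions act on $Q_v$ by invertible integral matrices), whence $C(\Sigma_u)\cap Q_v=\NN\Sigma_u=Q^+_u$.

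For item (iii), I would first check $\delta\in(\Delta^{\ima})^+_{\ol1}$. We have $\delta\in\Delta$ by \Lem{lemQ}(ii) and $p(\delta)=\ol1$ by construction (\ref{Qmnpcon}); to see that $\delta$ is imaginary, by~\ref{-Delta} it suffices to check $\tfrac\delta2\notin\Delta_{\an}$ --- clear since $\tfrac\delta2\notin Q_v$ --- and $\delta\notin\Delta^{\re}=\Delta_{\an}\coprod\Delta_{\is}$; the roots of $\Delta_{\an}$ are even, and $\delta\in\Delta_{\is}$ is ruled out by~(\ref{2alpha}) because $2\delta=\sum_{\alpha\in\pi}\alpha$ does lie in $\Delta$ (it belongs to $\bigcap_{u\in\Sp(v)}Q^+_u$ and satisfies $\langle2\delta,\alpha^\vee\rangle\le0$ for all $\alpha\in\pi$, so it lies in $(\Delta^{\ima})^+$ by \Thm{thmQmnt}(ii)). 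Next, \Lem{lemQ}(i) gives $(\Delta^{\ima})^+_{\ol1}\subset\delta+Q^+(\pi)$, so $\delta\le\mu$ for every $\mu\in(\Delta^{\ima})^+_{\ol1}$ in the order $\mu\le\nu\Leftrightarrow\nu-\mu\in Q^+(\pi)$; since $\delta$ itself belongs to this set, it is its unique minimal element. Finally $\langle\delta,\alpha^\vee\rangle<0$ for $\alpha\in\pi$ is a short computation: from $2\delta=\sum_{\beta\in\pi}\beta$ one reads off that $\langle\delta,\alpha_i^\vee\rangle$ is half the $i$-th row sum of the matrix $B_\pi$ displayed in~\ref{gprforQ}.

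I do not expect any serious obstacle, since \Thm{thmQmnt} already carries all the weight; the one place to be careful is the lattice identity $(\bigcap_{u\in\Sp(v)}C(\Sigma_u))\cap Q_v=\bigcap_{u\in\Sp(v)}Q^+_u$ used in item (ii), which is exactly what makes the $W$-orbit description of \Lem{Winvariant} match the one in \Thm{thmQmnt}(ii).
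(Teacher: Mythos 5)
Your proof is correct and follows essentially the same route as the paper, which disposes of (i) and (ii) with the one-line remark that they follow from \Thm{thmQmnt} and \Lem{Winvariant}, and for (iii) performs the identical computation $\langle \delta,\alpha_1^{\vee}\rangle=1-n$, $\langle \delta,\alpha_2^{\vee}\rangle=1-m$, $\langle \delta,\alpha_3^{\vee}\rangle=1-t$ from $2\delta=\sum_{i}\alpha_i$ and the matrix $B_{\pi}$ of~\ref{gprforQ}. The only difference is that you spell out the bookkeeping the paper leaves implicit --- in particular the identity $\bigl(\bigcap_{u\in \Sp(v)} C(\Sigma_u)\bigr)\cap Q_v=\bigcap_{u\in \Sp(v)} Q^+_u$, correctly justified by noting that each $\Sigma_u$ is a $\mathbb{Z}$-basis of $Q_v$ --- so the two arguments match step for step.
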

\begin{proof}
\Thm{thmQmnt} and~\Lem{Winvariant} imply (i) and (ii).

For (iii) recall that  $2\delta=\sum\limits_{i=1}^3 \alpha_i$. The formula for $B_{\pi}$ (see~\ref{gprforQ}),
gives
$$\langle \delta,\alpha_1^{\vee}\rangle=1-n,\ \ \ \langle \delta,\alpha_2^{\vee}\rangle=1-m,\ \ \ 
\langle \delta,\alpha_3^{\vee}\rangle=1-t$$
as required.
\end{proof}

\section{\Thm{corintrospine} and the formulae~(\ref{suppconnected}), (\ref{eqimagine}) in the type (Ind)}
Recall that~\Thm{corintrospine} and
 the formulae~(\ref{suppconnected}),
(\ref{eqimagine})  hold for the types (Fin) and (Aff) (see~\Cor{cor264}).
In this section we complete the proof of~\Thm{corintrospine}
and the formulae~(\ref{suppconnected}), (\ref{eqimagine}) in the remaining type (Ind).

We set $Q^{++}=\bigcap\limits_{u\in \Sk(v)} Q^+_u$ (i.e.,  $Q^{++}=Q^{++}_{\mathbb{R}}\cap Q_v$)
and $Q^{++}_{\pi}:=\bigcap\limits_{w\in W} (wQ^+_{\pi})$ (in the notation of~\ref{Deltapi},
$Q^{++}_{\pi}$ is the image of $\tilde{Q}^{++}$).

\subsection{Purely anisotropic components}\label{61}
The arguments in~\cite{Kbook}  5.2--5.4 are valid for this case. 
As a result,
$(\Delta^{\ima})^+$  is the union of $W$-orbits and each orbit has 
a unique representative in the set 
$$
K:=\{\mu\in  Q^+_v|\ \supp\mu\ \text{ is connected and }\ 
\langle\mu,\alpha^{\vee}\rangle \leq 0\ \ \text{for all }
\alpha\in\Sigma_v\}$$
where for $\mu=\sum\limits_{\alpha\in\Sigma_v} k_{\alpha}\alpha$
we set $\supp\mu:=\{\alpha\in\Sigma_v|\ k_{\alpha}\not=0\}$ (and $\supp\mu$
is connected if the corresponding Dynkin diagram is connected).

Since   for $\alpha\in\pi$ either $\alpha$ or $\alpha/2$ lies in $\Sigma_v$, 
we have $2\mu\in Q^+(\pi)$ for all $\mu\in Q^+_v$.

\subsection{}
\begin{cor}{corimaginary2}
Let $\Sk(v)$ be an  indecomposable Kac-Moody component of type (Ind).
\begin{enumerate}
\item The spine and $\pi$ are finite.
 One has $\bigcap\limits_{u\in \Sp(v)} C(\Sigma_u)=C(\pi)$.

\item 
 $(\Delta^{\ima})^+=Q^{++}$ if the component is not purely anisotropic.
\item $(\Delta^{\ima})^+=\{\mu\in Q^{++}|\ \supp \mu  \text{ is connected}\}$.
\item $(\Delta^{\ima})^+=\{\alpha\in Q_v| \ 2\alpha\in (\Delta^{\ima})^+_{\pi}\}$.
\item
If $\alpha\in (\Delta^{\ima})^+$, then $(\mathbb{R}_{\geq 0}\alpha\cap Q_v)\subset (\Delta^{\ima})^+$.

\item For an  isotropic root $\alpha$ the following conditions are equivalent:
\begin{itemize}
\item[(a)] $\alpha\in \Sigma_u$ for some $u\in\ Sp(v)$;
\item [(b)] $\alpha\not\in \bigl(-C(\pi)\cup C(\pi)\bigr)$.
\end{itemize}

\item One has $Q^{++}_{\mathbb{R}}=\bigcap\limits_{w\in W} wC(\pi)=\bigcup\limits_{w\in W} wK_{\mathbb{R}}$, 
where 
$$K_{\mathbb{R}}=\{ \mu\in C(\pi)|\ \langle \mu,\alpha^{\vee}\rangle \leq 0\ 
\text{ for all }\alpha\in\pi\}.$$
Moreover, any element in $Q^{++}_{\mathbb{R}}$ is $W$-conjugated to a unique element in $K_{\mathbb{R}}$.
\end{enumerate}
\end{cor}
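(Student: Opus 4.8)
The approach is to use the van de Leur--Hoyt classification (see~\ref{classif}, \ref{KMcomponents}): an indecomposable Kac-Moody component of type (Ind) is either \emph{purely anisotropic} or one of the components $Q^{\pm}(m,n,t)$. For $Q^{\pm}(m,n,t)$ all of (i)--(vii) will come out of~\Thm{thmQmnt} and~\Cor{corQmnt}, together with the observation that the Dynkin diagram of $\Sigma_v$ is the complete graph on three vertices (all off-diagonal entries of $A_v$ in~\ref{Qmnpcon} are nonzero), so every nonzero element of $Q_v$ has connected support; the same applies to the matrix $B_{\pi}$ of~\ref{gprforQ}. For the purely anisotropic components all of (i)--(vii) are transcriptions of Kac's Chapter~V, valid by~\ref{61}; here $\Sp(v)=\{v\}$, $\Sigma_v=\Sigma_{\pr}$, $\pi$ is $\Sigma_v$ with the odd simple roots doubled, and $\Delta_{\is}=\emptyset$.

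The first step is part (i). In the purely anisotropic case $\Sp(v)=\{v\}$ and $\pi$ are finite, and $\bigcap_{u\in\Sp(v)}C(\Sigma_u)=C(\Sigma_v)=C(\pi)$ since doubling the odd simple roots does not change the cone. For $Q^{\pm}(m,n,t)$ the spine has four vertices and $\pi=\{\alpha_i\}_{i=1}^{3}$ by~\ref{spineQmnt}, so both are finite; the inclusion $C(\pi)\subseteq\bigcap_{u\in\Sp(v)}C(\Sigma_u)$ holds because $\pi\subseteq Q^+_u$ for each $u\in\Sp(v)$, while for the reverse inclusion I would use that $\det A_v\neq 0$, so each $\Sigma_u$ is a $\ZZ$-basis of $Q_v$ and $C(\Sigma_u)$ a rational simplicial cone; hence $\bigcap_{u\in\Sp(v)}C(\Sigma_u)$ is a rational polyhedral cone, equal to the $\RR_{\geq 0}$-span of its lattice points $\bigcap_{u\in\Sp(v)}Q^+_u$, which by~\Lem{lemQ}(i) equals $Q^+(\pi)\cup(\delta+Q^+(\pi))\subseteq C(\pi)$, using $2\delta=\sum_i\alpha_i$.

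Granting (i), parts (vi) and (vii) follow from the general material of Section~2. For (vii), substituting $\bigcap_{u\in\Sp(v)}C(\Sigma_u)=C(\pi)$ into~\Lem{Winvariant} gives $K'_{\RR}=K_{\RR}$, hence $Q^{++}_{\RR}=\bigcup_{w\in W}wK_{\RR}$ with uniqueness of the representative; also $Q^{++}_{\RR}=\bigcap_{u\in\Sk(v)}C(\Sigma_u)=\bigcap_{w\in W}wC(\pi)$ because every $u\in\Sk(v)$ equals $wu'$ with $w\in W$, $u'\in\Sp(v)$ and $C(w\Sigma_{u'})=wC(\Sigma_{u'})$. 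For (vi), by the discussion of essentially simple roots preceding~\ref{essence} a positive isotropic root $\alpha$ fails to be essentially simple exactly when $\alpha\in\bigcap_{u\in\Sp(v)}C(\Sigma_u)=C(\pi)$; since a positive root cannot lie in $-C(\pi)$ and $\Delta_{\is}=-\Delta_{\is}$, this gives (a)$\Leftrightarrow$(b) for isotropic roots of either sign (vacuous in the purely anisotropic case, where $\Delta_{\is}=\emptyset$).

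Finally (ii)--(v). For $Q^{\pm}(m,n,t)$: (ii) is~\Cor{corQmnt}(ii); (iii) follows since $(\Delta^{\ima})^+=Q^{++}$ and every nonzero element of $Q_v$ has connected support; (v) holds because $(\Delta^{\ima})^+=Q^{++}=Q^{++}_{\RR}\cap Q_v$ is a cone intersected with a lattice; and (iv) follows from $(\Delta^{\ima})^+=Q^{++}=\{\mu\in Q_v:2\mu\in Q^+(\pi)\}$ (\Thm{thmQmnt}(i)), the identity $(\Delta^{\ima})^+_{\pi}=Q^{++}_{\pi}\setminus\{0\}$ (see~\ref{Deltapi}; connected support is automatic for $B_{\pi}$), together with $2Q_v\subseteq Q(\pi)$ and $Q^{++}_{\RR,\pi}=Q^{++}_{\RR}$ (\Thm{thmQmnt}(i)). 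For purely anisotropic components (ii) is vacuous, (iii) is the statement of~\ref{61}, (v) follows from (iii) since $t\alpha$ ($t>0$) has the same support as $\alpha$ and lies in $Q^{++}_{\RR}\cap Q_v$, and (iv) follows by observing that $2\mu\in Q(\pi)$ for all $\mu\in Q_v$, that the support of $\mu$ with respect to $\Sigma_v$ coincides with the support of $2\mu$ with respect to $\pi$ (rescaling odd simple roots changes neither the occurring indices nor the Dynkin diagram), and that $\mu\in Q^{++}$ iff $2\mu\in Q^{++}_{\pi}$ (using $Q^{++}_{\RR,\pi}=Q^{++}_{\RR}$ from~\ref{BpiKM}), then applying (iii). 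The main obstacle is part (i) for $Q^{\pm}(m,n,t)$: passing from the lattice identity of~\Lem{lemQ}(i) to the identity of real cones rests on the elementary but not purely formal fact that a finite intersection of rational simplicial cones is the $\RR_{\geq 0}$-span of its lattice points; everything else is bookkeeping on top of~\Thm{thmQmnt}, \Lem{Winvariant} and the transcription of Kac's Chapter~V in~\ref{61}.
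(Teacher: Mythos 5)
Your proposal is correct and follows essentially the same route as the paper: the paper's proof is the one-line ``Combining Section~\ref{Qmnp} and~\ref{61} we obtain (i)--(iv); (v) follows from (iv), (vi) follows from (i) and (vii) follows from (i) and~\Lem{Winvariant}'', and you expand exactly these ingredients (the $Q^{\pm}(m,n,t)$ analysis of \Thm{thmQmnt}/\Cor{corQmnt}, the transcription of Kac's Chapter~V in the purely anisotropic case, the essentially-simple-roots discussion for (vi), and \Lem{Winvariant} for (vii)). The only substantive additions are welcome ones --- you make explicit the passage from the lattice identity of \Lem{lemQ}(i) to the real-cone identity in (i) via rational polyhedral cones (a step the paper absorbs silently into the proof of \Thm{thmQmnt}(i)) and you derive (v) from (ii)/(iii) instead of (iv), which is an equally valid shortcut.
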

\begin{proof}
Combining Section~\ref{Qmnp} and~\ref{61} we obtain (i)--(iv); (v) follows from (iv), (vi) follows
from (i) and (vii) follows from (i) and~\Lem{Winvariant}.
\end{proof}

\subsection{Remark}
From the van de Leur-Hoyt classification it follows that 
for a Kac-Moody component which is not purely anisotropic we have
 $\Delta^{\ima}_{\pi}=\Delta^{\ima}_{\ol{0}}$ except for the affine component
 $D(m+1|n)^{(2)}$, where $\Delta^{\ima}_{\pi}\subsetneq\Delta^{\ima}_{\ol{0}}$
 (since $\delta\in\Delta^{\ima}_{\ol{0}}\setminus\Delta^{\ima}_{\pi}$). 
 Note that the component $C(3)^{(2)}=D(1|2)^{(2)}$ is purely anisotropic.

\section{Skeleton  and spine as  graphs}\label{spinegraph}
We  view the skeleton as a graph with the set of vertices
$\Sk(v)$ and the marked edges $v \overset{r_x}\longleftrightarrow u$.
This graph does not have loops and the  edges adjacent to each vertex 
have different marks. This graph is studied in Section 5.3 of~\cite{GHS}.

Fix an  ordered set $(x_{i_1},\ldots , x_{i_s})$ 
of elements in $X$. For each vertex $u$ there exists at most one path 
marked by $(r_{x_{i_1}},\ldots , r_{x_{i_s}})$ starting at $u$.
We call two paths $v_1\to  v_2$ and  $u_1\to u_2$
 {\em namesakes} if they are marked by the same ordered set.

The spine is a subgraph of the skeleton. The action of $W$ described in~\ref{l(w)} 
preserves the marking of the edges;  each orbit contains exactly one vertex from $\Sp(v)$.

\subsection{Examples}
The skeleton of a Kac-Moody component is a regular graph (the degree of each vertex equals to the cardinality 
of $X$). The degree of a  vertex $u$  in the spine is equal to the number 
of  isotropic roots in $\Sigma_u$.

In the non-isotropic components the spine is trivial and the skeleton identifies the Cayley graph of 
the Weyl group.

 \subsubsection{Example}
 The skeleta of the Kac-Moody components $A_n$  
 (corresponding to  $\fsl_{n+1}$) and $A(k|n-1-k)$ 
 (corresponding to $\fsl(k+1|n-k)$ if $2k+1\not=n$ and to $\fgl(k+1|k+1)$ if $2k+1=n$)
identify with the Cayley graph of $S_{n+1}$.   For $A(k|n-1-k)$ the Weyl group
is $S_{k+1}\times S_{n-k}$ and the spine contains $\binom{n+1}{k}$ 
vertices. For example, for $A(0|n-2)$  the spine contains $n+1$ vertices and is of the form
 $\cdot \longleftrightarrow\ \cdot\ldots\cdot\longleftrightarrow \cdot \longleftrightarrow\cdot$,
 with the arrows marked by $r_1,\ldots,r_{n}$.

 \subsubsection{Example}\label{346}
By~\ref{attain}, $\Sk(v)$ classifies the attainable sets of simple roots. Thus
 $\Sk(v)$ is finite if $\Delta$ is finite. 
  The converse does not hold: for example, for the Cartan matrix $\begin{pmatrix} -1 & -1 \\-1 &-1\end{pmatrix}$
  all elements of $X$ are not reflectable at $v$, so $\Sk(v)=\{v\}$, but $\fg^{\ttC}=\fg^{\U}$ coincides
  with the ``half-baked'' algebra (see~\ref{Liealgcase}) and $\Delta$ is infinite.
  
 \subsubsection{Example}
The spines of $G(3)^{(1)}$ and $F(4)^{(1)}$  are given in
 Appendix (we use the notation of~\cite{Ksuper} for the roots).

\subsection{The groups ${\Sk}^D(v)$ and  ${\Sp}^D(v)$}\label{SpDvset} 
We denote by $V_a$ the subspace of $\fh$ spanned by
$\{a_v(x)\}_{x\in X}$ and by $V_b$ the subspace of $\fh^*$ spanned by
$\{b_v(x)\}_{x\in X}$.  

Take $u\in\Sk(v)$. 
We introduce $\sigma_a^u\in GL(V_a)$, $\sigma_b^u\in GL(V_b)$  via the formulae
$$\sigma_a^u(a_v(x)):=a_u(x),\ \ \ \sigma_b^u(b_v(x)):=b_u(x)\ \text{ for all }x\in X.$$

For any $u\in\Sk(v)$ we set 
$$\begin{array}{l}
\Sk^D(u):=\{u'\in\Sk(v)|\ \text{the Cartan data at $u$ and $u'$ are $D$-equivalent}\},\\
\Sp^D(u):=\Sk^D(u)\cap \Sp(u).\end{array}$$
By~\ref{l(w)}, $W$ acts faithfully on $\Sk^D(u)$ and 
each orbit cointains a unique element of $\Sp^D(v)$.

\subsubsection{Remark}
For each $u\in\Sk(v)$ denote by $\mathfrak{b}_u$ the Borel subalgebra of $\fg:=\fg^{\ttC}$
generated by $\fg_{\alpha}$ with $\alpha\in\Sigma_u$. By~\ref{122},
for each $u\in\Sk^D(v)$ there exists an algebra automorphism of $\fg$
which maps $\fg_{b_v(x)}$ to $\fg_{b_u(x)}$; this automorphism
maps $\fg_{\beta}$ to $\fg_{\beta'}$ where  $\beta'=\sigma_b^u(\beta)$. This gives
$$u\in \Sk^D(v)\ \ \Longrightarrow\ \ \mathfrak{b}_v\cong \mathfrak{b}_u.$$

\subsubsection{}
\begin{lem}{lemSkDv}
Take $v'\in\Sk^D(v)$. Let $D$ be an invertible matrix such that $A^{v'}=DA^v$.
\begin{enumerate}
\item
For any path $\gamma: v\to u$ in $\Sk(v)$,  the skeleton
contains a unique path $\gamma': v'\to u'$ which is namesake for $\gamma$.
\item One has $u'\in\Sk^D(u)$  with $A^{u'}=DA^{u}$. 
\item If $\gamma$
lies in $\Sp(v)$, then $\gamma'$ lies in $\Sp(v')$.
\item For each $x\in X$ we have $b_{u'}(x)=\sigma_b^{v'} (b_{u}(x))$, 
$a_{u'}(x)=\sigma_a^{v'} (a_{u}(x))$.
\item If $\gamma_1,\gamma_2$ are two paths from
$v$ to $u$, and  $\gamma'_1: v'\to u'$, $\gamma'_2: v'\to u''$ are the paths
constructed in (i), then $u'=u''$.
\end{enumerate}
\end{lem}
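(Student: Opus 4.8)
The plan is to establish (i)--(iv) by induction on the length $s$ of a marking sequence $(r_{x_1},\dots,r_{x_s})$ representing $\gamma$, and then to deduce (v) from (iv). For $s=0$ we have $\gamma=\id_v$, so $u=v$, and taking $\gamma'=\id_{v'}$, $u'=v'$ makes (i)--(iv) hold by the definitions of $\Sk^D$, $\Sp$ and $\sigma_a^{v'},\sigma_b^{v'}$ given in~\ref{SpDvset}. For $s\ge 1$ I would write $\gamma=\gamma_0\cdot r_x$, where $\gamma_0\colon v\to u_0$ is represented by $(r_{x_1},\dots,r_{x_{s-1}})$ and $r_x\colon u_0\to u$ is the last reflexion, and apply the induction hypothesis to $\gamma_0$: this produces a namesake path $\gamma_0'\colon v'\to u_0'$ in the skeleton with $u_0'\in\Sk^D(u_0)$, $A^{u_0'}=DA^{u_0}$, and $b_{u_0'}(y)=\sigma_b^{v'}(b_{u_0}(y))$, $a_{u_0'}(y)=\sigma_a^{v'}(a_{u_0}(y))$ for all $y\in X$. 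Since $x$ is reflectable at $u_0$ and the Cartan data at $u_0$ and $u_0'$ are $D$-equivalent, Remark~\ref{dequiv reflectable} shows $x$ is reflectable at $u_0'$; then $r_x$ gives an arrow $u_0'\to u'$ and $\gamma'\coloneqq\gamma_0'\cdot r_x$ is the required namesake, proving the existence part of (i). Uniqueness of $\gamma'$ is automatic: the edges of the skeleton at any vertex carry pairwise distinct marks (see the opening of Section~\ref{spinegraph}), so a path starting at $v'$ with a prescribed marking sequence is unique whenever it exists.

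Next I would check (ii) and (iii) for $u'$. For (ii), an anisotropic reflexion leaves the Cartan datum unchanged, so then $A^{u'}=A^{u_0'}=DA^{u_0}=DA^{u}$; if $r_x$ is isotropic, then $a^{u_0}_{xx}=0$, hence $a^{u_0'}_{xx}=0$ and $a^{u_0'}_{xy}=0\Leftrightarrow a^{u_0}_{xy}=0$, so $r_x$ is again isotropic at $u_0'$, and substituting $A^{u_0'}=DA^{u_0}$ into the explicit formula of Definition~\ref{refdef} for the reflected Cartan matrix and carrying out the (routine) computation gives $A^{u'}=DA^{u}$ with the same diagonal matrix $D$; in particular $u'\in\Sk^D(u)$. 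For (iii), $\gamma'$ carries the same marking sequence as $\gamma$, and by the previous remark each reflexion of $\gamma'$ is isotropic exactly when the corresponding reflexion of $\gamma$ is; hence if $\gamma$ lies in $\Sp(v)$, i.e.\ all its reflexions are isotropic, then so does $\gamma'$, that is $\gamma'$ lies in $\Sp(v')$.

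For (iv) I would apply the linear map $\sigma_b^{v'}$ to the formula of Definition~\ref{refdef} expressing the $b_u(y)$ through the $b_{u_0}(\cdot)$ and use the induction hypothesis $b_{u_0'}(\cdot)=\sigma_b^{v'}(b_{u_0}(\cdot))$. The coefficients occurring in that formula are $2a^{u_0}_{xy}/a^{u_0}_{xx}$ in the anisotropic case and $0,\pm1$ in the isotropic case, with the case distinction governed by whether $a^{u_0}_{xy}$ vanishes; since passing from $A^{u_0}$ to $A^{u_0'}=DA^{u_0}$ merely rescales the $x$-th row, each ratio $a^{u_0}_{xy}/a^{u_0}_{xx}$ is unchanged and the case distinctions persist, so the transformed identity is precisely the defining formula for $b_{u'}(y)$; thus $b_{u'}(y)=\sigma_b^{v'}(b_u(y))$ for every $y$. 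The identity $a_{u'}(y)=\sigma_a^{v'}(a_u(y))$ is then obtained by the same manipulation applied to the reflexion formula for $a_u(y)$.

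Finally, (v) follows from (iv): given two paths $\gamma_1,\gamma_2\colon v\to u$ with namesakes $\gamma_1'\colon v'\to u'$ and $\gamma_2'\colon v'\to u''$ produced by (i), assertion (iv) applied to each gives $b_{u'}(x)=\sigma_b^{v'}(b_u(x))=b_{u''}(x)$ for all $x\in X$, so $\Sigma_{u'}=\Sigma_{u''}$, and then~(\ref{Sigmau=Sigmav}) forces $u'=u''$. I expect the one genuinely technical point to be the isotropic case of (ii) (and, relatedly, of the $a$-part of (iv)): one must go through the three clauses of Definition~\ref{refdef} and verify that rescaling the rows of the Cartan matrix by $D$ alters the reflected Cartan matrix (resp.\ the reflected $a(y)$) by nothing more than the same rescaling. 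Everything else is a bookkeeping induction built on Remark~\ref{dequiv reflectable} and the rigidity of the skeleton graph (no loops, distinct marks at each vertex).
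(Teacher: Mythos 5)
Your proposal follows the paper's own proof essentially verbatim: the paper also argues by induction on $\ell(\gamma)$, peeling off the last edge $r_x\colon u_1\to u$, using that $u_1'\in\Sk^D(u_1)$ (so that $x$ is reflectable at $u_1'$ by Remark~\ref{dequiv reflectable}) to extend the namesake path, and deduces (v) from (iv) together with~(\ref{Sigmau=Sigmav}) exactly as you do. Your verification of (i), (ii), (iii), (v) and of the $b$-half of (iv) is correct, and considerably more detailed than the paper's one-line ``the assertions follow from the formula for the reflexions''.

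The one step that does not go through as you describe is the $a$-half of (iv), which you dispatch with ``the same manipulation''. Your argument for the $b$-half works because every coefficient in the reflexion formula for $b'(y)$ --- namely $2a_{xy}/a_{xx}$, or $0,\pm1$ with the case distinction governed by the vanishing of $a_{xy}$ --- is a ratio of entries from the single row $x$ of the Cartan matrix, hence invariant under $A\mapsto DA$. The coefficients in the formula for $a'(y)$ are instead $2a_{yx}/a_{xx}$ (anisotropic case) and $a_{yx}/a_{xy}$ (isotropic case): these mix rows $y$ and $x$ and acquire the factor $d_{yy}/d_{xx}$ under $A\mapsto DA$. Concretely, if $a_u(y)=\sum_z c_{yz}\,a_{u_0}(z)$, then the same computation at $u_0'$ gives $a_{u'}(y)=\sum_z d_{yy}\,c_{yz}\,d_{zz}^{-1}\,a_{u_0'}(z)$, i.e.\ the transition matrix for the $a$'s is conjugated by $D$ rather than preserved. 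This conjugation is precisely what makes your verification of (ii) come out right, but it yields $a_{u'}(y)=\sigma_a^{v'}(a_u(y))$ only when $D$ commutes with the transition matrices (e.g.\ when $D$ is a scalar matrix), so for a general diagonal $D$ the $a$-statement in (iv) is not obtained by the manipulation you propose. To be fair, the paper's proof is silent on exactly this point, and only the $b$-half of (iv) is used downstream (in part (v) and in Theorem~\ref{thm1}); but a complete writeup should either restrict the $a$-claim to scalar $D$ or restate it with the conjugated coefficients.
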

\begin{proof}
We proceed by induction on the length of $\gamma$ which we denote by $\ell(\gamma)$.
If $\ell(\gamma)=1$, then 
$\gamma=r_x$ and  the assertions (i)-(iv) follow from the formula for the reflexions.
For the induction step we 
present $\gamma$ as  $\gamma=r_x\circ \gamma_1$
where $r_x: u_1\to u$ is the last edge in $\gamma$. 
Since  $\gamma_1: v\to u_1$ has length $m-1$, the skeleton
contains a unique path $\gamma_1': v'\to u_1'$ and 
(ii), (iii), (iv) holds for $u_1'$. Since $u'_1\in\Sk^D(u_1)$ the reflexion
$r_x: u_1\to u$ gives rise to $r_x: u_1'\to u'$ and assertions (ii)--(iv) 
hold.

For (v) note that (iv) implies $\Sigma_{u'}=\Sigma_{u''}$.
By~(\ref{Sigmau=Sigmav}) this gives $u'=u''$.
\end{proof}

\subsubsection{}
By~\Lem{lemSkDv} each  $v'\in\Sk^D(v)$ 
defines an automorphism  of the skeleton (as a marked graph);
we denote this automorphism as $\sigma^{v'}$.

For $u_1,u_2\in \Sk^D(v)$ we set
$$u_1*u_2:=\sigma^{u_1}(u_2).$$
By~\Lem{lemSkDv}, $u_1* u_2\in Sk^D(v)$. Moreover, if $A^{u_i}=D_i A^v$,
then $A^{u_1* u_2}=D_2D_1 A^v$.
Clearly,
$\sigma^v=\Id$, so 
$$v*u=u=\sigma^u(v)=u*v.$$

\subsubsection{}
\begin{lem}{}
The operation $*$ endows $\Sk^D(v)$ with a group structure with the unit $v$.
The group $\Sk^D(v)$ contains $\Sp^D(v)$ as a subgroup.
\end{lem}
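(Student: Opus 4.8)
The plan is to identify $(\Sk^D(v),*)$ with a subgroup of the group of automorphisms of $\Sk(v)$ regarded as a marked graph, via the assignment $\Phi\colon v'\mapsto\sigma^{v'}$ recalled just above. The one preliminary fact I would isolate first is that a marking-preserving automorphism of $\Sk(v)$ is determined by the image of a single vertex: if $\phi,\psi$ are such automorphisms with $\phi(w_0)=\psi(w_0)$, then for any vertex $w$ pick a path $\gamma\colon w_0\to w$; since $\phi$ and $\psi$ preserve markings, $\phi(\gamma)$ and $\psi(\gamma)$ are paths starting at $\phi(w_0)=\psi(w_0)$ carrying the same marking as $\gamma$, hence coincide by the uniqueness of marked paths recalled in~\ref{spinegraph}, so $\phi(w)=\psi(w)$. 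In particular $\Phi$ is injective (because $\sigma^{v'}(v)=v'$), and $\sigma^v=\Id$.

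Next I would prove the multiplicativity $\sigma^{u_1*u_2}=\sigma^{u_1}\circ\sigma^{u_2}$ for $u_1,u_2\in\Sk^D(v)$. By~\Lem{lemSkDv} one has $u_1*u_2\in\Sk^D(v)$, so the left side is defined, and it sends $v$ to $u_1*u_2$; the right side is a marking-preserving automorphism of $\Sk(v)$ sending $v$ to $\sigma^{u_1}(\sigma^{u_2}(v))=\sigma^{u_1}(u_2)=u_1*u_2$, so the two automorphisms coincide by the previous paragraph. Associativity of $*$ follows at once: $(u_1*u_2)*u_3=\sigma^{u_1*u_2}(u_3)=\sigma^{u_1}(\sigma^{u_2}(u_3))=u_1*(u_2*u_3)$. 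Together with the identities $v*u=u*v=u$ already recorded, this makes $(\Sk^D(v),*)$ a monoid with unit $v$. For inverses, given $v'\in\Sk^D(v)$ put $w:=(\sigma^{v'})^{-1}(v)$; choosing a path $\gamma\colon v\to w$, its namesake runs $v'\to\sigma^{v'}(w)=v$, and~\Lem{lemSkDv} applied to this path shows $v\in\Sk^D(w)$, hence $w\in\Sk^D(v)$. Then $(\sigma^{v'})^{-1}$ and $\sigma^w$ are two marking-preserving automorphisms of $\Sk(v)$ both sending $v$ to $w$, so they agree; by multiplicativity and injectivity of $\Phi$ we get $w*v'=v'*w=v$. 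Hence $(\Sk^D(v),*)$ is a group.

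Finally I would check that $\Sp^D(v)=\Sk^D(v)\cap\Sp(v)$ is a subgroup. It contains $v$. It is closed under $*$: for $u_1,u_2\in\Sp^D(v)$, by~\Lem{lemSkDv}(iii) the map $\sigma^{u_1}$ carries a path lying in $\Sp(v)$ to a path lying in $\Sp(u_1)=\Sp(v)$ (the equality because $u_1\in\Sp(v)$), so $u_1*u_2=\sigma^{u_1}(u_2)\in\Sp(v)$, while $u_1*u_2\in\Sk^D(v)$ already. For inverses, let $v'\in\Sp^D(v)$ and $w=(v')^{-1}$ as above; choose an isotropic path $\eta\colon v'\to v$ inside the connected graph $\Sp(v)=\Sp(v')$. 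Its namesake $\eta^{\vee}$ from $v$ runs $v\to(\sigma^{v'})^{-1}(v)=w$, and applying~\Lem{lemSkDv}(iii) with the roles of $v$ and $v'$ interchanged (legitimate since $v\in\Sk^D(v')$) shows $\eta^{\vee}$ lies in $\Sp(v)$; hence $w\in\Sp(v)$, so $w\in\Sp^D(v)$. I do not expect any genuine obstacle here: everything reduces to the rigidity statement of the first paragraph plus bookkeeping with~\Lem{lemSkDv}. The only point needing a little care is the last one, namely that the namesake of an isotropic path is again isotropic, which is exactly~\Lem{lemSkDv}(iii) and ultimately rests on the fact that namesake paths carry $D$-equivalent Cartan data, so the vanishing $a_{xx}=0$ is preserved along them.
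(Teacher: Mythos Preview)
Your proof is correct. The route differs from the paper's in one essential respect: the paper verifies associativity and the existence of inverses by direct manipulation of marked paths (concatenating paths marked by $M_1,M_2,M_3$ in two ways and comparing endpoints, then constructing the inverse of $u$ as the endpoint of the namesake of a path $u\to v$), whereas you first isolate the rigidity principle that a marking-preserving automorphism of $\Sk(v)$ is determined by the image of one vertex, deduce the multiplicativity $\sigma^{u_1*u_2}=\sigma^{u_1}\circ\sigma^{u_2}$, and then read off the group axioms from the monomorphism $\Phi\colon\Sk^D(v)\hookrightarrow\Aut(\Sk(v))$. Your approach is slightly more conceptual and has the side benefit of making the embedding into $\Aut(\Sk(v))$ explicit from the start (the paper establishes the analogous embedding into $GL(V_b)$ only afterwards, in~\Thm{thm1}, via essentially the same formula~(\ref{sigmabu1u2})). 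You are also more careful about closure of $\Sp^D(v)$ under inverses: the paper's sentence ``$u_1*u_2\in\Sp^D(v)$ for $u_1,u_2\in\Sp^D(v)$'' only records closure under $*$, leaving the inverse case implicit in the observation that when $u\in\Sp^D(v)$ the path $u\to v$ used to build $u^{-1}$ can be chosen inside the spine, whence~\Lem{lemSkDv}~(iii) applies; you spell this out.
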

\begin{proof}
Let us check the associativity.

Take $u_1,u_2,u_3\in\Sk^D(v)$ and the paths $\gamma_i: v\to u_i$;
we denote by $M_i$ the ordered set of marks for  $\gamma_i$.

The skeleton contains a  path
$\sigma^{u_2}(\gamma_3): u_2\to (u_2*u_3)$ which is marked by $M_3$;
this gives  the  path $\sigma^{u_2}(\gamma_3)\circ \gamma_2 :v\to (u_2*u_3)$
marked by $M_2\coprod M_3$  and 
the  path 
$$\sigma^{u_1}( \sigma^{u_2}(\gamma_3)\circ \gamma_2): u_1\to u_1*(u_2*u_3)$$
marked by $M_2\coprod M_3$. The composed path
\begin{equation}\label{path1}
\sigma^{u_1}( \sigma^{u_2}(\gamma_3)\circ \gamma_2)\circ\gamma_1: v\to u_1*(u_2*u_3)
\end{equation}
is marked by $M_1\coprod M_2\coprod M_3$.

On the other hand, the skeleton contains a  path  
$\sigma^{u_1}(\gamma_2): u_1\to (u_1*u_2)$ which is marked by $M_2$ 
and a path $\sigma^{u_1*u_2}(\gamma_3): u_1*u_2\to (u_1*u_2)*u_3$
which is marked by $M_3$.
The composed path
\begin{equation}\label{path2}
\sigma^{u_1*u_2}(\gamma_3)\circ \sigma^{u_1}(\gamma_2)\circ \gamma_1:
v\to  (u_1*u_2)*u_3
\end{equation}
is marked by the ordered set $M_1\coprod M_2\coprod M_3$.
Since the paths~(\ref{path1}), (\ref{path2}) have the same marks and
start at the same vertex $v$, these paths coincide and 
$$ u_1*(u_2*u_3)=(u_1*u_2)*u_3.$$

Now take any $u\in\Sk^D(v)$
and fix a path $\gamma: u\to v$ marked by a set $M$.
Applying~\Lem{lemSkDv}  to
 $v\in\Sk^D(u)$ we conclude that
 the skeleton contains a  path $\gamma':v\to u'$ which is marked by the same set
 $M$ and $u'$ lies in $\Sk^D(v)$.
 Let us show that 
  $$u'*u=u*u'=v.$$ 
 Indeed, the  path $\sigma^u(\gamma'): u\to u*u'$ is 
 marked $M$, so  $\sigma^u(\gamma')=\gamma$ and $u*u'=v$.
 On the other hand, taking $\gamma^{-1}: v\to u$ we obtain 
 $\sigma^{u'} (\gamma^{-1}): u'\to u'*u$. The paths
 $\gamma^{-1}:v\to u$ and  $(\gamma')^{-1}: u'\to v$ are marked by the same 
 set (the set $M$ with the reverse order), so  $\sigma^{u'} (\gamma^{-1}): u'\to u'*u$
 and $(\gamma')^{-1}: u'\to v$ are marked by the same 
 set. Hence  $u'*u=v$ as required.
 
 By above, $(\Sk^D(v), *)$ is a group. By~\Lem{lemSkDv} one has
 $u_1*u_2\in\Sp^D(v)$ for $u_1,u_2\in\Sp^D(v)$. This completes the proof.
\end{proof}

\subsection{}
\begin{thm}{thm1}
The map $u\mapsto \sigma_b^u$ induces a group monomorphism $\Sk^D(v)\to GL(V_b)$.
The image lies in the group of automorphisms of the 
Dynkin diagram of $\Sigma_{\pr}$, i.e. 
$$\operatorname{Dynkin}(\Sigma_{\pr}):=\bigl\{
\begin{array}{l}\psi\in \Aut(\Sigma_{\pr})|\ p(\psi(\alpha))=p(\alpha),\\
\ \langle \alpha^{\vee},\beta\rangle=
\langle \psi(\alpha)^{\vee},\psi(\beta)\rangle \text{ for all }\alpha,\beta\in\Sigma_{\pr}\end{array}\bigr\}.$$ 
\end{thm}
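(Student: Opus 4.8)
\emph{The monomorphism.} First I would note that for every $u\in\Sk(v)$ the family $\{b_u(x)\}_{x\in X}$ is a basis of $V_b$, so each $\sigma_b^u$ is a well-defined element of $\GL(V_b)$. To prove multiplicativity, I would fix $u_1,u_2\in\Sk^D(v)$ and a path $\gamma\colon v\to u_2$ in the skeleton and apply \Lem{lemSkDv} with $v':=u_1$: the namesake path it produces is $\gamma'\colon u_1\to\sigma^{u_1}(u_2)=u_1*u_2$, and part~(iv) gives $b_{u_1*u_2}(x)=\sigma_b^{u_1}\bigl(b_{u_2}(x)\bigr)$ for all $x\in X$. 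Since $\sigma_b^{u_2}(b_v(x))=b_{u_2}(x)$, evaluating on the basis $\{b_v(x)\}$ yields $\sigma_b^{u_1*u_2}=\sigma_b^{u_1}\circ\sigma_b^{u_2}$. Injectivity is then immediate from~\eqref{Sigmau=Sigmav}: if $\sigma_b^u=\id$ then $\Sigma_u=\Sigma_v$, hence $u=v$.

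\emph{The image lies in $\operatorname{Dynkin}(\Sigma_{\pr})$.} The key point is that $\sigma^u(v)=u$, so for $u\in\Sp^D(v)$ the automorphism $\sigma^u$ carries $\Sp(v)$ onto $\Sp(u)=\Sp(v)$ by \Lem{lemSkDv}(iii). Given $\alpha\in\Sigma_{\pr}$, I would write $\alpha=b_w(x)$ with $w\in\Sp(v)$, $a^w_{xx}\neq0$, and transport a path $v\to w$ along $u$ to a path $u\to w'$; by \Lem{lemSkDv}(ii),(iii),(iv) one gets $w'\in\Sp(v)$, $A^{w'}=DA^w$ for an invertible diagonal $D=(d_x)$ (the one with $A^u=DA^v$), and $\sigma_b^u(\alpha)=b_{w'}(x)$. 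Since $a^{w'}_{xx}=d_xa^w_{xx}\neq0$, the root $b_{w'}(x)$ is again principal, so, applying this also to $u^{-1}$, $\sigma_b^u$ restricts to a bijection of $\Sigma_{\pr}$. Parity is preserved, $p(\sigma_b^u(\alpha))=p_{w'}(x)=p_w(x)=p(\alpha)$, because $D$-equivalence keeps the parity function; and for $\beta\in\Sigma_{\pr}$, expanding $\beta=\sum_z c_z b_w(z)$ so that $\sigma_b^u(\beta)=\sum_z c_z b_{w'}(z)$, and using $\alpha^\vee=\tfrac{2}{a^w_{xx}}a_w(x)$, $\sigma_b^u(\alpha)^\vee=\tfrac{2}{a^{w'}_{xx}}a_{w'}(x)=\tfrac{1}{d_x}\sigma_a^u(\alpha^\vee)$ and $a^{w'}_{xz}=d_xa^w_{xz}$, a short computation gives $\langle\sigma_b^u(\alpha)^\vee,\sigma_b^u(\beta)\rangle=\langle\alpha^\vee,\beta\rangle$. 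Hence $\sigma_b^u\in\operatorname{Dynkin}(\Sigma_{\pr})$. For a general $u\in\Sk^D(v)=W\rtimes\Sp^D(v)$ one splits $u$ via this decomposition; the $W$-factor acts on $V_b$ by the tautological reflection action, which is $W$-invariant on $\Delta_{\an}$, on the parity homomorphism and on all pairings $\langle\alpha^\vee,\beta\rangle$, and which carries $\Sigma_{\pr}$ onto the principal roots of the spine component $\Sp(wv)$.

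\emph{The main obstacle.} I expect the real work to be the compatibility with the Cartan data in the second step: $\sigma^u$ preserves Cartan matrices only up to the diagonal factor $D$, so one has to check that precisely the $D$-invariant part of the structure transports --- the rescaling $d_x$ of the $x$-th row of $A$ is exactly cancelled by the rescaling $1/d_x$ of the coroot $\alpha^\vee$, which is what keeps both the normalisation $\langle\alpha^\vee,\alpha\rangle=2$ and the numbers $\langle\alpha^\vee,\beta\rangle$ intact. A subsidiary nuisance is that two principal roots need not be simple at a single common spine vertex, so one fixes the vertex $w$ that makes $\alpha$ simple and expands $\beta$ in the basis there, rather than treating $\alpha$ and $\beta$ symmetrically.
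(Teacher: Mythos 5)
Your proposal is correct and follows essentially the same route as the paper: multiplicativity and injectivity of $u\mapsto\sigma_b^u$ via \Lem{lemSkDv} and~(\ref{Sigmau=Sigmav}), and then, for $u$ in the spine part, transporting the vertex at which $\alpha\in\Sigma_{\pr}$ is simple and observing that the diagonal rescaling of the $x$-th row of the Cartan matrix cancels against the rescaling of $\alpha^{\vee}$ in the pairing $\langle\alpha^{\vee},\beta\rangle$. The only difference is that you spell out the reduction of general $u\in\Sk^D(v)$ to the $\Sp^D(v)$-part, which the paper leaves implicit (its proof of the second assertion, like the subsequent remark, is stated for $v'\in\Sp^D(v)$).
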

\begin{proof}
By~\Lem{lemSkDv} (iii), (iv)  we have $\sigma_b^{u_1}\not=\Id$ if $u_1\not=\Id$ and 
\begin{equation}\label{sigmabu1u2}
\sigma_b^{u_1*u_2}=\sigma_b^{u_1}\circ \sigma_b^{u_2}.
\end{equation}
This gives a monomorphism $\Sk^D(v)\to GL(V_b)$.

Take $v'\in\Sp^D(v)$ and $\alpha\in\Sigma_{\pr}$. 
Set $\alpha':=\sigma_b^{v'}(\alpha)$.
Let $u\in \Sp(v)$ and $x\in X$ be such that $\alpha=b_u(x)$. Note that $a^u_{xx}\not=0$.
Using~\Lem{lemSkDv} (ii), (iii) and (iv)
 we obtain $u':=\sigma^{v'}(u)\in\Sp(v)$ with 
 $$\sigma_b^{v'}(b_u(y))=b_{u'}(y)\ \ \text{ for all }y\in X$$
and $a^{u'}_{xx}\not=0$. Therefore $\alpha'=b_{u'}(x)\in\Sigma_{\pr}$.
 Therefore the map $v'\mapsto \sigma_b^{v'}$ induces
a group homomorphism $\iota:\Sp^D(v)\to  \Aut (\Sigma_{\pr})$.

Take $\beta=\sum\limits_{y\in X} m_y b_u(y)\in V_b$
and set $\beta':=\sigma_b^{v'}(\beta)$. Then
$$\beta'=\sigma_b^{v'}(\beta)=\sum\limits_{y\in X} m_y b_{u'}(y).$$
Since $\alpha^{\vee}=\frac{2}{a^{u}_{xx}} a_{u}(x)$ we have
$$\langle \alpha^{\vee},\beta\rangle=\sum\limits_{y\in X} m_y \frac{2a^{u}_{xy}}{a^{u}_{xx}}.$$
Similarly, $(\alpha')^{\vee}=\frac{2}{a^{u'}_{xx}} a_{u'}(x)$, so
$$\langle (\alpha')^{\vee},\beta'\rangle=
\frac{2}{a^{u'}_{xx}}\langle a_{u'}(x),\beta'\rangle=\frac{2}{a^{u'}_{xx}}
\sum\limits_{y\in X} m_y \langle a_{u'}(x),b_{u'}(y)\rangle=\sum\limits_{y\in X} m_y \frac{2a^{u'}_{xy}}{a^{u'}_{xx}}.$$
By~\Lem{lemSkDv}  (ii), $\frac{2a^{u}_{xy}}{a^{u}_{xx}}=\frac{2a^{u'}_{xy}}{a^{u'}_{xx}}$.
Hence $\langle \alpha^{\vee},\beta\rangle=\langle (\alpha')^{\vee},\beta'\rangle$ as required.
\end{proof}

\subsubsection{Remark}\label{type2Aut}
Denote by $\Aut(\Sigma_{\pr})$ the group
of automorphism of the set $\Sigma_{\pr}$. By~\Thm{thm1} the map
$u\mapsto \sigma_b^u$ induces a group homomorphism
$\Sp^D(v)\to \Aut (\Sigma_{\pr})$ which is injective 
if   $\mathbb{C}\Sigma_{\pr}=V_b$.

\subsection{}
\begin{lem}{lemWSkDv}
\begin{enumerate}
\item For $w\in W$ one has $wv\in\Sk^D(v)$ and $\sigma^{wv}(u)=wu$. Moreover,
 $\sigma_b^{wv}\in GL(V_b)$ coincides with the restriction of $w\in GL(\fh^*)$ to $V_b$.
 \item For $w_1,w_2\in W$ we have $w_1v* w_2v=(w_1w_2)v$, so 
  $Wv$ forms a  subgroup of $\Sk^D(v)$. 
  \item Identifying $W$ with $Wv$ we have
   $W\cap \Sp^D(v)=\Id$ and $\Sk^D(v)=W (\Sp^D(v))$.
  \end{enumerate}
\end{lem}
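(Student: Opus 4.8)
The plan is to deduce all three assertions from Lemma~\ref{lemSkDv}, the description of the $W$-action on the skeleton in~\ref{l(w)}, and the uniqueness of namesake paths. For~(i) I would first observe that $A^{wv}=A^v$ by~\ref{l(w)}, so the Cartan data at $wv$ and at $v$ coincide, hence are $D$-equivalent (with $D=\Id$), and therefore $wv\in\Sk^D(v)$. To identify the skeleton automorphism $\sigma^{wv}$ with the action of $w$, fix $u\in\Sk(v)$ and a path $\gamma\colon v\to u$ with ordered set of marks $M$. Since the $W$-action on $\Sk(v)$ preserves edges and their marks (by~\ref{l(w)}), $w\gamma$ is a path from $wv$ to $wu$ again marked by $M$, i.e.\ a namesake of $\gamma$ starting at $wv$. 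By Lemma~\ref{lemSkDv}(i) such a namesake path is unique, so it equals $w\gamma$ and its endpoint, which is $\sigma^{wv}(u)$ by definition, equals $wu$. Finally $\sigma_b^{wv}(b_v(x))=b_{wv}(x)=w\,b_v(x)$ for all $x\in X$ by~\ref{l(w)}, and since $\{b_v(x)\}_{x\in X}$ spans $V_b$, $\sigma_b^{wv}$ is precisely the restriction of $w\in GL(\fh^*)$ to $V_b$.

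For~(ii) I would simply compute $w_1v* w_2v=\sigma^{w_1v}(w_2v)=w_1(w_2v)=(w_1w_2)v$, using the definition of $*$ and part~(i). Thus $Wv$ is closed under $*$, contains the unit $v=\Id\cdot v$, and contains the inverse $(wv)^{-1}=w^{-1}v$ of each element (as $wv*w^{-1}v=(ww^{-1})v=v$), so $Wv$ is a subgroup of $\Sk^D(v)$; moreover $w\mapsto wv$ is a group homomorphism $W\to\Sk^D(v)$, injective because $W$ acts faithfully on $\Sk(v)$ by~\ref{l(w)}. Identify $W$ with $Wv$ via this isomorphism.

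For~(iii), note first that if $wv\in\Sp^D(v)\subset\Sp(v)$, then by the uniqueness statement in~\ref{l(w)}—the unique element of $W$ carrying $v$ into $\Sp(v)$ is $\Id$—we get $w=\Id$, so $W\cap\Sp^D(v)=\{\Id\}$. For the factorization, take $u\in\Sk^D(v)$; by~\ref{l(w)} there is (a unique) $w'\in W$ with $w'u\in\Sp(v)$. Since $A^{w'u}=A^u$ is $D$-equivalent to $A^v$, we have $w'u\in\Sk^D(v)$, hence $u_0:=w'u\in\Sk^D(v)\cap\Sp(v)=\Sp^D(v)$. Using part~(i), $u=(w')^{-1}u_0=\sigma^{(w')^{-1}v}(u_0)=(w')^{-1}v*u_0\in Wv*\Sp^D(v)$. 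The reverse inclusion $Wv*\Sp^D(v)\subset\Sk^D(v)$ is immediate because $\Sk^D(v)$ is a group containing both factors, so $\Sk^D(v)=W(\Sp^D(v))$.

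I do not anticipate a serious obstacle: the lemma is essentially a bookkeeping consequence of the machinery already set up. The one step needing genuine care is the identification $\sigma^{wv}=w$ in part~(i), where one must invoke that the $W$-action preserves the marking of edges in order to recognize $w\gamma$ as the unique namesake path issuing from $wv$; once that is in place, parts~(ii) and~(iii) are formal.
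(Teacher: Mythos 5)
Your proof is correct and follows essentially the same route as the paper: everything is reduced to the $W$-action on the skeleton from~\ref{l(w)}, with $wv\in\Sk^D(v)$ coming from $A^{wv}=A^v$, and (iii) from the unique intersection of each $W$-orbit with $\Sp(v)$. The only cosmetic difference is that you identify $\sigma^{wv}(u)=wu$ via uniqueness of namesake paths, whereas the paper compares $\Sigma_{\sigma^{wv}(u)}=w\Sigma_u=\Sigma_{wu}$ and invokes~(\ref{Sigmau=Sigmav}); both are immediate from the same machinery.
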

\begin{proof}
By~\ref{l(w)}, 
for $w\in W$ and $u\in\Sk^D(v)$  we have $wu\in\Sk^D(v)$ and $b_{wu}(x)=wb_u(x)$. In particular,
$\sigma^{wv}_b(b_v(x))=wb_v(x)$, so $\sigma^{wv}_b$ is the restriction of $w\in GL(\fh^*)$ to $V_b$.
For $u\in \Sk(v)$ we have
$\sigma^{wv}(u)=u'$ where 
$$\Sigma_{u'}=\sigma^{wv}_b(\Sigma_u)=w\Sigma_u=\Sigma_{wu}.$$
Hence $\sigma^{wv}(u)=wu$. This gives (i). For (ii) take $w_1,w_2\in W$. Then
$$\Sigma_{w_1v*w_2v}=w_1\Sigma_{w_2v}=w_1w_2\Sigma_v=\Sigma_{w_1w_2v}.$$
Hence $w_1v* w_2v=(w_1w_2)v$. This gives (ii).
Now (iii) follows from the fact that each $W$-orbit in $\Sk(v)$ has a unique intersection with
$\Sp(v)$ (see~\ref{l(w)}).
\end{proof}

\subsubsection{}
By~\cite{GHS}, Proposition 5.4.7 (4) 
one has $\Sk^D(v)=Wv\rtimes \Sp^D(v)$.

\subsection{Examples}\label{examples7}
By~\cite{GHS}, the group  $\Sp^D(v)$ is trivial if $\Sk(v)$ is of type (Fin) and differ from
 $A(n|n)$. \footnote{Recall that  $A(m|n)$ is defined for
 $m,n\geq -1$; one has $A(m|-1)=A_m$.
 We have $\fg^{\ttC}=\fsl(m+1|n+1)$
  if  $m\not=n$ and $\fg^{\ttC}=\fgl(m+1|m+1)$ for $m=n$.}

 \subsubsection{}\label{Ann}
 Consider the component $A(n|n)$.
The space $\fh^*$ is spanned by $\vareps_1,\ldots,\vareps_{n+1}, \delta_1,\ldots,\delta_{n+1}$.
We fix $v$ with
$$\Sigma_v=\{\vareps_1-\vareps_2,\ldots,\vareps_n-\vareps_{n+1},\vareps_{n+1}-\delta_1, \delta_1-\delta_2,\ldots,\delta_n-\delta_{n+1}\}.$$
In this case $\Sp^D(v)=\{v,u\}$ with
$$\Sigma_u=\{\delta_1-\delta_2,\ldots,\delta_n-\delta_{n+1},\delta_{n+1}-\vareps_1,\vareps_1
-\vareps_2,\ldots,\vareps_n-\vareps_{n+1}\}$$
and $A_u=-A_v$. 
Thus 
$$\Sp^D(v)=\{v,u\}\cong \mathbb{Z}_2.$$

The group $W$ identifies with $S_{n+1}\times S_{n+1}\subset GL(\fh^*)$, where
the first copy of $S_{n+1}$ 
stands for  the group of permutations of $\vareps_1,\ldots,\vareps_{n+1}$
and the  second copy of $S_{n+1}$ 
stands for  the group of permutations of $\vareps_1,\ldots,\vareps_{n+1}$.

Consider the involution $\xi\in GL(\fh^*)$ given by $\xi(\vareps_i)=\delta_i$.
 The group $\Sk^D(v)$ can be identified with the group $W\rtimes \{\xi,\Id\}\subset GL(\fh^*)$
 where $\sigma^u_b=\xi$.
 Note that the semidirect product $W\rtimes \{\xi,\Id\}$ is not direct.

 \subsubsection{}\label{Amnaff}
Take $n>0$.
 By~\cite{GHS}, 10.3 the skeleta of the affine Kac-Moody components $A_n^{(1)}$
  and $A(k|n-1-k)^{(1)}$ 
identify with the Cayley graph of the affine Weyl group
$W(A_n^{(1)})$.  For $A(k|n-1-k)^{(1)}$ the Weyl group
is $W(A_k^{(1)})\times W(A_{n-k-1}^{(1)})$. 

For $A(k|n-1-k)^{(1)}$  the group 
$\Sk^D(v)$ is described in~\cite{GHS}, 10.3.5. It is shown that for $k\not=-1,n$, the group
$W$ has an infinite index in $\Sk^D(v)$,
so $\Sp^D(v)$ is infinite.  Therefore $\Sp(v)$ is infinite.
Another proof of the fact $\Sp(v)$ is infinite appears in~\Cor{corinf}.
\footnote{Another description 
 of the spine for  $A(m|n)^{(1)}$ appears in~\cite{Mus}.}

\subsubsection{}
For the cases  $G(3)^{(1)}$ and $F(4)^{(1)}$ the group $\Sp^D(v)$ is trivial since in 
each case we have a Dynkin diagram appearing only once (the spines are given in Appendix).

\subsection{Remark}\label{Sk'}
Let $\cR_0$ be an admissible component of the root groupoid. 
Take a subset $X'\subset X$. For a vertex $v\in\cR_0$ we consider 
a vertex $v'$ which corresponds to the quadruple $(\fh,a',b',p')$
where  $a', b', p'$ are the restrictions of $a,b,p$ to $X'$.
Denote by $\cR'_0$ the component corresponding to $(\fh,a',b',p')$.
It is easy to see that $\cR'_0$ is admissible and 
 $\sum\limits_{\alpha\in \mathbb{Z}\Sigma_{v'}}\fg^{\ttC}_{\alpha}$ is the contragredient
algebra for $\cR'_0$. In particular, $\Delta'\subset \Delta$. 

If $\cR$ is fully reflectable, then $W'$ is a subgroup of $W$ and
$\Sk(v')$, $\Sp(v')$ are subgraphs of $\Sk(v)$ and $\Sp(v)$ respectively.
In general, $\Sp(v')$ is a subraph of $\Sp(v)$ (since $a_{xx}=0$ and $p(x)=\ol{1}$
implies reflectability of $x$ at $v$), but $\Sk(v')$ 
may not be a subgraph of $\Sk(v)$ and $W'$ may not be a subgroup of $W$,
see example~\ref{infinitepiS} below.

\subsection{Spine of a Kac-Moody component}
In the rest of this section we show how~\Thm{corintrospine} (iv)  can be deduced from~\Thm{thm1}.

Let $\Sp(v)$ be  an  indecomposable   Kac-Moody component of $\Sp$.
 Recall that  $\Delta$ is of  type  I  if $\mathbb{Q}\Delta_{\ol{0}}\not=\mathbb{Q}\Delta$. 
 By~\Cor{corinf} the spine is infinite in the type (Aff) I.

 \subsubsection{}
 \begin{lem}{lemspinefin}
 Let  $\Sp(v)$ be an  indecomposable  affine Kac-Moody component.
 \begin{enumerate}
 \item If $\Sp(v)$ is not of type $S(1|2;b)$,
 then the set $\mathcal{A}:=\{A_u\}_{u\in \Sp(v)}$ is finite.
 \item Assume  that $\mathcal{A}$ is finite. Then $\Sp^D(v)$ is finite if and only if $\Sp(v)$ is finite.
 \item  If $\Sp(v)$ is of type II, then the spine is finite.
 \end{enumerate}
\end{lem}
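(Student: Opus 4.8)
All three parts concern a fixed indecomposable affine Kac-Moody component $\Sp(v)$. The structural content is in~(ii); part~(i) provides the classification input it requires, and~(iii) combines the two with~\Thm{thm1}.

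For~(i), the plan is to read the statement off the van de Leur--Hoyt classification recalled in~\ref{classif} and~\ref{KMcomponents}. For an indecomposable affine Kac-Moody component other than $S(1|2;b)$ the superalgebra $\fg^{\ttC}$ is a possibly twisted affinization of a finite-dimensional Lie superalgebra $\dot\fg$, and every Cartan matrix occurring at a vertex of $\Sp(v)$ is obtained by adjoining one affine row and column to a Cartan matrix occurring at a vertex of the spine of $\dot\fg$; since $\dot\fg$ is finite-dimensional its skeleton, hence its spine, is finite, so $\mathcal A$ is finite. (One could also argue more directly, using that $\Sigma_{\pr}$ is finite by~\Lem{Sigmapraff}, that the image of $\Delta$ in $\fh^*/\CC\delta$ is finite by~(\ref{affmodulodelta}), and that $\langle\delta,b_u(x)^\vee\rangle=0$ for anisotropic $x$ because $(\Delta^{\ima})^+=\ZZ_{>0}\delta$ is $W$-stable; the reason this fails exactly for $S(1|2;b)$ is that its odd reflections move a parameter $b\in\CC\setminus\ZZ$, producing infinitely many Cartan matrices. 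But the cleanest route is the classification.)

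For~(ii), if $\Sp(v)$ is finite then so is its subset $\Sp^D(v)$. Conversely assume $\mathcal A$ is finite. Since $X$ carries only $2^{|X|}$ parity functions, the Cartan data at the vertices of $\Sp(v)$ fall into finitely many $D$-equivalence classes, so $\Sp(v)=\mathcal C_1\sqcup\dots\sqcup\mathcal C_r$, where $\mathcal C_i$ is the set of $u\in\Sp(v)$ whose Cartan datum is $D$-equivalent to that of a fixed $u_i\in\mathcal C_i$. Because $D$-equivalence is transitive and $\Sk(u_i)=\Sk(v)$, $\Sp(u_i)=\Sp(v)$, this says precisely $\mathcal C_i=\Sk^D(u_i)\cap\Sp(u_i)=\Sp^D(u_i)$. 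I then claim $|\Sp^D(u)|=|\Sp^D(v)|$ for every $u\in\Sp(v)$: choose a path $\gamma\colon v\to u$ inside $\Sp(v)$; for each $v'\in\Sp^D(v)\subseteq\Sk^D(v)$, \Lem{lemSkDv} produces a unique namesake path $\gamma'\colon v'\to u'$, by~\Lem{lemSkDv}(iii) it lies in $\Sp(v')=\Sp(v)$, and by~\Lem{lemSkDv}(ii) its endpoint satisfies $u'\in\Sk^D(u)$, hence $u'\in\Sk^D(u)\cap\Sp(u)=\Sp^D(u)$; applying the same construction to the reversed path $u\to v$ gives the inverse assignment, so $v'\mapsto u'$ is a bijection $\Sp^D(v)\to\Sp^D(u)$. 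Therefore $|\Sp(v)|=\sum_{i=1}^{r}|\mathcal C_i|=r\,|\Sp^D(v)|$, and finiteness of $\Sp^D(v)$ forces finiteness of $\Sp(v)$. (The identity $|\Sp(v)|=r\,|\Sp^D(v)|$ is the spine analogue of~\cite{Kbook}, Proposition~6.5.)

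For~(iii), let $\Sp(v)$ be of type (Aff)~II. By~\ref{KMtypeIandII} the type~II affine components do not include $S(1|2;b)$, so $\mathcal A$ is finite by~(i), and by~(ii) it suffices to show that $\Sp^D(v)$ is finite. For type~II one has $\QQ\Delta=\QQ\pi$ by~\ref{KMtypeIandII}, hence $\CC\Sigma_{\pr}=\CC\pi=\CC\Delta=V_b$, so by Remark~\ref{type2Aut} the map $u\mapsto\sigma_b^u$ embeds $\Sp^D(v)$ into $\Aut(\Sigma_{\pr})$. By~\Lem{Sigmapraff} the set $\Sigma_{\pr}$ is finite, hence $\Aut(\Sigma_{\pr})$ is a finite group and $\Sp^D(v)$ is finite; by~(ii) the spine is finite. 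Together with~\Cor{corinf} this yields the affine case of~\Thm{corintrospine}(iv). The main obstacle throughout is~(i) --- extracting from the classification that only finitely many Cartan matrices occur along the spine, equivalently isolating $S(1|2;b)$ as the unique affine component whose odd reflections act on a transcendental parameter; given~(i), parts~(ii) and~(iii) are formal consequences of~\Lem{lemSkDv}, Remark~\ref{type2Aut} and~\Lem{Sigmapraff}.
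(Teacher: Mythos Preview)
Your arguments for~(ii) and~(iii) are essentially the paper's, with~(ii) made pleasantly explicit: where the paper says only that $\Sp^D(u)$ acts faithfully on $\Sp^D(v)$ via \Lem{lemSkDv}, you construct the bijection $\Sp^D(v)\to\Sp^D(u)$ directly and obtain the index formula $|\Sp(v)|=r\,|\Sp^D(v)|$.

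The gap is in~(i). Your main argument asserts that every Cartan matrix along the affine spine is obtained by adjoining one row and column to a Cartan matrix from the spine of a fixed finite-dimensional $\dot\fg$. This is not justified, and it does not follow from the affinization description alone: once you perform an isotropic reflexion at the affine node, the new set of simple roots need not split as ``finite part of $\dot\fg$ plus one affine root'', and even when the $(n{-}1)\times(n{-}1)$ block is one of finitely many matrices, you have not bounded the possible entries of the adjoined row and column. Moreover, for $q_n^{(2)}$ the underlying $\dot\fg=\mathfrak{psq}_n$ is not contragredient, so it has no spine in the sense you invoke. The paper avoids these issues by a different split: in the symmetrizable case it uses that $A_u$ is the Gram matrix of $\Sigma_u$ (see~\ref{bilinearform}), and since $(\Delta,\delta)=0$ this Gram matrix depends only on the image of $\Sigma_u$ in $V/\mathbb{R}\delta$, which is finite by~(\ref{affmodulodelta}); for $q_n^{(2)}$ it checks directly that all Cartan matrices along the spine have entries in $\{0,\pm 1\}$ with zero row sums, so there are finitely many. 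Your parenthetical alternative is groping toward the symmetrizable half of this but stops short of invoking the Gram-matrix identification, and it does not address $q_n^{(2)}$ at all.
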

 \begin{proof}
Let $V\subset \fh^*$ be the $\mathbb{R}$-span of $\Delta$.
 Consider the symmetrizable case. By~\ref{bilinearform} for 
$u\in \Sk(v)$ the Cartan matrix $A_u$ is the Gram matrix of $\Sigma_u$.
In this case $(\Delta,\delta)=0$. By above,  the image of $\Delta$
 in $V/\mathbb{R}\delta$ is finite, so 
 the set of various Gram matrices of a given size which correspond to
 the vectors in $\Delta$ is finite. Thus
 the set  $\mathcal{A}$  is finite.
 
 Consider the case $q_n^{(2)}$. By~\cite{S}, Section 6
 we can choose $v$ in such a way that $a_{xy}\in \{0;\pm1 \}$ for $x\not=y$
 and $\sum_{y\in X} a_{xy}=0$ for all $x\in X$. Then all 
Cartan matrices $A_u$ for $u\in \Sk(v)$ satisfy the same conditions
($A_u=(a'_{xy})$ with  $a'_{xy}\in \{0;\pm1 \}$ for $x\not=y$
 and $\sum_{y\in X} a'_{xy}=0$ for all $x\in X$).  Clearly, the set of such matrices is finite.
 Since $S(1|2;b)$ and $q_n^{(2)}$ are the only non-symmetrizable affine components,
 this completes the proof of (i).

 For (ii) the part ``if'' follows from the definition of $\Sp^D(v)$.
 Assume that  $\mathcal{A}$  is finite.
 Consider the equivalence relation on $\Sp(v)$ given by $u_1\sim u_2$ if
 $u_2\in \Sp^D(u_1)$. The number of equivalence classes is finite since
 $\mathcal{A}$ is finite and the set of possible parity function $p:X\to \{\ol{0},\ol{1}\}$
 is also finite. If $\Sp(v)$ is infinite, then one of the equivalence classes is infinite,
 i.e.,  $\Sp^D(u)$ is infinite for some $u$.
By~\Lem{lemSkDv},   $\Sp^D(u)$ acts faithfully on $\Sp^D(v)$, so $\Sp^D(v)$ is infinite.

 For (iii)  assume that $\Sp(v)$ is of type II.  Combining~\ref{type2Aut}
 and~\ref{KMtypeIandII},  we obtain an embedding
  $\Sp^D(v)\to \Aut(\Sigma_{\pr})$.
 By~\Lem{Sigmapraff} the set  $\Sigma_{\pr}$ is finite. Thus  $\Sp^D(v)$ is finite. 
 Combining (ii), (iii)  and the fact $S(1|2;b)$ is of type I, we obtain that
 $\Sp(v)$ is finite in the type II.
 \end{proof}

\subsubsection{Remark}\label{remS12b}
For the case $S(1|2;b)$ the spine is infinite and $\Sp^D(v)$ is finite, 
see~\cite{GHS}, 10.4.

\subsubsection{}
\begin{cor}{}
Let    $\Sk(v)$ be an indecomposable Kac-Moody component. The set $\Sigma_{\pr}$ is finite. The spine
 is infinite if and only if $\Delta$ is infinite and $\mathbb{C}\Sigma_{\pr}\not=\mathbb{C}\Delta$.
 The group $\Sp^D(v)$ is infinite only
in the following cases: $A(m|n)^{(1)}$ for $m,n\geq 0$, $(m,n)\not=(0,0)$, and  $C(n+1)^{(1)}$ for $n\geq 1$.
\end{cor}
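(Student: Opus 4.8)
The plan is to prove the three assertions in turn, reducing each to results already in the paper together with the van de Leur--Hoyt classification recalled in~\ref{classif} and~\ref{KMtypeIandII}. For the finiteness of $\Sigma_{\pr}$ I would split by type: in type (Fin) the algebra $\fg$ is finite-dimensional, so $\Delta$, and hence $\Sigma_{\pr}$, is finite; in type (Aff) this is exactly~\Lem{Sigmapraff}; in type (Ind) it is contained in~\Cor{corimaginary2}(i). Hence $\Sigma_{\pr}$, and therefore also $\pi$ (with $|\pi|=|\Sigma_{\pr}|$), is finite in every case.

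For the criterion for infiniteness of the spine I would start from~\Thm{corintrospine}(iv), which says that the spine is infinite if and only if $\Delta$ is infinite and of type I; it then suffices to show that for an indecomposable Kac-Moody component with $\Delta$ infinite one has ``type I $\iff \mathbb{C}\Sigma_{\pr}\neq\mathbb{C}\Delta$'' (if $\Delta$ is finite, then $\Sk(v)$, hence the spine, is finite by~\ref{346}, and the right-hand condition also fails since it requires $\Delta$ infinite). Note first $\mathbb{C}\Sigma_{\pr}=\mathbb{C}\pi$ by the definition of $\pi$. For the direction type I $\Rightarrow \mathbb{C}\Sigma_{\pr}\neq\mathbb{C}\Delta$: type I means $\mathbb{Q}\Delta_{\ol{0}}\neq\mathbb{Q}\Delta$, which forces $\Delta_{\is}\neq\emptyset$ (a Kac-Moody component with $\Delta_{\is}=\emptyset$ is purely anisotropic, hence type II, as in the proof of~\Cor{corinf}), while $\pi_S=\pi$ for Kac-Moody components (see~\ref{KMtypeIandII}) and~\Cor{cortypeI} give $\mathbb{C}\pi_S\cap\Delta_{\is}=\emptyset$, so an isotropic root lies in $\Delta\setminus\mathbb{C}\Sigma_{\pr}$. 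For the converse I argue by contraposition: if the component is type II, then $\mathbb{C}\Delta=\mathbb{C}\Delta_{\ol{0}}$ by~(\ref{eqtypeIandII}), and $\mathbb{C}\Delta_{\ol{0}}=\mathbb{C}\pi+\mathbb{C}\Delta^{\ima}_{\ol{0}}$ by~\ref{KMtypeIandII}; since $\Delta^{\ima}_{\ol{0}}\subset\mathbb{C}\pi$ in all cases (empty in type (Fin); $\delta\in\mathbb{Q}\pi$ in type (Aff) II; $\Delta^{\ima}\subset\mathbb{Q}\pi$ in type (Ind), e.g.\ by~\Cor{corimaginary2}(iv) and~\ref{61}), we get $\mathbb{C}\Delta=\mathbb{C}\pi=\mathbb{C}\Sigma_{\pr}$. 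Combining, for $\Delta$ infinite the spine is infinite precisely when $\mathbb{C}\Sigma_{\pr}\neq\mathbb{C}\Delta$.

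For the last assertion I would use $\Sp^D(v)\subset\Sp(v)$: a finite spine forces $\Sp^D(v)$ finite, so by the previous part only components with $\Delta$ infinite of type I can have infinite $\Sp^D(v)$, and by~\ref{KMtypeIandII} these are $A(m|n)^{(1)}$, $C(k+1)^{(1)}$ and $S(1|2;b)$. For $S(1|2;b)$ one has $\Sp^D(v)$ finite by~\Rem{remS12b} (from~\cite{GHS}, 10.4). For $A(m|n)^{(1)}$ this is~\cite{GHS}, 10.3 (recalled in~\ref{Amnaff}), and for $C(k+1)^{(1)}$ it is Section~9 of the present paper together with~\cite{GHS} and~\cite{Shay}; in both of these $\Sp^D(v)$ is infinite. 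The main obstacle I anticipate is not conceptual but bookkeeping: one must read off from the classification in~\ref{KMtypeIandII} precisely which parameters give indecomposable Kac-Moody components of type (Aff) I --- in particular verifying the excluded boundary value $A(0|0)^{(1)}$ and the ranges $m,n\geq 0$, $(m,n)\neq(0,0)$, $k\geq 1$ --- so that the statement comes out with exactly the asserted list; everything else is assembling results already in hand.
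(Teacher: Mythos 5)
Your proof is correct, and it assembles essentially the same ingredients as the paper: finiteness of $\Sigma_{\pr}$ split by type via \Lem{Sigmapraff} and \Cor{corimaginary2}, finiteness of $\Sp^D(v)$ from $\Sp^D(v)\subset\Sp(v)$, and the citations to \cite{GHS}, 10.3--10.4 and Section 9 for the affine type I cases. The one substantive difference is in the middle assertion: the paper argues directly from \Cor{corinf} (spine infinite in type (Aff) I), \Lem{lemspinefin} (spine finite in type (Aff) II) and \Rem{remS12b}, deliberately bypassing \Thm{corintrospine}~(iv) because the surrounding subsection is presented as an \emph{alternative} derivation of that statement from \Thm{thm1}; you instead invoke \Thm{corintrospine}~(iv) outright. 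This is not circular --- (iv) is established independently in \Cor{cor264} and \Cor{corimaginary2} --- but it does defeat the stated purpose of Section 7.6, so be aware that your route and the paper's differ only in which previously proven form of the dichotomy they lean on. On the plus side, you explicitly verify the bridge ``type I $\iff \mathbb{C}\Sigma_{\pr}\neq\mathbb{C}\Delta$'' (via $\mathbb{C}\Sigma_{\pr}=\mathbb{C}\pi$, \Cor{cortypeI}, and the inclusion $\Delta^{\ima}_{\ol{0}}\subset\mathbb{C}\pi$ in type II), a step the paper's one-line proof leaves implicit even though its statement is phrased in terms of $\mathbb{C}\Sigma_{\pr}$ versus $\mathbb{C}\Delta$ while the cited lemmas are phrased in terms of type I/II; filling that in is a genuine improvement in completeness.
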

 \begin{proof}
In the type (Fin) the  set of roots is finite, so  $\Sigma_{\pr}$
  and the spine are finite (see~\ref{346}). In the type (Aff) the assertions  follow from~\Lem{Sigmapraff},  \Cor{corinf}, \Lem{lemspinefin} and~\Rem{remS12b}. 
For the purely anisotropic component   the spine is trivial and $\Sigma_{\pr}=\Sigma$.
In the remaining case $Q^{\pm}(m,n,t)$ the spine and  $\Sigma_{\pr}$ are finite,
see~\ref{Qmnpcon}.  
 \end{proof}

\section{Examples}
In this section we consider several  examples.
In~\ref{spineFin} we briefly describe the spines in type (Fin).
In~\ref{Cn+1spine} and~\ref{spineCn+11} we describe the spines for $C(n+1)$ and for $C(n+1)^{(1)}$.  
In~\ref{G31} we show that in van de Leur classification one has
$G(3)^{(1)}\cong G(3)^{(2)}$. In~\ref{infinitepiS} we describe an interesting example which is not fully reflectable.

We describe the root systems using the standard notation of~\cite{Ksuper}.

\subsection{Spines in the type (Fin)}\label{spineFin}
For the series $A(m|n)$, $B(m|n)$ and $D(m|n)$ the spine
can be described in terms Young diagrams, see~\cite{Mus}  and~\cite{BN}.
Below we give another description in the terms of $\varepsilon$-$\delta$ language which was used
in several papers for character formulae and computations of $\DS$-functors. Our  description 
easily follows from the descrption of sets of simple roots in~\cite{Ksuper}. 

For  $D(2|1;a)$ the spine and the skeleta coincide with the spine  and the skeleta of $D(2|1)$.
In Appendix we give the graphs of the spines for $G(3)^{(1)}$ and $F(4)^{(1)}$;
the spines of $G(3)$ and $F(4)$ are the subgraphs obtained by deleting the vertex marked by
$\Sigma_0$ and the edge marked by $r_{x_0}$ (the set of simple roots for $G(3)$ and $F(4)$ 
 can be obtained from
the sets of simple roots  for $G(3)^{(1)}$ and $F(4)^{(1)}$, presented in Appendix, 
 by deleting the first root in each set).

\subsubsection{Types $A(m-1|n-1)$, $B(m|n)$}\label{spineAB}
In the types $A(m-1|n-1)$ and $B(m|n)$ the spines can be described in the following way: the vertices
can be encoded by the words consisting of $m$ letters $\varepsilon$ and $n$ letters $\delta$, 
the edges correspond to permutations of the neighbouring $\varepsilon$ and $\delta$ (see, 
for example, \cite{Gdenomfin}); the edge is marked by $r_{x_i}$ when we permute
the letter number $i$ with the letter number $i+1$. For example, for $m=2$ and $n=1$, the spine contains an
 edge $v=\vareps\vareps\delta\overset{r_2}{\longleftrightarrow} u=\vareps\delta\vareps$,
 where the ordered sets $\Sigma_v,\Sigma_u$ are given by the following formulae:
with $\Sigma_v=\{\vareps_1-\vareps_2,\vareps_2-\delta_1\}$,
$\Sigma_u=\{\vareps_1-\delta_1,\delta_1-\vareps_2\}$ for $A(1|0)$ and
$\Sigma_v=\{\vareps_1-\vareps_2,\vareps_2-\delta_1,\delta_1\}$,
$\Sigma_u=\{\vareps_1-\delta_1,\delta_1-\vareps_2,\vareps_2\}$ for $B(2|1)$.

Note that the pair $A(m-1|n-1)$, $B(m|n)$ is an example of the situation 
described in~\ref{Sk'} (in the above example $m=2$ and $n=1$ we have
$X:=\{x_1,x_2,x_3\}$ for $B(2|1)$ and $X':=\{x_1,x_2\}$ for $A(1|0)$). In this case $\Sp(v')=\Sp(v)$
(since the last root in $\Sigma_u$  is anisotropic for any $u\in\Sp(v)$).

The skeleta is naturally isomorphic to the Cayley graph
of the Weyl group of $A_{m+n-1}$  and $B_{m+n}$ respectively,
see~\cite{GHS} and~\cite{Shay}.
Recall that the Weyl group acts faithfully on the 
skeleton and each orbit has a unique representative
in the spine (for $A_{m+n-1}$ the skeleton contains $(m+n)!$ vertices 
and the order of the Weyl group is $m!n!$; for $B(m|n)$ the skeleton contains $2^{m+n}(m+n)!$ vertices and the order of the Weyl group is $2^mm! 2^nn!$; 
in both cases the spine contains 
$\binom{m+n}{m}$ vertices).

\subsubsection{Type $D(m|n)$}\label{spineDmn}
In this case the  vertices in the spine can be encoded by the words consisting of $m$ letters 
$\varepsilon$ and $n$ letters $\delta$ with additional sign $+$ or $-$
if the last letter is $\delta$ (there are $\binom{m+n}{m}+\binom{m+n-1}{m}$ vertices). In this case the Weyl group is $W(D_m)\times W(C_n)$ and has the order
$2^{m+n-1}m!n!$, so the number of vertices in the skeleton is equal to
$$2^{m+n-1}m!n!\bigl(\binom{m+n}{m}+\binom{m+n-1}{m}\bigr)=2^{m+n-1}(m+2n)(m+n-1)!.$$

The edges in the spine correspond to permutations of the neighbouring $\varepsilon$ and $\delta$ with the following 
``sign rule'': there are no edges between vertices with different signs. 

For example,
 for $D(2|1)$ the spine contains $4$ vertices  $v_1=\delta\vareps\vareps$, 
 $v_2=+\vareps\vareps\delta$, $v_3=-\vareps\vareps\delta$ and $v_4=\vareps\delta\vareps$
 with $3$ edges $v_i \overset{r_i}{\longleftrightarrow} v_4$ for $i=1,2,3$; 
 for $D(1|2)=C(3)$ the spine is of the form
$$+\vareps\vareps\delta \overset{r_1}{\longleftrightarrow} +\delta\vareps\delta \overset{r_2}{\longleftrightarrow}\delta\delta\vareps 
\overset{r_3}{\longleftrightarrow}-\delta\vareps\delta \overset{r_1}{\longleftrightarrow}-\vareps\vareps\delta$$ 
(see~\ref{C3} for $\Sigma_u$).

 The marking on the edges is the following: an edge 
$v\overset{r_i}{\longleftrightarrow} u$ corresponds to the case when 
the word of $u$ is obtained from the word of $v$ by permuting the letter number $i$ with the letter number $i+1$,  except for the case when one of the words $u,v$ has sign $-$ and another does not have
sign (this means that we permute $\epsilon$ and $\delta$ which occupied the last two positions);
in the latter case the edge is marked by $r_{x_{m+n}}$, see the above examples.

Note that the pair $A(m-1|n-1)$, $D(m|n)$ is an example of the situation 
described in~\ref{Sk'}. In this case $\Sp(v')$
(the spine of $A(m-1|n-1)$)
 is a proper subgraph of $\Sp(v)$.

The spine admits an involutive automorphism which acts in the following way:
\begin{itemize}
\item
preserves the vertices encoded by unsigned words and interchanges 
the vertices encoded by the same word with different signs;
\item interchanges the edges marked by $r_{m+n}$ with the edges marked by $r_{m+n-1}$
and preserves the markings $r_i$ for $i<m+n-1$.
\end{itemize}
The image of a vertex $u$  has a set of simple roots equal to $s_{\vareps_n}\Sigma_u$
up to a permutation of elements. Each orbit of this involutive automorphism
contains a unique representative in $\Sp(v')$ (the spine of $A(m-1|n-1)$).

By~\cite{Shay}, Section 10
the skeleton for $D(m|n)$ is not the Cayley graph of a Coxeter group except  
for the case $D(1|1)=C(2)=A(1|0)$.
For example, for $D(2|1)$ the graph of the skeleton is isomorphic to a  so-called camfered cube
(vertices and  edges in the skeleton correspond to  vertices and edges in the camfered cube).

\subsection{Case $C(n+1)$}\label{Cn+1spine}
We fix $X=\{x_1,\ldots,x_n, x_{n+1}\}$.   In this case $\fh^*$
is spanned by $\vareps_1;\delta_1,\ldots,\delta_n$.
We fix the spine in such a way that 
\begin{itemize}
\item
the Cartan matrix is symmetric and 
the  bilinear form  described in~\ref{bilinearform} is such that 
$(\vareps_1,\vareps_1)=1$. Then  $(\delta_i,\delta_j)=-\delta_{ij}$ for $i,j=1,\ldots,n$, 
 and $(\delta_i,\vareps_1)=0$;
\item 
$\Sigma_{{pr}}=\pi=\{\delta_1-\delta_2,\ldots,\delta_{n-1}-\delta_n,2\delta_n\}$.
\end{itemize}

We denote by  $\sigma_{i;j}$ for the involution of $\mathbb{Z}$
which permutes $i$ and $j$.

\subsubsection{Case $C(2)$}
The spine takes the form
$v_{-1}\overset{r_1}\longleftrightarrow v_0 \overset{r_2}\longleftrightarrow v_1$
with
$$\begin{array}{lll}
\Sigma_{v_{-1}}=\{\vareps_1-\delta_1, 2\delta_1\} &\ \ 
 \Sigma_{v_0}=\{\delta_1-\vareps_1, \vareps_1+\delta_1\} &\ \ 
 \Sigma_{v_1}=\{ 2\delta_1, -\vareps_1-\delta_1\}.
\end{array}$$

\subsubsection{Case $C(3)$}\label{C3}
The spine  takes the form
$$v_{-2}\overset{r_1}{\longleftrightarrow} v_{-1}\overset{r_2}\longleftrightarrow v_0 \overset{r_3}\longleftrightarrow v_1 \overset{r_1}\longleftrightarrow v_2$$
with  the  following ordered sets $\Sigma_v:=\{b_v(x_i)\}_{i=1}^3$ (we give in
the parentheses the corresponding $\vareps$-$\delta$ word, see~\ref{spineFin})
$$\begin{array}{rlllcr}
\Sigma_{v_{-2}}=&\{\vareps_1-\delta_1,& \delta_1-\delta_2,& 2\delta_2\}\ &  & +\vareps\delta\delta\\
\Sigma_{v_{-1}}=&\{\delta_1-\vareps_1,& \vareps_1-\delta_2,& 2\delta_2\} & & +\delta\vareps\delta\\
 \Sigma_{v_0}=&\{\delta_1-\delta_2,&\delta_2-\vareps_1,& \vareps_1+\delta_2\}   & &\ \ \delta\delta \vareps \\
 \Sigma_{v_1}=&\{\vareps_1+\delta_1,&  2\delta_2, &-\vareps_1-\delta_2\}& & -\delta\vareps\delta\\
 \Sigma_{v_2}=&\{\ -\vareps_1-\delta_1,&  2\delta_2, & \delta_1-\delta_2\}   & & -\vareps\delta\delta\\
\end{array}$$

\subsubsection{}\label{Phi1}
Let $\ol{\alpha}:=\{\alpha_1,\ldots,\alpha_{n+1}\}$ be an ordered subset of $\Delta$.
We denote by $\tilde{\Phi}_1(\ol{\alpha})$ the following ordered subset of $\Delta$:
$$\tilde{\Phi}_1(\ol{\alpha}):= (s_{\vareps_1}\circ \sigma_{n;n+1})(\ol{\alpha})=\{s_{\vareps_1}(\alpha_1);s_{\vareps_1}(\alpha_{2});\ldots;s_{\vareps_1}(\alpha_{n+1});
s_{\vareps_1}(\alpha_n)\}.$$
where $s_{\vareps_1}$ is defined in~\ref{bilinearform}.

Observe  that  $\Sigma_{v_{-i}}=\tilde{\Phi}_1(\Sigma_{v_i})$.

\subsubsection{General case}\label{spineCn+1}
 Set
\begin{equation}\label{SigmaCn+1}
\Sigma_{v_0}:=\{\delta_1-\delta_2,\ldots,  \delta_{n-2}-\delta_{n-1},\delta_{n-1}-\delta_{n},\delta_n-\vareps_1,\delta_n+\vareps_1\}.\end{equation}
Noticing that $\Sigma_{v_0}$ is invariant under  $\tilde{\Phi}_1$ we obtain by induction on $i=1,\ldots,n$
\begin{enumerate}
\item  in the spine we have the arrows $v_{-i}\overset{r_s}{\longleftrightarrow} v_{-i+1}$ 
 and
$v_{i-1}\overset{r_{\sigma_{n;n+1}(s)}}{\longleftrightarrow} v_{i}$ for $s:=n+1-i$;
\item $\Sigma_{v_{-i}}=\tilde{\Phi}_1(\Sigma_{v_{-i}})$.
\end{enumerate}
Thus the spine contains a segment 
\begin{equation}\label{eqspineCn+1}
v_{-n}\overset{r_1}{\longleftrightarrow} v_{-n+1}\overset{r_2}\longleftrightarrow \ldots \overset{r_n}\longleftrightarrow 
v_0 \overset{r_{n+1}}\longleftrightarrow v_1 \overset{r_{n-1}}\longleftrightarrow v_2 \ldots \overset{r_1}\longleftrightarrow v_n
\end{equation}
with  the  following ordered sets $\Sigma_v:=\{b_v(x_i)\}_{i=1}^{n+1}$: 
$$\begin{array}{rl}
\Sigma_{v_{-n}}=&\{\vareps_1-\delta_1, \delta_1-\delta_2, \ldots, \delta_{n-1}-\delta_{n}, 2\delta_n\}\\
\Sigma_{v_{-i}}=&\{\delta_1-\delta_2,\ldots, \delta_{i-1}-\delta_{i},\delta_{i}-\vareps_1,\vareps_1-\delta_{i+1},\delta_{i+1}-\delta_{i+2},\ldots, 2\delta_n\}\\
\Sigma_{v_{-1}}=&\{\delta_1-\delta_2,\ldots, \delta_{n-2}-\delta_{n-1},\delta_{n-1}-\vareps_1,\vareps_1-\delta_n,2\delta_n\}\\
\Sigma_{v_{0}}=&\{\delta_1-\delta_2,\ldots,  \delta_{n-2}-\delta_{n-1},\delta_{n-1}-\delta_{n},\delta_n-\vareps_1,\delta_n+\vareps_1\}\\
\Sigma_{v_{1}}=&\{\delta_1-\delta_2,\ldots, \delta_{n-2}-\delta_{n-1},\delta_{n-1}-\vareps_1,2\delta_n,-\delta_n-\vareps_1\}\\
\Sigma_{v_{i}}=&\{\delta_1-\delta_2,\ldots, \delta_{i-1}-\delta_{i},\delta_{i}+\vareps_1,-\vareps_1-\delta_{i+1},\delta_{i+1}-\delta_{i+2},\ldots, 2\delta_n,\delta_{n-1}-\delta_{n}\}\\
\Sigma_{v_{n}}=&\{-\vareps_1-\delta_1, \delta_1-\delta_2, \ldots, , 2\delta_n, \delta_{n-1}-\delta_{n}\}
\end{array}
$$
where  $i=2,\ldots,n-1$. 
Note that $\Sigma_{v_{\pm i}}$ contains at most two isotropic roots and 
contains only one  isotropic root for $i=\pm n$. Thus a vertex $v_i$ has  degree two if $i=0,\pm 1,\ldots,\pm (n-1)$
and degree one if $i=\pm n$. Hence
 the spine of $C(n+1)$ is given by~(\ref{eqspineCn+1}).

\subsection{Case $C(n+1)^{(1)}$}\label{CaseCn+11}
We fix $X=\{x_0,x_1,\ldots,x_n, x_{n+1}\}$  for $C(n+1)^{(1)}$.
In this case $\fh^*$
is spanned by the elements $\vareps_1;\delta_1,\ldots,\delta_n;\delta$ and $\Lambda_0$
where $\Delta^{\ima}=\{j\delta\}_{j\not=0}$.
We  fix the spine in such a way that 
\begin{itemize}
\item
the Cartan matrix is symmetric and
the corresponding bilinear form on $\fh^*$ is such that
$(\vareps_1,\vareps_1)=1$. One has  $(\delta_i,\delta_j)=-\delta_{ij}$ for $i,j=1,\ldots,n$, 
 and $(\delta_i,\vareps_1)=(\delta,\vareps_1)=(\delta,\delta_i)=0$.
 
\item 
$\Sigma_{{pr}}=\pi=\{\delta-2\delta_1,\delta_1-\delta_2,\ldots,\delta_{n-1}-\delta_n,2\delta_n\}$.
\end{itemize}

Then $(\Delta^{\ima})^+=\mathbb{Z}_{>0}\delta$.

Similarly to~\ref{Phi1}, if 
 $\ol{\alpha}:=\{\alpha_0,\alpha_1,\ldots,\alpha_m\}$ is an ordered subset of $\Delta$,
 we set $\tilde{\Phi}_1(\ol{\alpha}):= (s_{\vareps_1}\circ \sigma_{n;n+1})(\ol{\alpha})$ and
$$\tilde{\Phi}_0(\ol{\alpha}):= (s_{2\delta-\vareps_1}\circ \sigma_{0;1})(\ol{\alpha})=\{s_{2\delta-\vareps_1}(\alpha_1);s_{2\delta-\vareps_1}(\alpha_0);s_{2\delta-\vareps_1}(\alpha_2);\ldots;
s_{2\delta-\vareps_1}(\alpha_m)\}.$$

Note that the reflections $s_{2\delta-\vareps_1},s_{\vareps_1}$ generate the infinite dihedral subgroup in $GL(\fh^*)$; 
this subgroup acts trivially on the even roots and stabilizes the set of odd roots.

\subsubsection{Remark}
	\label{autofCartan}
	If  the spine contains $v \overset{r_1}{\longleftrightarrow} u$ 
	(resp., $v \overset{r_n}{\longleftrightarrow} u$) 
	and $\Sigma_v$ 
	(as an ordered set) is $\tilde{\Phi}_0$-invariant (resp.,  $\tilde{\Phi}_1$-invariant), then 
	the spine contains $v \overset{r_0} {\longleftrightarrow}u'$ with $\Sigma_{u'}=\tilde{\Phi}_0(\Sigma_u)$
	(resp., $v \overset{r_{n-1}} {\longleftrightarrow}u'$
	with $\Sigma_{u'}=\tilde{\Phi}_1(\Sigma_u)$).

\subsubsection{Case $C(n+1)^{(1)}$,  $n>1$}\label{spineCn+11}
Combining~\ref{Sk'} and~\ref{spineCn+1}, we conclude that the spine contains~(\ref{eqspineCn+1}) as a subgraph with  the  following ordered sets $\Sigma_v:=\{b_v(x_i)\}_{i=0}^{n+1}$ :
$$\begin{array}{rl}
\Sigma_{v_{-n}}=&\{\delta-\vareps_1-\delta_1,\vareps_1-\delta_1, \delta_1-\delta_2, \ldots, \delta_{n-1}-\delta_{n}, 2\delta_n\}\\
\Sigma_{v_{-i}}=&\{\delta-2\delta_1,\delta_1-\delta_2,\ldots, \delta_{i-1}-\delta_{i},\delta_{i}-\vareps_1,\vareps_1-\delta_{i+1},\delta_{i+1}-\delta_{i+2},\ldots, 2\delta_n\}\\
\Sigma_{v_{-1}}=&\{\delta-2\delta_1,\delta_1-\delta_2,\ldots, \delta_{n-2}-\delta_{n-1},\delta_{n-1}-\vareps_1,\vareps_1-\delta_n,2\delta_n\}\\
\Sigma_{v_{0}}=&\{\delta-2\delta_1,\delta_1-\delta_2,\ldots,  \delta_{n-2}-\delta_{n-1},\delta_{n-1}-\delta_{n},\delta_n-\vareps_1,\delta_n+\vareps_1\}\\
\Sigma_{v_{1}}=&\{\delta-2\delta_1,\delta_1-\delta_2,\ldots, \delta_{n-2}-\delta_{n-1},\delta_{n-1}-\vareps_1,2\delta_n,-\delta_n-\vareps_1\}\\
\Sigma_{v_{i}}=&\{\delta-2\delta_1,\delta_1-\delta_2,\ldots, \delta_{i-1}-\delta_{i},\delta_{i}+\vareps_1,-\vareps_1-\delta_{i+1},\delta_{i+1}-\delta_{i+2},\ldots, 2\delta_n,\delta_{n-1}-\delta_{n}\}\\
\Sigma_{v_{n}}=&\{\delta+\vareps_1-\delta_1,-\vareps_1-\delta_1, \delta_1-\delta_2, \ldots, , 2\delta_n, \delta_{n-1}-\delta_{n}\}
\end{array}
$$
where  $i=2,\ldots,n-1$.  
Noticing that $\Sigma_{v_{-n}}$ is invariant under  $\tilde{\Phi}_0$ and $\Sigma_{v_{n}}$ is invariant under  
$\tilde{\Phi}_1\tilde{\Phi}_0$ (and using~\ref{autofCartan})
we obtain by induction for $i=1,\ldots,n$ that the spine contains the following subgraphs
\begin{equation}\label{spineCn+1aff}\begin{array}{ll}
v_{-2n}\overset{r_n}{\longleftrightarrow} v_{-2n+1}\overset{r_{n-1}}{\longleftrightarrow}\ldots
v_{-n-1} \overset{r_2}{\longleftrightarrow} 
v_{-n-1}\overset{r_0}{\longleftrightarrow} v_{-n}\overset{r_1}{\longleftrightarrow} v_{-n+1}\overset{r_2}{\longleftrightarrow} \ldots \overset{r_n}{\longleftrightarrow} 
v_0 \\
v_0\overset{r_{n+1}}{\longleftrightarrow} v_1 \overset{r_{n-1}}{\longleftrightarrow} v_2 \ldots 
\overset{r_1}{\longleftrightarrow} v_n
\overset{r_0}{\longleftrightarrow} v_{n+1}\overset{r_2}{\longleftrightarrow}\ldots \overset{r_{n-2}}{\longleftrightarrow} 
v_{2n-2}\overset{r_{n-1}}{\longleftrightarrow}
v_{2n-1}\overset{r_{n+1}}{\longleftrightarrow} v_{2n}\end{array}
 \end{equation}
where for $i=1,\ldots,n$ we have
 $\Sigma_{v_{-n-i}}=\tilde{\Phi}_0(\Sigma_{v_{-n+i}})$ and  $\Sigma_{v_{n+i}}=\tilde{\Phi}_1\tilde{\Phi}_0(\Sigma_{v_{n-i}})$.
 In particular,
$$\begin{array}{rl}
\Sigma_{v_{2n}}&=\{\delta_1-\delta_2,\delta-2\delta_1, \delta_2-\delta_3,  \ldots, \delta_{n-1}-\delta_{n},\delta+\delta_n+\vareps_1;-\delta+
\delta_n-\vareps_1\}\\
\Sigma_{v_{-2n}}&=\{\delta_1-\delta_2,\delta-2\delta_1, \delta_2-\delta_3,  \ldots, \delta_{n-1}-\delta_{n},-\delta+\delta_n+\vareps_1;\delta+
\delta_n-\vareps_1\}
\end{array}$$

The Cartan data for $v_{2n}$ and $v_{-2n}$ are equal. One has
$\Sigma_{v_{2n}}=(s_{\vareps_1}s_{2\delta-\vareps_1})^2 (\Sigma_{v_{-2n}})$.
This allows to construct $v_s$ for each $s\in\mathbb{Z}$ through the formula
 $v_{j+4mn}:=(s_{\vareps_1}s_{2\delta-\vareps_1})^{2m}(v_j)$ for $j=0,\pm 1,\ldots,\pm 2n$ and any integral $m$. Note that for these values of $j$ the spine contains
 the arrows $v_j\longleftrightarrow v_{j+1}$. Therefore the spine contains
 the arrows $v_s\longleftrightarrow v_{s+1}$ for all $s\in\mathbb{Z}$. 
 Since each $\Sigma_{v_s}$ contains exactly two isotropic roots, each vertex $v_i$ has  degree two. Hence
 the spine of $C(n+1)^{(1)}$ takes the form
 $$\ldots\overset{r_n}\longleftrightarrow v_0\overset{r_{n+1}}{\longleftrightarrow} v_1 \overset{r_{n-1}}{\longleftrightarrow} v_2 \ldots 
\overset{r_1}{\longleftrightarrow} v_n
\overset{r_0}{\longleftrightarrow} v_{n+1}\overset{r_2}{\longleftrightarrow}\ldots\overset{r_{n-1}}{\longleftrightarrow}
v_{2n-1}\overset{r_{n+1}}{\longleftrightarrow} v_{2n}\overset{r_{n}}{\longleftrightarrow}\ldots$$
The  marks over the arrows  are determined by~(\ref{spineCn+1aff}) 
and the fact that $v_s\overset{r_{i(s)}}{\longleftrightarrow} v_{s+1}$ implies
$v_{s+4n}\overset{r_{i(s)}}{\longleftrightarrow} v_{s+4n+1}$
(i.e., the marks  have period $4n$).

\subsection{}
\begin{cor}{corCnSp}
\begin{enumerate}
\item
The group $\Sp^D(v)$ is trivial for $C(n+1)$. 
\item
For $C(n+1)^{(1)}$ with   $n>1$
we let $\Psi$ be the restiction of $(s_{\vareps_1}s_{2\delta-\vareps_1})^2$ to $V_b$.
The group $\Sp^D(v)$ is an infinite cyclic group generated by    $u$ with  $\sigma_b^u=\Psi$
and $\sigma^u(v_i)=v_{i+4n}$.
\end{enumerate}
\end{cor}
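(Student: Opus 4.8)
My plan is to compute $\Sp^D(v)$ through its faithful action on the spine viewed as a marked graph, using the explicit descriptions of the spines obtained in~\ref{spineCn+1} and~\ref{spineCn+11}. By~\Lem{lemSkDv}, every $u\in\Sp^D(v)$ induces a marked‑graph automorphism $\sigma^u$ of the skeleton which, by~\Lem{lemSkDv}~(iii), maps $\Sp(v)$ onto $\Sp(v)$; since $\sigma^u(v)=v*u=u$ and $u\mapsto\sigma^u$ is a group homomorphism (this is the content of~\Thm{thm1} together with the relation $\sigma_b^{u_1*u_2}=\sigma_b^{u_1}\circ\sigma_b^{u_2}$, and $\sigma^u$ on the spine is determined by $\sigma_b^u$), this realises $\Sp^D(v)$ as a subgroup of the group of automorphisms of $\Sp(v)$ as a marked graph. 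So it suffices to determine this automorphism group and to exhibit elements of $\Sp^D(v)$ realising all of it.

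\textbf{Part (i).} By~\ref{spineCn+1} the spine of $C(n+1)$ is the finite path~\eqref{eqspineCn+1} on $2n+1$ vertices, whose sequence of edge‑marks read from $v_{-n}$ to $v_n$ is $r_1,r_2,\dots,r_{n-1},r_n,r_{n+1},r_{n-1},r_{n-2},\dots,r_1$. The only candidate for a nontrivial automorphism of a finite path is the reflection about its central vertex $v_0$, which reverses the mark sequence; this would force the two edges incident to $v_0$ to carry the same mark, whereas they carry the distinct marks $r_n$ and $r_{n+1}$. Hence $\Sp(v)$ has no nontrivial marked automorphism, and therefore $\Sp^D(v)=\{v\}$.

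\textbf{Part (ii).} By~\ref{spineCn+11} the spine of $C(n+1)^{(1)}$ with $n>1$ is an infinite path $(v_s)_{s\in\mathbb{Z}}$ whose edge‑mark sequence is periodic of period $4n$, read off from~\eqref{spineCn+1aff}. I would first check that $4n$ is the \emph{primitive} period and that the marking has no orientation‑reversing symmetry, by identifying the vertices at which the left and right incident edges carry the \emph{ordered} pair $(r_n,r_{n+1})$: from~\eqref{spineCn+1aff} these are exactly the $v_s$ with $s\in 4n\mathbb{Z}$. Since $(r_n,r_{n+1})\ne(r_{n+1},r_n)$, every marking‑preserving automorphism preserves the orientation of the path and the subset $4n\mathbb{Z}$ of vertices; so the group of marked automorphisms of $\Sp(v)$ consists precisely of the translations $v_s\mapsto v_{s+4nk}$, $k\in\mathbb{Z}$, and is infinite cyclic, generated by translation by $4n$. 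It then remains to produce a matching element of $\Sp^D(v)$: take $u:=v_{4n}$. By~\ref{spineCn+11} one has $v_{4n}=g(v_0)$ with $g:=(s_{\vareps_1}s_{2\delta-\vareps_1})^{2}$; since $s_{\vareps_1},s_{2\delta-\vareps_1}$ are reflections for the invariant form of~\ref{bilinearform} and $(\vareps_1,\vareps_1)=(2\delta-\vareps_1,2\delta-\vareps_1)=1$ (using $(\delta,\delta)=(\delta,\vareps_1)=0$), the map $g$ is orthogonal, so the Cartan matrix at $v_{4n}$ — the Gram matrix of $g(\Sigma_{v_0})$ — equals that at $v_0$, and the parities agree; thus $u\in\Sp^D(v_0)$. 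As $\sigma^u$ lies in the marked‑automorphism group and sends $v_0$ to $v_{4n}$, it is the generating translation, i.e.\ $\sigma^u(v_i)=v_{i+4n}$; and $\sigma_b^u\in\GL(V_b)$ sends $b_{v_0}(x)\mapsto b_{v_{4n}}(x)=g(b_{v_0}(x))$ for all $x$, hence, $\Sigma_{v_0}$ spanning $V_b$, equals $\Psi=g|_{V_b}$. Finally the monomorphism $\Sp^D(v_0)\hookrightarrow\mathbb{Z}$ has image containing the generator $\sigma^u$, so it is an isomorphism and $\Sp^D(v)=\langle u\rangle\cong\mathbb{Z}$, with the stated $\sigma_b^u$ and $\sigma^u$.

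\textbf{Main obstacle.} The delicate point is the purely combinatorial step in part (ii): reading off~\eqref{spineCn+1aff} carefully enough to be sure the infinite mark sequence has primitive period exactly $4n$ (and in particular \emph{not} $2n$) and admits no reflection symmetry. A convenient bookkeeping device is to track the positions of the unique mark $r_0$ occurring in each half‑period, whose consecutive gaps along the path alternate between $2n-1$ and $2n+1$, in tandem with the chiral local pattern $(r_n,r_{n+1})$ around the vertices in $4n\mathbb{Z}$. One must also take care to confirm that $(s_{\vareps_1}s_{2\delta-\vareps_1})^{2}$ genuinely preserves the bilinear form, so that it carries $v_0$ to a vertex with the \emph{same} Cartan datum rather than a merely $D$‑equivalent one.
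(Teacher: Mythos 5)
Your proposal is correct and follows essentially the same route as the paper: both parts realize $\Sp^D(v)$ inside the automorphism group of the spine as a marked graph and then compute that group from the explicit descriptions in~\ref{spineCn+1} and~\ref{spineCn+11} (the paper pins down the infinite case by locating, in a fundamental window, the unique vertex whose incident edges carry the marks $r_n$ and $r_{n-1}$, while you use the local pattern $(r_n,r_{n+1})$ at the vertices $v_s$, $s\in 4n\mathbb{Z}$, together with the spacing of the $r_0$-edges, and you additionally re-derive $v_{4n}\in\Sp^D(v_0)$ from orthogonality of $(s_{\vareps_1}s_{2\delta-\vareps_1})^2$, which~\ref{spineCn+11} already supplies). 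The one inference you make too quickly --- that $(r_n,r_{n+1})\neq(r_{n+1},r_n)$ by itself forces every automorphism to preserve orientation, whereas a reflection could a priori send $v_0$ to $v_{2n}$, whose local pattern is the reversed pair --- is precisely the verification you flag as the main obstacle, and it does go through: the only reflection centres compatible with the positions of the two $r_0$-edges per period are $s\mapsto -s$ and $s\mapsto 2n+1-s$, and both already fail at the edge $v_0\overset{r_{n+1}}{\longleftrightarrow}v_1$, whose mirror image is marked $r_n$.
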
 
\begin{proof}
Any element of $\Sp^D(v)$ induces an automorphism of the spine as a marked graph. 
In particular,  for $C(n+1)$ the group $\Sp^D(v)$ is trivial since the graph presented
in~(\ref{spineCn+1}) does not admit non-trivial automorphisms.

Consider the case $C(n+1)^{(1)}$ with   $n>1$. By~\ref{spineCn+11}, the group $\Sp^D(v)$ contains
$\Psi$ and $\Psi^{m}(v_j)=v_{j+4mn}$.  Assume that  $\Psi$ generates a proper subgroup of $\Sp^D(v)$.
Then $\Sp^D(v)$
contains  an automorphism which maps
$v_{-2n+1}$ to $v_i$ with $-2n+1<i\leq 2n$. Note that $v_{-2n+1}$ and $v_i$ belong to the part
of the spine presented in  (\ref{spineCn+1aff}). Since $v_{-2n+1}$ is the only vertex
with the arrows  marked by $r_n$ and $r_{n-1}$, we have $i=-2n+1$, a contradiction.
\end{proof}

%
%
%

 \subsection{Example: $G(3)^{(1)}$}\label{G31}
	In van de Leur classification of symmetrizable affine Lie superalgebras the algebra
	$G(3)^{(1)}$ appears twise: as $G(3)^{(1)}$  and as $G(3)^{(2)}$.  The automorphism $\sigma$ of $G(3)$ which used
	to construct  $G(3)^{(2)}$ is given in Table 4 of~\cite{vdL}. One readily sees that $\sigma$ is 
	the inner automorphism $\exp(h)$ where $h\in\fh$ is such that $\langle h,\alpha_1\rangle= \pi i$
	and $\langle h,\alpha_2\rangle= \langle h,\alpha_3\rangle=0$, so the isomorphism
	 $G(3)^{(1)}\cong G(3)^{(2)}$ which we present in~\Rem{G321} below is not suprising.
	
	Using  a well-known presentation of the root system of $G_2$ (in terms 
	of $\vareps_i, i=1,2,3$ with $\sum\limits_{i=1}^3 \vareps_i=0$ and
	$(\vareps_i,\vareps_i)=-2,\ (\vareps_i,\vareps_j)=1$)  we express the root system of $G(3)$ in 
	terms of $\vareps_i, i=1,2,3$ and $\delta_1$ with 
	$$(\vareps_i,\delta_1)=0,\ \ (\delta_1,\delta_1)=-1$$
	with the parity function given by $p(\vareps_i)=\ol{0}$, $p(\delta_1)=\ol{1}$.
	
	\subsubsection{}\label{G321}
	Let us check that  $G(3)^{(1)}\cong G(3)^{(2)}$. It is enough to verify that these superalgebras
	admits $D$-equivalent Cartan data.

	We consider a base for $G(3)$ consisting of $\vareps_2,\vareps_3-\delta_1,\delta_1-\vareps_2$; this gives
	the following base for $G(3)^{(1)}$: 
	$$\{\vareps_3-\delta_1, \delta+\vareps_1-\vareps_3, \vareps_2,\delta_1-\vareps_2\}$$
	(this base is obtained by reordering of $\Sigma_2$ in Appendix~\ref{appendix:a}). The Cartan matrix
	$$A=\begin{pmatrix}
0 & -3/2 &-1/2 & 3/2\\
-3/2 & 3& 0 & 0\\
-1/2 & 0 & 1 & -1\\
3/2 & 0  &-1 & 0\end {pmatrix}$$

	In~\cite{REIF},  6.2 the roots system of $G(3)^{(2)}$ in given in the terms of  an orthogonal basis
	$\delta,\epsilon_1,\epsilon_2,\epsilon_3$\footnote{ \cite{REIF}, 6.2 contains a misprint: 
	the minimal imaginary root is $2\delta$.} with the form
	$$(\delta',\delta')=0,\ \ (\epsilon_1,\epsilon_1)=\frac{3}{2},\ \  (\epsilon_2,\epsilon_2)=\frac{1}{2},
	(\epsilon_3,\epsilon_3)=-2.$$
	 The parity function is given by $p'(\delta)=p'(\epsilon_2)=p'(\epsilon_3)=\ol{0}$
	and $p'(\epsilon_3)=\ol{1}$.
	A  set of simple roots in~\cite{REIF}, Table 3  is $\{\epsilon_3-\epsilon_2-\epsilon_1, 2\epsilon_1,2\epsilon_2,
	\delta-(\epsilon_3+2\epsilon_2)\}$; the 
	corresponding Cartan matrix is
	$$A'=\begin{pmatrix}
0 & -3 &-1 & 3\\
-3 & 6  & 0 & 0\\
-1 & 0 & 2 & -2\\
3 & 0 & -2 & 0\end {pmatrix}$$	
	and the only odd root is the first one. Since $A'=2A$ and $p'=p$, we have $G(3)^{(1)}\cong G(3)^{(2)}$.
	
	Note that in~\cite{vdL} notation used in Table 5, 
	an isomorphism between root systems of $G(3)^{(1)}$ and $G(3)^{(2)}$
	is given, for example, by the formula $\delta\mapsto 2\delta$, $\vareps_i\mapsto \delta+\vareps_i$
	for $i=2,3$, $\vareps_1\mapsto -2\delta+\vareps_1$, $\delta_1\mapsto \delta+\delta_1$.

\subsection{Example with an infinite set $\pi_S$}\label{infinitepiS}
Take $X=\{x_1,x_2,x_3,x_4\}$ and $v_0$ with the Cartan datum
$$A=\begin{pmatrix}
2 & -1& -1& 0\\
-1 & 0 & 1 & -t\\
-1 & 1& 0& t\\
0 & -t& t &2
\end{pmatrix},\ \ \  p(x_1)=p(x_4)=\ol{0},\ \ p(x_2)=p(x_3)=\ol{1}$$
with $t\in\mathbb{R}\setminus\mathbb{Q}$. 
Observe that  $x_i$ is reflectable at $v$ if and only if  $i\not=4$. 
Note that for $X':=\{x_1,x_2,x_3\}$ the corresponding Cartan datum
 is of the type $A(1|0)^{(1)}$.  
 
 One has  $\rank A=3$, so $\dim \fh$ must be at least $5$.
 We take $\fh$ of the dimension $5$ and
  $$\Sigma_{v_0}=\{\vareps_1-\vareps_2,\vareps_2-\delta_1,\delta-\vareps_1+\delta_1,\beta\}$$
 where $\vareps_1,\vareps_2,\delta_1,\delta,\beta$ is a basis of $\fh^*$. We can choose the bilinear form
 by setting
 $$(\vareps_i,\vareps_j)=\delta_{ij}=-(\delta_1,\delta_1),\ \ (\beta,\delta_1)=t$$
 and defining all other products between bases elements equal to zero.
 
Let $r_{x_2}: v_0\to v_1$ be the isotropic reflection. Then
$$\Sigma_{v_1}=\{\vareps_1-\delta_1,\delta_1-\vareps_2,\delta-\vareps_1+\vareps_2, \beta+\vareps_2-\delta_1\}$$
(as the ordered set) and the Cartan matrix is
$$\begin{pmatrix}
0 & 1& -1& -t-1\\
1 & 0 & -1 & t+1\\
-1 & -1& 2& 1\\
-t-1 & t+1&1& 2-2t 
\end{pmatrix}$$
Note that  $\delta-\vareps_1+\vareps_2$ is a non-reflectable root, so the skeleton does not contain
the skeleton of $A(1|0)^{(1)}$. The spine contains the spine of $A(1|0)^{(1)}$ 
which takes the form
\begin{equation}\label{interestingspine}
\ldots \overset{r_{x_2}}{\longleftrightarrow} v_{-2} \overset{r_{x_1}}{\longleftrightarrow} v_{-1}\overset{r_{x_3}}{\longleftrightarrow} v_0\overset{r_{x_2}}{\longleftrightarrow} v_1 \overset{r_{x_1}}{\longleftrightarrow} v_2 \overset{r_{x_3}}{\longleftrightarrow} v_3 \overset{r_{x_2}}{\longleftrightarrow} v_4\overset{r_{x_1}}{\longleftrightarrow}   \ldots    \end{equation}
with the following unordered sets of simple roots 
 $$\begin{array}{ll}
\{\vareps_1-\vareps_2, i\delta+\vareps_2-\delta_1, (1-i)\delta-\vareps_1+\delta_1,
 \beta+i(i-1)\delta+i\str\}\ &\text{ for }v_{2i}\\
\{\delta-\vareps_1+\vareps_2,i\delta+\vareps_1-\delta_1, -i\delta-\vareps_2+\delta_1,
 \beta+i^2\delta+i\str+ (\vareps_2-\delta_1)\}\ & \text{ for }v_{2i+1}\end{array}$$
 where $\str:=\vareps_1+\vareps_2-2\delta_1$.
 In all cases the last root is $b_{v_j}(x_4)$ and $x_4$ is non-reflectable at $v_j$.
Therefore the spine coincides with  the graph~(\ref{interestingspine}).
 One has
 $\pi=\{\vareps_1-\vareps_2\}$, so $W\cong \mathbb{Z}_2$ and the skeleton is 
 $$  
\xymatrix{& \ldots\ar^{r_{x_2}}[r] & v_{-2}\ar^{r_{x_3}}[d]\ar[l]\ar^{r_{x_1}}[r]& v_{-1}\ar^{r_{x_3}}[r]\ar[l] & v_0\ar^{r_{x_2}}[r]\ar^{r_{x_1}}[d]\ar[l]&v_1\ar^{r_{x_1}}[r]\ar[l] &v_2\ar^{r_{x_3}}[r]\ar^{r_{x_2}}[d]\ar[l]&\ldots\ar[l]& \\
&\ldots\ar^{r_{x_2}}[r] & v'_{-2}\ar[u]\ar[l]\ar^{r_{x_1}}[r]& v'_{-1}\ar^{r_{x_3}}[r] \ar[l]& v'_0\ar^{r_{x_2}}[r]\ar[u]\ar[l]&v'_1\ar^{r_{x_1}}[r]\ar[l] &v'_2
\ar^{r_{x_3}}[r]\ar[u]\ar[l]&\ldots\ar[l]& }$$
where $v'_i=s_{\vareps_1-\vareps_2} v_i$ and $\Sigma_{v'_i}=s_{\vareps_1-\vareps_2} \Sigma_{v_i}$.

The set of $S$-principle elements introduced in~\ref{defnpiS} is infinite:
 $$\pi_S=\{\vareps_1-\vareps_2,\ \delta-\vareps_1+\vareps_2;
  \beta+i(i-1)\delta+i\str ,\ 
 2(\beta+i^2\delta+i\str+ (\vareps_2-\delta_1))\}_{i\in\mathbb{Z}}.$$
 Note that the imaginary ray $\mathbb{R}_{\geq 0}\delta$ is the limit ray
 of the rays $\mathbb{R}_{\geq 0}\alpha$ for $\alpha\in\pi_S$.

 \section{The group $\Sk^D(v)$ in the type (Aff)}
 In this section we describe $\Sk^D(v)$ in the case (Aff) and establish the formula~(\ref{eqintroSkDv}).

 \subsection{The group $\tau(V_b)$}
 Recall that indecomposable Kac-Moody superalgebras except for  $S(1|2;b)$ can be realized as twisted or untwisted affinizations of finite-dimensional Kac-Moody superalgebras or a simple $Q$-type superalgebra.
 Let $\Sk(v)$ be an indecomposable  component of the type (Aff) which is not $S(1|2;b)$.
 We fix a bilinear form $(-,-)$ on $\fh^*$ in the following way:
 if the component is symmetrizable, we denote $(-,-)$ a non-degenerate invariant bilinear form;
 in the remaining case $\fq_n^{(2)}$ we denote by $(-,-)$ a non-degenerate invariant bilinear form on $\fg_{\ol{0}}\cong \fsl_n^{(1)}$.  Retain notation of~\ref{VaVb}. We have $(V_b,\delta)=0$.
 We denote by $O(\fh^*)$
the subgroup of $GL(\fh^*)$ which preserves $(-,-)$ and set 
$$G:=\{\phi\in O(\fh^*)|\ \phi(V_b)=V_b, \ \phi(\delta)=\delta\}.$$

\subsubsection{}
The form $(-,-)$ induces a bilinear form on the quotient space $\dot{V}_b:=V_b/\mathbb{C}\delta$
(this form is non-degenerate except for the case $A(n|n)^{(1)}$). Following the notation of~\cite{Kbook},
Section 6, we denote by $\ol{\lambda}$ the image of $\lambda$ in $\dot{V}_b$.
We have the natural homomorphism  $G\to O(\dot{V}_b)$.

\subsubsection{}
We set 
$$\dot{\Delta}:=\{\ol{\alpha}|\ \alpha\in\Delta\}\setminus\{0\},\ \ \ 
\dot{\Delta}_{\pi}:=\{\ol{\alpha}|\ \alpha\in\Delta_{\ol{0}}\}\setminus\{0\}.$$
By~(\ref{affmodulodelta}), $\dot{\Delta}$ is finite.
The set $\dot{\Delta}_{\pi}\subset \dot{V}_b$ 
satisfies the standard axioms of a (finite) root system.

\subsubsection{}\label{dotpi}
For example, if $\fg=\dot{\fg}^{(1)}$ is an untwisted affinization, then 
$\dot{\Delta}$ is the root system of
$\dot{\fg}$ and $\dot{\Delta}_{\pi}=\dot{\Delta}_{\ol{0}}$. 
We denote by $\dot{v}$ a vertex in the skeleton of $\dot{\fg}$
and let $v$ be the corresponding vertex in the skeleton of $\fg$; one has 
 $\Sigma_v=\Sigma_{\dot{v}}\cup\{\delta-\theta\}$ where $\theta$ is the highest root
 in $\Delta^+_{\dot{v}}\subset \dot{\Delta}$.

Another example is $\fq_n^{(2)}$. In this case $\dot{\Delta}=\dot{\Delta}_{\pi}$ is the root system of $\fsl_n$. We denote by $\dot{v}$ a vertex in the skeleton of 
$\dot{\fg}$
and let $v$ be the corresponding vertex in the skeleton of $\fg$; one has 
 $\Sigma_v=\Sigma_{\dot{v}}\cup\{\delta-\theta\}$ where $\theta$ is as above
(in this case $p(\delta)=\ol{1}$).

In both cases the set 
$$\dot{\pi}=\pi\cap \dot{\Delta}$$
 is the set of simple roots for $\dot{\Delta}_{\pi}$.

\subsubsection{Remark}
In general, it is not hard to show
 (see, for example,~\cite{GSh})  that $\dot{\Delta}$
 is an indecomposable  weak generalized root system
 (WGRS)  in the sense of~\cite{SGRS}.   
From the description of the root systems given in~\cite{vdL}
we see that $\dot{\Delta}$ is not a root system of a Kac-Moody superalgebras
if and only if $\fg$ is a twisted affinization of $A(m|n)$
which is not isomorphic to $A(m|n)^{(1)}$.

\subsubsection{}
 For each $\nu\in V_b$  we introduce
$t_{\nu}\in GL(\fh^*)$ by the formula
\begin{equation}\label{tnu}
t_{\nu}(\lambda):=\lambda+k\nu-((\lambda,\nu)+\frac{k}{2}
(\nu,\nu))\delta\ \ \text{ where } k:=(\lambda,\delta).\end{equation}
Since $(V_b,\delta)=0$  we have  $(t_{\nu}(\lambda),t_{\nu}(\lambda))=(\lambda,\lambda)$ so $t_{\nu}\in O(\fh^*)$. The following properties can be easily verified
\begin{equation}\label{tnubeta}\begin{array}{lcl}
(i) & & t_{\nu} t_{\mu}=t_{\mu+\nu};\\
(ii) & & t_{\nu}(\beta)=\beta-(\beta|\nu)\delta\  \text{ for }\ \beta\in \Delta;\\
(iii) & & t_{\nu}(\delta)=\delta;\ \ \ \  t_{c\delta}=\Id\ \ \text{ for  } c\in\mathbb{C};\\
(iv) & & \phi t_{\nu} \phi^{-1}=t_{\phi(\nu)}\ \ \text{ for } \phi\in G.
\end{array}
\end{equation}
Thus the  assignment $\nu\mapsto t_{\nu}$ defined a homomorphism $\tau: {V}_b\to G$
with $\delta\in \ker\tau$ (and $\ker\tau=\mathbb{C}\delta$ apart for the case
$A(n|n)^{(1)}$).
 The image   of $\tau$ is a normal subgroup of $G$ which is isomorphic to
$\dot{V}_b$ except for the case $A(n|n)^{(1)}$. Note that (iv) gives
$$wt_{\nu}w^{-1}=t_{w\nu}\  \text{ for }w\in W.$$

\subsubsection{}\begin{lem}{}
The Weyl group  $W$ is a subgroup of $G$.
\end{lem}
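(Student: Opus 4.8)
The plan is to reduce to generators and to check three conditions. By~\ref{l(w)} the group $W$ is generated by the reflections $s_\alpha$ with $\alpha\in\pi$, so it suffices to show that each such $s_\alpha$ lies in $G$; that is, that $s_\alpha$ preserves the form $(-,-)$, that $s_\alpha(V_b)=V_b$, and that $s_\alpha(\delta)=\delta$. I would verify these three points in turn, the last two being essentially immediate from facts already recorded above.

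For the stabilization of $V_b$: one has $\pi\subset\Delta\subset Q_v$, and $\langle b_v(x),\alpha^\vee\rangle\in\mathbb{Z}$ for all $x\in X$ and $\alpha\in\pi$ by~(\ref{betaalphavee}); since $s_\alpha(b_v(x))=b_v(x)-\langle b_v(x),\alpha^\vee\rangle\alpha$, this gives $s_\alpha(Q_v)\subseteq Q_v$, hence $s_\alpha(V_b)\subseteq V_b$, and equality follows from $s_\alpha^2=\id$. For the fixing of $\delta$: by~\ref{111} the set $(\Delta^{\ima})^+$ is $W$-stable, and by~(\ref{affmodulodelta}) it equals $\mathbb{Z}_{>0}\delta$, so $s_\alpha(\delta)\in\mathbb{C}\delta$; on the other hand $s_\alpha(\delta)-\delta\in\mathbb{C}\alpha$, and the real root $\alpha$ is not proportional to the imaginary root $\delta$, which forces $s_\alpha(\delta)=\delta$.

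The remaining point, $s_\alpha\in O(\fh^*)$, is the one requiring care, because of the way $(-,-)$ is chosen, and this is where the symmetrizable and the $\fq_n^{(2)}$ cases have to be handled on the same footing. In the symmetrizable case $(-,-)$ may be taken to be the $W$-invariant form of~\ref{bilinearform}; there $(\alpha,\alpha)\ne 0$ and $s_\alpha$ is the orthogonal reflection $\nu\mapsto\nu-\tfrac{2(\nu,\alpha)}{(\alpha,\alpha)}\alpha$, which visibly preserves $(-,-)$. In the case $\fq_n^{(2)}$ the form is the chosen invariant form of $\fg_{\ol0}\cong\fsl_n^{(1)}$; here $W$ is the affine Weyl group of type $A_{n-1}$ acting on $\fh^*$, and its simple reflections $s_\alpha$ preserve this form, again because an invariant bilinear form restricts to a Weyl-invariant form on the Cartan subalgebra and hence induces one on $\fh^*$. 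Combining the three verifications yields $s_\alpha\in G$ for every $\alpha\in\pi$, and therefore $W\subseteq G$. The only genuine obstacle, and it is a mild one, is phrasing the orthogonality step uniformly so that it covers the non-symmetrizable component $\fq_n^{(2)}$ as well; everything else is routine.
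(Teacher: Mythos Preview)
Your proof is correct and follows essentially the same approach as the paper, which simply cites~\cite{Kbook}, Proposition~3.9; you have written out what that citation amounts to. One small inaccuracy: in the $\fq_n^{(2)}$ case you say ``here $W$ is the affine Weyl group of type $A_{n-1}$'', but in fact $W(\fq_n^{(2)})$ is only a proper subgroup of $W(\fsl_n^{(1)})$ (its translation part is $2Q(\dot\pi)$ rather than $Q(\dot\pi)$, as recorded later in the paper). This does not damage your argument, since the inclusion $W\subset W(\fsl_n^{(1)})$ is all you need; but a cleaner phrasing is that each $\alpha\in\pi$ is an even anisotropic root whose coroot, being computed inside the $\fsl_2$ generated by $\fg_{\pm\alpha}\subset\fg_{\ol 0}$, coincides with the coroot for $\fsl_n^{(1)}$, so $s_\alpha$ is a reflection in $W(\fsl_n^{(1)})$ and hence preserves the invariant form.
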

\begin{proof}
The proof is the same as in~\cite{Kbook}, Proposition 3.9. 
\end{proof}

\subsubsection{}\label{tauN}
Since $\tau(V_b)\cong \dot{V}_b$ is a  torsion-free subgroup in $G$ one has
$ \tau(V_b)\cap H=1$ if $H$ is a finite subgroup of $G$. Using the formula~(\ref{tnubeta}) (iv) we conclude that
for any finite subgroup $H$ of $G$ and  an $H$-invariant
subgroup $N$ of $V_b$ we have
$H\tau(N)=H\ltimes \tau(N)$.

In light of~\cite{Kbook} 6.3 and 6.5 we have
\begin{equation}\label{Weyldeco}
W=\dot{W}\ltimes \tau(M_{\pi})
\end{equation}
where $\dot{W}\subset G$ is  isomorphic to the Weyl group of the  root system
$\dot{\Delta}_{\pi}$ and $M_{\pi}\cong Q(\pi)$ is
a $W$-invariant lattice   in $\mathbb{Q}\pi$ given by the formula (6.5.8) in~\cite{Kbook}; the group
$\tau(M_{\pi})\cong \tau(Q(\pi))$ is a free abelian group. 
In $\fg=\dot{\fg}^{(1)}$, then 
 $\tau(M_{\pi})=\tau(Q(\dot{\pi}))\cong Q(\dot{\pi})$.

The decomposition~(\ref{Weyldeco}) 
implies the formula  $W\tau(N)=\dot{W}\ltimes \tau(N)$ for any 
$W$-invariant subgroup $N\subset V_b$ which contains $M_{\pi}$.

\subsection{Remark}\label{remaff}
In~\cite{Kbook}, the root system $\dot{\Delta}$ for the Kac-Moody algebra
is constructed as a ``finite part'' of $\Delta$ (this means that the set of simple of roots $\dot{\Sigma}$ 
for $\dot{\Delta}$ is a subset of $\Sigma$,  the Dynkin diagram of $\dot{\Sigma}$ is connected and 
$\Sigma\setminus \dot{\Sigma}$
is of cardinality one). In order to generalize this construction to 
 superalgebras, one has to
fix a finite part $\dot{\Delta}'$ in $\Delta$ in such a way that $W=\dot{W}'\ltimes \tau(M')$,
where $\dot{W}'$ is the Weyl group of $\dot{\Delta}'$ and $M'$ is a certain $W$-invariant lattice in $V_b$.

Observe that $\dot{W}'$ is the set of elements of the finite order in $W$, so the decomposition~(\ref{Weyldeco})
implies $\dot{W}'=\dot{W}$. As a result, the existence of the decomposition $W=\dot{W}'\ltimes \tau(M')$
is equivalent to the existence of  $\dot{\Delta}'$  with the Weyl group equal to $\dot{W}$.
The finite parts of symmetrizable Kac-Moody superalgebras are listed in~\cite{GKadm}, 13.2.1.
All finite parts of $S(1|2;b)$ are of the type $A(-1|0)$ (corresponding to $\fsl(1|2)$), and
the finite parts of $\fq_{n}^{(2)}$ are $A(k-1|n-1-k)$  (corresponding to $\fsl(k|n-k)$  if $2k\not=n$
and to $\fgl(k|k)$ if $n=2k$). Analyzing  these descriptions we  see that in all cases, except for $A(2m-1|2n-1)^{(2)}$,
it is possible to choose $\dot{\Delta}'$ in such a way that $\dot{W}$ equals to the Weyl group of $\dot{\Delta}'$.
For the remaining case $A(2m-1|2n-1)^{(2)}$ a suitable finite part does not exist, since
all finite parts are of the types $D(m|n)$ or $D(n|m)$, whereas
$\dot{W}$ is the Weyl group of $C_m\times C_n$.

\subsection{Case $A(m|n)^{(1)}$}
This case is treated in~\cite{GHS}, 10.3. We recall the results below.

In this case $\fg=\dot{\fg}^{(1)}$ where $\dot{\fg}$ is of the type $A(m|n)$. 
We normalize the bilinear form in the usual way (i.e., $(\alpha,\alpha)=\pm 2$ for $\alpha\in\pi$).
One has
$W=\dot{W}\ltimes \tau(Q(\pi))$
where $\dot{W}\cong S_m\times S_n$ is the Weyl group
of $\dot{\Delta}$. One has $ \tau(Q(\pi))=\tau(Q(\dot{\pi}))\cong Q(\dot{\pi})$
(since $Q(\dot{\pi})\cap \ker\tau=0$).

For $m\not=n$ and $m,n\geq 0$, \cite{GHS}, 10.3 gives $Sk^D(v)=W\tau(Q_v)$. By~\ref{tauN} this implies
$\Sk^D(v)=\dot{W}\ltimes \tau(Q_v)$
and $\tau(Q_v)=\tau(Q_{\dot{v}})\cong Q_{\dot{v}}$. 
$$\Sp^D(v)\cong \Sk^D(v)/W=
(\dot{W}\ltimes \tau(Q_v))/(\dot{W}\ltimes \tau(Q(\pi)))\cong Q_v/Q(\pi)\cong \mathbb{Z}.$$

For the case $m=n$ we retain notation of~\ref{Ann}.  
It is easy to see that the  involution $\xi$ (which sends $\vareps_i$ to $\delta_i$)  
can be uniquely extended to an element of $G$
(by setting $\xi(\delta)=\delta$ and $\xi(\Lambda_0)=\Lambda_0$). By~\cite{GHS}, 10.3 we have
$\Sk^D(v)=(\dot{W}\ltimes \{\xi,1\})\tau(Q_v)$. One has $\xi (Q_v)=Q_v$.
By~\ref{tauN}  we have
$\Sk^D(v)=
(\dot{W}\ltimes \{\xi,1\})\ltimes \tau(Q_v)$
and 
$$\Sp^D(v)\cong ((\dot{W}\rtimes \{\xi,1\}) \ltimes\tau(Q_v))/(\dot{W}\ltimes \tau(Q(\pi)))\cong 
(\{\xi,1\})\ltimes \tau(Q_v))/\tau(Q(\pi)).$$
The group  $Q/Q(\pi)$ is generated by  the element
$\str:=\sum\limits_{i=1}^n (\vareps_i-\delta_i)$. Since
$\xi (\str)=-\str$ we have
$\Sp^D(v)\cong  \mathbb{Z}_2\ltimes \mathbb{Z}$
and $\Sp^D(v)\not\cong  \mathbb{Z}_2\times \mathbb{Z}$.

\subsubsection{}
Let $\Sk(\dot{v})$ be the skeleton of $A(m|n)$. By~\ref{Sk'}, $\Sk(\dot{v})$ can be viewed as a subgraph
of $\Sk(v)$. One has
$\Sk^D(\dot{v})=\dot{W}$ for $m\not=n$ and $\Sk^D(v)=(\dot{W}\ltimes \{\xi,1\})$ for $m=n$.
This gives
$\Sk^D(v)=\Sk^D(\dot{v})\ltimes \tau(Q_v)$.
 
 \subsection{Case $C(n+1)^{(1)}$} 
 In this case $\dot{\Delta}=C(n+1)$. We retain the notation of~\ref{CaseCn+11}.
 We fix $v$ with $\Sigma_v$
 equal to $\Sigma_{v_0}$ described in~\ref{spineCn+11} and $\dot{v}$
 with $\Sigma_{\dot{v}}$ given by the formula~(\ref{SigmaCn+1}).
 Then $\Sk(\dot{v})$ is the skeleton of $C(n+1)$ and 
 $\Sk({v})$ is the skeleton of $C(n+1)^{(1)}$.

\subsubsection{}
\begin{cor}{}
Take $C(n+1)^{(1)}$ with $n>1$.
\begin{enumerate}
\item One has  $\Sk^D(v)=\mathbb{Z}u\times W$ where $u$ as in~\Cor{corCnSp}.

\item The embedding $W\to GL(\fh^*)$ can be extended to an embedding 
$\Sk^D(v)\to  GL(\fh^*)$
via the formula 
$u\mapsto (s_{\vareps_1}s_{2\delta-\vareps_1})^2=t_{-\vareps_1}$.

\item One has
$\Sk^D(v)={\Sk}^D(\dot{v})\ltimes\tau(Q_v)$ and ${\Sk}^D(\dot{v})=\dot{W}$.
\end{enumerate}
\end{cor}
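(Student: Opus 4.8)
The plan is to bootstrap from the general decomposition $\Sk^D(v)=W\rtimes\Sp^D(v)$ recalled after~\Lem{lemWSkDv} (from~\cite{GHS}, Proposition 5.4.7(4)) together with~\Cor{corCnSp}, which identifies $\Sp^D(v)$ with the infinite cyclic group generated by the automorphism $u$ determined by $\sigma_b^u=\Psi:=(s_{\vareps_1}s_{2\delta-\vareps_1})^2|_{V_b}$ and $\sigma^u(v_i)=v_{i+4n}$. The key point for (i) is that this semidirect product is in fact direct. For this I would first observe that $W$ fixes $\vareps_1$ pointwise: each $\alpha\in\pi=\{\delta-2\delta_1,\delta_1-\delta_2,\ldots,\delta_{n-1}-\delta_n,2\delta_n\}$ is an integral combination of $\delta,\delta_1,\ldots,\delta_n$, all of which are orthogonal to $\vareps_1$ in the form fixed in~\ref{CaseCn+11}, so $s_\alpha(\vareps_1)=\vareps_1$ and hence $w\vareps_1=\vareps_1$ for every $w\in W$. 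Granting (ii), which exhibits $t_{-\vareps_1}\in GL(\fh^*)$ with $t_{-\vareps_1}|_{V_b}=\sigma_b^u$, formula~(\ref{tnubeta})(iv) gives $wt_{-\vareps_1}w^{-1}=t_{-w\vareps_1}=t_{-\vareps_1}$, so $t_{-\vareps_1}$ is central in the subgroup of $GL(\fh^*)$ generated by $W$ and $t_{-\vareps_1}$; since $W$ and $t_{-\vareps_1}$ both preserve $V_b$, restricting to $V_b$ and using the injectivity of $u'\mapsto\sigma_b^{u'}$ (\Thm{thm1}) shows $u$ commutes with $W$ in $\Sk^D(v)$. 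Thus $\Sk^D(v)=W\times\langle u\rangle=\mathbb{Z}u\times W$.

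For (ii) the first task is the identity $(s_{\vareps_1}s_{2\delta-\vareps_1})^2=t_{-\vareps_1}$ in $GL(\fh^*)$, which I would check by evaluating both sides on a general $\lambda\in\fh^*$; since $(\vareps_1,\delta)=0$ and $(\vareps_1,\vareps_1)=(2\delta-\vareps_1,2\delta-\vareps_1)$, this is the usual computation that the product of the two reflections in the ``parallel'' hyperplanes $\vareps_1^{\perp}$ and $(2\delta-\vareps_1)^{\perp}$ is the translation of~(\ref{tnu}) with $\nu=-\vareps_1$. Combining this with~\Cor{corCnSp}(ii) gives $\sigma_b^u=t_{-\vareps_1}|_{V_b}$, while $\sigma_b^w=w|_{V_b}$ by~\Lem{lemWSkDv}(i); together with the commutation from (i) the rule $u^kw\mapsto t_{-k\vareps_1}\,w$ is then a well-defined homomorphism $\mathbb{Z}u\times W\to GL(\fh^*)$ whose composite with the restriction $GL(\fh^*)\to GL(V_b)$ is the monomorphism $\Sk^D(v)\hookrightarrow GL(V_b)$ of~\Thm{thm1}; hence it is itself injective, proving (ii).

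For (iii), \Cor{corCnSp}(i) shows $\Sp^D(\dot v)=1$ for $C(n+1)$, so $\Sk^D(\dot v)=W(C(n+1))=\dot W$, the Weyl group of the finite system $\dot\Delta_\pi$ of type $C_n$; and by~\ref{tauN} and~(\ref{Weyldeco}) one has $W=\dot W\ltimes\tau(M_\pi)$ with $\tau(M_\pi)=\tau(Q(\dot\pi))$ for the untwisted affinization $C(n+1)^{(1)}$. Since $t_{-\vareps_1}$ commutes with $\dot W$ and $t_{-\vareps_1}t_\mu=t_{\mu-\vareps_1}$, part (i) upgrades to $\Sk^D(v)=\dot W\ltimes\bigl(\tau(M_\pi)\cdot\langle t_{-\vareps_1}\rangle\bigr)=\dot W\ltimes\tau\bigl(M_\pi+\mathbb{Z}\vareps_1\bigr)$, and it remains to identify the translation lattice $\tau(M_\pi+\mathbb{Z}\vareps_1)$ with $\tau(Q_v)$. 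The natural way is to pass to $\dot V_b=V_b/\mathbb{C}\delta$, where $\tau$ becomes injective, compute the image of $Q_v=\mathbb{Z}\Sigma_{v_0}$ from the explicit list of simple roots in~\ref{spineCn+11}, and check that it equals the image of $M_\pi+\mathbb{Z}\vareps_1$ (equivalently $\mathbb{Z}\dot\Delta$). I expect this final lattice computation --- and in particular reconciling the normalization of the form entering~(\ref{tnu}) with the one fixed in~\ref{CaseCn+11}, so that the translation attached to $u$ really is $t_{-\vareps_1}$ and not a scalar multiple --- to be the only genuinely delicate step; the rest is formal manipulation of the already-established decomposition $\Sk^D(v)=W\rtimes\Sp^D(v)$.
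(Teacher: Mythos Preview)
Your proposal is correct and follows essentially the same route as the paper: both deduce from $w\vareps_1=\vareps_1$ that $t_{-\vareps_1}$ (hence $u$) commutes with $W$, upgrade the semidirect product $W\rtimes\Sp^D(v)$ to a direct one via the injectivity of $\Sk^D(v)\to GL(V_b)$, embed in $GL(\fh^*)$ by $u\mapsto t_{-\vareps_1}$, and for (iii) combine with $W=\dot W\ltimes\tau(M_\pi)$ to identify the translation lattice. One small correction in (iii): in the normalization of~\ref{CaseCn+11} the lattice $M_\pi$ for the $C_n^{(1)}$ piece is $\mathbb{Z}\{\delta_1,\ldots,\delta_n\}$ (the paper states this directly) rather than $Q(\dot\pi)$ as you write---this is precisely the normalization subtlety you rightly flag as requiring care in the final step.
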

\begin{proof}
Recall that the bilinear form is normalized by the condition
$(\delta_i,\delta_j)=-\delta_{ij}$. For this normalization
 the decomposition~(\ref{Weyldeco}) takes the form
 $W=\dot{W}\ltimes \tau(M_{\pi})$,
where $\dot{W}$ is the group of signed permutations of $\delta_1,\ldots,\delta_n$ and
$M_{\pi}$ is the span of  $\delta_1,\ldots,\delta_n$. 

By~\Cor{corCnSp},  $\Sp^D(v)$ is the infinite cyclic group generated by $u$ where $\sigma_b^u$
is the restriction of $(s_{\vareps_1}s_{2\delta-\vareps_1})^2=t_{-\vareps_1}$ to $V_b$. For all $w\in W$ one has $w  t_{\vareps_1}= t_{\vareps_1} w$.
In particular,  for any $\mu\in V_b$ one has 
\begin{equation}\label{wtmu}
(w\circ \sigma_b^u)(\mu)=( \sigma_b^u\circ w)(\mu).
\end{equation}
Using~\Lem{lemWSkDv} (iii) we obtain $\Sk^D(v)=W\times \Sp^D(v)$.
Since the group $W\subset GL(\fh^*)$ commutes with $t_{\vareps_1}$
the assignment $\phi_u:=t_{-\vareps_1}$ induces 
a homomorphism from $W\times \Sp^D(v)$ to $GL(\fh^*)$. 
Since $W\cap \{t_{m\vareps_1}\}_{m\in\mathbb{Z}}=\Id$ this homomorphism is injective.
This gives an embedding $\Sk^D(v)=W\times \Sp^D(v)\to GL(\fh^*)$
and establishes (ii).
For (iii) note that~\ref{tauN} gives $\Sk^D(v)={\Sk}^D(\dot{v})\ltimes\tau(N)$
for $N$ is spanned by $M_{\pi}$ and $\vareps_1$. Hence $N=Q_v$ as required.
\end{proof}

\subsection{Case $\fq_{n}^{(2)}$} 
In this case  $\dot{\Delta}$ is of the type $A(n-1)$. 
We normalize the bilinear form by the condition $(\alpha,\alpha)=2$ for 
$\alpha\in\pi$.
One has
$W=\dot{W}\ltimes \tau(M_{\pi})$
where $\dot{W}\cong S_n$ is the Weyl group
of $\dot{\Delta}$ and $M_{\pi}$ is the lattice spanned by the elements
$2\alpha$, $\alpha\in \dot{\Delta}$.  One has $\tau(M_{\pi})=\tau(\dot{M})\cong \dot{M}$ where $\dot{M}\subset \dot{V}_b$ is spanned by 
the elements $2\alpha$ with $\alpha\in\dot{\pi}$; these elements
are linearly independent, so $\dot{M}\cong Q(\dot{\pi})$.

By~\cite{GHS}, 10.3 one has $\Sk^D(v)=W$ for odd $n$. 

Consider the case  $n=2k$. 
One has $\dot{\pi}=\{\vareps_i-\vareps_{i+1}\}_{i=1}^{2k-1}$. By~\cite{GHS}, 10.3
$$Sk^D(v)=\dot{W}\ltimes \tau(M'),$$ 
where 
$M'\subset \dot{V}_b$ is spanned by 
$\{ 2(\vareps_i-\vareps_{i+1})\}_{i=1}^{2k-2}\cup\{\beta\}$ for
 $\beta:=\sum\limits_{i=1}^k (\vareps_{2i-1}-\vareps_{2i})$.
 Note that $M'\cong Q(\dot{\pi})$.
 Since  $M'\subset \dot{V}_b$ we have
 $\tau(M')\cong {M}'$. This gives 
$$\Sp^D(v)=Sk^D(v)/W=M'/\dot{M}=\mathbb{Z}_2.$$

\newpage
\appendix
\includepdf[pages={1},scale=.90,pagecommand=\section*{Appendix A: Spine of \(G(3)^{(1)}\)}\label{appendix:a}]{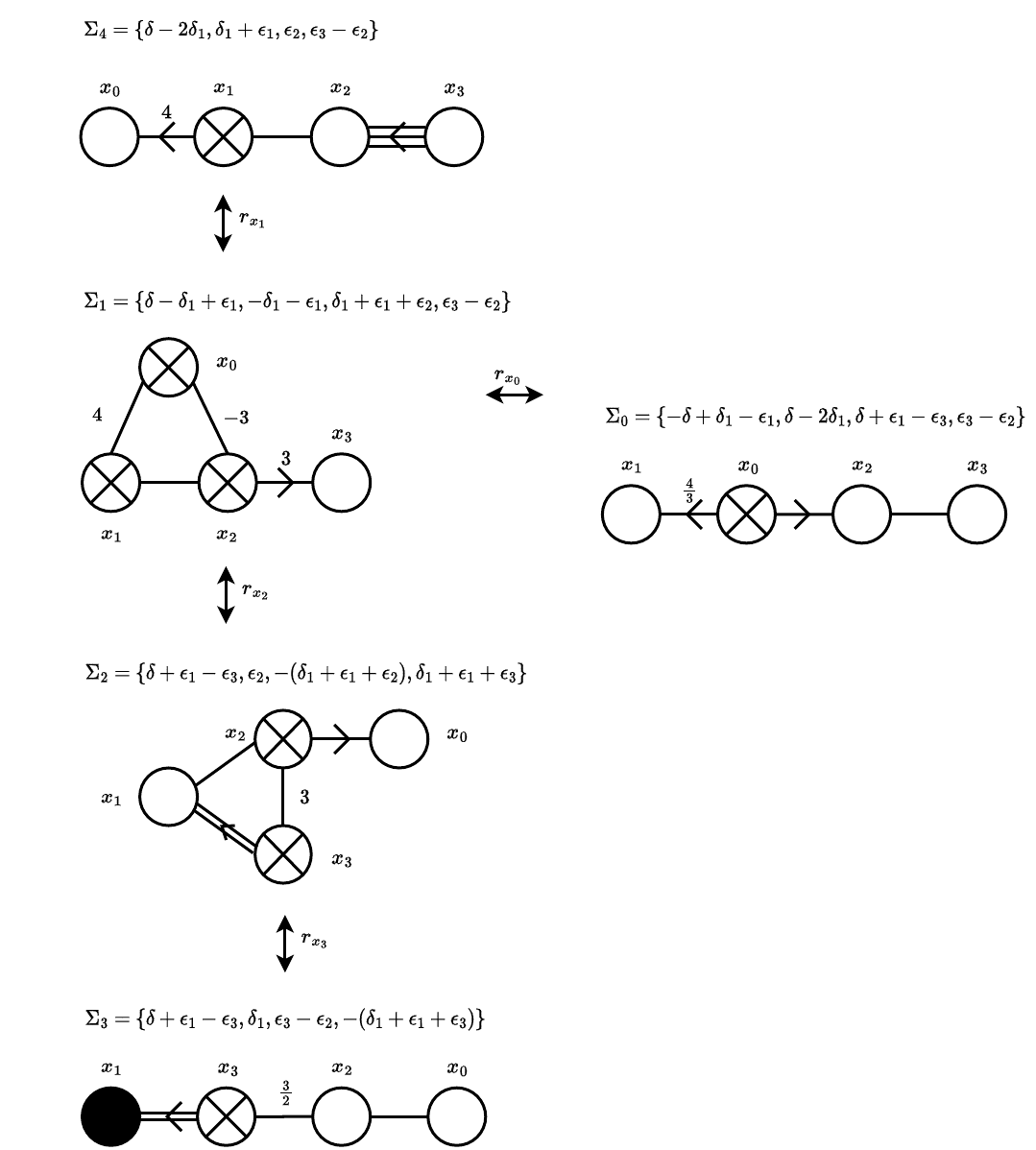}
	
\includepdf[pages={1},scale=.80,pagecommand=\section*{Appendix B: Spine of \(F(4)^{(1)}\)}\label{appendix:b}]{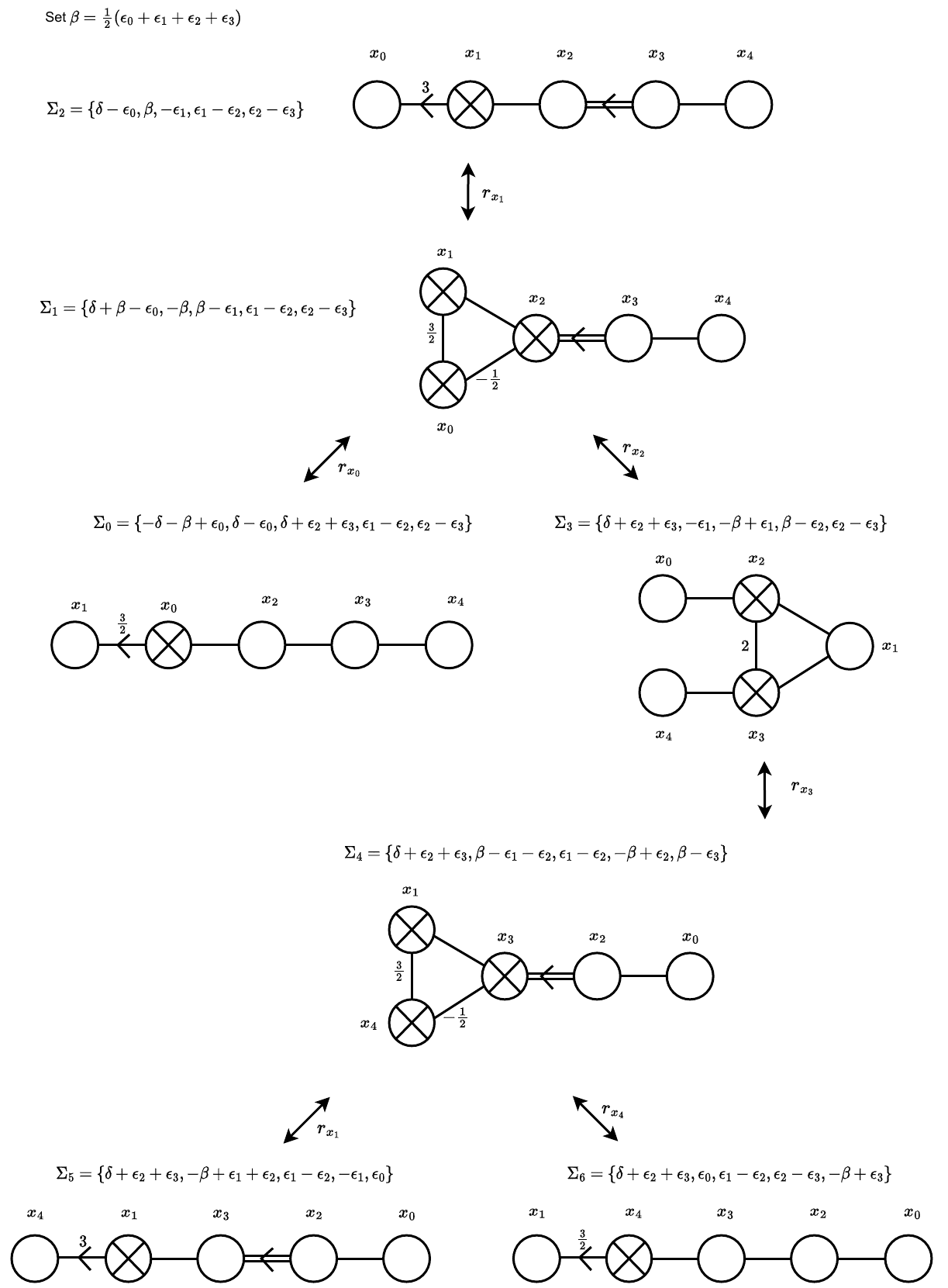}

		\section*{}	
		
%

	\end{document}